\documentclass[11pt]{article}
\usepackage{enumerate}
\usepackage{amsthm,amsmath,amssymb}
\usepackage[T1]{fontenc} 
\usepackage[utf8]{inputenc}
\allowdisplaybreaks

\usepackage{tikz}
\usetikzlibrary{arrows,arrows.meta,calc}

\usepackage{color}
\definecolor{vert}{rgb}{0.0, 0.5, 0.0}
\definecolor{amber}{rgb}{1.0, 0.75, 0.0}

\usepackage{vmargin}
\setmarginsrb{3cm}{2cm}{3cm}{2cm}{0cm}{2cm}{0cm}{1cm}

\theoremstyle{plain}
\newtheorem{theorem}{Theorem}
\newtheorem{lemma}[theorem]{Lemma}
\newtheorem{corollary}[theorem]{Corollary}
\newtheorem{proposition}[theorem]{Proposition}

\theoremstyle{definition}
\newtheorem{definition}[theorem]{Definition}
\newtheorem{example}[theorem]{Example}
\newtheorem{runningexample}[theorem]{Running Example}

\newtheorem{remark}[theorem]{Remark} 

\newcommand{\N}{\mathbb{N}}
\newcommand{\K}{\mathbb{K}}
\newcommand{\Z}{\mathbb{Z}}
\newcommand{\Suff}{{\rm Suff}}
\newcommand{\rad}{\rm rad}
\newcommand{\lex}{\rm lex}
\DeclareMathOperator{\Card}{{\rm Card}}
\newcommand{\KA}{\K\langle \langle A \rangle \rangle}
\newcommand{\KAd}{\K\langle \langle \bA \rangle \rangle}
\newcommand{\rep}{\mathrm{rep}}
\newcommand{\val}{\mathrm{val}}
\newcommand{\Sh}{\mathrm{Sh}}
\newcommand{\ba}{\boldsymbol{a}}
\newcommand{\bb}{\boldsymbol{b}}
\newcommand{\bi}{\boldsymbol{i}}
\newcommand{\bk}{\boldsymbol{k}}
\newcommand{\bm}{\boldsymbol{m}}
\newcommand{\bn}{\boldsymbol{n}}
\newcommand{\bw}{\boldsymbol{w}}
\newcommand{\bp}{\boldsymbol{p}}
\newcommand{\bs}{\boldsymbol{s}}
\newcommand{\bu}{\boldsymbol{u}}
\newcommand{\bv}{\boldsymbol{v}}
\newcommand{\bx}{\boldsymbol{x}}
\newcommand{\by}{\boldsymbol{y}}
\newcommand{\bz}{\boldsymbol{0}}
\newcommand{\bA}{\boldsymbol{A}}

\newcommand{\bL}{\boldsymbol{L}}
\newcommand{\bS}{\boldsymbol{\mathcal{S}}}
\newcommand{\bdiese}{\boldsymbol{\#}}
\newcommand{\couple}[2]{\left[\begin{smallmatrix} #1 \\ #2 \end{smallmatrix}\right]}
\newcommand{\triple}[3]{\left[\begin{smallmatrix} #1 \\ #2 \\ #3 \end{smallmatrix}\right]}
\newcommand{\duple}[2]{\left[\begin{smallmatrix} #1 \vspace{-.2cm} \\\vdots\\ #2 \end{smallmatrix}\right]}
\newcommand{\dcart}[2]{#1\times \cdots\times #2}

\author{\'Emilie Charlier, Célia Cisternino and Manon Stipulanti\\
Department of Mathematics\\
University of Li\`ege\\
All\'ee de la D\'ecouverte 12\\
4000 Li\`ege, Belgium\\
{\tt \{echarlier,ccisternino,m.stipulanti\}@uliege.be}}

\title{Regular sequences and synchronized sequences in abstract numeration systems}
\date{\today} 

\begin{document}

\thispagestyle{empty}
\maketitle

\begin{abstract} 
The notion of $b$-regular sequences was generalized to abstract numeration systems by Maes and Rigo in 2002. Their definition is based on a notion of $\mathcal{S}$-kernel that extends that of $b$-kernel. However, this definition does not allow us to generalize all of the many characterizations of $b$-regular sequences. In this paper, we present an alternative definition of $\mathcal{S}$-kernel, and hence an alternative definition of $\mathcal{S}$-regular sequences, which enables us to use recognizable formal series in order to generalize most (if not all) known characterizations of $b$-regular sequences to abstract numeration systems. We then give two characterizations of $\mathcal{S}$-automatic sequences as particular $\mathcal{S}$-regular sequences. Next, we present a general method to obtain various families of $\mathcal{S}$-regular sequences by enumerating $\mathcal{S}$-recognizable properties of $\mathcal{S}$-automatic sequences. As an example of the many possible applications of this method, we show that, provided that addition is $\mathcal{S}$-recognizable, the factor complexity of an $\mathcal{S}$-automatic sequence defines an $\mathcal{S}$-regular sequence. In the last part of the paper, we study $\mathcal{S}$-synchronized sequences. Along the way, we prove that the formal series obtained as the composition of a synchronized relation and a recognizable series is recognizable. As a consequence, the composition of an $\mathcal{S}$-synchronized sequence and a $\mathcal{S}$-regular sequence is shown to be $\mathcal{S}$-regular. All our results are presented in an arbitrary dimension $d$ and for an arbitrary semiring $\K$.
\end{abstract} 

\bigskip
\hrule
\bigskip

\noindent 2010 {\it Mathematics Subject Classification}: 68Q45, 11B85, 11A67, 13F25.

\noindent \emph{Keywords: abstract numeration systems, regular sequences, automatic sequences, synchronized sequences, recognizable formal series,  enumeration, first-order logic, weighted automata, synchronized relations}

\bigskip
\hrule
\bigskip

\section{Introduction}

Automatic sequences form a family of infinite words over finite alphabets that is widely studied in combinatorics on words since the pioneer work of Cobham~\cite{Cobham1969,Cobham1972}. A sequence $f$ is said to be automatic if it is generated by a deterministic finite automaton with output (DFAO) as follows: the $n$th term $f(n)$ is the output of the DFAO when the input is the representation of $n$ in a suitable numeration system. For integer base $b$ numeration systems, we talk about $b$-automatic sequences. The most famous example is the Thue-Morse sequence, which is $2$-automatic. See the handbook~\cite{AlloucheShallit2003} for an comprehensive presentation of the subject.

A well-known characterization of $b$-automatic sequences is given through the notion of $b$-kernel: a sequence $f$ is $b$-automatic if and only if its $b$-kernel, which is the set of sequences $\{n\mapsto f(b^e n + r) \mid e\in\N,\ r\in[\![0,b^e-1]\!]\}$, is finite. With the aim of generalizing $b$-automatic sequences to sequences taking infinitely many values, Allouche and Shallit~\cite{AlloucheShallit1992} introduced the notion of $(b,\K)$-regular sequences over a N\oe therian ring $\K$. They define a sequence to be $(b,\K)$-regular if the $\K$-module spanned by its $b$-kernel is finitely generated. Then, it can be shown that a sequence $f\colon\N\to \K$ is $b$-automatic if and only if it is $(b,\K)$-regular and takes only finitely many values.

Regularity can be defined in numeration systems generalizing integer bases. See for instance~\cite{AlloucheScheicherTichy2000,
Charlier2018,CharlierCisterninoStipulanti,CharlierRampersadShallit2012, LeroyRigoStipulanti2017,RigoMaes2002} for work on the matter. The most general setting in this area is given by the so-called abstract numeration systems \cite{LecomteRigo2001}. An abstract numeration system $\mathcal{S}$ is a triple $(L,A,<)$ where $L$ is an infinite regular language over a totally ordered alphabet $(A,<)$. The numeration language $L$ is ordered thanks to the radix (or genealogical) order induced by $<$. Any non-negative integer $n$ is then represented by the $n$-th word of the language. This family of numeration systems includes the usual integer bases and more generally, any positional numeration system having a regular numeration language. Maes and Rigo~\cite{RigoMaes2002} extended $(b,\K)$-regular sequences to abstract numeration systems $\mathcal{S}$ by generalizing the notion of $b$-kernel to that of $\mathcal{S}$-kernel.
However, this definition is not satisfying for it does not allow us to generalize most characterizations of $(b,\K)$-regular sequences.

In this paper, we present an alternative definition of regular sequences in abstract numeration systems. Following the lead of Berstel and Reutenauer~\cite{BerstelReutenauer2011}, we define a sequence $f$ over an arbitrary semiring $\K$ to be $(\mathcal{S},\K)$-regular if the formal series $\sum_{w  \in A^*} f(\val_\mathcal{S}(w))\,w$ is $\K$-recognizable. Next, we introduce an alternative definition of $\mathcal{S}$-kernel. This new framework enables us to generalize most known characterizations of $(b,\K)$-regular sequences to abstract numeration systems with a prefix-closed numeration language. In doing so, we unify the formal series and kernel's points of view. 

In a previous work~\cite{CharlierCisterninoStipulanti}, we already used the latter definition of regular sequences in the context of Pisot numeration systems, i.e., numeration systems based on an increasing sequence of integers that satisfies a linear recurrence over $\Z$ whose characteristic polynomial is the minimal polynomial of a Pisot number. Such systems generalize integer bases and can be seen as particular abstract numeration systems since it is well known that their numeration languages are regular~\cite{FrougnySolomyak1996}. We showed in~\cite{CharlierCisterninoStipulanti} that the notion of $(U,\K)$-regular sequences is robust, in the sense that it is independent of the algorithm used in order to represent the integers, provided that it still gives rise to a regular numeration language. The obtained characterization generalizes results of both Allouche, Scheicher and Tichy \cite[Section 5]{AlloucheScheicherTichy2000} and Berstel and Reutenauer \cite[Prop. 5.1.1]{BerstelReutenauer2011}. But precisely, this one characterization of $(U,\K)$-regular sequences \emph{cannot} be generalized to abstract numeration systems. Indeed, there is no such notion of robustness in the context of abstract numeration systems since the algorithm used for representing integers is given by the definition of the considered abstract numeration system itself. 

In a second part of the paper, we study the particular cases of $\mathcal{S}$-automatic sequences and $\mathcal{S}$-synchronized sequences in detail. We also generalize the technique initiated in~\cite{CharlierRampersadShallit2012} in order to build various examples of $(\mathcal{S},\K)$-regular sequences by enumerating suitable properties of $\mathcal{S}$-automatic sequences. We take care of presenting all our results in an arbitrary dimension $d$. Along the way, we illustrate most introduced notions and obtained results through a running example.

The paper is organized as follows.
Section~\ref{Sec : Preliminaries} is devoted to the necessary background on abstract numeration systems and formal series. In Section~\ref{Sec : Regular}, we fix a $d$-tuple $\bS=(\mathcal{S}_1,\ldots,\mathcal{S}_d)$ of abstract numeration systems and we define the notion of $(\bS,\K)$-regular sequences in terms of formal series. Then we study their closure properties and growth rate. In Section~\ref{Sec : WH}, we introduce a general working hypothesis. Namely, we ask that the numeration language $\bL$ is prefix-closed. This hypothesis will only be needed in Sections~\ref{Sec : Kernel} and~\ref{Sec : Automatic} in order to obtain an analogy between the formal series point of view and the sequences one. In Section~\ref{Sec : Kernel}, we define the $\bS$-kernel of a sequence $f\colon\N^d\to\K$. Then we prove three characterizations of $(\bS,\K)$-regular sequences, which are given by Theorems~\ref{The : Reg-sous-module},~\ref{The : PracticalCriterion} and~\ref{The : reg-noyau-fin-eng}. These three characterizations may be seen as analogues of the characterizations of $\K$-recognizable series given in~\cite{BerstelReutenauer2011}. Section~\ref{Sec : Automatic} is concerned with $\bS$-automatic sequences. First, Theorem~\ref{The : automatic-finite-kernel} states that a sequence $f\colon\N^d\to\K$ is $\bS$-automatic if and only if its $\bS$-kernel is finite. Next we establish in Theorem~\ref{The : Regular-Automatic} that the family of $\bS$-automatic sequences is included in that of $(\bS,\K)$-regular sequences. In Section~\ref{Sec : Enumeration} we present a general method in order to obtain $(\bS,\N)$-regular sequences by enumerating $\bS$-recognizable properties of $\bS$-automatic sequences. In particular, we provide an $\bS$-recognizable way to enumerate elements of $\N^d$. Section~\ref{Sec : Synchronized} is devoted to $(\bS,\bS')$-synchronized sequences $f\colon\N^d\to \N^{d'}$. We prove that if $d'=1$, the family of $(\bS,\mathcal{S}')$-synchronized sequences lies in between those of $\bS$-automatic sequences and $(\bS,\N)$-regular sequences. Then we provide various closure properties of $(\bS,\bS')$-synchronized sequences. In particular, thanks to the general notion of synchronized relations, we show that the composition of synchronized sequences is synchronized. This property does not hold for regular sequences in general. By relaxing the hypotheses on one of the sequences, we prove in Section~\ref{Sec : Reg-Synch} that the composition of a synchronized sequence and a regular one remains regular. The main tool to prove the latter result is a composition-like operation between a two-tape automaton and a weighted automaton, generalizing the operation defined in~\cite{CharlierCisterninoStipulanti}.

\section{Preliminaries}
\label{Sec : Preliminaries}

An interval of integers $\{i,\ldots,j\}$ is denoted $[\![i,j]\!]$. We make use of common notions in formal language theory, such as alphabet, letter, word, length of a word, language and usual definitions from automata theory~\cite{Lothaire2002}. In particular, we denote the empty word by $\varepsilon$ and for a finite word $w$, $|w|$ is its length and for each $j\in[\![1,|w|]\!]$, $w[j]$ is its $j$-th letter. We also make use of some classical algebraic structures such as semirings and modules.

\subsection{Abstract numeration systems}
\label{Sec : NumSys}

An \emph{abstract numeration system} is a triple $\mathcal{S}=(L,A,<)$ where $L$ is an infinite regular language over a totally ordered alphabet $(A,<)$. The words in $L$ are ordered with respect to the radix (or genealogic) order $<_{\rad}$ induced by the order $<$ on $A$: for $u,v\in A^*$, $u<_{\rad}v$ either if $|u|<|v|$, or if $|u|=|v|$ and $u$ is lexicographically less than $v$. The \emph{$\mathcal{S}$-representation function} $\rep_\mathcal{S} \colon \N \to L$ maps any non-negative integer $n$ onto the $n$th word in $L$ (note that we start indexing from $0$). The \emph{$\mathcal{S}$-value function} $\val_\mathcal{S} \colon L \to \N$ is the reciprocal function of $\rep_\mathcal{S}$.

%\begin{example}
%{\color{vert} Je dirais plutôt ceci dans l'intro. Ou alors faire un exemple ici. Pisot n'a pas été défini.}
%Note that the usual numeration systems based on integer bases and Pisot bases are particular cases of abstract numeration systems. For example, the binary system and Zeckendorf system, which is the Pisot numeration system associated with the golden ratio $(1+\sqrt{5})/2$, respectively correspond to the abstract numeration systems $\mathcal{S}_2:=(1\{0,1\}^*\cup\{\varepsilon\},0<1)$ and $\mathcal{S}_F:=(1\{0,01\}^*\cup\{\varepsilon\},0<1)$. More details about Pisot numeration systems can be found in~\cite{FrougnySakatovitch2010}.
%\end{example}

\begin{runningexample}
We start a running example by considering the abstract numeration system $\mathcal{S}=(a^*b^*, a<b)$. Since the radix order on $a^*b^*$ gives 
\[
	\varepsilon <_{\rad} a <_{\rad} b <_{\rad} aa <_{\rad} ab <_{\rad} bb <_{\rad} aaa <_{\rad} aab <_{\rad} \cdots
\]
the $\mathcal{S}$-representation of $7$ is $\rep_\mathcal{S}(7)=aab$. For instance, $\val_\mathcal{S}(aaa)=6$ but the $\mathcal{S}$-value function is not defined on the word $ba$. It is easily seen that for all $m,n\in\N$, we have $\val_\mathcal{S}(a^m b^n)=\frac{(m+n)(m+n+1)}{2}+n$.
\end{runningexample}

In general, for any alphabet $A$ and any symbol $\$\notin A$, we let $A_\$=A\cup \{\$\}$ and $\tau_{A,\$}\colon A_\$^*\to A^*$ be the morphism that erases the letter $\$$ and leaves the other letters unchanged. If there is no ambiguity on the alphabet $A$, we simply use the notation $\tau_\$$. For a $d$-tuple
\[
	\duple{w_1}{w_d}
\]
in $\dcart{A_1^*}{A_d^*}$ where $A_1,\ldots,A_d$ are any alphabets, we set 
\[
	\duple{w_1}{w_d}^{\$}
	=\duple{\$^{\ell-|w_1|}w_1}{\$^{\ell-|w_d|}w_d}
\]
where $\$$ is a symbol not belonging to $A_1\cup\cdots\cup A_d$ and $\ell$ is the maximum of the lengths of the words $w_1,\ldots,w_d$. Thus, we have padded the shortest words with leading symbols $\#$ in order to obtain $d$ words of the same length. Then the obtained $d$-tuple can be seen as a word over the $d$-dimensional alphabet $\dcart{(A_1)_\$}{(A_d)_\$}$, that is, an element of $\big(\dcart{(A_1)_\$}{(A_d)_\$}\big)^*$. 

From now on, we fix a dimension $d$, a $d$-tuple $\bS=(\mathcal{S}_1,\ldots,\mathcal{S}_d)$ of abstract numeration systems $\mathcal{S}_1=(L_1,A_1,<_1),\ldots, \mathcal{S}_d=(L_d,A_d,<_d)$ and a padding symbol $\#$, which does not belong to $A_1\cup\cdots\cup A_d$. We set 
\[
	\bdiese=\duple{\#}{\#}
\]
and 
\[
	\bA=\big(\dcart{(A_1)_\#}{(A_d)_\#}\big)\setminus\{\bdiese\}.
\]
We also define 
\[
	\bL=(\dcart{L_1}{L_d})^\#.
\] 
Note that since $L_1,\ldots,L_d$ are regular languages, so is $\bL$. We now extend the definition of the maps $\rep_\mathcal{S}$ and $\val_{\mathcal{S}}$ as follows:
\[
	\rep_{\bS}\colon \N^d \to \bL,\ 
	\duple{n_1}{n_d}
	\mapsto 
	\duple{\rep_{\mathcal{S}_1}(n_1)}{\rep_{\mathcal{S}_d}(n_d)}^\#
\]
and
\[
	\val_{\bS} \colon \bL\to \N^d,\
	\duple{w_1}{w_d} \mapsto 
	\duple{\val_{\mathcal{S}_1}(\tau_{\#}(w_1))}{\val_{\mathcal{S}_d}(\tau_{\#}(w_d))}.
\]
The maps $\rep_{\bS}$ and $\val_{\bS}$ are reciprocal bijections between $\N^d$ and $\bL$. Therefore, $\bL$ is called the \emph{numeration language} of the $d$-dimensional abstract numeration system $\bS$. 

\begin{runningexample}
Consider the $2$-dimensional abstract numeration system $\bS=(\mathcal{S},\mathcal{S})$. We have $\bdiese=\couple{\#}{\#}$ and
\[
	\bA=\left\{\couple{\#}{a},\couple{\#}{b},\couple{a}{\#},\couple{a}{a},\couple{a}{b},\couple{b}{\#},\couple{b}{a},\couple{b}{b}
\right\}.
\] 
The numeration language $\bL$ is the set of words over $\bA$ whose components  both belong to $\#^*a^*b^*$. For instance, $\rep_{\bS}\couple{4}{9}=\couple{\#ab}{bbb}=\couple{\#}{b}\couple{a}{b}\couple{b}{b}$ and $\val_{\bS}\couple{aab}{\#\#a}=\couple{\val_{\mathcal{S}}(aab)}{\val_{\mathcal{S}}(a)}=\couple{7}{1}$. 
\end{runningexample}

A subset $X$ of $\N^d$ is \emph{$\bS$-recognizable} if the language $\rep_{\bS}(X)$ is regular. A DFA accepting $\rep_{\bS}(X)$ is said to \emph{recognize} $X$ (with respect to the abstract numeration system $\bS$).

\begin{runningexample}
The subset $X=\{\frac{n(n+1)}{2}\couple{1}{1}+\couple{0}{n}\colon n\in\N\}$ of $\N^2$ is $\bS$-recognizable since $\rep_{\bS}(X)=\couple{a}{b}^*$.
\end{runningexample}

%We use the notation $\bn$ for an element in $\N^d$. In particular, $\bz$ denotes the zero vector of dimension $d$.

\subsection{Formal series}
\label{Sec : Formal series}

Let $A$ be a finite alphabet and $\K$ be a semiring. A \emph{(formal) series} is a function $S\colon A^* \to \K$. The image under $S$ of a word $w$ over $A$ is denoted $(S, w)$ and is called the \emph{coefficient} of $w$ in $S$. A series $S\colon A^* \to \K$ is also written as $S = \sum_{w\in A^*} (S,w)\, w$. We let $\KA$ denote the set of formal series over $A$ with coefficients in $\K$. For $S,T\in\KA$ and $k\in\K$, we define
\[
	S+T=\sum_{w\in A^*} \big((S,w)+(T,w)\big)w
	\quad\text{and}\quad
	kS=\sum_{w\in A^*} k(S,w)\, w.
\]
These operations provide $\KA$ with a structure of $\K$-module. For $S \in\KA$ and $u\in A^*$, we define 
\[
	Su^{-1}=\sum_{w\in A^*} (S,wu)\, w.
\] 
A $\K$-submodule of $\KA$ is called \emph{stable} if for all $u\in A^*$, it is closed under the operation $\KA\to \KA,\ S \mapsto Su^{-1}$.

A series $S \colon A^* \to \K$ is \emph{$\K$-recognizable} if there exist $r \in\N_{\ge 1}$, a morphism of monoids $\mu \colon A^* \to \K^{r\times r}$ (with respect to concatenation of words and multiplication of matrices) and two matrices $\lambda \in \K^{1 \times r}$ and $\gamma \in \K^{r \times 1}$ such that for all $w\in A^*$, $(S, w) = \lambda \mu(w) \gamma$. The triple $(\lambda, \mu, \gamma)$ is called a \emph{linear representation} of $S$ of \emph{dimension} $r$.

The following results can be found in the textbook \cite{BerstelReutenauer2011}. Note that these authors use the left operation $S\mapsto u^{-1}S$. Both choices lead to the same notion of $\K$-recognizable series since the family of $\K$-recognizable series is closed under reversal.

\begin{theorem}
\label{The : Series-Rec-Submodule}
A series $S \colon A^* \to \K$ is $\K$-recognizable if and only if there exists a stable finitely generated $\K$-submodule of $\KA$ containing $S$. 
\end{theorem}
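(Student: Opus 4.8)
The plan is to establish the two implications directly from the definition, after recording two elementary facts about the right quotient. First, from $(Sx^{-1},w)=(S,wx)$ one gets $S(uv)^{-1}=(Sv^{-1})u^{-1}$ for all $u,v\in A^*$; consequently a submodule is stable as soon as it is closed under the single-letter quotients $S\mapsto Sa^{-1}$, and right-quotienting by a word amounts to successively quotienting by its letters read from the right. Second, each map $S\mapsto Su^{-1}$ is $\K$-linear, so it sends a linear combination of series to the corresponding combination of their quotients.

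For the implication from recognizability, suppose $(\lambda,\mu,\gamma)$ is a linear representation of $S$ of dimension $r$. I would introduce, for $j\in[\![1,r]\!]$, the series $R_j=\sum_{w\in A^*}(\lambda\mu(w))_j\,w$ built from the entries of the row vector $\lambda\mu(w)$. Since $(S,w)=\lambda\mu(w)\gamma=\sum_j(\lambda\mu(w))_j\gamma_j$, the series $S$ is a combination of $R_1,\ldots,R_r$ and so lies in the submodule $M$ they generate. The point is that $M$ is stable: using that $\mu$ is a morphism, $(R_ja^{-1},w)=(\lambda\mu(w)\mu(a))_j=\sum_k(\lambda\mu(w))_k\,\mu(a)_{kj}$, so that $R_ja^{-1}=\sum_k\mu(a)_{kj}\,R_k\in M$; by linearity and the reduction to letters, $M$ is closed under all right quotients. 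Thus $M$ is a stable finitely generated submodule containing $S$.

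For the converse, let $M$ be stable and generated by $G_1,\ldots,G_r$, and write $S=\sum_i\lambda_iG_i$. Stability provides, for each letter $a$, a matrix $m(a)\in\K^{r\times r}$ with $G_ia^{-1}=\sum_j m(a)_{ij}G_j$ (any such choice will do). By induction on $|w|$, using $S(uv)^{-1}=(Sv^{-1})u^{-1}$, I would prove $G_iw^{-1}=\sum_j\mu(w)_{ij}G_j$, where $\mu(w)=m(a_n)\cdots m(a_1)$ for $w=a_1\cdots a_n$. Setting $\gamma_i=(G_i,\varepsilon)$ and exploiting $(S,w)=(Sw^{-1},\varepsilon)$ then yields $(S,w)=\lambda\mu(w)\gamma$.

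This last formula is almost a linear representation of $S$, and reconciling it with the stated definition is the step I expect to be the real obstacle: because suffixes compose in reverse order, $w\mapsto\mu(w)$ is an anti-morphism ($\mu(uv)=\mu(v)\mu(u)$) rather than a monoid morphism. I would resolve this through the reversal. Letting $\rho$ be the genuine morphism determined by $\rho(a)=m(a)$, one checks $\mu(\tilde w)=\rho(w)$, so that $(\tilde S,w)=(S,\tilde w)=\lambda\rho(w)\gamma$ exhibits $\tilde S$ as $\K$-recognizable in the sense of the definition; since the family of $\K$-recognizable series is closed under reversal, $S=\tilde{\tilde S}$ is $\K$-recognizable, which completes the argument. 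When $\K$ is commutative one may instead transpose, taking $(\gamma^{T},\,w\mapsto\mu(w)^{T},\,\lambda^{T})$. The reversal route is preferable because it needs no hypothesis on $\K$: the reversal is a $\K$-linear automorphism of $\KA$ that swaps left and right quotients, so the whole statement for $S$ turns into the left-quotient formulation for $\tilde S$, in which the module action and the transition maps align with no commutativity assumption and which likewise absorbs the minor left-versus-right bookkeeping between the two conventions.
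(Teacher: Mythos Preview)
Your approach is exactly the standard constructive argument the paper cites from Berstel--Reutenauer, adapted from left quotients to right quotients via reversal; the converse direction is carried out correctly, including the anti-morphism issue and its resolution by reversal.

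There is, however, a genuine slip in the forward direction when $\K$ is not commutative. With the left $\K$-module structure $kS=\sum_w k(S,w)\,w$ used in the paper, your row series $R_j=\sum_w(\lambda\mu(w))_j\,w$ satisfy $(S,w)=\sum_j(R_j,w)\,\gamma_j$ and $(R_ja^{-1},w)=\sum_k(R_k,w)\,\mu(a)_{kj}$: these are \emph{right} combinations, not left ones, so neither $S\in\langle R_1,\ldots,R_r\rangle_\K$ nor $R_ja^{-1}\in\langle R_1,\ldots,R_r\rangle_\K$ follows as stated. The fix is the same one you already invoke for the converse: work instead with the column series $C_j=\sum_w(\mu(w)\gamma)_j\,w$ and left quotients, where one does get the honest left combinations $S=\sum_j\lambda_jC_j$ and $a^{-1}C_j=\sum_k\mu(a)_{jk}C_k$, and then transport everything back to right quotients by the reversal $S\mapsto\tilde S$. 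This is precisely how the paper justifies using right quotients while citing a left-quotient source, so once you make this adjustment your proof coincides with the one the paper refers to.
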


\begin{runningexample}
Consider the series $S\colon\bA^*\to\N,\ \couple{u}{v}\mapsto\max|\Suff(u)\cap \Suff(v)|$. Some coefficients of $S$ are given in Figure~\ref{Fig : CoeffS}.
\begin{figure}[htb]
\[
	\begin{array}{c|ccccc}
	\bw & \couple{\#ab}{aab} & \couple{aaaab}{\#aaab}   & \couple{aab}{bab}  & \couple{aa}{ab} & \couple{a\#a}{aba} \\[3pt]
	\hline
	(S,\bw) & 2  & 4    & 2  & 0 & 1\\
	\end{array}
\]
\caption{Some coefficients of the series $S\colon\bA^*\to\N$.}
\label{Fig : CoeffS}
\end{figure}

Let $T\colon\bA^*\to\N,\ \bw\mapsto 1$. 
For all $\ba\in\bA$, $T\ba^{-1}=T$ and
\[
	S\ba^{-1}
	=\begin{cases}
	S+T & \text{if } \ba\in \left\{\couple{a}{a},\couple{b}{b}\right\} \\
	0 & \text{otherwise.}
	\end{cases}
\]
Therefore, $\langle S,T\rangle_\N$ is a stable finitely generated $\N$-submodule of $\N\langle \langle A \rangle \rangle$ containing $S$. By Theorem~\ref{The : Series-Rec-Submodule}, the series $S$ is $\N$-recognizable. More precisely, following the proof of Theorem~\ref{The : Series-Rec-Submodule} given in \cite{BerstelReutenauer2011},  the $\N$-submodule $\langle S,T\rangle_\N$ gives rise to the following linear representation $(\lambda,\mu,\gamma)$ of $S$:
\begin{align}
\nonumber
&\lambda=\left(\begin{smallmatrix}
			0 & 1		
			\end{smallmatrix}\right),\
	\gamma=\left(\begin{smallmatrix}
			1 \\
			0
			\end{smallmatrix}\right),\
	\mu\couple{a}{a}=\mu\couple{b}{b}
	=\begin{pmatrix}
			1 & 0\\
			1 & 1		
	\end{pmatrix},\\
\label{Eq:LinearRep}
	&\text{and for }\ba\in \bA\setminus\left\{\couple{a}{a},\couple{b}{b}\right\},\
	\mu(\ba)=\begin{pmatrix}
			0 & 0\\
			0 & 1		
			\end{pmatrix}.
\end{align}
\end{runningexample}

Since for all $u\in A^*$, the operation $\KA\to \KA,\ S \mapsto Su^{-1}$ is linear and for all $u,v\in A^*$, $(Su^{-1})v^{-1}=S(vu)^{-1}$, the smallest stable $\K$-submodule of $\KA$ containing $S$ is $\langle \{Su^{-1} \colon u\in A^*\} \rangle_\K$. For an arbitrary semiring $\K$, the fact that a series $S$ is $\K$-recognizable does not imply that $\langle \{Su^{-1} \colon u\in A^*\} \rangle_\K$ is finitely generated. The following result provides us with some special cases where this property holds. 

\begin{theorem}
\label{The : Series-Rec-Smallest-Submodule}
Suppose that $\K$ is finite or is a commutative ring. A series $S \colon A^* \to \K$ is $\K$-recognizable if and only if the smallest stable $\K$-submodule of $\KA$ containing $S$ is finitely generated. 
\end{theorem}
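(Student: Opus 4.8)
The plan is to prove the two implications separately; only the forward one will use the hypothesis on $\K$. For the backward implication, suppose the smallest stable $\K$-submodule of $\KA$ containing $S$ is finitely generated. As recalled just above, this submodule is $\langle\{Su^{-1}:u\in A^*\}\rangle_\K$, it is stable by construction, and it contains $S=S\varepsilon^{-1}$; it is therefore a stable finitely generated $\K$-submodule of $\KA$ containing $S$, so Theorem~\ref{The : Series-Rec-Submodule} immediately gives that $S$ is $\K$-recognizable. This direction uses neither finiteness nor commutativity of $\K$.

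For the forward implication, assume $S$ is $\K$-recognizable and fix a linear representation $(\lambda,\mu,\gamma)$ of dimension $r$. I would first describe the quotients explicitly: for all $u,w\in A^*$ we have $(Su^{-1},w)=(S,wu)=\lambda\mu(w)\mu(u)\gamma$, so that $Su^{-1}$ admits the linear representation $(\lambda,\mu,\mu(u)\gamma)$. Next, introduce the $\K$-linear map $\Phi\colon\K^{r\times 1}\to\KA$ sending a column vector $v$ to $\sum_{w\in A^*}(\lambda\mu(w)v)\,w$, so that $Su^{-1}=\Phi(\mu(u)\gamma)$. Writing $V=\langle\{\mu(u)\gamma:u\in A^*\}\rangle_\K\subseteq\K^{r\times 1}$, linearity of $\Phi$ gives $\langle\{Su^{-1}:u\in A^*\}\rangle_\K=\Phi(V)$, and the image of a finitely generated module under a linear map is again finitely generated. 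Since the left-hand side is exactly the smallest stable $\K$-submodule containing $S$, it suffices to prove that $V$ is a finitely generated submodule of the free module $\K^{r\times 1}$.

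It remains to prove finite generation of $V$, and this is where the hypothesis on $\K$ is used; I expect it to be the main obstacle. The case of a finite $\K$ is immediate, since then $\K^{r\times 1}$ is a finite set and hence so is $V$. For a commutative ring I would write $V=\mathcal{A}\gamma$, where $\mathcal{A}=\langle\{\mu(w):w\in A^*\}\rangle_\K$ is the $\K$-subalgebra of $\K^{r\times r}$ generated by the finitely many matrices $\mu(a)$, $a\in A$; as $V$ is the image of $\mathcal{A}$ under the linear map $M\mapsto M\gamma$, it is enough to show that $\mathcal{A}$ is a finitely generated $\K$-module. By the Cayley--Hamilton theorem each $\mu(a)$ is integral over $\K$, being annihilated by its monic characteristic polynomial of degree $r$. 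When $\K$ is Noetherian this already finishes the argument, because $\mathcal{A}$ is then a submodule of the finitely generated module $\K^{r\times r}$; the genuinely delicate point is a general commutative ring, where submodules of finitely generated modules may fail to be finitely generated, so one must exploit that $\K^{r\times r}$ satisfies a polynomial identity and invoke a Shirshov-type height argument to bound the length of the words $\mu(w)$ needed to span $\mathcal{A}$. A more hands-on alternative is to track the ascending chain $V_n=\langle\{\mu(u)\gamma:|u|\le n\}\rangle_\K$: one checks that $V_{n+1}=\langle\gamma\rangle_\K+\sum_{a\in A}\mu(a)V_n$, so the chain is constant from the first index at which two consecutive terms agree, reducing the whole question to the ascending chain condition, which is precisely what finiteness or Noetherianity of $\K$ supplies.
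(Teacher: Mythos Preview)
The paper does not prove this theorem: it is quoted from the textbook of Berstel and Reutenauer and stated without argument, so there is no in-paper proof to compare against. I will therefore simply assess your proposal on its own merits.

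Your backward implication is fine, and so is the forward implication when $\K$ is finite or, more generally, Noetherian: the reduction to showing that $V=\langle\{\mu(u)\gamma:u\in A^*\}\rangle_\K$ (equivalently, the subalgebra $\mathcal{A}\subseteq\K^{r\times r}$) is a finitely generated $\K$-module is correct, and the ``submodule of $\K^{r\times r}$'' observation settles the Noetherian case.

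The gap is the general commutative, non-Noetherian case, which is exactly the content of the hypothesis as stated. You correctly identify that submodules of $\K^{r\times r}$ need not be finitely generated, and you name the right tool (the fact that $\K^{r\times r}$ is a PI-algebra together with a Shirshov-type height bound, combined with Cayley--Hamilton to control powers), but you do not carry the argument out; saying ``one must invoke a Shirshov-type height argument'' is a pointer, not a proof. Worse, the ``more hands-on alternative'' you offer at the end undermines the proposal: the ascending-chain argument on the $V_n$ genuinely requires the ascending chain condition, and you yourself note that this is supplied by \emph{Noetherianity}, not by mere commutativity. So as written, the forward direction is proved only under a strictly stronger hypothesis than the one in the statement. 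To close the gap you should actually execute the PI/Shirshov step: use Amitsur--Levitzki (or simply Cayley--Hamilton applied to each word of bounded length) together with Shirshov's height theorem to exhibit a finite spanning set of monomials for $\mathcal{A}$ over $\K$, with explicit dependence only on $r$ and $|A|$.
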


\begin{theorem}
\label{The : FibresRegulieres}
Suppose that $\K$ is finite or is a commutative ring. If $S\colon A^*\to \K$ is a $\K$-recognizable series with finite image, then for all $k\in\K$, $S^{-1}(k)$ is a regular language.
\end{theorem}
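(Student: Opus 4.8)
The plan is to reduce everything to the case of a \emph{finite} base ring and to handle that case by a direct automaton construction on the reachable state vectors of a linear representation. Throughout, I would fix a linear representation $(\lambda,\mu,\gamma)$ of $S$ of dimension $r$, so that $(S,w)=\lambda\mu(w)\gamma$ for all $w\in A^*$, and write $E=\{(S,w):w\in A^*\}$ for the finite image of $S$. Since $S^{-1}(k)=\emptyset$ whenever $k\notin E$, and the empty language is regular, it suffices to treat the finitely many values $k\in E$.

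First I would settle the case where $\K$ is finite; here finiteness of the image is automatic and plays no role. The idea is that the set of reachable row vectors $R=\{\lambda\mu(w):w\in A^*\}\subseteq\K^{1\times r}$ is finite, being a subset of the finite set $\K^{1\times r}$. I would then build a DFA whose state set is $R$, whose initial state is $\lambda=\lambda\mu(\varepsilon)$, and whose transition function is $\delta(v,a)=v\mu(a)$; an immediate induction gives that the state reached on input $w$ is $\lambda\mu(w)$. Declaring a state $v$ final exactly when $v\gamma=k$, the automaton accepts $w$ iff $\lambda\mu(w)\gamma=(S,w)=k$, so $S^{-1}(k)$ is regular. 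Equivalently, one may invoke Theorem~\ref{The : Series-Rec-Smallest-Submodule}: over a finite $\K$ a finitely generated module is finite, so the smallest stable submodule, and hence the set of quotients of $S$, is finite.

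Next I would reduce the commutative-ring case to the finite case. Let $\K_0\subseteq\K$ be the subring generated by the finitely many entries of $\lambda$, $\gamma$ and the matrices $\mu(a)$, $a\in A$; it is a finitely generated commutative ring, every coefficient $(S,w)=\lambda\mu(w)\gamma$ lies in $\K_0$, and so does $E$. The goal is to produce a ring homomorphism $\phi\colon\K_0\to\K'$ onto a \emph{finite} commutative ring $\K'$ that is injective on the finite set $E$. Granting such a $\phi$, applying it entrywise to the representation yields a linear representation $(\phi\lambda,\phi\circ\mu,\phi\gamma)$ of the $\K'$-recognizable series $\phi(S)=\sum_{w\in A^*}\phi((S,w))\,w$; by the finite case each fibre $\phi(S)^{-1}(k')$ is regular, and since $\phi$ is injective on $E$ we have, for $k\in E$, that $(S,w)=k$ iff $\phi((S,w))=\phi(k)$, that is, $S^{-1}(k)=\phi(S)^{-1}(\phi(k))$, which is therefore regular.

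The hard part is the existence of $\phi$, which I expect to be the technical heart of the argument; it rests on the residual finiteness of finitely generated commutative rings, namely that for each nonzero $x\in\K_0$ there is an ideal of finite index avoiding $x$. To see this for a single $x$, I would choose by Zorn's lemma an ideal $\mathfrak p$ maximal among those not containing $x$; such $\mathfrak p$ is prime, so $\K_0/\mathfrak p$ is a finitely generated $\Z$-algebra that is a domain in which the image of $x$ survives, and after inverting that image one lands in a nonzero finitely generated $\Z$-algebra, which by the arithmetic Nullstellensatz (finite generation over $\Z$ forces finite residue fields at maximal ideals) admits a homomorphism onto a finite field sending $x$ to a unit, hence to a nonzero element; pulling back gives a finite quotient of $\K_0$ detecting $x$. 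Applying this to each of the finitely many nonzero differences $k-k'$ with $k\neq k'$ in $E$ and taking the product of the resulting maps produces the desired $\phi$ separating all elements of $E$. Once this number-theoretic input is in place, the automaton construction and the transfer along $\phi$ are routine.
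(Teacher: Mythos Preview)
The paper does not actually prove this theorem: it is stated in the preliminaries section with the remark that the result ``can be found in the textbook~\cite{BerstelReutenauer2011}''. Your overall strategy---handle finite $\K$ by the reachable-vector DFA, and in the commutative case pass to the finitely generated subring $\K_0$ and then to a finite quotient separating the finitely many values in $E$---is precisely the classical argument given by Berstel and Reutenauer, so in that sense your proposal matches the intended proof.

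There is, however, one genuine technical slip in your residual-finiteness step. You assert that an ideal $\mathfrak p$ maximal among those \emph{not containing $x$} is prime. This is false in general: in $R=\Z[t]/(t^2)$ with $x=t$, the ideal $(2)$ is maximal among ideals avoiding $t$, yet $R/(2)\cong(\Z/2\Z)[t]/(t^2)$ is not a domain. The correct statement is that an ideal maximal among those disjoint from the multiplicative set $\{1,x,x^2,\ldots\}$ is prime, which only helps when $x$ is not nilpotent; but the differences $k-k'$ you need to separate may well be nilpotent in $\K_0$. The standard fix is as follows: choose a prime $\mathfrak p\in\mathrm{Ass}(\K_0)$ of the form $\mathfrak p=\mathrm{Ann}(rx)$ with $rx\neq 0$ (so in particular $\mathrm{Ann}(x)\subseteq\mathfrak p$), pick a maximal ideal $\mathfrak m\supseteq\mathfrak p$ (which has finite residue field by the arithmetic Nullstellensatz), and observe that $x\notin\mathfrak m^n$ for some $n$ by Krull's intersection theorem in the Noetherian local ring $(\K_0)_{\mathfrak m}$; then $\K_0/\mathfrak m^n$ is the desired finite quotient detecting $x$. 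With this correction your argument goes through and coincides with the textbook proof.
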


The \emph{Hadamard product} of two series $S,T \in \KA$ is the series 
\[
	S \odot T
	=\sum_{w\in A^*} \big((S,w)(T,w)\big)\, w.
\] 
Note that for all $u\in A^*$, $(S \odot T)u^{-1}=
Su^{-1} \odot Tu^{-1}$. If $L$ is a language over $A$, then its \emph{characteristic series} is the formal series $\underline{L} = \sum_{w \in L} w$. In particular, $S \odot \underline{L} = \sum_{w\in L} (S,w) \, w$. Also note that for all $u\in A^*$, $\underline{L} u^{-1}=\underline{Lu^{-1}}$. 

\begin{proposition}
\label{Pro : HadamardReg}
If $S\colon A^*\to \K$ is a $\K$-recognizable series and $L\subseteq A^*$ is a regular language, then $S\odot\underline{L}$ is $\K$-recognizable.
\end{proposition}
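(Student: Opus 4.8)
The plan is to exhibit a stable, finitely generated $\K$-submodule of $\KA$ containing $S\odot\underline{L}$ and then conclude by Theorem~\ref{The : Series-Rec-Submodule}. First, since $S$ is $\K$-recognizable, Theorem~\ref{The : Series-Rec-Submodule} furnishes a stable finitely generated submodule $M\subseteq\KA$ with $S\in M$; I fix generators $S_1,\dots,S_m$ of $M$. Second, since $L$ is regular, it has only finitely many distinct right quotients $Lu^{-1}$ ($u\in A^*$); I list them as $L_1=L,L_2,\dots,L_n$ and set $T_j=\underline{L_j}$. The structural fact I would rely on is that this family is closed under the quotient operations: by the identity $\underline{L}u^{-1}=\underline{Lu^{-1}}$ recalled above, each $T_ju^{-1}$ is again the characteristic series of a quotient of $L$, hence equals some $T_l$ \emph{exactly} (not merely as a nontrivial linear combination).

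Next I would introduce the submodule $P=\langle S_i\odot T_j : 1\le i\le m,\ 1\le j\le n\rangle_\K$, which is finitely generated by construction, and check that it is stable. Using $(S\odot T)u^{-1}=Su^{-1}\odot Tu^{-1}$, I can write $(S_i\odot T_j)u^{-1}=(S_iu^{-1})\odot T_l$, where $T_l=T_ju^{-1}$. Expanding $S_iu^{-1}=\sum_k c_kS_k$ within the stable module $M$ and invoking the identity $(kS)\odot T=k(S\odot T)$ then gives $(S_i\odot T_j)u^{-1}=\sum_k c_k\,(S_k\odot T_l)\in P$, so $P$ is stable. Finally, writing $S=\sum_i a_iS_i$ yields $S\odot\underline{L}=S\odot T_1=\sum_i a_i\,(S_i\odot T_1)\in P$, and Theorem~\ref{The : Series-Rec-Submodule} lets me conclude that $S\odot\underline{L}$ is $\K$-recognizable.

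The one point demanding care — and the reason the two factors are treated asymmetrically — is that over a noncommutative semiring $\K$ the Hadamard product fails to be bilinear: the identity $S\odot(kT)=k(S\odot T)$ need not hold, since it would require $(S,w)$ and $k$ to commute. The construction sidesteps this by only ever decomposing the recognizable factor $S$, where the scalars act on the left and $(kS)\odot T=k(S\odot T)$ holds by mere associativity of $\K$, while the characteristic-series factor is kept rigid: its quotients never need to be expanded because each $T_ju^{-1}$ is literally one of the finitely many generators $T_1,\dots,T_n$. This is precisely where regularity of $L$ is used, namely through the finiteness of its set of right quotients, and I expect verifying this closure-under-quotients step to be the only delicate part of the argument.
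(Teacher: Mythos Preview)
Your proof is correct. The paper itself does not supply a proof of this proposition; it is stated as a known result taken from the textbook of Berstel and Reutenauer. That said, the very construction you propose---forming the module generated by the $S_i\odot\underline{L_j}$ where the $S_i$ generate a stable module containing $S$ and the $L_j$ run over the finitely many right quotients of $L$---is precisely the argument the paper deploys later in the proof of Theorem~\ref{The : Reg-sous-module}, where the module $N'=\langle G_i\odot(\underline{\bL}\,\bw_j^{-1})\rangle_\K$ is built in exactly this way. So your approach is not only sound but entirely in the spirit of the paper, and your observation about the asymmetry forced by possible noncommutativity of $\K$ (only the left factor may be linearly decomposed, while the characteristic-series factor stays rigid because its quotients literally coincide with one of the $T_l$) is a genuine point that the paper's later proof also relies on, though less explicitly.
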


\section{$(\bS,\K)$-Regular sequences and first properties}
\label{Sec : Regular}

From now on, we let $\K$ designate an arbitrary semiring.

\begin{definition}
A sequence $f\colon\N^d\to\K$ is called \emph{$(\bS,\K)$-regular} if the formal series 
\[
	S_f:=\sum_{\bw\in\bL} f(\val_{\bS}(\bw))\,\bw
\]
is $\K$-recognizable. 
\end{definition}

\begin{runningexample}
Consider the $2$-dimensional sequence
\[
	f \colon \N^2 \to \N,\ 
	\couple{m}{n} \mapsto 
	\max|\Suff(\rep_{\mathcal{S}}(m))\cap \Suff(\rep_{\mathcal{S}}(n))|.
\]
We have $S_f=S\odot \underline{\bL}$. Since $S$ is an $\N$-recognizable series and $\bL$ is a regular language, by Proposition~\ref{Pro : HadamardReg}, the series $S_f$ is $\N$-recognizable, and hence the sequence $f$ is $(\bS,\N)$-regular. An example of unidimensional $(\mathcal{S},\N)$-regular sequence is given by the identity function $\N\to\N,\ n\mapsto n$~\cite{Rigo2001}.
\end{runningexample}

\subsection{Left-right duality}

In order to represent vectors of integers, we chose the convention of padding the $\bS$-representations to the left. Therefore, unless we work in dimension $1$, the notion of $(\bS,\K)$-regular sequences defined above is not left-right symmetric. In fact, the choice of a left padding will influence all choices that will be made in order to characterize $(\bS,\K)$-regular sequences. In what follows, we will take care to emphasize all definitions that depend on this "left choice" and comment on the consequences.

\subsection{First properties}

We first present some closure properties of $(\bS,\K)$-regular sequences. 

The Hadamard product of two sequences $f\colon\N^d\to\K$ and $g\colon\N^d\to\K$ is the sequence $f \odot g \colon\N^d\to\K,\ \bn \mapsto f(\bn )g(\bn )$.

\begin{proposition}
Let $f,g\colon\N^d\to\K$ be two $(\bS,\K)$-regular sequences and let $k\in \K$. The sequences $f+g$, $kf$ and $f\odot g$ are $(\bS,\K)$-regular. 
\end{proposition}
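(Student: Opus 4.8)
The plan is to push each of the three operations through the correspondence $h\mapsto S_h$ and thereby reduce the statement to closure properties of $\K$-recognizable series. Directly from the definition of the associated series, and using that $S_f$ and $S_g$ are both supported on $\bL$ with coefficients read off through the same map $\val_{\bS}$, one verifies the three identities
\[
	S_{f+g}=S_f+S_g,\qquad S_{kf}=kS_f,\qquad S_{f\odot g}=S_f\odot S_g .
\]
The first two are immediate from the coefficientwise definitions of the sum and the scalar multiple of series. The third holds because, for $\bw\in\bL$, the coefficient of $\bw$ on both sides equals $f(\val_{\bS}(\bw))\,g(\val_{\bS}(\bw))$, while both series vanish off $\bL$. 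Hence it remains only to show that $\K$-recognizable series are closed under sum, scalar multiple, and Hadamard product.

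For the sum and the scalar multiple I would argue through Theorem~\ref{The : Series-Rec-Submodule}. Let $M_f$ and $M_g$ be stable finitely generated $\K$-submodules of $\KAd$ with $S_f\in M_f$ and $S_g\in M_g$. Since for every $\bu\in\bA^*$ the map $S\mapsto S\bu^{-1}$ is $\K$-linear, the submodule $M_f+M_g$ is again stable; it is finitely generated (a generating set is the union of generating sets of $M_f$ and $M_g$), and it contains $S_f+S_g$. So $S_f+S_g$ is $\K$-recognizable. For the scalar multiple no new module is needed: $M_f$ is a $\K$-submodule, hence $kS_f\in M_f$, and $kS_f$ is $\K$-recognizable by the same theorem. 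Both of these steps go through for an arbitrary semiring $\K$.

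The substantive step, and the one I expect to be the main obstacle, is the Hadamard product $S_f\odot S_g$. The natural route is to combine linear representations: from representations $(\lambda,\mu,\gamma)$ of $S_f$ and $(\lambda',\mu',\gamma')$ of $S_g$ one forms the Kronecker-product representation $(\lambda\otimes\lambda',\,\mu\otimes\mu',\,\gamma\otimes\gamma')$, where $(\mu\otimes\mu')(\bw)=\mu(\bw)\otimes\mu'(\bw)$, and checks that its coefficient at $\bw$ is $(\lambda\mu(\bw)\gamma)(\lambda'\mu'(\bw)\gamma')=(S_f\odot S_g,\bw)$. The delicate point is that $\mu\otimes\mu'$ be a monoid morphism, i.e.\ the mixed-product identity $(A\otimes B)(C\otimes D)=(AC)\otimes(BD)$; this rearrangement rests on coefficients of the two representations commuting, which is precisely where commutativity of $\K$ enters (as in the running example, where $\K=\N$). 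This closure of $\K$-recognizable series under the Hadamard product is classical and recorded in~\cite{BerstelReutenauer2011}; combining it with the identity $S_{f\odot g}=S_f\odot S_g$ established above yields that $f\odot g$ is $(\bS,\K)$-regular.
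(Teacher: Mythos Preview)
Your approach is essentially the same as the paper's: establish the three identities $S_{f+g}=S_f+S_g$, $S_{kf}=kS_f$, $S_{f\odot g}=S_f\odot S_g$ and then invoke the closure properties of $\K$-recognizable series from~\cite[Chapter~1]{BerstelReutenauer2011}. The paper's proof is a one-liner doing exactly this, without spelling out the stable-submodule or Kronecker-product arguments that you supply.

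One remark: you are right to flag that the standard Kronecker-product proof of Hadamard closure uses commutativity of $\K$, whereas the paper states the proposition for an arbitrary semiring and simply cites \cite{BerstelReutenauer2011}. So you are in fact being more careful than the paper here; your caveat is warranted, not a gap in your argument.
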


\begin{proof}
We have $S_{f+g}=S_f+S_g$, $S_{kf}=kS_f$ and $S_{f \odot g} = S_f \odot S_g$, so the statement follows from closure properties of $\K$-recognizable series \cite[Chapter 1]{BerstelReutenauer2011}.
\end{proof}

The family of $\K$-recognizable sequences is closed under finite modifications.

\begin{proposition}
\label{Pro : FiniteModif}
Let $f\colon\N^d\to\K$ be a $\K$-recognizable sequence and let $g\colon\N^d\to\K$ be such that $g(\bn)=f(\bn)$ for all $\bn\in\N^d$ except a finite number of them. Then the sequence $g$ is $\K$-recognizable.
\end{proposition}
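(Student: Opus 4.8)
The plan is to reduce the claim about finite modifications to the already-established closure properties and the recognizability of singletons. The key observation is that the difference $g-f$ is supported on a finite set, so I would first prove that any series with finite support is $\K$-recognizable, and then write $g$ as a sum (or appropriate combination) of $f$ with such a finitely supported series.

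First I would isolate the finite set $F=\{\bn\in\N^d \colon g(\bn)\neq f(\bn)\}$, which is finite by hypothesis. For each $\bn\in F$, the corresponding word $\rep_{\bS}(\bn)$ is a single word $\bw_{\bn}\in\bL$, and the characteristic series $\underline{\{\bw_{\bn}\}}$ of a single word is $\K$-recognizable (a one-word language is regular, so this follows from Proposition~\ref{Pro : HadamardReg} applied to any recognizable series, or simply directly since a finite-dimensional linear representation can be built by reading the letters of $\bw_{\bn}$). Scaling by the coefficient $g(\bn)\in\K$ and summing over the finitely many $\bn\in F$ produces a $\K$-recognizable series, using closure under scalar multiplication and finite sums.

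The main subtlety is the bookkeeping for an arbitrary semiring $\K$, where we cannot subtract. Over a ring I would simply write $S_g = S_f + \sum_{\bn\in F}\bigl(g(\bn)-f(\bn)\bigr)\,\bw_{\bn}$ and conclude. For a general semiring, subtraction is unavailable, so instead I would write
\[
	S_g = S_f \odot \underline{\bL\setminus\{\bw_{\bn}\colon\bn\in F\}} + \sum_{\bn\in F} g(\bn)\,\bw_{\bn}.
\]
Here $\bL\setminus\{\bw_{\bn}\colon\bn\in F\}$ is regular (a regular language minus a finite set), so the first term is $\K$-recognizable by Proposition~\ref{Pro : HadamardReg}, and it agrees with $S_g$ on all words of $\bL$ outside $F$ while vanishing on the words $\bw_{\bn}$; the second term supplies the correct coefficients on exactly those finitely many words. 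Since $f$ is $\K$-recognizable (so $S_f$ is a $\K$-recognizable series) and the family of $\K$-recognizable series is closed under Hadamard product, scalar multiplication and finite sums, the series $S_g$ is $\K$-recognizable, which is exactly the statement that $g$ is $\K$-recognizable.

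The step I expect to require the most care is verifying the recognizability of the single-word series $\bw_{\bn}$ and arranging the decomposition so that it holds over an arbitrary semiring rather than only a ring; the Hadamard-with-restricted-characteristic-series trick is what lets us avoid subtraction. Everything else is a direct appeal to the closure properties recorded above.
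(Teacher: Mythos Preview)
Your proposal is correct and follows essentially the same approach as the paper: the paper writes $S_g=S_f\odot \underline{\bA^*\setminus F}+\sum_{\bw\in F}g(\val_{\bS}(\bw))\,\bw$ with $F=\rep_{\bS}(\{\bn:g(\bn)\ne f(\bn)\})$, which is exactly your Hadamard-with-restricted-characteristic-series decomposition (your $\bL\setminus\{\bw_{\bn}\}$ and the paper's $\bA^*\setminus F$ yield the same Hadamard product since $S_f$ is supported on $\bL$). Your extra discussion of the ring-versus-semiring subtlety is a nice clarification but not a different route.
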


\begin{proof}
Let $F=\rep_{\bS}(\{\bn\in\N^d\colon g(\bn)\ne f(\bn)\})$. By assumption, $F$ is a finite language. Therefore, $S_g=S_f\odot \underline{\bA^*\setminus F}+\sum_{\bw\in F}g(\val_{\bS}(\bw))\bw$ is $\K$-recognizable by Proposition~\ref{Pro : HadamardReg} and since polynomials (i.e., series with finite support) are always $\K$-recognizable.
\end{proof}

Next we study the growth rate of $(\bS,\K)$-regular sequences.

\begin{proposition}
\label{Pro : GrowthRate}
Assume that $\K$ is equipped with an absolute value $|\cdot|_\K\colon \K\to\mathbb{R}_{\ge 0}$ and that there are a function $g\colon \N^d \to \N$ and $N\in\N$ such that for all $\bn\in\N^d$ satisfying  $\min\{n_1,\ldots,n_d\}\ge N$, we have $|\rep_{\bS}(\bn)|\le g(\bn)$. Then for any $(\bS,\K)$-regular sequence $f\colon\N^d\to\K$, there exists $c\in\mathbb{R}_{>0}$ such that $|f(\bn)|_\K\in O(c^{g(\bn)})$. 
\end{proposition}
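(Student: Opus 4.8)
The plan is to exploit the linear-representation description of $\K$-recognizable series together with the submultiplicativity of the absolute value. Since $f$ is $(\bS,\K)$-regular, the series $S_f$ is $\K$-recognizable, so it admits a linear representation $(\lambda,\mu,\gamma)$ of some dimension $r$, meaning that $(S_f,\bw)=\lambda\,\mu(\bw)\,\gamma$ for every $\bw\in\bA^*$. For each $\bn\in\N^d$, the word $\rep_{\bS}(\bn)$ lies in $\bL$ and satisfies $\val_{\bS}(\rep_{\bS}(\bn))=\bn$, so that
\[
	f(\bn)=(S_f,\rep_{\bS}(\bn))=\lambda\,\mu(\rep_{\bS}(\bn))\,\gamma.
\]
It therefore suffices to bound the right-hand side in terms of the length $|\rep_{\bS}(\bn)|$.

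The key step is to transport the absolute value to matrices. First I would extend $|\cdot|_\K$ to a norm on matrices over $\K$ by setting $\|P\|=\sum_{i,j}|P_{ij}|_\K$. Using subadditivity and submultiplicativity of $|\cdot|_\K$, one checks that this norm is submultiplicative, i.e.\ $\|PQ\|\le\|P\|\,\|Q\|$ whenever $PQ$ is defined, and that $|\lambda\,P\,\gamma|_\K\le\|\lambda\|\,\|P\|\,\|\gamma\|$ for a row vector $\lambda$, a matrix $P$ and a column vector $\gamma$ (viewing the scalar $\lambda P\gamma$ as a $1\times 1$ matrix). Since $\mu$ is a morphism of monoids, $\mu(\bw)=\mu(\bw[1])\cdots\mu(\bw[|\bw|])$, and submultiplicativity gives $\|\mu(\bw)\|\le M^{|\bw|}$, where $M:=\max_{\ba\in\bA}\|\mu(\ba)\|$ is finite because $\bA$ is a finite alphabet. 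Writing $C:=\|\lambda\|\,\|\gamma\|$, we obtain for every $\bn\in\N^d$
\[
	|f(\bn)|_\K\le C\,M^{|\rep_{\bS}(\bn)|}.
\]

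Finally, I would set $c:=\max\{M,1\}\ge 1$. For every $\bn$ with $\min\{n_1,\ldots,n_d\}\ge N$, the hypothesis yields $|\rep_{\bS}(\bn)|\le g(\bn)$, whence $M^{|\rep_{\bS}(\bn)|}\le c^{|\rep_{\bS}(\bn)|}\le c^{g(\bn)}$, and therefore $|f(\bn)|_\K\le C\,c^{g(\bn)}$, which is precisely the asserted estimate $|f(\bn)|_\K\in O(c^{g(\bn)})$ on the range $\min\{n_1,\ldots,n_d\}\ge N$. I do not expect a serious obstacle here: the only points needing care are the verification that the entrywise norm $\|\cdot\|$ is submultiplicative directly from the axioms of the absolute value, and the replacement of $M$ by $c=\max\{M,1\}$, which is needed so that raising to the larger exponent $g(\bn)$ does not reverse the inequality in the degenerate case $M<1$.
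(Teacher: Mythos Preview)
Your proof is correct and follows essentially the same route as the paper: take a linear representation $(\lambda,\mu,\gamma)$ of $S_f$, pass to a submultiplicative matrix norm, and bound $|f(\bn)|_\K$ by a constant times the norm-maximum raised to $|\rep_{\bS}(\bn)|\le g(\bn)$. The paper simply invokes ``any submultiplicative matrix norm induced by $|\cdot|_\K$'' and sets $c=\max\{\|\lambda\|,\max_{\ba\in\bA}\|\mu(\ba)\|,\|\gamma\|\}$ to get $|f(\bn)|_\K\le c^{g(\bn)+2}$, whereas you pick the explicit entrywise $\ell^1$-norm and are a bit more careful in replacing $M$ by $\max\{M,1\}$ so that passing from exponent $|\rep_{\bS}(\bn)|$ to the larger exponent $g(\bn)$ is valid even when $M<1$; this is a cosmetic refinement rather than a different argument.
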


\begin{proof}
Let $f\colon\N^d\to\K$ be a $(\bS,\K)$-regular sequence and let $(\lambda,\mu,\gamma)$ be a linear representation of $S_f$. Consider any submultiplicative matrix norm  $||\cdot||_\K$ induced by the absolute value $|\cdot|_\K$ on $\K$ and let $c=\max\{ ||\lambda||_\K, \max_{\ba\in \bA} ||\mu(\ba)||_\K, ||\gamma||_\K \}$. If $\bn\in\N^d$ is such that all its components are large enough, then $|\rep_{\bS}(\bn)|\le g(\bn)$ and we get $|f(\bn)|_\K 
= |\lambda \mu(\rep_{\bS}(\bn)) \gamma|_\K
\le c^{g(\bn)+2}$.
\end{proof}

\begin{runningexample}
Let $\mathsf{v}\colon\N\to \N,\ \ell\mapsto \Card\{w\in a^*b^*\colon |w|\le \ell\}$. Then  for all $n,\ell\in\N$, $|\rep_\mathcal{S}(n)|=\ell$ if and only if $n\in[\![\mathsf{v}(\ell-1),\mathsf{v}(\ell)-1]\!]$. It is easily seen that for all $\ell\in\N$, $\mathsf{v}(\ell)=\frac{(\ell+1)(\ell+2)}{2}$. Thus, for all $n\in\N$, $|\rep_{\mathcal{S}}(n)|\le \sqrt{2n}$. Applying Proposition~\ref{Pro : GrowthRate}, we get that $f\couple{m}{n} \in O(c^{\sqrt{\max\{m,n\}}})$ for some $c\in\mathbb{R}_{>0}$.
\end{runningexample}

Let us now show that the family of regular sequences is closed under projection. For $i\in[\![1,d]\!]$, we let $\delta_i(\bS)$ denote the $(d{-}1)$-dimensional abstract numeration system $(\mathcal{S}_1,\ldots,\mathcal{S}_{i-1},\mathcal{S}_{i+1},\ldots,\mathcal{S}_d)$, we let $\delta_i(\bdiese)$ denote the $(d{-}1)$-dimensional letter all whose components are equal to $\#$ and we let $\delta_i(\bA)=\big((A_1)_\#\times \cdots\times (A_{i-1})_\#\times (A_{i+1})_\#\times\cdots\times (A_d)_\#\big)\setminus\{\delta_i(\bdiese)\}$ denote the corresponding $(d{-}1)$-dimensional alphabet. Similarly, for $\ba\in\bA$, we let $\delta_i(\ba)$ denote the letter obtained by deleting the $i$-th component of $\ba$.

\begin{proposition}
\label{Pro : Projection-Regular}
Let $f\colon\N^d\to \K$ be an $(\bS,\K)$-regular sequence, let $i\in[\![1,d]\!]$ and let $k\in\N$. Then the $(d{-}1)$-dimensional sequence 
\[
	\delta_{i,k}(f)\colon\N^{d-1}\to \K,\
	\left[\begin{smallmatrix}
	n_1\vspace{-.15cm}\\ \vdots\\ n_{i-1}\\ n_{i+1}\vspace{-.15cm}\\ \vdots\\ n_d
	\end{smallmatrix}\right]
	\mapsto f\left[\begin{smallmatrix}
	n_1\vspace{-.15cm}\\ \vdots\\ n_{i-1}\\ k\\ n_{i+1}\vspace{-.15cm}\\ \vdots\\ n_d
	\end{smallmatrix}\right]
\]
is $(\delta_i(\bS),\K)$-regular.
\end{proposition}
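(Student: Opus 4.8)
The plan is to build a linear representation of $S_{\delta_{i,k}(f)}$ out of one for $S_f$, by reproducing inside the matrix product the way $\rep_{\bS}$ inserts the fixed block $\rep_{\mathcal{S}_i}(k)$ into the $i$-th component. Write $\ell_k=|\rep_{\mathcal{S}_i}(k)|$ and $\rep_{\mathcal{S}_i}(k)=c_1\cdots c_{\ell_k}$ with $c_p\in A_i$. First I would record the precise relation between the two representations: if $\bm\in\N^{d-1}$ has $\rep_{\delta_i(\bS)}(\bm)=\bv$ of length $\ell'\ge \ell_k$, and $\bn\in\N^d$ is obtained from $\bm$ by inserting $k$ in coordinate $i$, then $\rep_{\bS}(\bn)$ is obtained from $\bv$ letter by letter: at position $p$ one inserts in the $i$-th slot the symbol $\#$ if $p\le \ell'-\ell_k$, and the symbol $c_{p-(\ell'-\ell_k)}$ otherwise. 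This uses $\ell'\ge\ell_k$, which guarantees the overall length is unchanged. Since $\ell'\ge\ell_k$ fails only for the finitely many $\bm$ all of whose components have short representations, Proposition~\ref{Pro : FiniteModif} lets me ignore these short words and concentrate on producing a $\K$-recognizable series matching $S_{\delta_{i,k}(f)}$ on all sufficiently long words of $\delta_i(\bL)$.

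The obstacle is that the inserted letter depends on the distance $\ell'-p$ to the \emph{right} end of the word, information that a left-to-right evaluation of a matrix product cannot access. I would resolve this using the closure of $\K$-recognizable series under reversal, so that the $\ell_k$ special positions become the \emph{first} $\ell_k$ positions, which a finite counter can track. Concretely, let $(\lambda,\mu,\gamma)$ be a linear representation of $S_f$ over $\bA$, let $\mathrm{rev}$ denote word reversal, and let $(\tilde\lambda,\tilde\mu,\tilde\gamma)$ be a linear representation of the reversal of $S_f$, so that $\tilde\lambda\,\tilde\mu(\mathrm{rev}(z))\,\tilde\gamma=\lambda\mu(z)\gamma$ for every word $z$. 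For $\bb\in\delta_i(\bA)$ and $c\in (A_i)_\#$, write $\iota_c(\bb)\in\bA$ for the letter obtained by inserting $c$ in the $i$-th component of $\bb$; this never equals $\bdiese$, so $\mu$ and $\tilde\mu$ are defined on it. I then define a new morphism $\mu'\colon\delta_i(\bA)^*\to\K^{(\ell_k+1)r\times(\ell_k+1)r}$ with block rows and columns indexed by a phase $t\in\{0,\ldots,\ell_k\}$: from phase $t<\ell_k$, reading $\bb$, the only nonzero block sends $t$ to $t+1$ and equals $\tilde\mu(\iota_{c_{\ell_k-t}}(\bb))$, while in phase $\ell_k$, reading $\bb$, one stays and applies $\tilde\mu(\iota_\#(\bb))$. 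Taking $\lambda'$ supported on phase $0$ (equal to $\tilde\lambda$ there) and $\gamma'$ supported on phase $\ell_k$ (equal to $\tilde\gamma$ there), every path is forced to advance exactly one phase per letter through its first $\ell_k$ letters and then remain in phase $\ell_k$, so that for $\bu=\mathrm{rev}(\bv)$ with $\bv\in\delta_i(\bL)$ of length at least $\ell_k$ one gets $\lambda'\mu'(\bu)\gamma'=\tilde\lambda\,\tilde\mu(\mathrm{rev}(\rep_{\bS}(\bn)))\,\tilde\gamma=\lambda\mu(\rep_{\bS}(\bn))\gamma=f(\bn)=\delta_{i,k}(f)(\bm)$.

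Thus $(\lambda',\mu',\gamma')$ defines a $\K$-recognizable series $S''$ whose reversal agrees with $S_{\delta_{i,k}(f)}$ on every word of $\delta_i(\bL)$ of length at least $\ell_k$. Finally I would Hadamard-multiply this reversal by the characteristic series $\underline{\delta_i(\bL)}$ — which is regular, being the numeration language of $\delta_i(\bS)$ — to discard all coefficients outside $\delta_i(\bL)$; by Proposition~\ref{Pro : HadamardReg} the result is still $\K$-recognizable and coincides with $S_{\delta_{i,k}(f)}$ except on the finitely many short words of $\delta_i(\bL)$. One last appeal to Proposition~\ref{Pro : FiniteModif} then yields that $\delta_{i,k}(f)$ is $(\delta_i(\bS),\K)$-regular. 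The only genuinely delicate point is the right-to-left dependence of the left padding; everything else is bookkeeping of the insertion map together with the routine closure properties of $\K$-recognizable series.
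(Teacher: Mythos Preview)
Your proof is correct, but the paper takes a different and rather slicker route that avoids the reversal step entirely. Instead of explicitly tracking which symbol of $\rep_{\mathcal{S}_i}(k)$ to insert, the paper first restricts $S_f$ to the fiber $X_{i,k}=\{\bn\in\N^d:n_i=k\}$ via a Hadamard product with $\underline{\rep_{\bS}(X_{i,k})}$ (which is $\K$-recognizable by Proposition~\ref{Pro : HadamardReg}), and then simply \emph{sums} over all possible $i$-th components: $\mu'(\bb)=\sum_{\ba\in\bA,\ \delta_i(\ba)=\bb}\mu(\ba)$. The point is that, after restricting to the fiber, every term in this sum vanishes except the unique one whose $i$-th component matches the correctly padded digit of $\rep_{\mathcal{S}_i}(k)$; so the automaton does not need to know in advance which letter to insert, and the left-padding dependence that forced you to reverse disappears. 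Both proofs finish the same way, by invoking finite modification for the words shorter than $\ell_k$.

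Your approach has the virtue of being explicit about the insertion mechanism, and it highlights clearly why left padding is the only delicate point. The paper's ``restrict then project by summation'' trick is shorter and arguably more conceptual, since it recasts projection as a genuine linear operation on the series rather than a finite-state reconstruction.
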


\begin{proof}
The set $X_{i,k}:=\{\bn\in\N^d\colon n_i=k\}$ is $\bS$-recognizable. By Proposition~\ref{Pro : HadamardReg}, the series $S_f\odot \underline{\rep_{\bS}(X_{i,k})}$ is $\K$-recognizable. Let $(\lambda,\mu,\gamma)$ be a linear representation, say of dimension $r$, of the latter series. Define a morphism $\mu'\colon \big(\delta_i(\bA)\big)^*\to \K^{r\times r}$ by setting, for each $\bb\in\delta_i(\bA)$, 
\[
	\mu'(\bb)=\sum_{\substack{\ba\in\bA\\ \delta_i(\ba)=\bb}}\mu(\ba).
\]
Then for all $\by\in(\delta_i(\bA))^*$ such that $|\by|\ge |\rep_{\mathcal{S}_i}(k)|$, we have $\lambda\mu'(\by)\gamma=(S_{\delta_{i,k}(f)},\by)$. Therefore, the series $S_{\delta_{i,k}(f)}$ is a finite modification of a $\K$-recognizable series, hence it is $\K$-recognizable.
\end{proof}

\section{Working hypothesis}
\label{Sec : WH}

In what follows, we sometimes impose as an extra condition on the abstract numeration system $\bS$ that the numeration language $\bL$ is \emph{prefix-closed}, that is, 
\begin{equation}
	\tag{WH}
	\forall \bu,\bv\in \bA^*,\ 
	\bu\bv \in \bL \implies \bu\in \bL.
\end{equation}
This amounts to asking that all languages $L_1,\ldots,L_d$ are prefix-closed. 

We note that this working hypothesis is satisfied by most of the usual numeration systems. For example, it is true for integer base numeration systems, the Zeckendorf numeration system, and more generally for all Bertrand numeration systems~\cite{Bertrand-Mathis1989,BruyereHansel1997}, as well as for substitutive numeration systems~\cite{DumontThomas1989}. This hypothesis is also crucial in order to generalize various properties of integer bases to abstract numeration systems. In particular, it is used in order to be able to represent real numbers~\cite{CharlierLeGonidecRigo2011} or to study the carry propagation of the successor function~\cite{BertheFrougnyRigoSakarovitch2020}. Finally, we note that from the proof of the fact that an infinite word is $\mathcal{S}$-automatic for some abstract numeration $\mathcal{S}$ if and only if it is morphic, it can be deduced that an infinite word is $\mathcal{S}$-automatic for some abstract numeration $\mathcal{S}$ if and only if it is $\mathcal{S}'$-automatic for some abstract numeration $\mathcal{S}'$ having a prefix-closed numeration language. See~\cite{RigoMaes2002}, and also \cite{CharlierKarkiRigo2010} for a multidimensional version of this result.

In the present work, the fact that $\bL$ is prefix-closed is used in order to obtain the three characterizations of $(\bS,\K)$-regular sequences in terms of the $\bS$-kernel given in Section~\ref{Sec : Kernel}. It also has repercussions on some results of Section~\ref{Sec : Automatic}. Every result that requires this working hypothesis is marked by (WH).

\section{$\bS$-kernel of a sequence}
\label{Sec : Kernel}

The following considerations generalize those of \cite[Chapter 5]{BerstelReutenauer2011}.

\begin{definition}
For $f\colon\N^d\to\K$ and $\bw\in\bA^*$, we define a sequence $f\circ\bw\colon\N^d\to\K$ by 
\[
	(f\circ\bw) (\bn) = 
	\begin{cases}
	f(\val_{\bS}	(\rep_{\bS}(\bn)\bw)) 
	&\text{if } \rep_{\bS}(\bn)\bw \in \bL\\
	0
	& \text{otherwise}
	\end{cases}
\]
for all $\bn\in \N^d$. A $\K$-submodule of $\K^{\N^d}$ is called \emph{stable} if it is closed under all operations $f \mapsto f\circ\bw$ for all $\bw\in \bA^*$. The \emph{$\bS$-kernel} of a sequence $f\colon\N^d\to\K$, denoted $\ker_{\bS}(f)$, is the set of all sequences of the form $f\circ\bw$:
\[
	\ker_{\bS}(f) =\{f\circ\bw \colon \bw\in \bA^*\}.
\]
\end{definition}

\begin{runningexample}
For all $\bw\in\bA^+\setminus(\bA^*\couple{a}{a}\cup\bA^*\couple{b}{b})$, we have $f\circ\bw=0$.
For all $u\in a^*b^*$, we have $ub\in a^*b^*$ so we get 
\begin{equation}
\label{Eq : f-circ-bb}
	f\circ \couple{b}{b}=f+1.
\end{equation}
Moreover, for all $\bn\in\N^2$, 
\[
	(f\circ \couple{a}{a})(\bn)=
	\begin{cases}
	f(\bn)+1		&\text{if } \bn\in \val_{\mathcal{S}}(a^*)\times \val_{\mathcal{S}}(a^*)\\
	0			&\text{else}.
	\end{cases}
\]
Some values of the function $f\circ \couple{ab}{ab}$ are given in Figure~\ref{Fig : circ}.
\begin{figure}[htb]
\[
	\begin{array}{c|ccccc}
	\bn & \couple{0}{1} & \couple{1}{2}   & \couple{3}{2}  &\couple{6}{3}  \\[3pt]	
	\hline
	\rep_{\bS}(\bn) & \couple{\#}{a}  & \couple{a}{b}    & \couple{aa}{\# b}  & \couple{aaa}{\# aa} \\[3pt]
	\rep_{\bS}(\bn)\couple{ab}{ab}   & \couple{\#ab}{aab}  & \couple{aab}{bab}   &  \couple{aaab}{\# bab}  & \couple{aaaab}{\# aaab} \\[3pt]
	\val_{\bS}(\rep_{\bS}(\bn)\couple{ab}{ab}) & \couple{4}{7}   & \nexists  	& \nexists  &  \couple{16}{11}  \\[3pt]
	(f\circ \couple{ab}{ab} ) (\bn) & 2 & 0  & 0& 4
	\end{array}
\]
\caption{Some values of the function $f\circ \couple{ab}{ab}$.}
\label{Fig : circ}
\end{figure}
\end{runningexample}

Let us establish a link between sequences and series, i.e., between the $\K$-module $\K^{\N^d}$ equipped with the operations $f\mapsto f\circ\bw$ for all $\bw\in \bA^*$, and the $\K$-module $\KAd$ equipped with the operations $S\mapsto S\bw^{-1}$ for all $\bw\in \bA^*$. Let 
\[
	Z = \{S\in\KAd\colon \text{ for all } \bw\in\bA^*\setminus\bL,\ (S,\bw)=0 \}
\] and let 
\[
	h\colon \K^{\N^d} \to Z,\ f \mapsto S_f.
\] 
Clearly, $Z$ is a $\K$-submodule of $\KAd$.

\begin{lemma}
\label{Lem : h-iso-modules}
The map $h$ is an isomorphism of $\K$-modules. \end{lemma}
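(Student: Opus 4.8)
The plan is to exhibit an explicit two-sided inverse for $h$ and to check that both $h$ and its inverse are $\K$-linear; this reduces everything to the single structural fact, already recorded in the preliminaries, that $\rep_{\bS}\colon\N^d\to\bL$ and $\val_{\bS}\colon\bL\to\N^d$ are reciprocal bijections. In other words, $h$ is nothing but transport of structure along the bijection $\rep_{\bS}$, with the submodule $Z$ designed precisely to be its image.

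First I would check that $h$ indeed lands in $Z$ and is $\K$-linear. By definition $S_f=\sum_{\bw\in\bL}f(\val_{\bS}(\bw))\,\bw$, so $(S_f,\bw)=0$ for every $\bw\in\bA^*\setminus\bL$; hence $S_f\in Z$. Linearity, i.e.\ $S_{f+g}=S_f+S_g$ and $S_{kf}=kS_f$, is immediate by comparing coefficients word by word, using the module operations on $\KAd$ recalled in Section~\ref{Sec : Formal series}.

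Next I would define the candidate inverse $g\colon Z\to\K^{\N^d}$ by $g(S)(\bn)=(S,\rep_{\bS}(\bn))$ for all $\bn\in\N^d$, which is clearly $\K$-linear. To see $g\circ h=\mathrm{id}$, note that for $f\in\K^{\N^d}$ and $\bn\in\N^d$ we have $g(S_f)(\bn)=(S_f,\rep_{\bS}(\bn))=f(\val_{\bS}(\rep_{\bS}(\bn)))=f(\bn)$, since $\rep_{\bS}(\bn)\in\bL$ and $\val_{\bS}\circ\rep_{\bS}=\mathrm{id}_{\N^d}$. For the other composition, given $S\in Z$ I would compute $h(g(S))=\sum_{\bw\in\bL}g(S)(\val_{\bS}(\bw))\,\bw=\sum_{\bw\in\bL}(S,\rep_{\bS}(\val_{\bS}(\bw)))\,\bw=\sum_{\bw\in\bL}(S,\bw)\,\bw$, again using $\rep_{\bS}\circ\val_{\bS}=\mathrm{id}_{\bL}$; and because $S\in Z$ forces $(S,\bw)=0$ off $\bL$, the last series is exactly $S$. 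Thus $h\circ g=\mathrm{id}_Z$, and $h$ is a bijective $\K$-linear map, hence an isomorphism of $\K$-modules.

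There is no genuine obstacle here: the content is entirely bookkeeping along the bijection $\rep_{\bS}$/$\val_{\bS}$. The only step where the definition of $Z$ is truly used is the identity $h\circ g=\mathrm{id}_Z$, where membership in $Z$ is precisely what guarantees that reconstructing a series from its values on the support $\bL$ loses no information. I would also emphasize that the argument needs only that $\rep_{\bS}$ and $\val_{\bS}$ are mutually inverse, so the prefix-closedness hypothesis (WH) plays no role in this particular lemma; it will instead become relevant afterwards, when one compares the operations $f\mapsto f\circ\bw$ and $S\mapsto S\bw^{-1}$ across this isomorphism.
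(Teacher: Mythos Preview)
Your proof is correct and follows essentially the same approach as the paper: both use the map $S\mapsto\big(\bn\mapsto(S,\rep_{\bS}(\bn))\big)$ to establish bijectivity, with the paper phrasing this as separate injectivity and surjectivity checks while you package it as an explicit two-sided inverse. Your additional remarks that $h$ lands in $Z$ and that (WH) is not needed here are accurate and helpful but not required for the argument.
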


\begin{proof}
Let us first show that $h$ is injective. Let $f_1,f_2\colon\N^d\to\K$ such that $S_{f_1}=S_{f_2}$. For all $\bn\in\N^d$, $f_1(\bn)=(S_{f_1},\rep_{\bS}(\bn))=(S_{f_2},\rep_{\bS}(\bn))=f_2(\bn)$. Therefore $f_1=f_2$. The map $h$ is surjective since for all $S\in Z$, the sequence $f\colon\N^d\to\K,\ \bn\mapsto (S,\rep_{\bS}(\bn))$ is such that $S=S_f$. Finally, for all $f_1,f_2\colon\N^d\to\K$ and $k_1,k_2 \in \K$, we have $S_{k_1f_1 + k_2 f_2}=k_1S_{f_1}+ k_2 S_{f_2}$.
\end{proof}

\begin{lemma}(WH)
The $\K$-submodule $Z$ of $\KAd$ is stable.
\end{lemma}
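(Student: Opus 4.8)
The plan is to show that $Z$ is closed under each operation $S \mapsto S\ba^{-1}$ for $\ba \in \bA$, which suffices since an arbitrary $\bw \in \bA^*$ decomposes into letters and $(S\bu^{-1})\bv^{-1} = S(\bv\bu)^{-1}$. So let $S \in Z$ and $\ba \in \bA$; I must verify that $S\ba^{-1} \in Z$, i.e. that for every $\bw \in \bA^* \setminus \bL$ we have $(S\ba^{-1}, \bw) = 0$. By the definition of the right-quotient operation, $(S\ba^{-1}, \bw) = (S, \bw\ba)$, so the task reduces to proving that $\bw\ba \notin \bL$ whenever $\bw \notin \bL$ (since $S \in Z$ kills every coefficient indexed outside $\bL$).

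The key step is precisely this implication: $\bw \notin \bL \implies \bw\ba \notin \bL$. This is the contrapositive of the working hypothesis (WH). Indeed, (WH) states that $\bu\bv \in \bL \implies \bu \in \bL$ for all $\bu,\bv \in \bA^*$; taking $\bu = \bw$ and $\bv = \ba$, this reads $\bw\ba \in \bL \implies \bw \in \bL$, whose contrapositive is exactly what I need. Hence if $\bw \notin \bL$ then $\bw\ba \notin \bL$, so $(S, \bw\ba) = 0$ because $S \in Z$, and therefore $(S\ba^{-1}, \bw) = 0$.

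I would then conclude by the iteration argument: since $Z$ is closed under $S \mapsto S\ba^{-1}$ for every single letter $\ba \in \bA$, and any word $\bw \in \bA^*$ is a concatenation $\ba_1 \cdots \ba_n$ of letters, repeated application together with the identity $S\bw^{-1} = (\cdots((S\ba_n^{-1})\ba_{n-1}^{-1})\cdots)\ba_1^{-1}$ shows that $S\bw^{-1} \in Z$ for all $\bw \in \bA^*$. This establishes that $Z$ is stable.

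There is no real obstacle here: the statement is essentially a direct translation of the prefix-closedness of $\bL$ into the language of formal series, and the only thing to be careful about is the direction of the implication in (WH) and the matching of the quotient convention $(S\ba^{-1}, \bw) = (S, \bw\ba)$ with the right-padding (``left choice'') convention adopted for the numeration language. Since the paper pads to the left and quotients on the right, these conventions are consistent, and the short computation goes through cleanly.
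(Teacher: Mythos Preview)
Your proof is correct and follows essentially the same approach as the paper. The only minor difference is that the paper handles an arbitrary $\bw \in \bA^*$ directly in one step (if $\bu \notin \bL$ then $\bu\bw \notin \bL$ by prefix-closedness, so $(S\bw^{-1},\bu)=(S,\bu\bw)=0$), whereas you first reduce to single letters and then iterate; both arguments are logically equivalent and equally valid.
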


\begin{proof}
Let $\bw\in \bA^*$ and $S\in Z$. Since $\bL$ is prefix-closed, for all $\bu\in\bA^*\setminus\bL$, $\bu\bw\notin \bL$, hence $(S\bw^{-1},\bu)=(S,\bu\bw)=0$. Therefore, $S\bw^{-1}\in Z$.
\end{proof}

\begin{lemma}(WH)
\label{Lem : S_f-circ}
For all $\bw\in\bA^*$ and $f\colon\N^d\to\K$, $S_{f\circ\bw}=S_f\bw^{-1}$.
\end{lemma}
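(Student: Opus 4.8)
The plan is to prove the equality of the two formal series by comparing their coefficients word by word: two elements of $\KAd$ are equal precisely when they assign the same coefficient to every word $\bu\in\bA^*$. So I would fix an arbitrary $\bu\in\bA^*$ and show that $(S_{f\circ\bw},\bu)=(S_f\bw^{-1},\bu)$.

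First I would unfold both sides from the definitions. By the definition of the right operation, $(S_f\bw^{-1},\bu)=(S_f,\bu\bw)$, which by the definition of $S_f$ equals $f(\val_{\bS}(\bu\bw))$ if $\bu\bw\in\bL$ and $0$ otherwise. On the left, the support of $S_{f\circ\bw}$ is contained in $\bL$, so $(S_{f\circ\bw},\bu)$ equals $(f\circ\bw)(\val_{\bS}(\bu))$ if $\bu\in\bL$ and $0$ otherwise.

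Next I would split into two cases according to whether $\bu\in\bL$. In the case $\bu\in\bL$, I would set $\bn=\val_{\bS}(\bu)$, so that $\rep_{\bS}(\bn)=\bu$ since $\rep_{\bS}$ and $\val_{\bS}$ are reciprocal bijections between $\N^d$ and $\bL$. Then the defining formula for $f\circ\bw$ gives $(f\circ\bw)(\bn)=f(\val_{\bS}(\bu\bw))$ when $\bu\bw\in\bL$ and $0$ otherwise; this is exactly the right-hand coefficient computed above, so the two agree in this case.

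The only place the working hypothesis intervenes is the case $\bu\notin\bL$, which I view as the crux of the argument rather than a genuine obstacle. Here the left-hand coefficient is $0$ by the observation on the support of $S_{f\circ\bw}$. For the right-hand side I would invoke that $\bL$ is prefix-closed: the contrapositive of (WH) yields $\bu\bw\notin\bL$ whenever $\bu\notin\bL$, whence $(S_f,\bu\bw)=0$ as well. Thus both coefficients vanish, and combining the two cases shows that $S_{f\circ\bw}$ and $S_f\bw^{-1}$ coincide on every word, which is the desired equality. The argument is a routine coefficient comparison; the subtlety is merely to treat the $\bu\notin\bL$ case separately and to recognize that prefix-closedness is exactly what guarantees the matching there.
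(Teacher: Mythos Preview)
Your proposal is correct and follows essentially the same coefficient-by-coefficient comparison as the paper's proof: both unfold $(S_{f\circ\bw},\bu)$ and $(S_f\bw^{-1},\bu)$ from the definitions and use prefix-closedness of $\bL$ to handle the case $\bu\notin\bL$. The only cosmetic difference is that the paper presents the computation as a single chain of equalities rather than an explicit case split, but the logical content is identical.
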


\begin{proof}
Let $\bw,\bu\in\bA^*$ and $f\colon\N^d\to\K$. By using that $\bL$ is prefix-closed, we obtain
\begin{align*}
	(S_{f \circ \bw},\bu)
	&=\begin{cases}
	(f\circ\bw)(\val_{\bS}(\bu)) 
	& \text{if } \bu \in \bL \\
	0 
	& \text{otherwise}
	\end{cases}\\
	&=\begin{cases}
	f(\val_{\bS}(\bu\bw)) 
	& \text{if } \bu\in\bL \text{ and } \bu\bw\in\bL\\
	0 
	& \text{otherwise}
	\end{cases}\\
	&=\begin{cases}
	f(\val_{\bS}(\bu\bw)) 
	& \text{if } \bu\bw\in\bL\\
	0
	& \text{otherwise}
	\end{cases}\\
	&=(S_f,\bu\bw)\\
	&=(S_f \bw^{-1},\bu).
\end{align*}
\end{proof}

\begin{remark}
The notion of $\bS$-kernel of a sequence is not left-right symmetric. The $\bS$-kernel defined above may be seen as the \emph{right} $\bS$-kernel. The \emph{left $\bS$-kernel} of a sequence $f\colon\N^d\to \K$ would then be the set of sequences $\{\bw\circ f\colon \bw\in\bA^*\}$ where for every $\bw\in\bA^*$, $\bw\circ f\colon\N^d\to\K$ is the sequence defined by
\[
	(\bw\circ f) (\bn) = 
	\begin{cases}
	f(\val_{\bS}	(\bw\,\rep_{\bS}(\bn))) 
	&\text{if } \bw\,\rep_{\bS}(\bn) \in \bL\\
	0
	& \text{otherwise}
	\end{cases}
\]
for all $\bn\in\N^d$. In this case, we need to adapt the conventions used so far: we pad representations of vectors of integers with $\#$'s on the right, we ask the numeration language $\bL$ to be \emph{suffix-closed} and we say that a $\K$-submodule of $\K^{\N^d}$ is \emph{left stable} if it is closed under all operations $f\mapsto \bw\circ f$ for all $\bw\in\bA^*$. Provided that these conventions are taken, all the results of this work can be adapted to the left version of the $\bS$-kernel. 

In this paper, we chose the right version of the operation $\circ$ to stick to the usual definition of the kernel in integer bases. This also justifies the choice of the operation $S\mapsto S\bw^{-1}$  in Section~\ref{Sec : Formal series}. Indeed, the left version of Lemma~\ref{Lem : S_f-circ} is to say that $S_{\bw\circ f}=\bw^{-1}S_f$. 
\end{remark}

\begin{theorem}(WH)
\label{The : Reg-sous-module}
A sequence $f\colon\N^d\to\K$ is $(\bS,\K)$-regular if and only if there exists a stable finitely generated $\K$-submodule of $\K^{\N^d}$ containing $f$. 
\end{theorem}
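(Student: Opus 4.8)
The plan is to exploit the isomorphism $h\colon\K^{\N^d}\to Z$ of Lemma~\ref{Lem : h-iso-modules}, which by Lemma~\ref{Lem : S_f-circ} satisfies $h(f\circ\bw)=h(f)\bw^{-1}$ for all $\bw\in\bA^*$, so that $h$ carries the operations $f\mapsto f\circ\bw$ on $\K^{\N^d}$ to the operations $S\mapsto S\bw^{-1}$ on $Z$. Consequently, $h$ sets up a correspondence between the stable finitely generated $\K$-submodules of $\K^{\N^d}$ containing $f$ and the stable finitely generated $\K$-submodules of $Z$ containing $S_f$: finite generation is preserved by any module isomorphism, and stability is preserved precisely because $h$ intertwines the two families of operations. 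Since $S_f=h(f)$, the statement reduces, through Theorem~\ref{The : Series-Rec-Submodule}, to the following equivalence: $S_f$ lies in some stable finitely generated $\K$-submodule of $\KAd$ if and only if it lies in such a submodule that is moreover contained in $Z$.

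One implication of this reduced equivalence is immediate. A stable finitely generated $\K$-submodule of $Z$ is, a fortiori, a stable finitely generated $\K$-submodule of $\KAd$. Hence, starting from a witness $N\ni f$ in $\K^{\N^d}$, I would transport it to $h(N)\ni S_f$ and invoke Theorem~\ref{The : Series-Rec-Submodule} to conclude that $S_f$ is $\K$-recognizable, i.e., that every sequence possessing a stable finitely generated submodule is $(\bS,\K)$-regular.

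The delicate direction is the converse, and here lies the main obstacle. Recognizability of $S_f$ only yields, via Theorem~\ref{The : Series-Rec-Submodule}, a stable finitely generated submodule $M$ of $\KAd$ containing $S_f$; in general $M\not\subseteq Z$, so it cannot be pulled back through $h$, which is defined only on $Z$. Nor can one simply pass to the smallest stable submodule $\langle\{S_f\bw^{-1}\colon\bw\in\bA^*\}\rangle_\K$, which does sit inside $Z$ (as $Z$ is stable) but need not be finitely generated over an arbitrary semiring $\K$ — finite generation of this smallest submodule is exactly what forces the extra hypotheses on $\K$ in Theorem~\ref{The : Series-Rec-Smallest-Submodule}. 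What is genuinely needed is an \emph{explicit} stable finitely generated submodule already confined to $Z$.

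To build it, I would start from a linear representation $(\lambda,\mu,\gamma)$ of $S_f$ of dimension $r$ and set $T_i=\sum_{\bw\in\bA^*}(\lambda\mu(\bw))_i\,\bw$; the standard computation then gives $T_i\bw^{-1}=\sum_{l=1}^r\mu(\bw)_{li}\,T_l$ and $S_f=\sum_{i=1}^r\gamma_i T_i$. To force the supports into $\bL$, I would take the Hadamard products $T_i\odot\underline{\bL_k}$, where $\bL_1,\ldots,\bL_s$ enumerate the right quotients $\bL\bw^{-1}$ of $\bL$: these are finitely many because $\bL$ is regular, and each satisfies $\bL_k\subseteq\bL$ because the working hypothesis makes $\bL$ prefix-closed. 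I claim that $P=\langle T_i\odot\underline{\bL_k}\colon 1\le i\le r,\ 1\le k\le s\rangle_\K$ is the desired submodule. It is finitely generated by construction; it is contained in $Z$, since each generator is supported on $\bL_k\subseteq\bL$; and it contains $S_f=S_f\odot\underline{\bL}=\sum_i\gamma_i(T_i\odot\underline{\bL})$. Stability follows from the identities $(S\odot T)\bw^{-1}=S\bw^{-1}\odot T\bw^{-1}$ and $\underline{L}\bw^{-1}=\underline{L\bw^{-1}}$ recorded in Section~\ref{Sec : Formal series}: expanding $(T_i\odot\underline{\bL_k})\bw^{-1}=\sum_l\mu(\bw)_{li}\,(T_l\odot\underline{\bL_k\bw^{-1}})$, and observing that $\bL_k\bw^{-1}$ is again a right quotient of $\bL$, hence equals some $\bL_{k'}$, shows this sum lies in $P$. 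Pulling $P$ back through $h^{-1}$ then delivers the required stable finitely generated submodule of $\K^{\N^d}$ containing $f$. The crux of the whole argument is thus the passage from an arbitrary representing submodule in $\KAd$ to one confined to $Z$, which the finiteness of the right quotients of the regular language $\bL$, combined with prefix-closedness, makes possible.
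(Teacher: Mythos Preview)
Your proof is correct and follows essentially the same approach as the paper's: both directions use the isomorphism $h$ and Lemma~\ref{Lem : S_f-circ} to transport between $\K^{\N^d}$ and $Z$, and the converse is handled by taking Hadamard products of a generating set with the finitely many right quotients $\underline{\bL\bw^{-1}}$ of $\underline{\bL}$, using prefix-closedness to ensure the resulting submodule lies in $Z$. The only cosmetic difference is that the paper invokes Theorem~\ref{The : Series-Rec-Submodule} abstractly to obtain generators $G_1,\ldots,G_n$ of a stable finitely generated submodule containing $S_f$, whereas you unfold that theorem and work with the explicit series $T_i$ coming from a linear representation; the Hadamard-product construction is then identical.
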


\begin{proof}
Suppose that such a $\K$-submodule $M$ exists. We have to show that $S_f$ is $\K$-recognizable. To this aim, we prove that $h(M)$ is a stable finitely generated $\K$-submodule of $\KAd$ containing $S_f$, and we conclude by Theorem~\ref{The : Series-Rec-Submodule}. First, $h(M)$ is a finitely generated $\K$-submodule of $Z$ by Lemma~\ref{Lem : h-iso-modules}, and hence also of $\KAd$. Next, $S_f\in h(M)$ since $f\in M$. Finally, let us show that $h(M)$ is stable. Let $g\in M$ and $\bw\in \bA^*$. By Lemma~\ref{Lem : S_f-circ}, $S_g\bw^{-1} = S_{g\circ\bw}$. Since $M$ is stable, $g\circ\bw\in M$, yielding $S_g\bw^{-1}\in h(M)$.

Conversely, let $f\colon\N^d\to\K$ be a $(\bS,\K)$-regular sequence. By definition, the series $S_f$ is $\K$-recognizable. By Theorem~\ref{The : Series-Rec-Submodule}, there exists a stable finitely generated $\K$-submodule $N$ of $\KAd$ containing $S_f$. Let $G_1,\ldots,G_n \in \KAd$ be such that $N=\langle G_1,\ldots,G_n\rangle_\K$. Since $\bL$ is a regular language, there exist words $w_1,\ldots,w_\ell\in \bA^*$ such that 
\[
	\{ \bL \bw^{-1} \colon \bw\in \bA^* \} 
	= \{\bL 	\bw_1^{-1},\ldots,\bL \bw_\ell^{-1}\}.
\]
Without loss of generality, we may assume that $\bL \bw_1^{-1}=\bL$. Define
\[
	N' = \big\langle 
	\{G_i\odot(\underline{\bL} \bw_j^{-1}) 
	\colon i\in[\![1,n]\!],\ j\in[\![1,\ell]\!]\} 		\big\rangle_\K.
\]
Clearly, $N'$ is a finitely generated $\K$-submodule of $\KAd$. Let us prove that $N'$ contains $S_f$, is stable and is a subset of $Z$. 

First, since $S_f\in N$, there exist $\alpha_1, \ldots, \alpha_n\in \K$ such that $S_f=\sum_{i=1}^n \alpha_i G_i$. Then
\[
	S_f 
	= S_f \odot \underline{\bL}
	= \sum_{i=1}^n \alpha_i(G_i\odot\underline{\bL}) 
	= \sum_{i=1}^n \alpha_i(G_i\odot(\underline{\bL}\bw_1^{-1})) \in N'.
\]
Second, we show that $N'$ is stable. The operations $\KAd\to \KAd,\ S\mapsto S\bw^{-1}$ being linear for all $\bw\in\bA^*$ and since $(S\bu^{-1})\bv^{-1}=S(\bv\bu)^{-1}$ for all $S\in\KAd$ and $\bu,\bv\in\bA^*$, it is enough to show that $N'$ contains the series $S\ba^{-1}$ for all series $S$ that generate $N'$ and all $\ba\in\bA$. Let thus $i\in[\![1,n]\!]$, $j\in[\![1,\ell]\!]$ and $\ba\in\bA$. Since $N$ is stable, $G_i \ba^{-1} = \sum_{t=1}^n \beta_{i,\ba,t} G_t$ for some $\beta_{i,\ba,1},\ldots,\beta_{i,\ba,n}\in \K$. Moreover, $\bL(\ba\bw_j)^{-1}=\bL \bw_{k_{j,\ba}}^{-1}$ for some $k_{j,\ba}\in[\![1,\ell]\!]$. We get
\begin{align*}
	\big(G_i\odot(\underline{\bL} \bw_j^{-1})\big)\ba^{-1}
	&= G_i\ba^{-1}\odot (\underline{\bL}\bw_j^{-1})\ba^{-1}\\
	&= G_i\ba^{-1}\odot \underline{\bL}(\ba\bw_j)^{-1}\\
	&= \sum_{t=1}^n \beta_{i,\ba,t} (G_t \odot \underline{\bL}\bw_{k_{j,\ba}}^{-1})
\end{align*}
which shows that indeed $\big(G_i\odot(\underline{\bL} \bw_j^{-1})\big)\ba^{-1}$ belongs to $N'$. Third, in order to get $N' \subseteq Z$, it suffices to see that for all $j\in[\![1,\ell]\!]$ and $\bw\in \bA^*\setminus \bL$, $(\underline{\bL} \bw_j^{-1},\bw)=0$, which follows from the fact that $\bL$ is prefix-closed.

By Lemmas~\ref{Lem : h-iso-modules} and~\ref{Lem : S_f-circ}, $M=h^{-1}(N^\prime)$ is a stable finitely generated $\K$-submodule of $\K^{\N^d}$ containing $f$, as desired.
\end{proof}

\begin{remark}
The proof of Theorem~\ref{The : Series-Rec-Submodule} is constructive in the sense that any linear representation of a series $S\in\KA$ gives rise to a set of generators of a finitely generated stable $\K$-submodule of $\KA$ containing $S$, and conversely, any set of generators of such a $\K$-submodule provides a linear representation for $S$.

Provided that DFAs accepting the numeration languages $L_1,\ldots,L_d$ are known, this implies that the proof of Theorem~\ref{The : Reg-sous-module} is constructive as well: any linear representation of $S_f$ gives rise to a set of generators of a stable finitely generated $\K$-submodule of $\K^{\N^d}$ containing $f$, and conversely, any set of generators of such a $\K$-submodule provides a linear representation for $S_f$.
\end{remark} 
 
Now, we prove two properties that are crucial for the proofs of Theorems~\ref{The : PracticalCriterion} and~\ref{The : reg-noyau-fin-eng}. First, for all $\bw\in\bA$, the operation $f\mapsto f\circ\bw$ is linear. 

\begin{lemma}
\label{Lem : Linear-Circ}
For all $\bw\in \bA^*$, $f_1,f_2\colon\N^d\to\K$ and $k_1,k_2\in \K$, $(k_1f_1+k_2f_2)\circ \bw=k_1(f_1\circ \bw)+k_2(f_2\circ \bw)$.
\end{lemma}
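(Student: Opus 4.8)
The plan is to prove the identity $(k_1f_1+k_2f_2)\circ\bw=k_1(f_1\circ\bw)+k_2(f_2\circ\bw)$ by evaluating both sides at an arbitrary point $\bn\in\N^d$ and checking they agree coefficient-wise. The key observation is that the definition of $f\circ\bw$ is governed by a single condition, namely whether $\rep_{\bS}(\bn)\bw\in\bL$, and this condition depends only on $\bn$ and $\bw$, not on the particular sequence $f$. So I would first fix $\bw\in\bA^*$, the sequences $f_1,f_2$, the scalars $k_1,k_2\in\K$, and an arbitrary $\bn\in\N^d$, and then split into the two cases dictated by this condition.

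In the case where $\rep_{\bS}(\bn)\bw\in\bL$, every sequence $g$ satisfies $(g\circ\bw)(\bn)=g(\val_{\bS}(\rep_{\bS}(\bn)\bw))$. Writing $\bm:=\val_{\bS}(\rep_{\bS}(\bn)\bw)$, the left-hand side becomes $(k_1f_1+k_2f_2)(\bm)=k_1f_1(\bm)+k_2f_2(\bm)$ by the definition of the $\K$-module structure on $\K^{\N^d}$ (pointwise operations), while the right-hand side is $k_1(f_1\circ\bw)(\bn)+k_2(f_2\circ\bw)(\bn)=k_1f_1(\bm)+k_2f_2(\bm)$; the two coincide. In the complementary case where $\rep_{\bS}(\bn)\bw\notin\bL$, every sequence $g$ gives $(g\circ\bw)(\bn)=0$, so the left-hand side evaluates to $0$, and the right-hand side is $k_1\cdot 0+k_2\cdot 0=0$ as well. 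Since the two sides agree at every $\bn$, the sequences are equal.

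There is no real obstacle here: the statement is a routine verification that the ``restriction-and-shift'' operation $f\mapsto f\circ\bw$ respects the pointwise $\K$-module structure. The only point requiring the slightest care is that the membership test $\rep_{\bS}(\bn)\bw\in\bL$ is independent of $f$, so the same case applies simultaneously to $f_1$, $f_2$, and $k_1f_1+k_2f_2$; this is precisely what makes the linearity go through. I expect the proof to be a two-case computation of at most a few lines, and I would not even need the working hypothesis (WH) for this lemma, since it follows directly from the definitions.
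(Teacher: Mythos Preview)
Your proof is correct and is exactly the ``straightforward verification'' that the paper alludes to without spelling out. You have correctly identified that the case split depends only on $\bn$ and $\bw$, and your observation that (WH) is not needed here is also accurate.
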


\begin{proof}
This is a straightforward verification.
\end{proof}

The second property of $\circ$ that we establish cannot be obtained from the notion of $\bS$-kernel used in~\cite{RigoMaes2002}.

\begin{lemma}(WH)
\label{Lem : composition-circ}
For all $\bu,\bv\in \bA^*$ and $f\colon\N^d\to\K$, $(f\circ \bv)\circ \bu = f \circ \bu\bv$.
\end{lemma}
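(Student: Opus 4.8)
The plan is to prove the identity pointwise: I would fix an arbitrary $\bn\in\N^d$ and unfold both $\big((f\circ\bv)\circ\bu\big)(\bn)$ and $(f\circ\bu\bv)(\bn)$ directly from the definition of $\circ$, then reconcile them by a case analysis on whether certain concatenations belong to $\bL$. The one structural fact I would lean on repeatedly is that $\rep_{\bS}$ and $\val_{\bS}$ are reciprocal bijections between $\N^d$ and $\bL$; in particular, whenever $\rep_{\bS}(\bn)\bu\in\bL$, the vector $\bm:=\val_{\bS}(\rep_{\bS}(\bn)\bu)$ satisfies $\rep_{\bS}(\bm)=\rep_{\bS}(\bn)\bu$, so that $\rep_{\bS}(\bm)\bv=\rep_{\bS}(\bn)\bu\bv$. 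This is exactly what lets the inner and outer applications of $\circ$ telescope.

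Concretely, I expect two cases. If $\rep_{\bS}(\bn)\bu\bv\in\bL$, then by the working hypothesis that $\bL$ is prefix-closed, also $\rep_{\bS}(\bn)\bu\in\bL$; writing $\bm=\val_{\bS}(\rep_{\bS}(\bn)\bu)$ and using $\rep_{\bS}(\bm)\bv=\rep_{\bS}(\bn)\bu\bv\in\bL$, the outer branch and then the inner branch both fire, yielding $\big((f\circ\bv)\circ\bu\big)(\bn)=(f\circ\bv)(\bm)=f(\val_{\bS}(\rep_{\bS}(\bn)\bu\bv))$, which equals $(f\circ\bu\bv)(\bn)$. If instead $\rep_{\bS}(\bn)\bu\bv\notin\bL$, the right-hand side is $0$, and I would check that the left-hand side vanishes too by splitting on whether $\rep_{\bS}(\bn)\bu\in\bL$: if not, the outer branch already returns $0$; and if so, then $\rep_{\bS}(\bm)\bv=\rep_{\bS}(\bn)\bu\bv\notin\bL$ forces the inner branch to return $0$.

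The one delicate point — and the place where prefix-closedness is genuinely needed — is the first case: knowing $\rep_{\bS}(\bn)\bu\bv\in\bL$, I must be able to assert $\rep_{\bS}(\bn)\bu\in\bL$ in order to enter the outer branch of $(f\circ\bv)\circ\bu$. Without prefix-closedness this can fail, and then the left-hand side would spuriously collapse to $0$ while the right-hand side is nonzero; this is precisely why the identity is unavailable with the $\bS$-kernel of~\cite{RigoMaes2002}. Everything else is routine bookkeeping.

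Alternatively, I could sidestep the case analysis entirely by transporting the statement through the isomorphism $h$ of Lemma~\ref{Lem : h-iso-modules}. Applying Lemma~\ref{Lem : S_f-circ} twice gives $S_{(f\circ\bv)\circ\bu}=S_{f\circ\bv}\bu^{-1}=(S_f\bv^{-1})\bu^{-1}$, and the algebraic identity $(S\bu^{-1})\bv^{-1}=S(\bv\bu)^{-1}$ recorded in Section~\ref{Sec : Formal series} (with the roles $u=\bv$, $v=\bu$) rewrites this as $S_f(\bu\bv)^{-1}=S_{f\circ\bu\bv}$; injectivity of $h$ then yields $(f\circ\bv)\circ\bu=f\circ\bu\bv$. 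This route is shorter, but it merely relocates the use of the working hypothesis inside Lemma~\ref{Lem : S_f-circ}, so I would likely present the direct computation to keep the role of prefix-closedness visible.
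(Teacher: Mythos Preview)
Your proposal is correct. Both routes you outline are valid; the paper itself takes precisely your ``alternative'' route: it invokes Lemmas~\ref{Lem : h-iso-modules} and~\ref{Lem : S_f-circ} together with the identity $(S\bv^{-1})\bu^{-1}=S(\bu\bv)^{-1}$, and that is the entire proof. Your direct pointwise case analysis is equally sound and has the virtue of making explicit where prefix-closedness enters (namely, in deducing $\rep_{\bS}(\bn)\bu\in\bL$ from $\rep_{\bS}(\bn)\bu\bv\in\bL$); the paper's approach is shorter because it reuses the bridge to formal series already built, at the cost of hiding the use of (WH) inside Lemma~\ref{Lem : S_f-circ}. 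Either presentation is fine.
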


\begin{proof}
This is a consequence of Lemmas~\ref{Lem : h-iso-modules} and~\ref{Lem : S_f-circ} and the property that for all $\bu,\bv\in \bA^*$ and $S\in\KA$, $(S\bv^{-1})\bu^{-1}=S(\bu\bv)^{-1}$.
\end{proof}

\begin{runningexample}
We illustrate the previous lemma by computing $\big((f\circ \couple{b}{b})\circ \couple{a}{a}\big)(\bn)$ for $\bn\in\left\{\couple{0}{1}, \couple{1}{2}, \couple{3}{2},\couple{6}{3}\right\}$. 
Since $\rep_{\bS}\couple{1}{3}=\couple{\#a}{aa}$ and $\rep_{\bS}\couple{10}{6}=\couple{aaaa}{\#aaa}$, we get $f\couple{1}{3}=1$ and $f\couple{10}{6}=3$.
Then by using~\eqref{Eq : f-circ-bb}, we obtain the array of Figure~\ref{Fig : Composition}. 
\begin{figure}[htb]
\[
	\begin{array}{c|ccccc}
	\bn 				
	& \couple{0}{1} 		& \couple{1}{2}		& \couple{3}{2}  		&\couple{6}{3} \\[3pt]	
	\hline
	\rep_{\bS}(\bn) 
	& \couple{\#}{a}		& \couple{a}{b}		& \couple{aa}{\# b} 		& \couple{aaa}{\# aa} \\[3pt]
	\rep_{\bS}(\bn)\couple{a}{a}   
	& \couple{\#a}{aa}  & \couple{aa}{ba}	&  \couple{aaa}{\# ba}	& \couple{aaaa}{\# aaa} \\[3pt]
	\val_{\bS}(\rep_{\bS}(\bn)\couple{a}{a}) 
	& \couple{1}{3}   	& \nexists  			& \nexists  				&  \couple{10}{6} \\[3pt]
	((f\circ \couple{b}{b} ) \circ \couple{a}{a})(\bn) 
	& 2 					& 0  				& 0							& 4
	\end{array}
\]
\caption{Some values of the function $(f\circ \couple{b}{b})\circ \couple{a}{a}$.}
\label{Fig : Composition}
\end{figure}
Observe that the last row coincides with that of Figure~\ref{Fig : circ}.
\end{runningexample}

We obtain the following practical criterion for $(\bS,\K)$-regularity.

\begin{theorem}(WH)
\label{The : PracticalCriterion}
A sequence $f\colon\N^d\to \K$ is $(\bS,\K)$-regular if and only if there exist $r\in\N_{\ge 1}$ and $f_1,f_2,\ldots,f_r \colon\N^d\to\K$ such that $f=f_1$ and for all $\ba\in \bA$ and all $i\in[\![1,r]\!]$, there exist $k_{\ba,i,1},\ldots,k_{\ba,i,r}\in\K$ such that
\[
	f_i\circ \ba = \sum_{j=1}^r k_{\ba,i,j} f_j. 
\]
\end{theorem}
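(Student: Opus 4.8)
The plan is to read this statement as a concrete, finitary reformulation of the submodule criterion of Theorem~\ref{The : Reg-sous-module}. Indeed, the displayed system of equations says precisely that the finitely generated $\K$-submodule $M=\langle f_1,\ldots,f_r\rangle_\K$ of $\K^{\N^d}$ is closed under each of the single-letter operations $g\mapsto g\circ\ba$ for $\ba\in\bA$, together with the condition $f=f_1\in M$. So both implications will be obtained by passing between this explicit single-letter closure and the full stability (closure under $g\mapsto g\circ\bw$ for every $\bw\in\bA^*$) demanded by Theorem~\ref{The : Reg-sous-module}.

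For the forward implication I would assume that $f$ is $(\bS,\K)$-regular and invoke Theorem~\ref{The : Reg-sous-module} to obtain a stable finitely generated $\K$-submodule $M$ containing $f$, say $M=\langle g_1,\ldots,g_m\rangle_\K$. Setting $f_1=f$ and $f_{i+1}=g_i$ for $i\in[\![1,m]\!]$ gives a generating list of the same module $M$ whose first entry is $f$. Since $M$ is stable it is in particular closed under each $g\mapsto g\circ\ba$, so every $f_i\circ\ba$ lies in $M$ and can therefore be written as a $\K$-linear combination of $f_1,\ldots,f_{m+1}$; reading off these coefficients yields exactly the required scalars $k_{\ba,i,j}$ with $r=m+1$.

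The converse is the more substantial direction. Given $f_1,\ldots,f_r$ as in the statement, I would set $M=\langle f_1,\ldots,f_r\rangle_\K$; this is finitely generated and contains $f=f_1$, so by Theorem~\ref{The : Reg-sous-module} it only remains to prove that $M$ is stable, that is, that $f_i\circ\bw\in M$ for every $i$ and every $\bw\in\bA^*$ (closure of $M$ under $\circ\bw$ then follows from the linearity of $g\mapsto g\circ\bw$, Lemma~\ref{Lem : Linear-Circ}). I would establish this by induction on $|\bw|$. The base case $\bw=\varepsilon$ is immediate from the definition, since $f_i\circ\varepsilon=f_i\in M$. For the inductive step, write $\bw=\ba\bw'$ with $\ba\in\bA$ and $|\bw'|=|\bw|-1$, and use Lemma~\ref{Lem : composition-circ} to rewrite $f_i\circ(\ba\bw')=(f_i\circ\bw')\circ\ba$. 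By the induction hypothesis $f_i\circ\bw'\in M$, say $f_i\circ\bw'=\sum_j c_j f_j$; applying $\circ\ba$ and Lemma~\ref{Lem : Linear-Circ} gives $(f_i\circ\bw')\circ\ba=\sum_j c_j (f_j\circ\ba)$, and each $f_j\circ\ba$ lies in $M$ by the hypothesis of the theorem. Hence $f_i\circ\bw\in M$, completing the induction and hence the proof.

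The one step that deserves care, and the main obstacle, is precisely this bootstrapping from single letters to arbitrary words in the converse: it rests on the composition law $(f\circ\bv)\circ\bu=f\circ\bu\bv$ of Lemma~\ref{Lem : composition-circ}, which itself requires the working hypothesis (WH). Without this law the single-letter relations would not propagate to longer words, and indeed the remark preceding Lemma~\ref{Lem : composition-circ} signals that this very property fails for the $\bS$-kernel of \cite{RigoMaes2002}. I would also double-check the direction of the peeling (first letter versus last) so that the factorization matches the order convention in $f\circ\bu\bv=(f\circ\bv)\circ\bu$; the argument goes through equally with $\bw=\bw'\ba$ by taking $\bu=\bw'$ and $\bv=\ba$, whereupon $f_i\circ\bw=(f_i\circ\ba)\circ\bw'=\sum_j k_{\ba,i,j}(f_j\circ\bw')$ and the induction hypothesis applies to each $f_j\circ\bw'$.
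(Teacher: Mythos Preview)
Your proposal is correct and follows essentially the same approach as the paper: both directions are reduced to Theorem~\ref{The : Reg-sous-module}, and the converse is obtained by an induction on $|\bw|$ using Lemmas~\ref{Lem : Linear-Circ} and~\ref{Lem : composition-circ} to pass from single-letter closure to full stability. Your forward direction is a bit more explicit than the paper's one-line ``follows from Theorem~\ref{The : Reg-sous-module}'', but the added detail (adjoining $f$ to a generating set of a stable submodule) is exactly what that line unpacks to.
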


\begin{proof}
The necessary condition follows from Theorem~\ref{The : Reg-sous-module}. Conversely, suppose that such sequences $f_1,f_2,\ldots,f_r$ exist. By induction on the length of $\bw$ and by using Lemmas~\ref{Lem : Linear-Circ} and~\ref{Lem : composition-circ}, we obtain that for all $\bw\in\bA^*$, $f_i\circ \bw$ is  a $\K$-linear combination of $f_1,f_2,\ldots,f_r$. Therefore, $\langle f_1,\ldots, f_r\rangle_\K$ is a stable finitely generated $\K$-submodule that contains $f$. By Theorem~\ref{The : Reg-sous-module}, $f$ is $(\bS,\K)$-regular.
\end{proof}

\begin{runningexample}
We prove the $(\bS,\N)$-regularity of the sequence $f$ by using Theorem~\ref{The : PracticalCriterion}. Define the sequence 
\[
	g\colon\N^2\to\N,\ 
	\bn\mapsto
	\begin{cases}
	f(\bn) 	&\text{if } \bn\in \val_{\mathcal{S}}(a^*)\times\val_{\mathcal{S}}(a^*)\\
	0		&\text{otherwise}.
	\end{cases}
\]
We show that the ten functions
$f$, 
$g$, 
$\chi_{\{0\}\times\val_{\mathcal{S}}(a^*)}$, 
$\chi_{\{0\}\times\N}$, 
$\chi_{\val_{\mathcal{S}}(a^*)\times\{0\}}$, 
$\chi_{\val_{\mathcal{S}}(a^*)\times\val_{\mathcal{S}}(a^*)}$, 
$\chi_{\val_{\mathcal{S}}(a^*)\times\N}$,
$\chi_{\N\times\{0\}}$,
$\chi_{\N\times\val_{\mathcal{S}}(a^*)}$ 
and $1$ satisfy Theorem~\ref{The : PracticalCriterion}. Let $\ba\in \bA$.
First, we have 
\[
	f\circ \ba 
	=\begin{cases}
	g+\chi_{\val_{\mathcal{S}}(a^*)\times\val_{\mathcal{S}}(a^*)}		& \text{if } \ba = \couple{a}{a} \\
f+1 	& \text{if } \ba = \couple{b}{b} \\
0 & \text{otherwise }
	\end{cases}
\]
and 
\[
	g\circ  \ba 
	=\begin{cases}
	g+\chi_{\val_{\mathcal{S}}(a^*)\times\val_{\mathcal{S}}(a^*)} 	& \text{if } \ba = \couple{a}{a} \\
	0 	& \text{otherwise}.
\end{cases}
\]
Next, take $X_1,X_2\in \{\{0\},\val_{\mathcal{S}}(a^*),\N\}$ such that not both $X_1,X_2$ are equal to $\{0\}$. Note that $\chi_{\N\times\N}=1$. We have 
$\chi_{X_1\times X_2}\circ \ba
=\chi_{Y_1\times Y_2}$ where for each $i \in \{1,2\}$,
\[
	Y_i
	=\begin{cases}
	\{0\} 					& \text{if } a_i=\# \\
	\val_{\mathcal{S}}(a^*) 	& \text{if } a_i=a \text{ and } X_i\in \{\val_{\mathcal{S}}(a^*),\N\}\\
	\N 			& \text{if } a_i=b \text{ and } X_i=\N\\
	\emptyset 		& \text{otherwise}.
	\end{cases}
\]
Following the proof of Theorem~\ref{The : Reg-sous-module}, the ten series $S_f$, 
$S_g$, 
$S_{\chi_{\{0\}\times\val_{\mathcal{S}}(a^*)}}$, 
$S_{\chi_{\{0\}\times\N}}$, 
$S_{\chi_{\val_{\mathcal{S}}(a^*)\times\{0\}}}$, 
$S_{\chi_{\val_{\mathcal{S}}(a^*)\times\val_{\mathcal{S}}(a^*)}}$, 
$S_{\chi_{\val_{\mathcal{S}}(a^*)\times\N}}$,
$S_{\chi_{\N\times\{0\}}}$,
$S_{\chi_{\N\times\val_{\mathcal{S}}(a^*)}}$ 
and $S_1$ generate a stable $\N$-submodule of $\N\langle \langle \bA \rangle \rangle$ containing $S_f$. 
\end{runningexample}

It follows from Lemmas~\ref{Lem : Linear-Circ} and~\ref{Lem : composition-circ} that $\langle\ker_{\bS}(f)\rangle_\K$ is the smallest stable $\K$-submodule of $\K^{\N^d}$ containing $f$. For an arbitrary semiring $\K$, the fact that $f$ is an $(\bS,\K)$-regular sequence does not imply that $\langle\ker_{\bS}(f)\rangle_\K$ is finitely generated. The following theorem provides us with some cases where $\langle\ker_{\bS}(f)\rangle_\K$ is indeed finitely generated.

\begin{theorem}(WH)
\label{The : reg-noyau-fin-eng}
Let $f\colon\N^d\to\K$. If $\langle\ker_{\bS}(f)\rangle_\K$ is finitely generated then $f$ is $(\bS,\K)$-regular. If $f$ is $(\bS,\K)$-regular and if moreover $\K$ is finite or is a commutative ring, then $\langle\ker_{\bS}(f)\rangle_\K$ is finitely generated.
\end{theorem}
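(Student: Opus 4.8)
The plan is to prove the two implications separately, in both cases transporting the question between sequences and series through the isomorphism $h$ of Lemma~\ref{Lem : h-iso-modules}. The crucial background fact, recorded in the remark immediately preceding the statement, is that (under WH) the module $\langle\ker_{\bS}(f)\rangle_\K$ is \emph{exactly} the smallest stable $\K$-submodule of $\K^{\N^d}$ containing $f$. This is what makes the first direction almost a tautology and gives the second direction its target to aim at.

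For the first implication, I would argue that it is immediate. Suppose $\langle\ker_{\bS}(f)\rangle_\K$ is finitely generated. It is stable by construction, it is finitely generated by hypothesis, and it contains $f$ (indeed $f=f\circ\varepsilon\in\ker_{\bS}(f)$). Thus it is a stable, finitely generated $\K$-submodule of $\K^{\N^d}$ containing $f$, and the existence of such a submodule is precisely equivalent to $(\bS,\K)$-regularity of $f$ by Theorem~\ref{The : Reg-sous-module}. No further work is needed.

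For the converse, I would first unwind the definition: $f$ being $(\bS,\K)$-regular means that $S_f$ is $\K$-recognizable. Since $\K$ is finite or a commutative ring, Theorem~\ref{The : Series-Rec-Smallest-Submodule} applies and tells us that the smallest stable $\K$-submodule of $\KAd$ containing $S_f$ is finitely generated; by the discussion in Section~\ref{Sec : Formal series}, this submodule equals $\langle\{S_f\bw^{-1}\colon\bw\in\bA^*\}\rangle_\K$. The remaining step is to identify this series-side module with the $h$-image of $\langle\ker_{\bS}(f)\rangle_\K$. By Lemma~\ref{Lem : S_f-circ} we have $h(f\circ\bw)=S_f\bw^{-1}$ for every $\bw\in\bA^*$, so $h$ carries the generating set $\ker_{\bS}(f)$ onto $\{S_f\bw^{-1}\colon\bw\in\bA^*\}$; since $h$ is a module homomorphism, $h\big(\langle\ker_{\bS}(f)\rangle_\K\big)=\langle\{S_f\bw^{-1}\colon\bw\in\bA^*\}\rangle_\K$.

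It then only remains to pull finite generation back through $h$, and this is where I expect the only genuine (if minor) care to be needed. I would observe that every generator $S_f\bw^{-1}$ lies in $Z$, because $S_f\in Z$ and $Z$ is stable (the WH lemma stating $Z$ is stable); hence the smallest stable submodule containing $S_f$ sits inside $Z$, which is exactly the codomain of the isomorphism $h$. Therefore $\langle\ker_{\bS}(f)\rangle_\K=h^{-1}\big(\langle\{S_f\bw^{-1}\colon\bw\in\bA^*\}\rangle_\K\big)$ is the image under the isomorphism $h^{-1}$ of a finitely generated module, and is thus itself finitely generated, completing the proof. The conceptual point worth stressing is that all the WH machinery --- Lemma~\ref{Lem : S_f-circ} and the stability of $Z$ --- was set up precisely so that $h$ intertwines the operations $f\mapsto f\circ\bw$ and $S\mapsto S\bw^{-1}$; once that is in hand, the whole statement reduces to the known series-level result of Theorem~\ref{The : Series-Rec-Smallest-Submodule} together with elementary transport of finite generation across an isomorphism.
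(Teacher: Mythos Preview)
Your proposal is correct and follows essentially the same route as the paper: the first implication via Theorem~\ref{The : Reg-sous-module}, and the second by invoking Theorem~\ref{The : Series-Rec-Smallest-Submodule} on the series side and transporting the finitely generated module $\langle\{S_f\bw^{-1}\colon\bw\in\bA^*\}\rangle_\K$ back through the isomorphism $h$ using Lemma~\ref{Lem : S_f-circ}. Your version is slightly more explicit in checking that this module sits inside $Z$ (via the stability-of-$Z$ lemma) before applying $h^{-1}$, which the paper leaves implicit.
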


\begin{proof}
Since $\langle \ker_{\bS}(f) \rangle_\K$ is a stable $\K$-submodule of $\K^{\N^d}$ containing $f$, the first part of the statement follows from Theorem~\ref{The : Reg-sous-module}. Second, suppose that $f$ is $(\bS,\K)$-regular and that $\K$ is finite or is a commutative ring. Then the series $S_f$ is $\K$-recognizable and by Theorem~\ref{The : Series-Rec-Smallest-Submodule}, the $\K$-submodule $M:=\langle \{S_f\bw^{-1}\colon\bw\in\bA^*\}\rangle_\K$ of $\KAd$ is finitely generated. By Lemma~\ref{Lem : S_f-circ}, for all $\bw\in\bA^*$, we have $h(f\circ\bw)=S_{f\circ\bw}=S_f\bw^{-1}$. Therefore, $\langle \ker_{\bS}(f) \rangle_\K=h^{-1}(M)$, and the conclusion follows from Lemma~\ref{Lem : h-iso-modules}.
\end{proof}

\begin{runningexample}
For $\bw\in \bA^*\setminus \couple{a}{a}^*\couple{b}{b}^*$, we have $f \circ \bw=0$, for $k\in \N$, we have $f\circ \couple{b}{b}^k=f+k$ and for $k,k'\in \N$ with $k\ge 1$, we have
\[
	\left(f\circ \couple{a}{a}^k\couple{b}{b}^{k'}\right)(\bn)= 	\begin{cases}
	f(\bn)+k+k' 	& \text{if } \bn \in\val_{\mathcal{S}}(a^*)\times \val_{\mathcal{S}}(a^*)\\
	0 			& \text{otherwise}.
	\end{cases}
\]
Therefore, $\langle \ker_{\bS}(f) \rangle_\N$ is not finitely generated. However, $\langle \ker_{\bS}(f) \rangle_\Z$ is finitely generated by Theorem~\ref{The : reg-noyau-fin-eng}. Indeed, it is easily seen that $\langle \ker_{\bS}(f) \rangle_\Z= \langle f,f\circ\couple{a}{a},f\circ\couple{b}{b},f\circ\couple{aa}{aa}\rangle_\Z$.
\end{runningexample}

\begin{remark}
As mentioned earlier, Theorem~\ref{The : reg-noyau-fin-eng} can be reformulated in terms of the left $\bS$-kernel. The notion of left (resp.\ right) kernel then corresponds to that of left (resp.\ right) $(\mathcal{S},\K)$-regular sequence, which is obtained by using a right (resp.\ left) padding. In the unidimensional case, the families of left $(\mathcal{S},\K)$-regular sequences and of right $(\mathcal{S},\K)$-regular sequences obviously coincide since no padding is necessary. Therefore, for any sequence $f\colon\N\to\K$ where $\K$ is finite or is a commutative ring, even though the left and right $\mathcal{S}$-kernels of $f$ may be different sets, they generate $\K$-submodules of $\K^\N$ that are simultaneously finitely generated. However, there is no such nice analogy in higher dimensions since it might be that a left $(\bS,\K)$-regular sequence is not a right $(\bS,\K)$-regular sequence, or vice-versa. 
\end{remark}

\section{$\bS$-Automatic sequences}
\label{Sec : Automatic}

Automatic sequences for abstract numeration systems were originally defined in~\cite{Rigo2000} and were further studied in~\cite{CharlierKarkiRigo2010,RigoMaes2002}.

\begin{definition}
A sequence $f\colon\N^d\to\K$ is called \emph{$\bS$-automatic} if there exists a deterministic finite automaton with output (DFAO) $\mathcal{A}=(Q,q_0,\delta,\bA,\tau,\Delta)$, where the output alphabet $\Delta$ is a subset of $\K$, such that for all $\bn\in \N^d$, $f(\bn)=\tau(\delta(q_0,\rep_{\bS}(\bn)))$. We say that such a DFAO \emph{generates} the sequence $f$. 
\end{definition}

\begin{lemma}
\label{Lem : DFAO-with-0}
If $f\colon\N^d\to\K$ is an $\bS$-automatic sequence then there exists a complete DFAO $\mathcal{A}=(Q,i,\delta,\bA,\tau,\Delta)$ such that for all $\bw\in \bA^*$, 
\begin{align*}
	\tau(\delta(i,\bw))
	=\begin{cases}
	f(\val_{\mathcal{S}}(\bw)) 
	& \text{if } \bw\in\bL\\
	0 
	&\text{else}.
	\end{cases}
\end{align*}
\end{lemma}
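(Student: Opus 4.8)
The plan is to build the desired complete DFAO as a product of a completion of a DFAO generating $f$ with a complete DFA recognizing the numeration language $\bL$. First I would fix a DFAO $\mathcal{A}_0=(Q_0,q_0,\delta_0,\bA,\tau_0,\Delta)$ generating $f$, so that $f(\bn)=\tau_0(\delta_0(q_0,\rep_{\bS}(\bn)))$ for all $\bn\in\N^d$. The crucial preliminary observation is that, since $\rep_{\bS}$ and $\val_{\bS}$ are reciprocal bijections between $\N^d$ and $\bL$, every $\bw\in\bL$ equals $\rep_{\bS}(\val_{\bS}(\bw))$; hence the generating property already yields $\tau_0(\delta_0(q_0,\bw))=f(\val_{\bS}(\bw))$ for all $\bw\in\bL$. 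Thus the only tasks are to make the transition function total and to force the output to be $0$ precisely on the words outside $\bL$, while preserving the output on $\bL$.

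Next I would complete $\mathcal{A}_0$ by adjoining a sink state $s$ with $\delta_1(s,\ba)=s$ for all $\ba\in\bA$ and $\delta_1(q,\ba)=s$ whenever $\delta_0(q,\ba)$ is undefined, setting $\tau_1(s)=0$ and $\tau_1=\tau_0$ on $Q_0$; write $\mathcal{A}_1=(Q_1,q_0,\delta_1,\bA,\tau_1,\Delta\cup\{0\})$ for the resulting complete DFAO. Since $\delta_0(q_0,\bw)$ is defined for every $\bw\in\bL$, the extended transition function is defined along all of $\bw$, so no prefix of such a word triggers the sink; the computations of $\mathcal{A}_1$ and $\mathcal{A}_0$ therefore agree on $\bL$, and in particular $\tau_1(\delta_1(q_0,\bw))=f(\val_{\bS}(\bw))$ for all $\bw\in\bL$. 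I would then invoke the regularity of $\bL$ to fix a complete DFA $\mathcal{B}=(P,p_0,\eta,\bA,F)$ recognizing $\bL$.

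Finally I would form the product DFAO $\mathcal{A}=(Q,i,\delta,\bA,\tau,\Delta\cup\{0\})$ with $Q=Q_1\times P$, initial state $i=(q_0,p_0)$, componentwise transitions $\delta((q,p),\ba)=(\delta_1(q,\ba),\eta(p,\ba))$, and output
\[
\tau((q,p))=\begin{cases}\tau_1(q)&\text{if }p\in F\\ 0&\text{otherwise.}\end{cases}
\]
A routine induction on $|\bw|$ gives $\delta(i,\bw)=(\delta_1(q_0,\bw),\eta(p_0,\bw))$ for all $\bw\in\bA^*$. If $\bw\in\bL$, then $\eta(p_0,\bw)\in F$, so $\tau(\delta(i,\bw))=\tau_1(\delta_1(q_0,\bw))=f(\val_{\bS}(\bw))$; if $\bw\notin\bL$, then $\eta(p_0,\bw)\notin F$, so $\tau(\delta(i,\bw))=0$. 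As $\mathcal{A}$ is complete by construction and its output alphabet $\Delta\cup\{0\}$ is a subset of $\K$, this is exactly the claimed DFAO.

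I expect no genuine obstacle here: the argument is a standard product construction. The only point requiring care is verifying that completing $\mathcal{A}_0$ with a sink does not alter its behaviour on $\bL$, which follows from the fact that $\delta_0$ is already defined along every word of $\bL$, together with the observation that the original DFAO automatically computes $f(\val_{\bS}(\bw))$ on $\bL$ because $\rep_{\bS}$ and $\val_{\bS}$ are mutually inverse.
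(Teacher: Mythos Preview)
Your proof is correct and follows essentially the same product construction as the paper: form the product of a complete DFAO generating $f$ with a complete DFA for $\bL$, and set the output to $0$ whenever the second component is non-accepting. The only cosmetic difference is that the paper assumes from the outset a complete DFAO generating $f$, whereas you explicitly adjoin a sink state first; your added explanation of why completion does not disturb the behaviour on $\bL$ and why $\tau_0(\delta_0(q_0,\bw))=f(\val_{\bS}(\bw))$ for $\bw\in\bL$ is correct and merely more detailed than the paper's terse presentation.
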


\begin{proof}
Let $\mathcal{A}_1=(Q_1,i_1,\delta_1,\bA,\tau_1,\Delta)$ be a complete DFAO generating $f$ and let $\mathcal{A}_2=(Q_2,i_2,\delta_2,\bA,F)$ be the minimal automaton of $\bL$. The following DFAO satisfies the desired property: $\mathcal{B}=(Q_1 \times Q_2,(i_1,i_2),\delta,\bA,\tau,\Delta\cup\{0\})$ where 
\[
	\delta\colon (Q_1\times Q_2)\times \bA\to Q_1\times Q_2,\
	((q_1,q_2),\ba)\mapsto
	(\delta_1(q_1,\ba),\delta_2(q_2,\ba))
\] 
and 
\[
	\tau \colon Q_1 \times Q_2 \to \Delta\cup\{0\},\ 
	(q_1,q_2) \mapsto 
	\begin{cases}
	\tau_1(q_1) &\text{if } q_2\in F\\
	0	&\text{else}.
\end{cases}
\]
\end{proof}

In the proof of Theorem~\ref{The : automatic-finite-kernel} below, we work with reverse representations $\rep_\mathcal{S}(\bn)^R$. A sequence $f\colon\N^d\to\K$ is \emph{reversal-$\bS$-automatic} if there exists a DFAO $\mathcal{A}=(Q,q_0,\delta,\bA,\tau,\Delta)$, where $\Delta$ is a finite subset of $\K$, such that for all $\bn\in \N^d$, $f(\bn)=\tau(\delta(q_0,\rep_{\bS}(\bn)^R))$.

\begin{lemma}
\label{Lem : automatic-reversal-automatic}
A sequence $f\colon\N^d\to\K$ is $\bS$-automatic if and only if it is reversal-$\bS$-automatic.
\end{lemma}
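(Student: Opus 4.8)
The plan is to reduce both implications to a single construction that, given any complete DFAO $\mathcal{A}$, produces a DFAO $\mathcal{A}^R$ whose output on any word $w$ coincides with the output of $\mathcal{A}$ on the reversed word $w^R$. Since reversal is an involution on $\bA^*$, feeding $\mathcal{A}^R$ the word $\rep_{\bS}(\bn)^R$ recovers the output of $\mathcal{A}$ on $\rep_{\bS}(\bn)$, while feeding it $\rep_{\bS}(\bn)$ recovers the output of $\mathcal{A}$ on $\rep_{\bS}(\bn)^R$; the two directions of the equivalence then follow by applying this construction to a DFAO generating $f$ and to a DFAO witnessing that $f$ is reversal-$\bS$-automatic, respectively. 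First, using Lemma~\ref{Lem : DFAO-with-0} (or simply by adding a sink state), I would assume without loss of generality that the given DFAO $\mathcal{A}=(Q,q_0,\delta,\bA,\tau,\Delta)$ is complete, so that each letter $\ba\in\bA$ induces a total map $\delta_\ba\colon Q\to Q,\ q\mapsto\delta(q,\ba)$.

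The automaton $\mathcal{A}^R$ would have as its (finite) state set the transition monoid, i.e.\ the set $Q^Q$ of all functions $Q\to Q$, with initial state the identity map $\mathrm{id}_Q$, transition function $\delta'(g,\ba)=g\circ\delta_\ba$, output alphabet $\Delta$, and output map $\tau'(g)=\tau(g(q_0))$. A straightforward induction on $|w|$ shows that reading $w=b_1\cdots b_m$ from $\mathrm{id}_Q$ leads to the state $\delta_{b_1}\circ\cdots\circ\delta_{b_m}$, and since $\delta(q_0,w^R)=(\delta_{b_1}\circ\cdots\circ\delta_{b_m})(q_0)$, the output of $\mathcal{A}^R$ on $w$ is exactly $\tau(\delta(q_0,w^R))$, the output of $\mathcal{A}$ on $w^R$. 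Note that $\mathcal{A}^R$ is again a complete DFAO with the same (finite) output alphabet $\Delta$.

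With this in hand the equivalence is immediate. If $f$ is $\bS$-automatic via $\mathcal{A}$, then for all $\bn\in\N^d$ the automaton $\mathcal{A}^R$ reading $\rep_{\bS}(\bn)^R$ outputs $\tau(\delta(q_0,(\rep_{\bS}(\bn)^R)^R))=\tau(\delta(q_0,\rep_{\bS}(\bn)))=f(\bn)$, so $f$ is reversal-$\bS$-automatic. Conversely, if $f$ is reversal-$\bS$-automatic via some DFAO $\mathcal{B}$, then $\mathcal{B}^R$ reading $\rep_{\bS}(\bn)$ outputs the value of $\mathcal{B}$ on $\rep_{\bS}(\bn)^R$, which is $f(\bn)$, so $f$ is $\bS$-automatic.

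The construction is entirely standard, and I expect no genuine obstacle: the only conceptual point is that the naive reversal of a deterministic automaton is nondeterministic, which is precisely why one passes to the transition monoid $Q^Q$ and composes letter-maps rather than tracking a single state. The sole things to get right are the direction of composition (the new letter-map is composed on the right) and the choice of $\mathrm{id}_Q$ as initial state; both are pinned down by the requirement that the final state, evaluated at $q_0$, equal $\delta(q_0,w^R)$.
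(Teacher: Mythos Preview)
Your argument is correct: the transition-monoid DFAO $\mathcal{A}^R$ with state set $Q^Q$, initial state $\mathrm{id}_Q$, transitions $\delta'(g,\ba)=g\circ\delta_\ba$, and output $\tau'(g)=\tau(g(q_0))$ does compute $\tau(\delta(q_0,w^R))$ on input $w$, and the involution $(w^R)^R=w$ then yields both directions at once.

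The paper does not spell out a proof here; it simply points to~\cite[Proposition~9]{RigoMaes2002} and notes that the argument carries over to dimension $d$ verbatim. Your construction is precisely the standard one underlying that reference (the determinization of the reversed automaton via functions $Q\to Q$ rather than subsets of $Q$, so as to retain a well-defined output), so you are taking essentially the same route, just making explicit what the paper leaves to the citation.
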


\begin{proof}
This is a straightforward generalization of the proof of~\cite[Proposition~9]{RigoMaes2002} to multidimensional sequences.
\end{proof}

We now characterize $\bS$-automatic sequences by means of the $\bS$-kernel. 

\begin{theorem}
\label{The : automatic-finite-kernel}
Let $f\colon\N^d\to\K$. If $f$ is $\bS$-automatic then $\ker_{\bS}(f)$ is finite. Under WH, if $\ker_{\bS}(f)$ is finite then $f$ is $\bS$-automatic.
\end{theorem}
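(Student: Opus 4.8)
The statement has two directions. For the forward direction (if $f$ is $\bS$-automatic then $\ker_{\bS}(f)$ is finite), the natural plan is to use a complete DFAO computing $f$ correctly inside $\bL$ and returning $0$ outside, which is exactly what Lemma~\ref{Lem : DFAO-with-0} provides. So I would first fix such a DFAO $\mathcal{A}=(Q,i,\delta,\bA,\tau,\Delta\cup\{0\})$ with $\tau(\delta(i,\bw))=f(\val_{\bS}(\bw))$ for $\bw\in\bL$ and $0$ otherwise. The key observation is then that $f\circ\bw$ is determined by the \emph{state} $\delta(i,\rep_{\bS}(\bn))$ together with $\bw$: indeed, by the definition of $f\circ\bw$, we have $(f\circ\bw)(\bn)=\tau(\delta(i,\rep_{\bS}(\bn)\bw))=\tau(\delta(\delta(i,\rep_{\bS}(\bn)),\bw))$, where I use that $\rep_{\bS}(\bn)\bw\in\bL$ iff the combined automaton does not drop into the rejecting sink, which is handled by the $0$-output convention. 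Hence $f\circ\bw$ depends only on the function $q\mapsto\tau(\delta(q,\bw))$ restricted to reachable states, and since $Q$ is finite, there are only finitely many such functions as $\bw$ ranges over $\bA^*$. More precisely, I would argue that $f\circ\bw$ is a function of the map $Q\to\Delta\cup\{0\},\ q\mapsto\tau(\delta(q,\bw))$, and that this map takes at most $(\Card(\Delta)+1)^{\Card Q}$ values, bounding $\Card(\ker_{\bS}(f))$.

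For the converse direction (under WH), the plan is to build a DFAO from the finite kernel, mirroring the classical integer-base construction. Assuming $\ker_{\bS}(f)$ is finite, I would take its elements as the states: $Q=\ker_{\bS}(f)=\{f\circ\bw\colon\bw\in\bA^*\}$, with initial state $f=f\circ\varepsilon$. The transition function is $\delta(f\circ\bw,\ba)=(f\circ\bw)\circ\ba$, which equals $f\circ(\ba\bw)$ by Lemma~\ref{Lem : composition-circ} (this is precisely where WH is needed, and where the alternative $\bS$-kernel earns its keep). The output function reads off the value at $\bz$: set $\tau(f\circ\bw)=(f\circ\bw)(\bz)$. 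To check correctness I must verify that feeding $\rep_{\bS}(\bn)$ into this automaton yields $f(\bn)$. However, because of the composition identity $(f\circ\bv)\circ\bu=f\circ\bu\bv$, reading a word $\bw$ letter by letter from the initial state accumulates the letters in \emph{reversed} order, so the automaton naturally computes $f\circ\rep_{\bS}(\bn)^R$ evaluated at $\bz$. This is exactly why Lemma~\ref{Lem : automatic-reversal-automatic} is available: it suffices to produce a reversal-$\bS$-automatic description.

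The cleanest route for the converse is therefore to show that $f$ is reversal-$\bS$-automatic and invoke Lemma~\ref{Lem : automatic-reversal-automatic}. Concretely, I would prove by induction on $|\bw|$ that $\delta(f,\bw)=f\circ\bw^R$, the base case being $\delta(f,\varepsilon)=f$ and the inductive step $\delta(f,\bw\ba)=\delta(\delta(f,\bw),\ba)=(f\circ\bw^R)\circ\ba=f\circ(\ba\bw^R)=f\circ(\bw\ba)^R$ using Lemma~\ref{Lem : composition-circ}. Then $\tau(\delta(f,\rep_{\bS}(\bn)^R))=(f\circ\rep_{\bS}(\bn))(\bz)$. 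It remains to evaluate $(f\circ\rep_{\bS}(\bn))(\bz)$: by definition this is $f(\val_{\bS}(\rep_{\bS}(\bz)\rep_{\bS}(\bn)))$ provided $\rep_{\bS}(\bz)\rep_{\bS}(\bn)\in\bL$. Since $\rep_{\bS}(\bz)=\varepsilon$ (the representation of the all-zero vector is the empty word in each component, hence $\bdiese$-padded away), this reduces to $f(\val_{\bS}(\rep_{\bS}(\bn)))=f(\bn)$, as desired. I expect the main obstacle to be this last bookkeeping step: one must be careful about what $\rep_{\bS}(\bz)$ actually is and confirm that concatenating it on the left behaves as the identity under the $\#$-padding and $\val_{\bS}$ conventions, and that $\rep_{\bS}(\bz)\rep_{\bS}(\bn)$ indeed lies in $\bL$ so that the nonzero branch of the definition of $f\circ\bw$ applies; WH (prefix-closedness, via Lemma~\ref{Lem : composition-circ}) is what guarantees the composition identity holds throughout.
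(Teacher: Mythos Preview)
Your proposal is correct and follows essentially the same approach as the paper: the forward direction uses the DFAO from Lemma~\ref{Lem : DFAO-with-0} and bounds the kernel via the finitely many maps $q\mapsto\tau(\delta(q,\bw))$, while the converse builds a reversal-DFAO from the finite kernel (using Lemma~\ref{Lem : composition-circ} for well-definedness of the transitions) and then invokes Lemma~\ref{Lem : automatic-reversal-automatic}. The only cosmetic difference is that the paper takes as state set the pairs $(\bL\bw^{-1},f\circ\bw)$ rather than just the elements $f\circ\bw$ of $\ker_{\bS}(f)$; your simpler choice works because under WH one has $\rep_{\bS}(\bz)=\varepsilon$, so $(f\circ\bw)(\bz)$ already equals $f(\val_{\bS}(\bw))$ when $\bw\in\bL$ and $0$ otherwise, rendering the quotient-language component redundant.
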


\begin{proof}
First, suppose that $f$ is an $\bS$-automatic sequence. Let $\mathcal{A}=(Q,i,\delta,\bA,\tau,\Delta)$ be a DFAO as in Lemma~\ref{Lem : DFAO-with-0}. For all $\bw\in\bA^*$, define a DFAO $\mathcal{A}_{\bw}=(Q,i,\delta,\bA,\tau_{\bw},\Delta)$ where for all $q\in Q$, $\tau_{\bw}(q)=\tau(\delta(q,\bw))$. Then for all $\bw\in\bA^*$, $\mathcal{A}_{\bw}$ generates the sequence $f\circ\bw$. Since $\Card(\ker_{\bS}(f))\le\Card\{\tau_{\bw}\colon\bw\in\bA^*\}\le \Card(\Delta)^{\Card(Q)}$, we obtain that $\ker_{\bS}(f)$ is finite.

Conversely, assume that $\ker_{\bS}(f)$ is finite.
Define a DFAO $\mathcal{A}=(Q,i,\delta,\bA,\tau,\Delta)$ where $Q=\{(\bL\bw^{-1},f\circ\bw)\colon \bw\in\bA^*\}$, $i=(\bL,f)$ and for all $\bw\in\bA^*$, $\delta((\bL\bw^{-1}, f\circ\bw),\ba)=(\bL(\ba\bw)^{-1},f\circ \ba\bw)$ and $\tau(\bL\bw^{-1}, f\circ\bw)=f(\val_{\bS}(\bw))$ if $\bw\in\bL$ (the output function $\tau$ can take whatever values on words $\bw\notin\bL$). Let us prove that for all $\bw\in \bA^*$, $\delta(i,\bw^R)=(\bL\bw^{-1}, f\circ\bw)$. We proceed by induction on the length of $\bw$. The base case is given by $\delta(i,\boldsymbol{\varepsilon})=i=(\bL,f)$. Now, suppose that the claim holds for some $\bw\in \bA^*$. Then, by Lemma~\ref{Lem : composition-circ}, for all $\ba\in \bA$, 
$\delta(i,(\ba\bw)^R)
	= \delta(\delta(i,\bw^R),a)
	= \delta((\bL\bw^{-1},f\circ\bw),\ba)
	= (\bL(\ba\bw)^{-1},f\circ\ba\bw)$.
It follows that for all $\bn \in \N^d$, 
$\tau(\delta(i,\rep_{\bS}(\bn)^R))
= \tau(\bL\rep_{\bS}(\bn)^{-1},f\circ\rep_{\bS}(\bn))= f(\bn)$. Thus, $f$ is reversal-$\bS$-automatic, hence also $\bS$-automatic by Lemma~\ref{Lem : automatic-reversal-automatic}.
\end{proof}

\begin{remark}
Even though the statement of Theorem~\ref{The : automatic-finite-kernel} (when restricted to unidimensional sequences) coincide with that of~\cite[Proposition 7]{RigoMaes2002}, Theorem~\ref{The : automatic-finite-kernel} is indeed a new result since we are working with a different notion of $\bS$-kernel. As a consequence, we obtain that, for any given sequence $f$, both kernels are simultaneously finite. In fact, the cardinality of the $\bS$-kernel defined here is always greater than or equal to the cardinality of the kernel defined in \cite{RigoMaes2002}.
\end{remark}

We next establish the link between $\bS$-automatic sequences and $\bS$-regular sequences. 

\begin{lemma}
\label{Lem : Automatic-Fibers}
A sequence $f\colon\N^d\to\K$ is $\bS$-automatic if and only if it takes only finitely many values and for all $k\in \K$, the subsets $f^{-1}(k)$ of $\N^d$ are $\bS$-recognizable.
\end{lemma}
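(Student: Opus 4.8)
The plan is to move back and forth between the sequence $f$ and the languages of $\bS$-representations, exploiting two facts: a DFAO has a finite output alphabet $\Delta\subseteq\K$, and each fiber of $f$ corresponds, after applying $\rep_{\bS}$, to the intersection of a regular language with $\bL$. Note that no working hypothesis is needed here, since the argument only manipulates regular languages directly.

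For the direct implication, I would start from a DFAO $\mathcal{A}=(Q,q_0,\delta,\bA,\tau,\Delta)$ generating $f$. Since $\Delta$ is finite and $f(\bn)=\tau(\delta(q_0,\rep_{\bS}(\bn)))\in\tau(Q)\subseteq\Delta$ for all $\bn$, the sequence $f$ takes only finitely many values. To handle a fixed $k\in\K$, I would regard $\mathcal{A}$ as a DFA whose set of final states is $\{q\in Q\colon \tau(q)=k\}$; the language $\{\bw\in\bA^*\colon \tau(\delta(q_0,\bw))=k\}$ it accepts is then regular. Intersecting this regular language with the regular language $\bL$ yields exactly $\rep_{\bS}(f^{-1}(k))$, which is therefore regular, so each fiber $f^{-1}(k)$ is $\bS$-recognizable.

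For the converse, assume that $f$ takes finitely many values $k_1,\dots,k_m$ and that each $f^{-1}(k_j)$ is $\bS$-recognizable. I would set $L_j=\rep_{\bS}(f^{-1}(k_j))$, a regular language, and pick a complete DFA $\mathcal{A}_j=(Q_j,q_{0,j},\delta_j,\bA,F_j)$ recognizing it. Because $f$ is a function defined on all of $\N^d$, the languages $L_1,\dots,L_m$ are pairwise disjoint with union $\bL$; thus every representation $\rep_{\bS}(\bn)$ lies in exactly one $L_j$, the one with $k_j=f(\bn)$. I would then form the product DFAO $\mathcal{A}=(Q_1\times\cdots\times Q_m,(q_{0,1},\dots,q_{0,m}),\delta,\bA,\tau,\{k_1,\dots,k_m\})$ with componentwise transition $\delta$ and output $\tau(q_1,\dots,q_m)=k_j$, where $j$ is the least index with $q_j\in F_j$ (and an arbitrary default, say $k_1$, when no component is final). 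On reading $\rep_{\bS}(\bn)$ the unique final component is the $j$ with $k_j=f(\bn)$, so the output of $\mathcal{A}$ on $\rep_{\bS}(\bn)$ is $f(\bn)$, showing that $\mathcal{A}$ generates $f$.

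The only delicate point is the well-definedness and correctness of the output map $\tau$: I must verify that on a genuine representation exactly one component of the product state is final, which is precisely where the single-valuedness of $f$ (disjointness of the fibers) and its totality (the fibers cover $\N^d$) are used. The finiteness of the value set is exactly what guarantees that the product is a \emph{finite} automaton with a finite output alphabet, so that $\mathcal{A}$ is a legitimate DFAO.
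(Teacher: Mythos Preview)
Your argument is correct and is exactly the standard one; the paper itself does not spell out a proof but merely states that it is a straightforward adaptation of \cite[Theorem~8]{Rigo2000} to the multidimensional setting, and your write-up is precisely that adaptation.
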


\begin{proof}
This is a straightforward adaptation of the proof of \cite[Theorem 8]{Rigo2000} to the multidimensional setting.
\end{proof}

\begin{theorem}
\label{The : Regular-Automatic}
Let $f\colon \N^d\to \K$. 
\begin{itemize}
\item If $f$ is $\bS$-automatic then it is $(\bS,\K)$-regular. 
\item If $f$ is $(\bS,\K)$-regular and takes only finitely many values, and if moreover $\K$ is included in a commutative ring, then $f$ is $\bS$-automatic. 
\item If $f$ is $(\bS,\K)$-regular and if $\K$ is finite, then $f$ is $\bS$-automatic.
\end{itemize}
\end{theorem}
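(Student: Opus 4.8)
The plan is to prove the three implications separately, relying on the bridge results already established. The first bullet---$\bS$-automatic implies $(\bS,\K)$-regular---I would obtain almost immediately from Theorem~\ref{The : automatic-finite-kernel} and Theorem~\ref{The : reg-noyau-fin-eng}. Indeed, if $f$ is $\bS$-automatic, then $\ker_{\bS}(f)$ is a \emph{finite} set by the first (WH-free) half of Theorem~\ref{The : automatic-finite-kernel}; a fortiori $\langle \ker_{\bS}(f)\rangle_\K$ is finitely generated, and so $f$ is $(\bS,\K)$-regular by the first half of Theorem~\ref{The : reg-noyau-fin-eng}. Note that this direction needs no hypothesis on $\K$, which is consistent with the statement. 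I should double-check whether this argument secretly invokes WH (through Theorem~\ref{The : reg-noyau-fin-eng}); if so, I would instead argue directly that the series $S_f$ is $\K$-recognizable by building a linear representation from a DFAO as in Lemma~\ref{Lem : DFAO-with-0}, using the usual trick of Boolean transition matrices multiplied by the output-selecting covector, thereby avoiding WH entirely.

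For the third bullet---$(\bS,\K)$-regular with $\K$ finite implies $\bS$-automatic---the key observation is that when $\K$ is finite, an $(\bS,\K)$-regular sequence automatically takes finitely many values (since its range lies in $\K$), so this case reduces to the second bullet once we verify the commutative-ring inclusion hypothesis. The cleaner route, though, is to apply Theorem~\ref{The : FibresRegulieres}: since $\K$ is finite, $S_f$ is a $\K$-recognizable series with finite image, hence each fiber $S_f^{-1}(k)$ is a regular language for every $k\in\K$. Intersecting with $\bL$ and projecting through $\val_{\bS}$ shows that each $f^{-1}(k)$ is $\bS$-recognizable, and since $f$ takes finitely many values, Lemma~\ref{Lem : Automatic-Fibers} gives that $f$ is $\bS$-automatic.

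The second bullet is the main obstacle and the heart of the proof. Here $f$ is $(\bS,\K)$-regular, takes only finitely many values, and $\K$ embeds in a commutative ring $\K'$. The strategy is to view $S_f$ as a $\K'$-recognizable series with \emph{finite} image (the finite image is exactly the finite value set of $f$, together with the coefficient $0$ coming from words outside $\bL$). Since $\K'$ is a commutative ring, Theorem~\ref{The : FibresRegulieres} applies over $\K'$ and yields that $S_f^{-1}(k)$ is a regular language for each $k$ in the finite image. As in the third bullet, this translates into $\bS$-recognizability of each fiber $f^{-1}(k)$, and Lemma~\ref{Lem : Automatic-Fibers} then delivers $\bS$-automaticity. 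The subtlety I would watch carefully is the passage from fibers of the series $S_f$ over $\bA^*$ to fibers of the sequence $f$ over $\N^d$: one has $f^{-1}(k)=\val_{\bS}\big(S_f^{-1}(k)\cap\bL\big)$, and since $\val_{\bS}$ is a bijection between $\bL$ and $\N^d$ given by the $\bS$-representation, $f^{-1}(k)$ is $\bS$-recognizable precisely because $\rep_{\bS}(f^{-1}(k))=S_f^{-1}(k)\cap\bL$ is the intersection of two regular languages and hence regular. I would make sure the embedding $\K\hookrightarrow\K'$ genuinely preserves $\K$-recognizability of $S_f$ (it does, since a linear representation over $\K$ is in particular one over $\K'$) so that Theorem~\ref{The : FibresRegulieres} is legitimately applicable.
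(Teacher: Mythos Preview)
Your proposal is correct and matches the paper's proof almost exactly. For the first item you correctly flag that the kernel route via Theorem~\ref{The : reg-noyau-fin-eng} requires WH, and your fallback---constructing a linear representation of $S_f$ directly from a DFAO as in Lemma~\ref{Lem : DFAO-with-0} using Boolean transition matrices and the output vector---is precisely the paper's argument (the paper also records your kernel route as an alternative under WH in a remark following the theorem). For the second and third items your use of Theorem~\ref{The : FibresRegulieres}, the identity $\rep_{\bS}(f^{-1}(k))=S_f^{-1}(k)\cap\bL$, and Lemma~\ref{Lem : Automatic-Fibers} is exactly what the paper does; your explicit treatment of the embedding $\K\hookrightarrow\K'$ just spells out what the paper leaves implicit when invoking Theorem~\ref{The : FibresRegulieres} under the ``included in a commutative ring'' hypothesis.
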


\begin{proof}
Let $f\colon \N^d\to \K$ be an $\bS$-automatic sequence and let $\mathcal{A}=(Q,i,\delta,\bA,\tau,\Delta)$ be a DFAO as in Lemma~\ref{Lem : DFAO-with-0}. For $q\in Q$, define
\[
	\lambda_q=\begin{cases}
				1	&\text{if }q=i\\
				0	&\text{else}
				\end{cases}
\]
and $\gamma_q=\tau(q)$. Moreover, for $\ba \in\bA$ and $q,q'\in Q$, define
\[
	(\mu(\ba))_{q,q'}=\begin{cases}
				1	&\text{if }\delta(q,\ba)=q'\\
				0	&\text{else}.
				\end{cases}
\]
Then $(\lambda,\mu,\gamma)$ is a linear representation of dimension $\Card(Q)$ of the series $S_f$. 
%$\K$-automaton $\mathcal{B}=(Q, I, \tau, \bA, E)$ with
%\[	
%	I\colon Q \to \K,\ q\mapsto 
%	\begin{cases}
%		1 	&\text{if } q=i\\
%		0	&\text{else}
%	\end{cases}
%\]
%and 
%\[
%	E \colon Q \times \bA \times Q \to \K,\ 
%	(q,\ba,q^\prime)\mapsto 
%	\begin{cases}
%		1 	&\text{if } \delta(q,\ba)=q^\prime \\
%		0	&\text{else}
%	\end{cases}
%\]
%recognizes $S_f$.

Now, suppose that $f$ is $(\bS,\K)$-regular and takes only finitely many values, and that $\K$ is finite or is included in a commutative ring. Then $S_f$ is a $\K$-recognizable series with a finite image. By Theorem~\ref{The : FibresRegulieres}, for all $k\in \K$, the language $S_f^{-1}(k)$ is regular. Since for all $k\in\K$, $\rep_{\bS}(f^{-1}(k))=S_f^{-1}(k) \cap \bL$,
it follows from Lemma~\ref{Lem : Automatic-Fibers} that $f$ is $\bS$-automatic.
\end{proof}

\begin{remark}
Under the working hypothesis, another way to obtain the first item of Theorem~\ref{The : Regular-Automatic} is to use Theorems~\ref{The : reg-noyau-fin-eng} and~\ref{The : automatic-finite-kernel}. Indeed, if $\ker_{\bS}(f)$ is finite then clearly $\langle\ker_{\bS}(f)\rangle_\K$ is finitely generated.
\end{remark}

\begin{corollary}
\label{Cor : Automatic-Modulo}
Let $f\colon\N^d\to\Z$ be a $(\bS,\Z)$-regular sequence. For all $m\in \N_{\ge 2}$, the sequences $f \bmod m\colon\N^d\to\Z/m\Z,\ \bn\mapsto f(\bn) \bmod m$ are $\bS$-automatic.
\end{corollary}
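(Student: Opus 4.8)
The plan is to deduce this corollary directly from the third item of Theorem~\ref{The : Regular-Automatic}, after passing to a finite semiring. The key observation is that reduction modulo $m$ is a ring homomorphism $\pi_m\colon\Z\to\Z/m\Z$, and the target $\Z/m\Z$ is a \emph{finite} ring. So the natural strategy is: show that $f\bmod m$ is $(\bS,\Z/m\Z)$-regular, and then invoke the finiteness of $\Z/m\Z$ to conclude it is $\bS$-automatic.

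First I would establish that the composition $\pi_m\circ f$ is $(\bS,\Z/m\Z)$-regular. Since $f$ is $(\bS,\Z)$-regular, the series $S_f$ is $\Z$-recognizable, so it admits a linear representation $(\lambda,\mu,\gamma)$ with $\lambda\in\Z^{1\times r}$, $\mu\colon\bA^*\to\Z^{r\times r}$ and $\gamma\in\Z^{r\times 1}$. Applying $\pi_m$ entrywise to $\lambda$, $\mu(\ba)$ for each $\ba\in\bA$, and $\gamma$ yields matrices over $\Z/m\Z$; because $\pi_m$ is a ring homomorphism, it respects matrix products, so $(\pi_m(\lambda),\pi_m\circ\mu,\pi_m(\gamma))$ is a linear representation over $\Z/m\Z$. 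For every $\bw\in\bA^*$ one has $\pi_m(\lambda)\,(\pi_m\circ\mu)(\bw)\,\pi_m(\gamma)=\pi_m\big(\lambda\mu(\bw)\gamma\big)=\pi_m\big((S_f,\bw)\big)$, which is exactly the coefficient of $\bw$ in $S_{f\bmod m}$. Hence $S_{f\bmod m}$ is $(\Z/m\Z)$-recognizable, i.e.\ $f\bmod m$ is $(\bS,\Z/m\Z)$-regular.

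Finally, since $\Z/m\Z$ is a finite semiring, the third item of Theorem~\ref{The : Regular-Automatic} applies verbatim to the $(\bS,\Z/m\Z)$-regular sequence $f\bmod m$, giving that $f\bmod m$ is $\bS$-automatic. This completes the argument.

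I do not expect a genuine obstacle here; the only point requiring care is verifying that the linear representation pushes forward correctly under $\pi_m$, which hinges on $\pi_m$ being a homomorphism of the semiring $\Z$ (so that it commutes with the matrix multiplications defining $\mu$ as a monoid morphism and with the bilinear form $\lambda(\cdot)\gamma$). One could alternatively phrase the whole deduction more abstractly as ``regularity is preserved under a semiring homomorphism,'' but since the paper has not isolated such a lemma, writing out the explicit linear representation is the cleanest route. Note that the finite-image hypothesis needed for the commutative-ring item of Theorem~\ref{The : Regular-Automatic} is automatic here because $\Z/m\Z$ is finite, so it is the finite-semiring item that one should cite.
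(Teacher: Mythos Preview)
Your proof is correct and follows essentially the same approach as the paper: show that $f\bmod m$ is $(\bS,\Z/m\Z)$-regular and then apply the third item of Theorem~\ref{The : Regular-Automatic}. The paper simply asserts the regularity of $f\bmod m$ as ``clear,'' whereas you spell out the pushforward of the linear representation along $\pi_m$, which is exactly the justification behind that word.
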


\begin{proof}
Let $m\in\N_{\ge 2}$. Since $f$ is $(\bS,\Z)$-regular, $f\bmod m$ is clearly $(\bS,\Z/m\Z)$-regular. The result then follows from the third item of Theorem~\ref{The : Regular-Automatic}.
\end{proof}

\begin{runningexample}
\label{Ex : Automatic}
Since the sequence $f$ is $(\bS,\Z)$-regular, Corollary~\ref{Cor : Automatic-Modulo} implies that the sequences $f \bmod m\colon\N^2\to\Z/m\Z,\ \bn\mapsto f(\bn) \bmod m$ are $\bS$-automatic for all $m\in \N_{\ge 2}$. Since $\langle\ker_{\bS}(f) \rangle_\Z=\langle f,f\circ\couple{a}{a},f\circ\couple{b}{b},f\circ\couple{aa}{aa} \rangle_\Z$, we get that
$\langle\ker_{\bS}(f\bmod m) \rangle_{\Z/m\Z}=
	\big\{\big(
	i\cdot f
	+j\cdot \left(f\circ\couple{a}{a}\right)
	+k\cdot \left(f\circ\couple{b}{b}\right)
	+\ell\cdot \left(f\circ\couple{aa}{aa}\right)
	\big) \bmod m
	\colon i,j,k,\ell\in\Z/m\Z
	\big\}$. Since the numeration language $\bL$ is prefix-closed, the proof of Theorem~\ref{The : automatic-finite-kernel} provides us with an effective construction of a DFAO of size $9\cdot m^4$ computing $f\bmod m$. By directly using the definition of $f$, it can be seen that the DFAO of Figure~\ref{Fig : AutomatonMod} also computes $f\bmod m$. The size of this DFAO is $2m+7$. 
\begin{figure}[htb]
\begin{center}
\hspace*{-2cm}
\scalebox{0.6}{
\begin{tikzpicture}
\tikzstyle{every node}=[shape=circle, fill=none, draw=black,
minimum size=20pt, inner sep=2pt]
% L'état initial 
\node(initial) at (0,0){$0$};

%Les états colonnes 2
\node (diezA) at (1.76,4.24){$0$};
\node (Adiez) at (1.76,-4.24){$0$};

%Les états de la colonne 3
\node (diezB) at (6,6){$0$};
\node (AB) at (10.24,4.24){$0$};
\node (Bdiez) at (6,-6){$0$};
\node (BA) at (10.24,-4.24){$0$};

%Les états  (a,a) mod m
\node (AA0) at (4.58,1.41){$0$};
\node (AA1) at (4,0)  {$1$};
\node (AA2) at (4.58,-1.41) {$2$};
\tikzstyle{every node}=[shape=circle, fill=none, draw=black,
minimum size=20pt, inner sep=0.8pt]
\node (AAM1) at (6,2) {$\scriptscriptstyle{m-1}$};

\tikzstyle{every node}=[shape=circle, fill=none, draw=black,
minimum size=20pt, inner sep=2pt]
%Les états  (b,b) mod m
\node (BB0) at (12.58,1.41){$0$};
\node (BB1) at (12,0)  {$1$};
\node (BB2) at (12.58,-1.41) {$2$};
\tikzstyle{every node}=[shape=circle, fill=none, draw=black,
minimum size=20pt, inner sep=0.8pt]
\node (BBM1) at (14,2) {$\scriptscriptstyle{m-1}$};

\tikzstyle{every node}=[shape=circle, fill=none, draw=black,
minimum size=20pt, inner sep=2pt]

%Fleche de poid initial et finaux
\tikzstyle{every node}=[shape=circle, fill=none, draw=black, minimum size=15pt, inner sep=2pt]
\tikzstyle{every path}=[color=black, line width=0.5 pt]
\tikzstyle{every node}=[shape=circle, minimum size=5pt, inner sep=2pt]
\draw [-Latex] (-1,0) to node [above=-0.2] {$ $} (initial);

%Les pointillés entres les états (a,a) et (b,b)
\draw[line width=0.4mm, dotted,-Latex, vert] (4.95,-1.52)arc (-120:75:1.87);
\draw[line width=0.4mm, dotted,-Latex, red] (12.95,-1.52)arc (-120:75:1.87);

%Les loops
\draw [-Latex,brown] (diezA) [loop above] to node [right] {$ $} (diezA);
\draw [-Latex,violet] (diezB) [loop above] to node [right] {$ $} (diezB);
\draw [-Latex,blue] (AB) [loop above] to node [right] {$ $} (AB);
\draw [-Latex,orange] (Adiez) [loop below] to node [right] {$ $} (Adiez);
\draw [-Latex,magenta] (Bdiez) [loop below] to node [right] {$ $} (Bdiez);
\draw [-Latex,cyan] (BA) [loop below] to node [right] {$ $} (BA);

%Edges to (#,a)
\draw [-Latex,brown] (initial) to node {$ $} (diezA);

%Edges to (a,#)
\draw [-Latex,orange] (initial) to node {$ $} (Adiez);

%Edges to (a,a),0
\draw [-Latex,vert] (AAM1) to node {$ $} (AA0);

%Edges to (a,a),1
\draw [-Latex,vert] (initial) to node {$ $} (AA1);
\draw [-Latex,vert] (diezA) to node {$ $} (AA1);
\draw [-Latex,vert] (Adiez) to node {$ $} (AA1);
\draw [-Latex,vert] (AA0) to node {$ $} (AA1);

%Edges to (a,a),2
\draw [-Latex,vert] (AA1) to node {$ $} (AA2);

%Edges to (#,b)
\draw [-Latex,violet] (diezA) to node {$ $} (diezB);
\draw [-Latex,violet] (initial) to  node {$ $} (diezB);

%Edges to (a,b)
\draw [-Latex,blue] (diezB) to node {$ $} (AB);
\draw [-Latex,blue] (initial) [bend left=15] to node {$ $} (AB);
\draw [-Latex,blue] (AA0) to  [bend left=25, in=170] node {$ $} (AB);
\draw [-Latex,blue] (AA1) to node {$ $} (AB);
\draw [-Latex,blue] (AA2) to node {$ $} (AB);
\draw [-Latex,blue] (AAM1) to node {$ $} (AB);
\draw [-Latex,blue] (diezA) to node {$ $} (AB);
\draw [-Latex,blue] (Adiez) to  [bend right=25,in=-140]  node {$ $} (AB);

%Edges to (b,#)
\draw [-Latex,magenta] (Adiez) to node {$ $} (Bdiez);
\draw [-Latex,magenta] (initial) to node {$ $} (Bdiez);

%Edges to (b,a)
\draw [-Latex,cyan] (Bdiez) to node {$ $} (BA);
\draw [-Latex,cyan] (initial) to [bend right=15] node {$ $} (BA);
\draw [-Latex,cyan] (AA0) to node {$ $} (BA);
\draw [-Latex,cyan] (AA1) to node {$ $} (BA);
\draw [-Latex,cyan] (AA2) to node {$ $} (BA);
\draw [-Latex,cyan] (AAM1) to node {$ $} (BA);
\draw [-Latex,cyan] (Adiez) to node {$ $} (BA);
\draw [-Latex,cyan] (diezA) to  [out=-10,in=90]  node {$ $} (BA);

%Edges to (b,b),0
\draw [-Latex,red] (BBM1) to  node {$ $} (BB0);
\draw [-Latex,red] (AAM1) to node {$ $} (BB0);

%Edges to (b,b),1
\draw [-Latex,red] (BB0) to node {$ $} (BB1);
\draw [-Latex,red] (AA0) to node {$ $} (BB1);
\draw [-Latex,red] (Bdiez) to node {$ $} (BB1);
\draw [-Latex,red] (BA) to node {$ $} (BB1);
\draw [-Latex,red] (diezB) to node {$ $} (BB1);
\draw [-Latex,red] (AB) to node {$ $} (BB1);
\draw [-Latex,red] (Adiez) to [out=10, in=210] node {$ $} (BB1);
\draw [-Latex,red] (diezA) to [out=-5, in=150] node {$ $} (BB1);
\draw [-Latex,red] (initial) to [bend right=65, looseness=1.3, out=-30,in=200]  node {$ $} (BB1);

%Edges to (b,b),2
\draw [-Latex,red] (BB1) to node {$ $} (BB2);
\draw [-Latex,red] (AA1) to node {$ $} (BB2);
\end{tikzpicture}
}%end scalebox
\end{center}
\vspace*{-2.5cm}
\hfill\scalebox{0.8}{
\fbox{\begin{tabular}{cc|cc}
\textcolor{brown}{$\blacksquare$} 	& $\couple{\#}{a}$ 
&\textcolor{blue}{$\blacksquare$} 	& $\couple{a}{b}$\\
\textcolor{violet}{$\blacksquare$} 	& $\couple{\#}{b}$ 
&\textcolor{magenta}{$\blacksquare$} & $\couple{b}{\#}$\\
\textcolor{orange}{$\blacksquare$} 	& $\couple{a}{\#}$
&\textcolor{cyan}{$\blacksquare$} 	& $\couple{b}{a}$\\
\textcolor{vert}{$\blacksquare$} 	& $\couple{a}{a}$  
& \textcolor{red}{$\blacksquare$} 	& $\couple{b}{b}$\\
\end{tabular}}
}%end scalebox
\vspace{0.5cm}
\caption{A DFAO generating $f \bmod m$ for $m\in\N_{\ge 2}$.}
\label{Fig : AutomatonMod}
\end{figure}
\end{runningexample}

\begin{remark}
Under the working hypothesis and assuming that $\K$ is finite or is included in a commutative ring, Theorem~\ref{The : Regular-Automatic} combined with Theorem~\ref{The : reg-noyau-fin-eng} provides us with another proof of the fact that the finiteness of $\ker_{\bS}(f)$ implies that $f$ is $\bS$-automatic. Indeed, let $f\colon\N^d\to\K$ and suppose that $\ker_{\bS}(f)$ is finite. Then $f$ is $(\bS,\K)$-regular by Theorem~\ref{The : reg-noyau-fin-eng}. Now, suppose towards a contradiction that $f$ takes infinitely many values. Let $(\bn_i)_{i\in\N} \in (\N^d)^\N$ be such that for all distinct $i,j\in \N$, $f(\bn_i)\ne f(\bn_j)$. Let $i,j\in \N$ with $i\neq j$. Since $\bL$ is prefix-closed, $\rep_{\bS}(\bz)=\varepsilon$ and we have $(f\circ\rep_{\bS}(\bn_i)) (\bz) 
=f(\bn_i)\neq f(\bn_j)
=(f\circ\rep_{\bS}(\bn_j)) (\bz)$. Since $f\circ\rep_{\bS}(\bn_i)$ and $f\circ\rep_{\bS}(\bn_j)$ both belong to $\ker_{\bS}(f)$, we conclude that $\ker_{\bS}(f)$ is infinite, a contradiction. Therefore, $f$ has a finite image and we conclude by Theorem~\ref{The : Regular-Automatic}. 
\end{remark}

The family of $\bS$-automatic sequences is closed under projection. 

\begin{corollary}
Let $f\colon\N^d\to \K$ be an $\bS$-automatic sequence, let $i\in[\![1,d]\!]$ and let $k\in\N$. Then the $(d{-}1)$-dimensional sequence $\delta_{i,k}(f)\colon\N^{d-1}\to \K$ (defined as in Proposition~\ref{Pro : Projection-Regular}) is $\delta_i(\bS)$-automatic.
\end{corollary}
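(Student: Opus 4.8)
The plan is to verify the two conditions of Lemma~\ref{Lem : Automatic-Fibers} for $\delta_{i,k}(f)$ with respect to $\delta_i(\bS)$: that it takes only finitely many values and that each of its fibers is $\delta_i(\bS)$-recognizable. The first condition is immediate, since the image of $\delta_{i,k}(f)$ is contained in that of $f$, which is finite because $f$ is $\bS$-automatic (apply Lemma~\ref{Lem : Automatic-Fibers} to $f$). The crux is therefore to show that for every $k'\in\K$, the set $(\delta_{i,k}(f))^{-1}(k')\subseteq\N^{d-1}$ is $\delta_i(\bS)$-recognizable.

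The key idea is to reduce this to Proposition~\ref{Pro : Projection-Regular} by passing to $\{0,1\}$-valued characteristic functions, which lets us sidestep the restriction on $\K$. Fix $k'\in\K$. Since $f$ is $\bS$-automatic, Lemma~\ref{Lem : Automatic-Fibers} gives that $f^{-1}(k')$ is $\bS$-recognizable; as $X_{i,k}=\{\bn\in\N^d\colon n_i=k\}$ is also $\bS$-recognizable, so is the intersection $Y_{k'}:=f^{-1}(k')\cap X_{i,k}$. Its characteristic function $\chi_{Y_{k'}}\colon\N^d\to\N$ takes only the values $0$ and $1$, both fibers $Y_{k'}$ and $\N^d\setminus Y_{k'}$ are $\bS$-recognizable, so by Lemma~\ref{Lem : Automatic-Fibers} the sequence $\chi_{Y_{k'}}$ is $\bS$-automatic, hence $(\bS,\N)$-regular by the first item of Theorem~\ref{The : Regular-Automatic}. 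Applying Proposition~\ref{Pro : Projection-Regular} to $\chi_{Y_{k'}}$ shows that $\delta_{i,k}(\chi_{Y_{k'}})$ is $(\delta_i(\bS),\N)$-regular.

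It then remains to identify this projected sequence. For $\bm\in\N^{d-1}$, writing $\hat\bm$ for the $d$-tuple obtained by inserting $k$ in the $i$-th position, we have $\delta_{i,k}(\chi_{Y_{k'}})(\bm)=\chi_{Y_{k'}}(\hat\bm)$, and since $\hat\bm\in X_{i,k}$ automatically, this equals $1$ exactly when $f(\hat\bm)=k'$, that is, when $\delta_{i,k}(f)(\bm)=k'$. Hence $\delta_{i,k}(\chi_{Y_{k'}})=\chi_{(\delta_{i,k}(f))^{-1}(k')}$. This characteristic function is $(\delta_i(\bS),\N)$-regular and takes finitely many values, and $\N$ is included in the commutative ring $\Z$; so by the second item of Theorem~\ref{The : Regular-Automatic} it is $\delta_i(\bS)$-automatic, and Lemma~\ref{Lem : Automatic-Fibers} then yields that its $1$-fiber $(\delta_{i,k}(f))^{-1}(k')$ is $\delta_i(\bS)$-recognizable. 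As this holds for every $k'\in\K$ and $\delta_{i,k}(f)$ has finite image, a final application of Lemma~\ref{Lem : Automatic-Fibers} to $\delta_i(\bS)$ shows that $\delta_{i,k}(f)$ is $\delta_i(\bS)$-automatic.

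The main subtlety is precisely the passage to $\{0,1\}$-valued sequences: it is what allows us both to invoke Proposition~\ref{Pro : Projection-Regular} (stated for regular sequences) and to apply the converse direction of Theorem~\ref{The : Regular-Automatic} over $\N\subseteq\Z$ irrespective of the original semiring $\K$. I expect the only place where care is genuinely needed is checking the identity $\delta_{i,k}(\chi_{Y_{k'}})=\chi_{(\delta_{i,k}(f))^{-1}(k')}$ and ensuring $X_{i,k}$ is inserted so that the intersection with $f^{-1}(k')$ is handled before projecting. A more hands-on alternative would construct the recognizing automaton directly from a DFAO for $f$, deleting the $i$-th component of each letter; but this route forces one to handle the padding explicitly, stripping the leading $\delta_i(\bdiese)$ letters produced when $\rep_{\mathcal{S}_i}(k)$ is longer than the other coordinates, which the argument above avoids.
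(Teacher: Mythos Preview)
Your proof is correct, but it takes a noticeably different route from the paper's. The paper observes that since $f$ has finite image, one may without loss of generality replace $\K$ by a finite semiring containing that image (invoking the existence of finite semirings of every size). With $\K$ finite, the chain ``$\bS$-automatic $\Rightarrow$ $(\bS,\K)$-regular $\Rightarrow$ $\delta_{i,k}(f)$ is $(\delta_i(\bS),\K)$-regular $\Rightarrow$ $\delta_i(\bS)$-automatic'' goes through directly via Theorem~\ref{The : Regular-Automatic} (first and third items) and Proposition~\ref{Pro : Projection-Regular}, in two lines.

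Your argument instead works fiber by fiber, pushing each characteristic function $\chi_{Y_{k'}}$ into $\N$ so that Proposition~\ref{Pro : Projection-Regular} and the second item of Theorem~\ref{The : Regular-Automatic} apply regardless of $\K$. This is a perfectly sound workaround for the hypothesis on $\K$, and your identification $\delta_{i,k}(\chi_{Y_{k'}})=\chi_{(\delta_{i,k}(f))^{-1}(k')}$ is correct (in fact the preliminary intersection with $X_{i,k}$ is harmless but not even needed there, since $\hat\bm\in X_{i,k}$ automatically). The trade-off is length: the paper's finite-semiring trick collapses everything to a single application of the regular--automatic correspondence, whereas your approach unwinds that correspondence through Lemma~\ref{Lem : Automatic-Fibers} twice per fiber.
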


\begin{proof}
Since $f$ has a finite image and since there exist finite semirings of all sizes, we may assume that $\K$ is finite. Then the result follows from Proposition~\ref{Pro : Projection-Regular} and Theorem~\ref{The : Regular-Automatic}.
\end{proof}

\section{Enumerating $\bS$-recognizable properties of $\bS$-automatic sequences give rise to $(\bS,\N)$-regular sequences}
\label{Sec : Enumeration}

In this section, we show how some enumeration properties of $\bS$-automatic sequences give rise to $(\bS,\N)$-regular sequences. The technique that we use contains three ingredients that we describe in Subsections~\ref{Subsec : Ingredient2},~\ref{Subsec : Ingredient1} and~\ref{Subsec : Ingredient3}. In doing so, we positively answer~\cite[Problem 3.5.7]{Charlier2018}.

We focus on the semirings $\N$ and $\N_\infty=\N\cup\{\infty\}$. The sum and product over $\N$ are extended to $\N_\infty$ as follows: for all $n\in\N_\infty$, $\infty+n=n+\infty=\infty$; for all $n\in\N_\infty\setminus\{0\}$, $\infty\cdot n=n\cdot\infty=\infty$; and $\infty\cdot 0=0\cdot\infty=0$.

\subsection{First ingredient: $\bS$-recognizable enumerations of $\N^d$}
\label{Subsec : Ingredient2}

We define an enumeration $E_{\bS}\colon\N^d\to \N$ recursively as follows. We fix a total order on $\bA$ and we consider the induced radix order on $\bA^*$. Then we define a total order $<_{\bS}$ on $\N^d$ by declaring that
\[
	\forall \bm,\bn\in\N^d,\
	\bm<_{\bS} \bn \iff \rep_{\bS}(\bm)<_{\rad} \rep_{\bS}(\bn). 
\]
For all $\bn\in\N^d$, we define $E_{\bS}(\bn)=i$ if $\bn$ is the $i$-th element of $\N^d$ with respect to this total order on $\N^d$. Note that we start indexing at $i=0$, so $E_{\bS}(\bz)=0$.

\begin{runningexample}
We fix the following total order on the alphabet $\bA$:  
\[
\def\arraystretch{1.1}
	\begin{array}{c|c|c|c|c|c|c|c}
1&2&3&4&5&6&7&8\\
\hline
\couple{\#}{a} & \couple{\#}{b} &
\couple{a}{\#} & \couple{a}{a} & \couple{a}{b}		& \couple{b}{\#} & \couple{b}{a} &\couple{b}{b}.
	\end{array}
\] 
This choice induces the following order on the pairs of integers that are represented by a single letter:
\[
\def\arraystretch{1.1}
	\begin{array}{c|c|c|c|c|c|c|c}
1&2&3&4&5&6&7&8\\
\hline
\couple{0}{1} & \couple{0}{2} &
\couple{1}{0} & \couple{1}{1} & \couple{1}{2}		& \couple{2}{0} & \couple{2}{1} &\couple{2}{2}.
	\end{array}
\] 
Note that in this example, all letters in $\bA$ actually belong to the numeration language $\bL$.
In general, the list of $d$-tuples of integers represented by a single letter might be shorter than the size of the alphabet $\bA$. The corresponding radix order on the words over $\bA^*$ of length $2$ whose components belong to $\#^*a^*b^*$ is given by
\[
\def\arraystretch{1.1}
	\begin{array}{c|c|c|c|c|c|c|c|c}
	9&10&11&12&13&14&15&16&17\\
	\hline
	\vspace{.3cm}
	\couple{\#\#}{aa} & \couple{\#\#}{ab}
	& \couple{\#a}{aa} & \couple{\#a}{ab}
	& \couple{\#b}{aa} & \couple{\#b}{ab}
	& \couple{\#\#}{bb} & \couple{\#a}{bb} 			
	& \couple{\#b}{bb}     \\ 
	18&19&20&21&22&23&24&25&26\\
	\hline
	\vspace{.3cm}
	\couple{aa}{\#\#} & \couple{aa}{\#a} 
	& \couple{aa}{\#b} & \couple{ab}{\#\#}
	& \couple{ab}{\#a} & \couple{ab}{\#b}
	& \couple{aa}{aa} & \couple{aa}{ab}
	& \couple{ab}{aa} \\
	27&28&29&30&31&32&33&34&35\\
	\hline
	\couple{ab}{ab} & \couple{aa}{bb} 
	& \couple{ab}{bb} & \couple{bb}{\#\#}
	& \couple{bb}{\#a} & \couple{bb}{\#b} 
	& \couple{bb}{aa} & \couple{bb}{ab} 
	& \couple{bb}{bb}
	\end{array}
\]
This provides us with the following order on the corresponding pairs of integers:
\[
\def\arraystretch{1.1}
	\begin{array}{c|c|c|c|c|c|c|c|c}
	9&10&11&12&13&14&15&16&17\\
	\hline
	\vspace{.3cm}
	\couple{0}{3} & \couple{0}{4}
	& \couple{1}{3} & \couple{1}{4}
	& \couple{2}{3} & \couple{2}{4}
	& \couple{0}{5} & \couple{1}{5} 			
	& \couple{2}{5}     \\ 
	18&19&20&21&22&23&24&25&26\\
	\hline
	\vspace{.3cm}
	\couple{3}{0} & \couple{3}{1} 
	& \couple{3}{2} & \couple{4}{0}
	& \couple{4}{1} & \couple{4}{2}
	& \couple{3}{3} & \couple{3}{4}
	& \couple{4}{3} \\
	27&28&29&30&31&32&33&34&35\\
	\hline
	\couple{4}{4} & \couple{3}{5} 
	& \couple{4}{5} & \couple{5}{0}
	& \couple{5}{1} & \couple{5}{2} 
	& \couple{5}{3} & \couple{5}{4} 
	& \couple{5}{5}
	\end{array}
\]
The first values of the enumeration $E_{\bS}$ are represented in Figure~\ref{Fig : Enumeration-a*b*}.
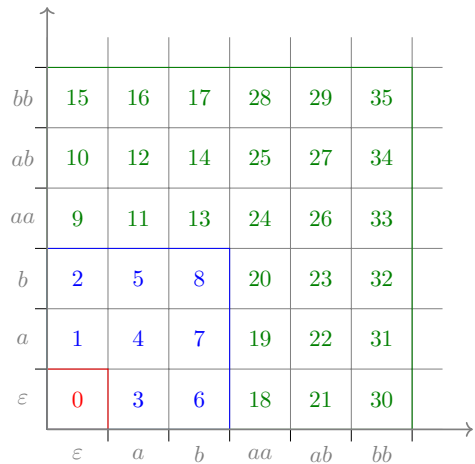
\begin{figure}[htb]
\begin{center}
\scalebox{0.8}{
\begin{tikzpicture}
\draw[step=1.0,gray,thin] (0,0) grid (6.5,6.5);

\draw[red] (0,0) rectangle (1,1);
\node[red] at (0.5,0.5) {$0$};

\draw[blue] (0,0) rectangle (3,3);
\node[blue] at (0.5,1.5) {$1$};
\node[blue] at (0.5,2.5) {$2$};
\node[blue] at (1.5,0.5) {$3$};
\node[blue] at (1.5,1.5) {$4$};
\node[blue] at (1.5,2.5) {$5$};
\node[blue] at (2.5,0.5) {$6$};
\node[blue] at (2.5,1.5) {$7$};
\node[blue] at (2.5,2.5) {$8$};

\draw[vert] (0,0) rectangle (6,6);
\node[vert] at (0.5,3.5) {$9$};
\node[vert] at (0.5,4.5) {$10$};
\node[vert] at (1.5,3.5) {$11$};
\node[vert] at (1.5,4.5) {$12$};
\node[vert] at (2.5,3.5) {$13$};
\node[vert] at (2.5,4.5) {$14$};
\node[vert] at (0.5,5.5) {$15$};
\node[vert] at (1.5,5.5) {$16$};
\node[vert] at (2.5,5.5) {$17$};
\node[vert] at (3.5,0.5) {$18$};
\node[vert] at (3.5,1.5) {$19$};
\node[vert] at (3.5,2.5) {$20$};
\node[vert] at (4.5,0.5) {$21$};
\node[vert] at (4.5,1.5) {$22$};
\node[vert] at (4.5,2.5) {$23$};
\node[vert] at (3.5,3.5) {$24$};
\node[vert] at (3.5,4.5) {$25$};
\node[vert] at (4.5,3.5) {$26$};
\node[vert] at (4.5,4.5) {$27$};
\node[vert] at (3.5,5.5) {$28$};
\node[vert] at (4.5,5.5) {$29$};
\node[vert] at (5.5,0.5) {$30$};
\node[vert] at (5.5,1.5) {$31$};
\node[vert] at (5.5,2.5) {$32$};
\node[vert] at (5.5,3.5) {$33$};
\node[vert] at (5.5,4.5) {$34$};
\node[vert] at (5.5,5.5) {$35$};

\foreach \x in {1,...,6} 
\draw[black] ($(\x,0)$) -- ($(\x,-0.2)$);

\node[gray] at ($(0.5,-0.4)$) {$\varepsilon$};
\node[gray] at ($(1.5,-0.4)$) {$a$};
\node[gray] at ($(2.5,-0.4)$) {$b$};
\node[gray] at ($(3.5,-0.4)$) {$aa$};
\node[gray] at ($(4.5,-0.4)$) {$ab$};
\node[gray] at ($(5.5,-0.4)$) {$bb$};

\foreach \y in {1,...,6} 
\draw[black] ($(0,\y)$) -- ($(-0.2,\y)$);

\node[gray] at ($(-0.4,0.5)$) {$\varepsilon$};
\node[gray] at ($(-0.4,1.5)$) {$a$};
\node[gray] at ($(-0.4,2.5)$) {$b$};
\node[gray] at ($(-0.4,3.5)$) {$aa$};
\node[gray] at ($(-0.4,4.5)$) {$ab$};
\node[gray] at ($(-0.4,5.5)$) {$bb$};

%Axes gris à la fin
\draw[->,gray,thick] (0,0) -- (7,0);
\draw[->,gray,thick] (0,0) -- (0,7);
\end{tikzpicture}
}
\end{center}
\caption{The enumeration $E_{(\mathcal{S},\mathcal{S})}$ for $\mathcal{S}=(a^*b^*,a<b)$.}
\label{Fig : Enumeration-a*b*}
\end{figure}
\end{runningexample}

\begin{example} 
Consider the following order on the alphabet $\bA=\{\#,0,1\}^2\setminus\{\tiny\couple{\#}{\#}\}$:
\[
	\couple{\#}{0}<\couple{\#}{1}
	<\couple{0}{\#}<\couple{0}{0}<\couple{0}{1}
	<\couple{1}{\#}<\couple{1}{0}<\couple{1}{1}.
\]
Then the corresponding enumerations $E_{(\mathcal{S}_2,\mathcal{S}_2)}$ and $E_{(\mathcal{S}_F,\mathcal{S}_F)}$, where $\mathcal{S}_2=(1\{0,1\}^*\cup\{\varepsilon\},0<1)$ is the binary numeration system and $\mathcal{S}_F=(1\{0,01\}^*\cup\{\varepsilon\},0<1)$ is the Zeckendorf numeration system, are illustrated in Figure~\ref{Fig : EnumerationFibo}. 
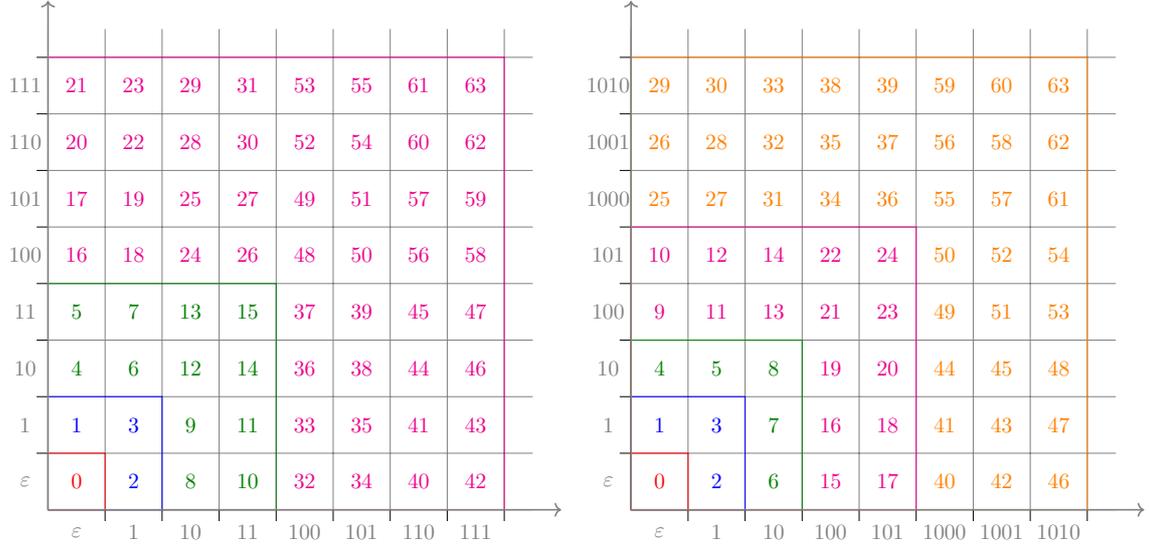
\begin{figure}[htb]
\begin{minipage}{0.5\textwidth}
\scalebox{0.75}{
\begin{tikzpicture}
\draw[step=1.0,gray,thin] (0,0) grid (8.5,8.5);

\draw[red] (0,0) rectangle (1,1);
\node[red] at (0.5,0.5) {$0$};

\draw[blue] (0,0) rectangle (2,2);
\node[blue] at (0.5,1.5) {$1$};
\node[blue] at (1.5,0.5) {$2$};
\node[blue] at (1.5,1.5) {$3$};

\draw[vert] (0,0) rectangle (4,4);
\node[vert] at (0.5,2.5) {$4$};
\node[vert] at (0.5,3.5) {$5$};
\node[vert] at (1.5,2.5) {$6$};
\node[vert] at (1.5,3.5) {$7$};
\node[vert] at (2.5,0.5) {$8$};
\node[vert] at (2.5,1.5) {$9$};
\node[vert] at (3.5,0.5) {$10$};
\node[vert] at (3.5,1.5) {$11$};
\node[vert] at (2.5,2.5) {$12$};
\node[vert] at (2.5,3.5) {$13$};
\node[vert] at (3.5,2.5) {$14$};
\node[vert] at (3.5,3.5) {$15$};

\draw[magenta] (0,0) rectangle (8,8);
\node[magenta] at (0.5,4.5) {$16$};
\node[magenta] at (0.5,5.5) {$17$};
\node[magenta] at (1.5,4.5) {$18$};
\node[magenta] at (1.5,5.5) {$19$};
\node[magenta] at (0.5,6.5) {$20$};
\node[magenta] at (0.5,7.5) {$21$};
\node[magenta] at (1.5,6.5) {$22$};
\node[magenta] at (1.5,7.5) {$23$};
\node[magenta] at (2.5,4.5) {$24$};
\node[magenta] at (2.5,5.5) {$25$};
\node[magenta] at (3.5,4.5) {$26$};
\node[magenta] at (3.5,5.5) {$27$};
\node[magenta] at (2.5,6.5) {$28$};
\node[magenta] at (2.5,7.5) {$29$};
\node[magenta] at (3.5,6.5) {$30$};
\node[magenta] at (3.5,7.5) {$31$};
\node[magenta] at (4.5,0.5) {$32$};
\node[magenta] at (4.5,1.5) {$33$};
\node[magenta] at (5.5,0.5) {$34$};
\node[magenta] at (5.5,1.5) {$35$};
\node[magenta] at (4.5,2.5) {$36$};
\node[magenta] at (4.5,3.5) {$37$};
\node[magenta] at (5.5,2.5) {$38$};
\node[magenta] at (5.5,3.5) {$39$};
\node[magenta] at (6.5,0.5) {$40$};
\node[magenta] at (6.5,1.5) {$41$};
\node[magenta] at (7.5,0.5) {$42$};
\node[magenta] at (7.5,1.5) {$43$};
\node[magenta] at (6.5,2.5) {$44$};
\node[magenta] at (6.5,3.5) {$45$};
\node[magenta] at (7.5,2.5) {$46$};
\node[magenta] at (7.5,3.5) {$47$};
\node[magenta] at (4.5,4.5) {$48$};
\node[magenta] at (4.5,5.5) {$49$};
\node[magenta] at (5.5,4.5) {$50$};
\node[magenta] at (5.5,5.5) {$51$};
\node[magenta] at (4.5,6.5) {$52$};
\node[magenta] at (4.5,7.5) {$53$};
\node[magenta] at (5.5,6.5) {$54$};
\node[magenta] at (5.5,7.5) {$55$};
\node[magenta] at (6.5,4.5) {$56$};
\node[magenta] at (6.5,5.5) {$57$};
\node[magenta] at (7.5,4.5) {$58$};
\node[magenta] at (7.5,5.5) {$59$};
\node[magenta] at (6.5,6.5) {$60$};
\node[magenta] at (6.5,7.5) {$61$};
\node[magenta] at (7.5,6.5) {$62$};
\node[magenta] at (7.5,7.5) {$63$};

\foreach \x in {1,...,8} 
\draw[black] ($(\x,0)$) -- ($(\x,-0.2)$);

\node[gray] at ($(0.5,-0.4)$) {$\varepsilon$};
\node[gray] at ($(1.5,-0.4)$) {$1$};
\node[gray] at ($(2.5,-0.4)$) {$10$};
\node[gray] at ($(3.5,-0.4)$) {$11$};
\node[gray] at ($(4.5,-0.4)$) {$100$};
\node[gray] at ($(5.5,-0.4)$) {$101$};
\node[gray] at ($(6.5,-0.4)$) {$110$};
\node[gray] at ($(7.5,-0.4)$) {$111$};

\foreach \y in {1,...,8} 
\draw[black] ($(0,\y)$) -- ($(-0.2,\y)$);

\node[gray] at ($(-0.4,0.5)$) {$\varepsilon$};
\node[gray] at ($(-0.4,1.5)$) {$1$};
\node[gray] at ($(-0.4,2.5)$) {$10$};
\node[gray] at ($(-0.4,3.5)$) {$11$};
\node[gray] at ($(-0.4,4.5)$) {$100$};
\node[gray] at ($(-0.4,5.5)$) {$101$};
\node[gray] at ($(-0.4,6.5)$) {$110$};
\node[gray] at ($(-0.4,7.5)$) {$111$};

%Axes gris à la fin
\draw[->,gray,thick] (0,0) -- (9,0);
\draw[->,gray,thick] (0,0) -- (0,9);
\end{tikzpicture}
}%scalebox
\end{minipage}
\begin{minipage}{0.5\textwidth}
\scalebox{0.75}{
\begin{tikzpicture}
\draw[step=1.0,gray,thin] (0,0) grid (8.5,8.5);

\draw[red] (0,0) rectangle (1,1);
\node[red] at (0.5,0.5) {$0$};

\draw[blue] (0,0) rectangle (2,2);
\node[blue] at (0.5,1.5) {$1$};
\node[blue] at (1.5,0.5) {$2$};
\node[blue] at (1.5,1.5) {$3$};

\draw[vert] (0,0) rectangle (3,3);
\node[vert] at (0.5,2.5) {$4$};
\node[vert] at (1.5,2.5) {$5$};
\node[vert] at (2.5,0.5) {$6$};
\node[vert] at (2.5,1.5) {$7$};
\node[vert] at (2.5,2.5) {$8$};

\draw[magenta] (0,0) rectangle (5,5);
\node[magenta] at (0.5,3.5) {$9$};
\node[magenta] at (0.5,4.5) {$10$};
\node[magenta] at (1.5,3.5) {$11$};
\node[magenta] at (1.5,4.5) {$12$};
\node[magenta] at (2.5,3.5) {$13$};
\node[magenta] at (2.5,4.5) {$14$};
\node[magenta] at (3.5,0.5) {$15$};
\node[magenta] at (3.5,1.5) {$16$};
\node[magenta] at (4.5,0.5) {$17$};
\node[magenta] at (4.5,1.5) {$18$};
\node[magenta] at (3.5,2.5) {$19$};
\node[magenta] at (4.5,2.5) {$20$};
\node[magenta] at (3.5,3.5) {$21$};
\node[magenta] at (3.5,4.5) {$22$};
\node[magenta] at (4.5,3.5) {$23$};
\node[magenta] at (4.5,4.5) {$24$};

\draw[orange] (0,0) rectangle (8,8);
\node[orange] at (0.5,5.5) {$25$};
\node[orange] at (0.5,6.5) {$26$};
\node[orange] at (1.5,5.5) {$27$};
\node[orange] at (1.5,6.5) {$28$};
\node[orange] at (0.5,7.5) {$29$};
\node[orange] at (1.5,7.5) {$30$};
\node[orange] at (2.5,5.5) {$31$};
\node[orange] at (2.5,6.5) {$32$};
\node[orange] at (2.5,7.5) {$33$};
\node[orange] at (3.5,5.5) {$34$};
\node[orange] at (3.5,6.5) {$35$};
\node[orange] at (4.5,5.5) {$36$};
\node[orange] at (4.5,6.5) {$37$};
\node[orange] at (3.5,7.5) {$38$};
\node[orange] at (4.5,7.5) {$39$};
\node[orange] at (5.5,0.5) {$40$};
\node[orange] at (5.5,1.5) {$41$};
\node[orange] at (6.5,0.5) {$42$};
\node[orange] at (6.5,1.5) {$43$};
\node[orange] at (5.5,2.5) {$44$};
\node[orange] at (6.5,2.5) {$45$};
\node[orange] at (7.5,0.5) {$46$};
\node[orange] at (7.5,1.5) {$47$};
\node[orange] at (7.5,2.5) {$48$};
\node[orange] at (5.5,3.5) {$49$};
\node[orange] at (5.5,4.5) {$50$};
\node[orange] at (6.5,3.5) {$51$};
\node[orange] at (6.5,4.5) {$52$};
\node[orange] at (7.5,3.5) {$53$};
\node[orange] at (7.5,4.5) {$54$};
\node[orange] at (5.5,5.5) {$55$};
\node[orange] at (5.5,6.5) {$56$};
\node[orange] at (6.5,5.5) {$57$};
\node[orange] at (6.5,6.5) {$58$};
\node[orange] at (5.5,7.5) {$59$};
\node[orange] at (6.5,7.5) {$60$};
\node[orange] at (7.5,5.5) {$61$};
\node[orange] at (7.5,6.5) {$62$};
\node[orange] at (7.5,7.5) {$63$};

\foreach \x in {1,...,8} 
\draw[black] ($(\x,0)$) -- ($(\x,-0.2)$);

\node[gray] at ($(0.5,-0.4)$) {$\varepsilon$};
\node[gray] at ($(1.5,-0.4)$) {$1$};
\node[gray] at ($(2.5,-0.4)$) {$10$};
\node[gray] at ($(3.5,-0.4)$) {$100$};
\node[gray] at ($(4.5,-0.4)$) {$101$};
\node[gray] at ($(5.5,-0.4)$) {$1000$};
\node[gray] at ($(6.5,-0.4)$) {$1001$};
\node[gray] at ($(7.5,-0.4)$) {$1010$};

\foreach \y in {1,...,8} 
\draw[black] ($(0,\y)$) -- ($(-0.2,\y)$);

\node[gray] at ($(-0.4,0.5)$) {$\varepsilon$};
\node[gray] at ($(-0.4,1.5)$) {$1$};
\node[gray] at ($(-0.4,2.5)$) {$10$};
\node[gray] at ($(-0.4,3.5)$) {$100$};
\node[gray] at ($(-0.4,4.5)$) {$101$};
\node[gray] at ($(-0.4,5.5)$) {$1000$};
\node[gray] at ($(-0.4,6.5)$) {$1001$};
\node[gray] at ($(-0.4,7.5)$) {$1010$};

%Axes gris à la fin
\draw[->,gray,thick] (0,0) -- (9,0);
\draw[->,gray,thick] (0,0) -- (0,9);
\end{tikzpicture}
}
\end{minipage}
\caption{The enumeration $E_{(\mathcal{S}_2,\mathcal{S}_2)}$ on the left and the enumeration $E_{(\mathcal{S}_F,\mathcal{S}_F)}$ on the right.}
\label{Fig : EnumerationFibo}
\end{figure}
We can also mix both systems and work with the numeration system $(\mathcal{S}_2,\mathcal{S}_F)$. The corresponding mixed enumeration $E_{(\mathcal{S}_2,\mathcal{S}_F)}$ is depicted in the left part of Figure~\ref{Fig : EnumerationMixed}. Similarly, the right part of Figure~\ref{Fig : EnumerationMixed} corresponds to the mixed enumeration $E_{(\mathcal{S},\mathcal{S}_F)}$, where $\mathcal{S}$ is the abstract numeration system of the running example.
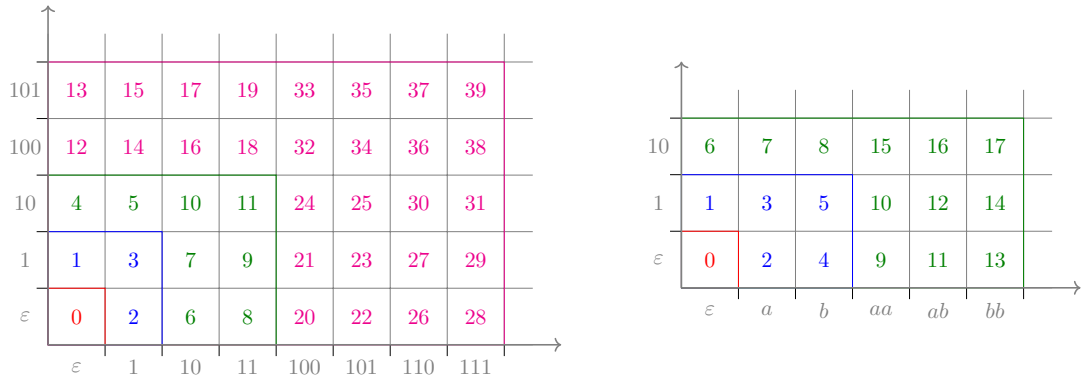
\begin{figure}[htb]
\begin{minipage}{0.5\textwidth}
\scalebox{0.75}{
\begin{tikzpicture}
\draw[step=1.0,gray,thin] (0,0) grid (8.5,5.5);

\draw[red] (0,0) rectangle (1,1);
\node[red] at (0.5,0.5) {$0$};

\draw[blue] (0,0) rectangle (2,2);
\node[blue] at (0.5,1.5) {$1$};
\node[blue] at (1.5,0.5) {$2$};
\node[blue] at (1.5,1.5) {$3$};

\draw[vert] (0,0) rectangle (4,3);
\node[vert] at (0.5,2.5) {$4$};
\node[vert] at (1.5,2.5) {$5$};
\node[vert] at (2.5,0.5) {$6$};
\node[vert] at (2.5,1.5) {$7$};
\node[vert] at (3.5,0.5) {$8$};
\node[vert] at (3.5,1.5) {$9$};
\node[vert] at (2.5,2.5) {$10$};
\node[vert] at (3.5,2.5) {$11$};

\draw[magenta] (0,0) rectangle (8,5);
\node[magenta] at (0.5,3.5) {$12$};
\node[magenta] at (0.5,4.5) {$13$};
\node[magenta] at (1.5,3.5) {$14$};
\node[magenta] at (1.5,4.5) {$15$};
\node[magenta] at (2.5,3.5) {$16$};
\node[magenta] at (2.5,4.5) {$17$};
\node[magenta] at (3.5,3.5) {$18$};
\node[magenta] at (3.5,4.5) {$19$};
\node[magenta] at (4.5,0.5) {$20$};
\node[magenta] at (4.5,1.5) {$21$};
\node[magenta] at (5.5,0.5) {$22$};
\node[magenta] at (5.5,1.5) {$23$};
\node[magenta] at (4.5,2.5) {$24$};
\node[magenta] at (5.5,2.5) {$25$};
\node[magenta] at (6.5,0.5) {$26$};
\node[magenta] at (6.5,1.5) {$27$};
\node[magenta] at (7.5,0.5) {$28$};
\node[magenta] at (7.5,1.5) {$29$};
\node[magenta] at (6.5,2.5) {$30$};
\node[magenta] at (7.5,2.5) {$31$};
\node[magenta] at (4.5,3.5) {$32$};
\node[magenta] at (4.5,4.5) {$33$};
\node[magenta] at (5.5,3.5) {$34$};
\node[magenta] at (5.5,4.5) {$35$};
\node[magenta] at (6.5,3.5) {$36$};
\node[magenta] at (6.5,4.5) {$37$};
\node[magenta] at (7.5,3.5) {$38$};
\node[magenta] at (7.5,4.5) {$39$};

\foreach \x in {1,...,8} 
\draw[black] ($(\x,0)$) -- ($(\x,-0.2)$);

\node[gray] at ($(0.5,-0.4)$) {$\varepsilon$};
\node[gray] at ($(1.5,-0.4)$) {$1$};
\node[gray] at ($(2.5,-0.4)$) {$10$};
\node[gray] at ($(3.5,-0.4)$) {$11$};
\node[gray] at ($(4.5,-0.4)$) {$100$};
\node[gray] at ($(5.5,-0.4)$) {$101$};
\node[gray] at ($(6.5,-0.4)$) {$110$};
\node[gray] at ($(7.5,-0.4)$) {$111$};

\foreach \y in {1,...,5} 
\draw[black] ($(0,\y)$) -- ($(-0.2,\y)$);

\node[gray] at ($(-0.4,0.5)$) {$\varepsilon$};
\node[gray] at ($(-0.4,1.5)$) {$1$};
\node[gray] at ($(-0.4,2.5)$) {$10$};
\node[gray] at ($(-0.4,3.5)$) {$100$};
\node[gray] at ($(-0.4,4.5)$) {$101$};

%Axes gris à la fin
\draw[->,gray,thick] (0,0) -- (9,0);
\draw[->,gray,thick] (0,0) -- (0,6);
\end{tikzpicture}
}
\end{minipage}
\qquad
\begin{minipage}{0.3\textwidth}
\scalebox{0.75}{
\begin{tikzpicture}
\draw[step=1.0,gray,thin] (0,0) grid (6.5,3.5);

\draw[red] (0,0) rectangle (1,1);
\node[red] at (0.5,0.5) {$0$};

\draw[blue] (0,0) rectangle (3,2);
\node[blue] at (0.5,1.5) {$1$};
\node[blue] at (1.5,0.5) {$2$};
\node[blue] at (1.5,1.5) {$3$};
\node[blue] at (2.5,0.5) {$4$};
\node[blue] at (2.5,1.5) {$5$};

\draw[vert] (0,0) rectangle (6,3);
\node[vert] at (0.5,2.5) {$6$};
\node[vert] at (1.5,2.5) {$7$};
\node[vert] at (2.5,2.5) {$8$};
\node[vert] at (3.5,0.5) {$9$};
\node[vert] at (3.5,1.5) {$10$};
\node[vert] at (4.5,0.5) {$11$};
\node[vert] at (4.5,1.5) {$12$};
\node[vert] at (5.5,0.5) {$13$};
\node[vert] at (5.5,1.5) {$14$};
\node[vert] at (3.5,2.5) {$15$};
\node[vert] at (4.5,2.5) {$16$};
\node[vert] at (5.5,2.5) {$17$};

\foreach \x in {1,...,6} 
\draw[black] ($(\x,0)$) -- ($(\x,-0.2)$);

\node[gray] at ($(0.5,-0.4)$) {$\varepsilon$};
\node[gray] at ($(1.5,-0.4)$) {$a$};
\node[gray] at ($(2.5,-0.4)$) {$b$};
\node[gray] at ($(3.5,-0.4)$) {$aa$};
\node[gray] at ($(4.5,-0.4)$) {$ab$};
\node[gray] at ($(5.5,-0.4)$) {$bb$};

\foreach \y in {1,...,3} 
\draw[black] ($(0,\y)$) -- ($(-0.2,\y)$);

\node[gray] at ($(-0.4,0.5)$) {$\varepsilon$};
\node[gray] at ($(-0.4,1.5)$) {$1$};
\node[gray] at ($(-0.4,2.5)$) {$10$};

%Axes gris à la fin
\draw[->,gray,thick] (0,0) -- (7,0);
\draw[->,gray,thick] (0,0) -- (0,4);
\end{tikzpicture}
}
\end{minipage}
\caption{The enumeration $E_{(\mathcal{S}_2,\mathcal{S}_F)}$ on the left and the enumeration $E_{(\mathcal{S},\mathcal{S}_F)}$.}
\label{Fig : EnumerationMixed}
\end{figure}
\end{example}

\begin{proposition}
\label{Pro : AutEnumeration}
For each $\diamond\in\{=,>,<\}$, the set
$\{\couple{\bm}{\bn}\in\N^{2d}\colon 					E_{\bS}(\bm) \diamond E_{\bS}(\bn)\}$ is $(\bS,\bS)$-recognizable. 
\end{proposition}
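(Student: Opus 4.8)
The plan is to reduce the claim to the recognizability of the radix comparison of $\bS$-representations, and then to build an explicit automaton. By construction $E_{\bS}$ is an order isomorphism between $(\N^d,<_{\bS})$ and $(\N,<)$, where $\bm<_{\bS}\bn$ means $\rep_{\bS}(\bm)<_{\rad}\rep_{\bS}(\bn)$ (that $E_{\bS}$ is surjective, i.e.\ that $<_{\bS}$ has order type $\omega$, uses that $\bL$ is infinite). Hence, for each $\diamond\in\{=,>,<\}$, we have $E_{\bS}(\bm)\diamond E_{\bS}(\bn)$ if and only if $\rep_{\bS}(\bm)\mathbin{\diamond_{\rad}}\rep_{\bS}(\bn)$, where $=_{\rad}$ is equality and $<_{\rad},>_{\rad}$ are the radix comparisons. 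It therefore suffices to prove that $R_\diamond:=\{\couple{\bm}{\bn}\in\N^{2d}\colon \rep_{\bS}(\bm)\mathbin{\diamond_{\rad}}\rep_{\bS}(\bn)\}$ is $(\bS,\bS)$-recognizable.

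Next I would describe $\rep_{(\bS,\bS)}\couple{\bm}{\bn}$ in terms of $\bu:=\rep_{\bS}(\bm)$ and $\bv:=\rep_{\bS}(\bn)$. Padding the $2d$ component representations to a common length is the same as first forming $\bu$ and $\bv$ and then left-padding these two words of $\bA^*$ to the common length $L=\max\{|\bu|,|\bv|\}$ with the $d$-dimensional all-$\#$ letter $\bdiese$. Thus $\rep_{(\bS,\bS)}\couple{\bm}{\bn}$ is obtained by superposing $\bu$ and $\bv$ column by column, the first $d$ components recording $\bu$ and the last $d$ recording $\bv$. Since no letter of $\bu$ or $\bv$ equals $\bdiese$, the block of leading all-$\#$ columns in the $\bu$-part has length exactly $L-|\bu|$, and likewise $L-|\bv|$ for the $\bv$-part; as one of these two numbers is $0$, the two padding blocks are disjoint and the sign of $|\bu|-|\bv|$ is already revealed by the first column. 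When $|\bu|=|\bv|$ there is no padding at all, and every column carries a genuine letter of $\bA$ on both parts.

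With these observations the automaton for $R_<$ is immediate. From the initial state, on the first column: if the $\bu$-part is $\bdiese$ (so $|\bu|<|\bv|$) go to an accepting sink; if the $\bv$-part is $\bdiese$ (so $|\bu|>|\bv|$) go to a rejecting sink; otherwise enter a ``still equal'' state. In that state each column carries two letters $\ba,\bb\in\bA$, which we compare in the fixed total order on $\bA$: if $\ba<\bb$ go to the accepting sink, if $\ba>\bb$ to the rejecting sink, and if $\ba=\bb$ stay. The initial and ``still equal'' states are non-accepting (ending there means $\bu=\bv$, so the inequality is not strict), and both sinks absorb the remaining letters. This finite deterministic automaton accepts an input of the form $\rep_{(\bS,\bS)}\couple{\bm}{\bn}$ exactly when $\bu<_{\rad}\bv$; intersecting its language with the (regular) numeration language of $(\bS,\bS)$ gives $\rep_{(\bS,\bS)}(R_<)$, which is thus regular. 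The cases $\diamond={>}$ and $\diamond={=}$ are handled identically after swapping the two sinks (and, for $=$, declaring the initial and ``still equal'' states accepting and both length-mismatch branches rejecting); alternatively, since $R_=$, $R_<$, $R_>$ partition $\N^{2d}$ into $(\bS,\bS)$-recognizable sets, any one of them follows from the other two by Boolean closure.

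The only delicate point is the bookkeeping of the padding: one must verify that the length comparison is settled at the first column and that, in the equal-length case, the lexicographic scan never encounters a padding symbol. Both facts are exactly the disjointness of the two padding blocks noted above, after which the construction is routine.
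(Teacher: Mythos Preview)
Your approach is correct and essentially the same as the paper's: reduce $E_{\bS}$-comparison to radix comparison of the $\bS$-representations, build a small DFA for that comparison, and intersect with the numeration language $\rep_{(\bS,\bS)}(\N^{2d})$. The paper streamlines your construction with one observation you do not use: it extends the total order on $\bA$ to $\bA_{\bdiese}$ by declaring $\bdiese$ minimal. Then for $\bu,\bv\in\bA^*$ the radix comparison $\bu<_{\rad}\bv$ coincides with the lexicographic comparison of the equal-length padded words $\bdiese^{L-|\bu|}\bu$ and $\bdiese^{L-|\bv|}\bv$ over $\bA_{\bdiese}$, because a leading $\bdiese$ is automatically smaller than any genuine letter. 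This absorbs your length-check into the lex scan and yields a single three-state DFA (states $q_=,q_>,q_<$) doing pure lexicographic comparison over $\bA_{\bdiese}\times\bA_{\bdiese}$, with no special first-column handling. Your explicit case split on the first column achieves the same thing, just less compactly.

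One small slip in your description: as written, when the first column has both parts in $\bA$ you ``enter a still-equal state'' without comparing those two letters, so a disagreement already at position $1$ (in the equal-length case) would be missed. The fix is immediate: in the ``otherwise'' branch you must already compare the two $\bA$-letters and go to the appropriate sink or to ``still equal'' accordingly --- or, equivalently, merge your initial and ``still equal'' states and let the $\bdiese$-as-minimum convention handle the length mismatch, exactly as the paper does.
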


\begin{proof}
We extend the total order on $\bA$ to a total order on $\bA_{\bdiese}$ by declaring the letter $\bdiese$ to be less than all the letters in $\bA$. We can then consider the induced radix order on $(\bA_{\bdiese})^*$.
For each $\diamond\in\{=,>,<\}$, the language 
$R:=\{\couple{\bu}{\bv}\in (\bA_{\bdiese}\times\bA_{\bdiese})^*\colon  
	\bu\diamond_{\rad}\bv\}$
is regular as a DFA $\mathcal{A}_{\diamond}$ accepting this language is obtained as follows. The set of states is $\{q_=,q_>,q_<\}$. The initial state is $q_=$ and the only final state is $q_\diamond$. For all $\ba,\bb\in \bA_{\bdiese}$, $\delta(q_>, \couple{\ba}{\bb}) = q_>$,
$\delta(q_<, \couple{\ba}{\bb}) = q_<$ and
\[
	\delta(q_=, \couple{\ba}{\bb})
	=
	\begin{cases}
	q_= & \text{if }\ba=\bb\\ 
	q_> & \text{if }\ba>\bb\\
	q_< & \text{if } \ba<\bb.
	\end{cases}
\]
Since $\rep_{(\bS,\bS)}\big(\{\couple{\bm}{\bn}\in\N^{2d}\colon E_{\bS}(\bm) \diamond E_{\bS}(\bn)\}\big)=\rep_{(\bS,\bS)}(\N^{2d})\cap R$, the conclusion follows.
\end{proof}

\begin{remark}
If $d\ge 2$, then it is easily seen by using the pumping lemma that the subset $\{\couple{\bn}{E_{(\mathcal{S},\ldots,\mathcal{S})}(\bn)}\colon\bn\in\N^d\})$ of $\N^{d+1}$ is not $((\mathcal{S},\ldots,\mathcal{S}),\mathcal{S})$-recognizable for any abstract numeration system $\mathcal{S}$. 
\end{remark}

The enumeration $E_{\bS}$ was defined recursively and depends on the chosen order on $\bA$. In the case of integer base numeration systems, we are able to give a closed formula. Let $b\in\N_{\ge 2}$, let
\[
	\mathcal{S}_b=(\{1,\ldots,b-1\}\{0,\ldots,b-1\}^*\cup\{\varepsilon\},0<\cdots<b-1)
\]
and let $\bS_{b,d}=(\mathcal{S}_b,\ldots,\mathcal{S}_b)$ be the $d$-dimensional abstract numeration system made of $d$ copies of $\mathcal{S}_b$. Suppose that $\#<0$ and consider the total order on the alphabet $\{\#,0,\ldots,b-1\}^d\setminus\{\bdiese\}$ induced by the lexicographic order on the components, i.e.,
\begin{equation}
\label{Eq : LexOrder}
	\duple{a_1}{a_d}< \duple{b_1}{b_d}\iff
	\exists i\in[\![1,d]\!],\ a_i<b_i \text{ and } 
	\forall j\in[\![1,i-1]\!],\ a_j=b_j.
\end{equation}

Let us introduce some extra notation. First, if for each $i\in[\![1,d]\!]$, $w_i$ is a word of length $\ell$, then the \emph{perfect shuffle} of $w_1,\ldots,w_d$ is the word of length $\ell d$ given by
\[
	\Sh\duple{w_1}{w_d}=\prod_{j=1}^{\ell}(w_1[j]\cdots w_d[j]).
\]

\begin{example}
We have $\Sh\triple{ab}{cd}{ef}=acebdf$.
\end{example}

We define the function $\val_b$ as the usual $b$-value function:
\[
	\val_b\colon \{0,\ldots,b-1\}^*\to\N,\
	w\mapsto \sum_{j=1}^{|w|} w[j]b^{|w|-j}.
\]
Note that the functions $\val_b$ and $\val_{\mathcal{S}_b}$ coincide on words not starting with the letter $0$. 
 
\begin{proposition}
For all $\bn\in\N^d$, $E_{\bS_{b,d}}(\bn)=
	\val_b \big(\sigma_{\#,0}(\Sh(\rep_{\bS_{b,d}}(\bn)))\big)$ where $\sigma_{\#,0}$ is the morphism that replaces $\#$ by $0$ and leaves the other letters unchanged.  
\end{proposition}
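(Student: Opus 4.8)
The plan is to prove that the right-hand side, which I will abbreviate as $\phi(\bn):=\val_b\big(\sigma_{\#,0}(\Sh(\rep_{\bS_{b,d}}(\bn)))\big)$, counts exactly the number of $\bm\in\N^d$ with $\bm<_{\bS_{b,d}}\bn$; this count is by definition $E_{\bS_{b,d}}(\bn)$. The case $\bn=\bz$ is immediate, since $\rep_{\bS_{b,d}}(\bz)=\varepsilon$ and $\val_b(\varepsilon)=0=E_{\bS_{b,d}}(\bz)$, so I assume $\bn\ne\bz$ and write $\ell$ for the common length of the components of $\rep_{\bS_{b,d}}(\bn)$.

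First I would simplify the word under $\val_b$. Each component of $\rep_{\bS_{b,d}}(\bn)$ has the form $\#^{\ell-|\rep_b(n_i)|}\rep_b(n_i)$; since $\sigma_{\#,0}$ acts letterwise and hence commutes with the perfect shuffle, the word $\sigma_{\#,0}(\Sh(\rep_{\bS_{b,d}}(\bn)))$ is the perfect shuffle of the length-$\ell$ strings $\rho_\ell(n_i):=0^{\ell-|\rep_b(n_i)|}\rep_b(n_i)$, i.e.\ of the standard zero-padded base-$b$ digit strings of the $n_i$. Next I would use the fixed-length reading of $\val_b$: for a word $w'$ over $\{0,\ldots,b-1\}$ of length $L$, $\val_b(w')$ equals the number of $x\in\{0,\ldots,b-1\}^L$ with $x<_{\lex}w'$. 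Applying this with $L=\ell d$, and using that de-interleaving is a lex-preserving bijection between $\{0,\ldots,b-1\}^{\ell d}$ and $d$-tuples of length-$\ell$ digit strings (the tuples being compared column by column, each column read letterwise, which is precisely the componentwise lexicographic order \eqref{Eq : LexOrder} fixed on $\bA$), and that each length-$\ell$ digit string is $\rho_\ell(m_i)$ for a unique $m_i\in\{0,\ldots,b^\ell-1\}$, I obtain that $\phi(\bn)$ equals the number of $\bm\in\{0,\ldots,b^\ell-1\}^d$ whose zero-padded column word is lex-smaller than that of $\bn$.

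It then remains to prove the comparison lemma: for $\bm,\bn$ with all components $<b^\ell$ (equivalently with $|\rep_{\bS_{b,d}}(\cdot)|\le\ell$) and $|\rep_{\bS_{b,d}}(\bn)|=\ell$, the zero-padded length-$\ell$ column word of $\bm$ is $<_{\lex}$ that of $\bn$ if and only if $\rep_{\bS_{b,d}}(\bm)<_{\rad}\rep_{\bS_{b,d}}(\bn)$. Granting this, $\phi(\bn)$ counts exactly $\{\bm:\rep_{\bS_{b,d}}(\bm)<_{\rad}\rep_{\bS_{b,d}}(\bn)\}=E_{\bS_{b,d}}(\bn)$, which is the claim. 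The key structural remark is that $\#$-padding and $0$-padding place each component's genuine digits in identical positions (both prepend $\ell-|\rep_b(\cdot)|$ symbols), so the $\#$-padded column word and the zero-padded one differ only by $\#\leftrightarrow0$ in the leading padded cells. If $|\rep_{\bS_{b,d}}(\bm)|<\ell$, both orders declare $\bm$ smaller: the columns of $\bm$ are all zero up to and including the first column in which $\bn$ carries a leading, hence nonzero, digit. If the two lengths are equal, I compare letterwise and locate the first position at which the $\#$-padded words differ.

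The heart of the argument — and the step I expect to be the most delicate — is the observation that this first differing position can never be a ``$\#$ versus $0$'' position. Indeed, at such a position the larger word would be carrying the first genuine digit of its component, i.e.\ a leading digit, which is nonzero by the definition of $\mathcal{S}_b$ (representations have no leading zeros). Consequently $\sigma_{\#,0}$ leaves this comparison strict and in the same direction: where the $\#$-padded words have $s<t$ with $s=\#$, one has $t\ge 1$, so the zero-padded words have $0<t$; where $s$ is a digit, nothing changes. Hence the zero-padded words first differ at the very same position and in the same order, proving the lemma (and its converse follows from totality of both orders together with injectivity of $\bm\mapsto$ its zero-padded column word). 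This no-leading-zero phenomenon is exactly what lets the padding symbol $\#$, which is smaller than every digit, be replaced by $0$ without ever reversing or collapsing a comparison, and it is the precise point where the hypothesis that $\bS_{b,d}$ is built from the genuine base-$b$ system is used.
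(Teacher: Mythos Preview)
Your proof is correct. Both your argument and the paper's hinge on the same no-leading-zero observation that keeps $\sigma_{\#,0}$ from collapsing or reversing any comparison, but the conclusions are organized differently. The paper shows that $\phi(\bn):=\val_b\big(\sigma_{\#,0}(\Sh(\rep_{\bS_{b,d}}(\bn)))\big)$ is strictly $<_{\rad}$-increasing and then separately checks surjectivity of $\phi$ onto $\N$ (by explicitly de-interleaving the base-$b$ representation of an arbitrary $e\in\N$), concluding that $\phi$ is the unique order isomorphism $(\N^d,<_{\bS_{b,d}})\to(\N,<)$, hence equal to $E_{\bS_{b,d}}$. You instead read $\val_b$ on length-$\ell d$ strings as a lex-count and then bijectively match that count with $\{\bm:\bm<_{\bS_{b,d}}\bn\}$ via your comparison lemma, which makes the surjectivity step unnecessary. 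Your route is a little more direct; the paper's route makes the order-isomorphism structure more explicit and produces the surjectivity of $\phi$ as a standalone fact.
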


In the previous formula, the inserted morphism $\sigma_{\#,0}$ could be removed if we had taken the convention to pad the shortest representations with the letter $0$ instead of $\#$ as is usually done in integer base numeration systems.

\begin{proof}
First, we prove that for all $\bm,\bn\in\N^d$,
$\rep_{\bS_{b,d}}(\bm)<_{\rad}\rep_{\bS_{b,d}}(\bn)$
implies that 
\begin{equation}
\label{Eq : Val_b}
	\val_b \big(\sigma_{\#,0}(\Sh(\rep_{\bS_{b,d}}(\bm)))\big)
	<\val_b \big(\sigma_{\#,0}(\Sh(\rep_{\bS_{b,d}}(\bn)))\big).
\end{equation}
Let $\bm,\bn\in\N^d$. Observe that the first letters of both $\rep_{\bS_{b,d}}(\bm)$ and $\rep_{\bS_{b,d}}(\bm)$ have at least a component in $\{1,\ldots,b-1\}$. Therefore, the first length-$d$ blocks of both $\sigma_{\#,0}(\Sh(\rep_{\bS_{b,d}}(\bm)))$ and $\sigma_{\#,0}(\Sh(\rep_{\bS_{b,d}}(\bm)))$ differ from $0^d$. 

Suppose that $\rep_{\bS_{b,d}}(\bm)<_{\rad}\rep_{\bS_{b,d}}(\bn)$, and let $k=|\rep_{\bS_{b,d}}(\bm)|$ and $\ell=|\rep_{\bS_{b,d}}(\bn)|$. We get from the previous observation that $\val_b\big(\sigma_{\#,0}(\Sh(\rep_{\bS_{b,d}}(\bm)))\big)$ belongs to the interval $[\![b^{(k-1)d},b^{kd}-1]\!]$ whereas $\val_b\big(\sigma_{\#,0}(\Sh(\rep_{\bS_{b,d}}(\bn)))\big)$ is in $[\![b^{(\ell-1)d},b^{\ell d}-1]\!]$. This proves~\eqref{Eq : Val_b} if $k<\ell$. Now assume that $k=\ell$. Then we have
$|\sigma_{\#,0}(\Sh(\rep_{\bS_{b,d}}(\bm)))|=
|\sigma_{\#,0}(\Sh(\rep_{\bS_{b,d}}(\bn)))|$. By choice of the order~\eqref{Eq : LexOrder}, we get that
\[
	\sigma_{\#,0}(\Sh(\rep_{\bS_{b,d}}(\bm)))			
	<_{\lex}	\sigma_{\#,0}(\Sh(\rep_{\bS_{b,d}}(\bn))).
\]
It is classical that the function $\val_b$ respects the lexicographic order on words of the same length, i.e., for any $u,v\in\{0,\ldots,b-1\}^*$ such that $|u|=|v|$, we have $\val_b(u)<\val_b(v)\iff u<_{\lex}v$. Therefore, $\eqref{Eq : Val_b}$ holds in this case as well.

In order to conclude, it suffices to show that the map 
\[
	\N^d\to \N,\ 
	\bn\mapsto \val_b\big(\sigma_{\#,0}(\Sh(\rep_{\bS_{b,d}}(\bn)))\big)
\]
is surjective. Let $e\in \N$. Define $\ell\in \N$ as the least integer such that $\ell d\ge |\rep_{\mathcal{S}_b}(e)|$ and factorize $0^{\ell d- |\rep_{\mathcal{S}_b}(e)|} \rep_{\mathcal{S}_b}(e)= w_1 \cdots w_{\ell}$ where each factor $w_j$ has length $d$. Then, for all $i\in[\![1,d]\!]$, we define $n_i= \val_b(w_1[i] \cdots w_{\ell}[i])$. By choice of $\ell$, there exists $i\in[\![1,d]\!]$ such that $w_1[i]\ne 0$, and hence such that $\rep_{\mathcal{S}_b}(n_i)=w_1[i] \cdots w_{\ell}[i]$. We obtain
\[
	\val_b\Big(\sigma_{\#,0}\Big(\Sh\Big(\rep_{\bS_{b,d}}\duple{n_1}{n_d}\Big)\Big)\Big)
	=\val_b\Big(\Sh\duple{w_1[1] \cdots w_{\ell}[1]}{w_1[d] \cdots w_{\ell}[d]}\Big)
	=\val_b(w_1 \cdots w_{\ell})
	=e. 
\]
\end{proof}

\begin{example} 
We have 
$E_{(\mathcal{S}_2,\mathcal{S}_2)}\couple{1}{6}
=\val_2 \big(\sigma_{\#,0}\big(\Sh\couple{\#\#1}{110}\big)\big)
=\val_2(\sigma_{\#,0}(\#1\#110))
=\val_2(010110)
=22$, which indeed corresponds to the value found in the left part of Figure~\ref{Fig : EnumerationFibo}.
\end{example}

\subsection{Second ingredient: generating $(\bS,\N)$-regular sequences from $(\bS,\bS')$-recognizable sets}
\label{Subsec : Ingredient1}

We first recall the following two results on formal series from \cite{CharlierRampersadShallit2012}; also see \cite{Charlier2018} for a survey.

\begin{proposition}
\label{Pro :  bounded-N_infty-recognizable}
If a series $S\colon A^*\to\N$ is $\N_{\infty}$-recognizable, then it is $\N$-recognizable.
\end{proposition}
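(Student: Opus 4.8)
The plan is to start from an arbitrary $\N_{\infty}$-linear representation $(\lambda,\mu,\gamma)$ of $S$, say of dimension $r$, and to produce an $\N$-linear representation of the \emph{same} series by the crudest possible surgery: replace every entry equal to $\infty$ (in $\lambda$, in each matrix $\mu(a)$, and in $\gamma$) by $0$, while leaving all finite entries untouched. Denote the resulting triple by $(\lambda',\mu',\gamma')$; by construction all its entries lie in $\N$. The entire content of the proof is then to check that this surgery changes no coefficient, that is, that $(S,w)=\lambda'\mu'(w)\gamma'$ for every $w\in A^*$, which immediately yields that $S$ is $\N$-recognizable.

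To do this I would expand each matrix product coefficient-wise as a sum over ``paths''. Writing $w=a_1\cdots a_n$, one has
\[
	(S,w)=\lambda\mu(w)\gamma=\sum_{i_0,\ldots,i_n} \lambda_{i_0}\,(\mu(a_1))_{i_0 i_1}\cdots(\mu(a_n))_{i_{n-1}i_n}\,\gamma_{i_n},
\]
a sum of finitely many products of elements of $\N_{\infty}$, and likewise for $(\lambda',\mu',\gamma')$. It then suffices to prove that each summand $P$ takes the same value before and after the surgery, and to sum. I would argue by cases on $P$: if $P$ involves no factor equal to $\infty$, then all its factors are finite and the surgery leaves it untouched; if $P$ involves an $\infty$ factor together with a $0$ factor, then $P=0$ both before (because $0$ annihilates in $\N_{\infty}$, using in particular $0\cdot\infty=0$) and after (the $\infty$ factors become $0$, and the original $0$ factor remains); the only remaining case is $P$ having an $\infty$ factor but no $0$ factor, in which all factors are nonzero and $P=\infty$.

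The key step, and the only place where the hypothesis that $S$ takes values in $\N$ is used, is to rule out this last case globally. If some summand for a word $w$ were equal to $\infty$, then, since a finite sum in $\N_{\infty}$ that contains a term $\infty$ is itself $\infty$ (because $\infty+x=\infty$), we would obtain $(S,w)=\infty$, contradicting the fact that every coefficient of $S$ lies in $\N$. Hence no summand is infinite, so for every $w$ the third case never occurs; the case analysis closes and gives $(S,w)=\lambda'\mu'(w)\gamma'$.

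I expect the main (and rather mild) obstacle to be purely a matter of bookkeeping: carrying out the path expansion and the case analysis carefully inside the semiring $\N_{\infty}$ with its convention $0\cdot\infty=0$. This convention is exactly what guarantees that an $\infty$ factor accompanied by a $0$ factor contributes $0$ rather than being ill-behaved, and it is what makes the coefficient-by-coefficient comparison go through. No other subtlety is anticipated, and the argument is uniform in $w$ (including $w=\varepsilon$, where the expansion reduces to $\sum_{i}\lambda_i\gamma_i$).
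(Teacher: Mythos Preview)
Your argument is correct. The path expansion together with the observation that $(S,w)\in\N$ forbids any summand equal to $\infty$ is exactly the right mechanism, and the convention $0\cdot\infty=0$ is used precisely where you say it is. One tiny redundancy: in your second case you do not actually need the ``original $0$ factor'' to survive---once an $\infty$ factor has been turned into $0$, the surgered product is $0$ regardless. But this does not affect correctness.

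As for comparison with the paper: there is nothing to compare. The paper does not prove this proposition; it merely recalls it from \cite{CharlierRampersadShallit2012} (see the sentence introducing Propositions~\ref{Pro :  bounded-N_infty-recognizable} and Theorem~\ref{The : S-N-infty-recognizable}). Your write-up therefore supplies a self-contained proof where the paper gives only a citation. The argument you give is in fact the standard one: any $\N_\infty$-weighted automaton whose behaviour is $\N$-valued can have all $\infty$ weights zeroed out without changing a single coefficient, because an $\infty$ weight that is ever ``used'' along an accepting path with no zero factor would force an infinite coefficient.
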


\begin{theorem}
\label{The : S-N-infty-recognizable}
Let $S\colon A^*\to\N_\infty$. The following assertions are equivalent.
\begin{enumerate}
\item The series $S$ is $\N_\infty$-recognizable.
\item There exists a regular language $L\subseteq (A_\$\times \Delta)^*$ (where $\$\notin A$ and $\Delta$ is a finite alphabet) such that for all $w\in A^+$, $(S,w)=\Card\{z\in L\colon \tau_\$(\pi_1(z))=w\}$, where $\pi_1$ is the projection onto the first component.
\end{enumerate}
\end{theorem}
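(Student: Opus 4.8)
The plan is to prove both implications by passing between a weighted automaton over $\N_\infty$ (a linear representation) and an ordinary finite automaton reading letter–label pairs, using the extra $\Delta$-component and the padding symbol $\$$ to \emph{unfold} the multiplicities carried by the weights.

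To prove (2) $\Rightarrow$ (1), I would start from a \emph{deterministic} finite automaton $\mathcal{A}$ recognizing $L$, so that each $z\in L$ corresponds to exactly one accepting path and cardinalities of words equal cardinalities of paths. Let $Q$ be its state set. I split the transitions according to whether the first component is a genuine letter or the padding symbol: for $a\in A$ define the $Q\times Q$ matrix $M_a$ by $(M_a)_{p,q}=\Card\{\delta\in\Delta\colon \mathcal{A} \text{ reads } \couple{a}{\delta} \text{ from } p \text{ to } q\}$, and define $P$ analogously using the letters $\couple{\$}{\delta}$. The entry $(P^{\ast})_{p,q}$, defined as the number of $\$$-labelled words leading from $p$ to $q$ in $\mathcal{A}$ (of arbitrary length), is a well-defined element of $\N_\infty$: it equals $\infty$ exactly when such paths can be pumped. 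Setting $\lambda=\lambda_0 P^{\ast}$ (with $\lambda_0$ the indicator row of the initial state), $\gamma$ the indicator column of the final states, and $\mu(a)=M_a P^{\ast}$ for $a\in A$ extended multiplicatively, one checks that for every $w=a_1\cdots a_k$ the product $\lambda\mu(w)\gamma$ equals $\lambda_0 P^{\ast}M_{a_1}P^{\ast}\cdots M_{a_k}P^{\ast}\gamma$, which counts exactly the decompositions $z=B_0\,c_1\,B_1\cdots c_k\,B_k$ with $c_i$ a letter above $a_i$ and each $B_i$ a (possibly empty) $\$$-block; since such a decomposition of a given $z$ is unique, this number is $\Card\{z\in L\colon\tau_\$(\pi_1(z))=w\}=(S,w)$. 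Hence $(\lambda,\mu,\gamma)$ is an $\N_\infty$-linear representation of $S$ on $A^+$ (the boundary value at $\varepsilon$ being produced by the very same formula).

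For (1) $\Rightarrow$ (2), I would fix an $\N_\infty$-linear representation $(\lambda,\mu,\gamma)$ of $S$ of dimension $r$. I read $(S,w)=\lambda\mu(w)\gamma$ as a sum, over the finitely many ``skeleton paths'' $q_0\to\cdots\to q_k$ spelling $w=a_1\cdots a_k$, of the products $\lambda_{q_0}\,\prod_i\mu(a_i)_{q_{i-1},q_i}\,\gamma_{q_k}$, and I encode each such path together with a witness of its weight as a word over $A_\$\times\Delta$. A finite nonzero weight $\mu(a)_{p,q}=m$ is unfolded into $m$ distinct symbols $\couple{a}{\delta}$, the label $\delta$ recording $(p,q)$ and a choice index in $[\![1,m]\!]$; an infinite weight $\mu(a)_{p,q}=\infty$ is unfolded into a block $\couple{\$}{\star}^{n}\couple{a}{\delta}$ with $n\ge 0$ arbitrary, the unbounded padding producing the needed infinitely many witnesses while the erased $\$$'s keep $\tau_\$(\pi_1(\cdot))$ equal to $w$; the initial and final weights $\lambda_{q_0},\gamma_{q_k}$ are handled by analogous leading and trailing gadgets. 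Let $L$ be the set of all such encodings. Requiring successive labels to have matching states, and $\$$-blocks to occur only next to infinite-weight transitions, are local conditions, so $L$ is regular; and reading the skeleton, the finite choices and the block-lengths back off a word $z$ shows that the encoding is a bijection between witnessed paths and the $z\in L$ above $w$. Consequently $\Card\{z\in L\colon\tau_\$(\pi_1(z))=w\}$ equals the displayed sum $(S,w)$, with the $\N_\infty$ conventions ($0$ absorbing, $\infty$ dominating) matching ``no encoding'' and ``infinitely many encodings'' respectively.

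The routine direction is (2) $\Rightarrow$ (1); the delicate point, and the part I expect to cost the most care, is the encoding in (1) $\Rightarrow$ (2). One must realize infinite multiplicities by unbounded $\$$-padding \emph{without} destroying regularity of $L$ or corrupting the count, and one must guarantee that the map from weighted paths to words $z$ is a genuine bijection, so that counting \emph{words} (rather than runs, which could be ambiguous) reproduces $(S,w)$ exactly. The use of $\N_\infty$ rather than $\N$ is precisely what legitimizes both the star $P^{\ast}$ in the first direction and the unbounded padding in the second.
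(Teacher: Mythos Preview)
The paper does not prove this theorem: it is stated as a result recalled from \cite{CharlierRampersadShallit2012} (see the sentence preceding Proposition~\ref{Pro :  bounded-N_infty-recognizable}), so there is no in-paper proof to compare against. I therefore assess your argument on its own merits.

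Your direction (2) $\Rightarrow$ (1) is clean and correct. Starting from a deterministic automaton for $L$ is the right move (so that words, not runs, are counted), and absorbing the $\$$-blocks into the star matrix $P^{\ast}$ over $\N_\infty$ is exactly how one collapses the padding; the product $\lambda_0 P^{\ast}M_{a_1}P^{\ast}\cdots M_{a_k}P^{\ast}\gamma$ indeed counts the words $z\in L$ with $\tau_\$(\pi_1(z))=a_1\cdots a_k$. This is the standard argument and matches the proof in the cited reference.

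Your direction (1) $\Rightarrow$ (2) has the right idea, but your sketch hides a bookkeeping point that must be handled for the bijection claim to go through. You encode an infinite-weight transition as $\couple{\$}{\star}^{n}\couple{a}{\delta}$ and say the initial and final weights are ``handled by analogous leading and trailing gadgets''. If $\lambda_{q_0}=\infty$ and simultaneously the first transition $\mu(a_1)_{q_0,q_1}=\infty$, then two consecutive $\$$-blocks sit at the front of $z$ with nothing to separate them; with a single symbol $\star$ the decomposition $n=n_\lambda+n_\mu$ is not unique and your encoding is no longer injective. The fix is easy---use distinct second-component symbols for the $\lambda$-, $\mu$-, and $\gamma$-padding (or, equivalently, force each $\$$-block to carry its own state tag in $\Delta$)---but it should be said explicitly, since the correctness of the count rests precisely on the map from witnessed paths to words $z$ being a bijection. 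Once this is done, regularity of $L$ and the $\N_\infty$ arithmetic ($0$ absorbing, $\infty$ dominating) behave exactly as you describe.
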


In what follows, we sometimes consider an extra abstract numeration system $\bS'$ of dimension $d'$. The notation of Section~\ref{Sec : Preliminaries} are extended to this context in a natural manner. This theorem can be seen as a generalization of \cite[Theorem 3.4.15]{Charlier2018} to abstract numeration systems. Note that here, we only use the notion of recognizability of sets of vectors of integers (see Section~\ref{Sec : NumSys}), whereas the notion of definability of such sets was used in \cite{Charlier2018}. 

\begin{theorem}
\label{The : enumeration}
If $X$ is a $(\bS,\bS')$-recognizable subset of $\N^{d+d'}$, then the sequence 
\begin{equation}
\label{Eq : f-Card}
f\colon\N^d\to\N_{\infty},\ \bn\mapsto \Card\{\bn'\in\N^{d'}\colon \couple{\bn}{\bn'}\in X\}
\end{equation}
is $(\bS,\N_{\infty})$-regular. If moreover $f(\N)\subseteq\N$ then $f$ is $(\bS,\N)$-regular.
\end{theorem}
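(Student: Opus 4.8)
The plan is to reduce the statement to the characterization of $\N_\infty$-recognizable series given by Theorem~\ref{The : S-N-infty-recognizable}. The key observation is that the sequence $f$ counts, for each $\bn\in\N^d$, the number of $\bn'\in\N^{d'}$ such that $\couple{\bn}{\bn'}\in X$, and since $X$ is $(\bS,\bS')$-recognizable, the set $\rep_{(\bS,\bS')}(X)$ is a regular language. The obstacle to applying Theorem~\ref{The : S-N-infty-recognizable} directly is one of alignment: that theorem wants a regular language $L$ over an alphabet of the form $A_\$\times\Delta$ such that the number of preimages of $w$ under $\tau_\$\circ\pi_1$ equals $(S,w)$. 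I need to massage $\rep_{(\bS,\bS')}(X)$ into this shape, viewing the $\bS$-part of a representation as the ``input word'' $w$ and the $\bS'$-part (after padding) as the auxiliary ``$\Delta$-component'' being counted.

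**First I would** set $S_f=\sum_{\bw\in\bL} f(\val_{\bS}(\bw))\,\bw$ and aim to show $S_f$ is $\N_\infty$-recognizable. Here $A=\bA$ is the input alphabet and I take $\Delta=\bA'_{\#'}$ (the $\bS'$-alphabet together with its padding symbol), playing the role of the finite auxiliary alphabet. The natural candidate for $L$ is obtained from $\rep_{(\bS,\bS')}(X)$: an element of this language is a word whose letters record, synchronously, a letter of the $\bS$-representation of $\bn$ (possibly a padding symbol $\bdiese$) together with a letter of the $\bS'$-representation of $\bn'$. After adjoining the pad symbol $\$$ to $\bA$, I would arrange the letters so that $\pi_1$ projects onto the $\bS$-component and $\tau_\$$ erases the padding, so that $\tau_\$(\pi_1(z))$ recovers exactly $\rep_{\bS}(\bn)=\bw$. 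Then $\Card\{z\in L\colon \tau_\$(\pi_1(z))=\bw\}$ counts precisely the $\bn'$ with $\couple{\bn}{\bn'}\in X$, which is $f(\val_{\bS}(\bw))$ by definition. The regularity of $L$ follows from the regularity of $\rep_{(\bS,\bS')}(X)$ together with the fact that inserting and deleting pad symbols and relabelling letters preserve regularity.

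**The main obstacle** I expect is the bookkeeping around the two independent paddings. The joint representation $\rep_{(\bS,\bS')}$ pads the $\bS$-block and the $\bS'$-block to a common length determined by \emph{both} $\bn$ and $\bn'$; but when I count over $\bn'$ with $\bn$ fixed, different $\bn'$ yield representations of different lengths, so the $\bw$-component in $L$ is padded inconsistently across the words being counted. This is exactly what the symbol $\$$ in Theorem~\ref{The : S-N-infty-recognizable} is designed to absorb: by letting the first-component alphabet be $(\bA)_\$$ and erasing $\$$ via $\tau_\$$, I recover the same $\bw$ regardless of how much extra padding a particular $\bn'$ forced. Making this precise requires defining $L$ so that the $\bS$-block is $\$$-padded on the left up to the total length, and then checking carefully that $\tau_\$\circ\pi_1$ is constant on the fiber over each $\bn$ while $\pi_2$ ranges bijectively over $\{\rep_{\bS'}(\bn')\colon \couple{\bn}{\bn'}\in X\}$. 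Once $L$ is built and shown regular, Theorem~\ref{The : S-N-infty-recognizable} yields that $S_f$ is $\N_\infty$-recognizable, hence $f$ is $(\bS,\N_\infty)$-regular.

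**Finally**, if $f(\N^d)\subseteq\N$, then $S_f$ is an $\N$-valued series that is $\N_\infty$-recognizable, so Proposition~\ref{Pro : bounded-N_infty-recognizable} immediately upgrades this to $\N$-recognizability, giving that $f$ is $(\bS,\N)$-regular. This last step is routine; the whole weight of the argument rests on the correct construction of the counting language $L$ and the verification that its fiber cardinalities reproduce $f$.
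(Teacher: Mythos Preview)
Your proposal is correct and follows essentially the same route as the paper: both take $L=\rep_{(\bS,\bS')}(X)$, identify the $\bS$-block padding symbol $\bdiese$ with the erasable symbol $\$$, and invoke Theorem~\ref{The : S-N-infty-recognizable} (then Proposition~\ref{Pro : bounded-N_infty-recognizable} for the $\N$-valued case). The only cosmetic difference is that the paper first shows the auxiliary counting series $S$ is $\N_\infty$-recognizable and then writes $S_f=S\odot\underline{\bL}$ via Proposition~\ref{Pro : HadamardReg}, whereas you apply the theorem directly to $S_f$; this works because $\tau_{\bdiese}(\pi_1(z))=\rep_{\bS}(\bn)\in\bL$ for every $z\in\rep_{(\bS,\bS')}(X)$, so the count is already $0$ on $\bA^*\setminus\bL$ and the Hadamard step is in fact redundant---you should state this one-line observation explicitly when you write the proof out.
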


\begin{proof}
Let $X$ be an $(\bS,\bS')$-recognizable subset of $\N^{d+d'}$ and let $f\colon\N^d\to\N_{\infty}$ be defined as in~\eqref{Eq : f-Card}. Then for all $\bn\in\N^d$, 
\[
	f(\bn)
	=\Card\{\boldsymbol{z}\in \rep_{(\bS,\bS')}(X)
	\colon\pi_1(\boldsymbol{z})\in \bdiese^*\rep_{\bS}(\bn)\}
\]
where $\pi_1$ denotes the projection onto the first $d$ components. Consider the series
\[
	S\colon \bA^* \to\N_{\infty},\ 
	\bw\mapsto 
	\Card\{\boldsymbol{z}\in \rep_{(\bS,\bS')}(X)	
	\colon\pi_1(\boldsymbol{z})\in \bdiese^*\bw\}.
\] 
For all $\bn\in\N^d$, we have $(S,\rep_{\bS}(\bn))=f(\bn)$. Therefore, $S_f=S\odot \underline{\bL}$. Since $\bL$ is a regular language, by Proposition~\ref{Pro : HadamardReg}, in order to obtain that the sequence $f$ is $(\bS,\N_{\infty})$-regular, it suffices to show that the series $S$ is $\N_{\infty}$-recognizable. For all $\bw\in \bA^*$ and $\boldsymbol{z}\in \rep_{(\bS,\bS')}(X)$, we have $\pi_1(\boldsymbol{z})\in \bdiese^*\bw$ if and only if $\tau_{\bdiese}(\pi_1(\boldsymbol{z}))=\bw$. Since $\rep_{(\bS,\bS')}(X)$ is a regular language, we obtain that $S$ is $\N_{\infty}$-recognizable by using Theorem~\ref{The : S-N-infty-recognizable}.

The fact that $f$ is $(\bS,\N)$-regular if $f(\N)\subseteq\N$ follows from Proposition~\ref{Pro :  bounded-N_infty-recognizable}.
\end{proof}

\subsection{Third ingredient: building $\bS$-recognizable predicates}
\label{Subsec : Ingredient3}

For any $m\in\N$, we let $\bS^m$ denote the $md$-dimensional numeration system $(\bS,\ldots,\bS)$ (where $\bS$ is repeated $m$ times) and we say that a predicate $P$ on $\N^{md}$ is \emph{$\bS$-recognizable} if its characteristic set 
\[
	\Big\{\duple{\bn_1}{\bn_m}\in\N^{md}
	\colon P(\bn_1,\ldots,\bn_m) \text{ is true}\Big\}
\] 
is $\bS^m$-recognizable. 
We let $\bx=\by$ and $\bx< \by$ denote the $2d$-ary predicates $(x_1=y_1)\land \cdots\land (x_d=y_d)$ and $(x_1< y_1)\land \cdots\land (x_d< y_d)$ respectively. These predicates are always $\bS$-recognizable since the languages
\[
	\rep_{(\bS,\bS)}
	\{\couple{\bn}{\bn}\colon \bn\in\N^d\}
	=\{\couple{\bw}{\bw}\colon \bw\in \bL\}
\]
and
\[
	\rep_{(\bS,\bS)}
	\{\couple{\bm}{\bn}\in\N^{2d}\colon \bm<\bn\}
	=\{\couple{\bu}{\bv}^\#\colon \bu,\bv\in \bL,\ u_1<_{\rad}v_1,\ldots,u_d<_{\rad}v_d\}
\] 
are both regular. On the other side, addition, which corresponds to the $3d$-ary predicate $\bx+\by=\boldsymbol{z}$ is not $\bS$-recognizable in general~\cite{CharlierRigoSteiner2008,LecomteRigo2001}. Note that addition is $\bS$-recognizable if and only if it is $\mathcal{S}_i$-recognizable for every $i\in[\![1,d]\!]$.

\begin{proposition}
\label{Prop : EqualityofAutSeqisRec}
If $f$ is an $\bS$-automatic $d$-dimensional sequence, then the $2d$-ary predicate $f(\bx)=f(\by)$ is $\bS$-recognizable.
\end{proposition}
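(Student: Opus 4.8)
The plan is to recognize the characteristic set
\[
	C=\Big\{\couple{\bm}{\bn}\in\N^{2d}\colon f(\bm)=f(\bn)\Big\}
\]
by reading the two $d$-dimensional tracks of an $(\bS,\bS)$-representation synchronously through a single automaton for $f$ and comparing the two outputs. Recognizability of $C$ amounts to regularity of $\rep_{(\bS,\bS)}(C)$, so I would produce a DFA $\mathcal{B}$ over the $2d$-dimensional alphabet whose accepted language $\mathcal{L}$ satisfies $\rep_{(\bS,\bS)}(C)=\mathcal{L}\cap\rep_{(\bS,\bS)}(\N^{2d})$. Since the numeration language $\rep_{(\bS,\bS)}(\N^{2d})$ is regular, this intersection is regular and the proof is complete.

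To build $\mathcal{B}$, I would start from a complete DFAO $\mathcal{A}=(Q,i,\delta,\bA,\tau,\Delta)$ for $f$ provided by Lemma~\ref{Lem : DFAO-with-0}, so that $f(\bm)=\tau(\delta(i,\rep_{\bS}(\bm)))$ for every $\bm\in\N^d$. First I would extend $\delta$ to the padding letter by declaring $\delta(q,\bdiese)=q$ for all $q\in Q$. Then $\mathcal{B}$ has state set $Q\times Q$, initial state $(i,i)$, transition $\big((p,q),\couple{\ba}{\bb}\big)\mapsto\big(\delta(p,\ba),\delta(q,\bb)\big)$, and set of final states $\{(p,q)\colon\tau(p)=\tau(q)\}$. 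Intuitively, the first copy of $\mathcal{A}$ processes the top track and the second copy processes the bottom track.

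To verify correctness I would use that $\rep_{(\bS,\bS)}\couple{\bm}{\bn}$ pads the shorter of $\rep_{\bS}(\bm),\rep_{\bS}(\bn)$ with \emph{leading} copies of $\bdiese$, and that genuine $\bS$-representations contain no $\bdiese$ letter. Hence each track has the form $\bdiese^{\,j}\,\rep_{\bS}(\cdot)$, so with the convention $\delta(q,\bdiese)=q$ the corresponding copy of $\mathcal{A}$ stays at $i$ throughout the leading padding and then reaches $\delta(i,\rep_{\bS}(\cdot))$. Reading the whole word in $\mathcal{B}$ therefore ends in the state $\big(\delta(i,\rep_{\bS}(\bm)),\delta(i,\rep_{\bS}(\bn))\big)$, which is final exactly when $\tau(\delta(i,\rep_{\bS}(\bm)))=\tau(\delta(i,\rep_{\bS}(\bn)))$, that is, when $f(\bm)=f(\bn)$. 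This yields $\mathcal{L}\cap\rep_{(\bS,\bS)}(\N^{2d})=\rep_{(\bS,\bS)}(C)$, as required.

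The only delicate point is the bookkeeping of the leading padding: I must check that treating $\bdiese$ as a no-op on the states of $\mathcal{A}$ genuinely reproduces the value of $f$ on each track. This is guaranteed by the left-padding convention together with the absence of $\bdiese$ inside genuine $\bS$-representations; everything else is a routine product-automaton argument, and the finiteness of the output alphabet $\Delta$ is needed only to ensure that the set of final states of $\mathcal{B}$ is well defined.
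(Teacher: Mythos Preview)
Your proof is correct and follows essentially the same approach as the paper's: take the product $\mathcal{A}\times\mathcal{A}$ of a DFAO computing $f$, declare a state $(p,q)$ final when $\tau(p)=\tau(q)$, and intersect with a DFA for the numeration language $(\bL\times\bL)^\#=\rep_{(\bS,\bS)}(\N^{2d})$. You are simply more explicit than the paper about how the product automaton handles the extra padding letter $\bdiese$ on each track.
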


\begin{proof}
In order to get a DFA accepting $\rep_{\bS}(\{\couple{\bm}{\bn}\in\N^{2d}\colon f(\bm)=f(\bn)\})$ from a DFAO $\mathcal{A}$ computing $f$, we compute the product $\mathcal{A}\times \mathcal{A}$ reading $2d$-tuples and we declare a state $(q,q')$ to be final if the outputs of $q$ and $q'$ in $\mathcal{A}$ coincide, and then we intersect the obtained DFA with a DFA accepting $(\bL\times\bL)^\#$.
\end{proof}

For a $d$-ary predicate $P(x_1,\ldots,x_d)$, we use the notation $\forall\bx P(\bx)$ and $\exists\bx P(\bx)$ as shortcuts for $\forall x_1 \cdots \forall x_d\, P(x_1,\ldots,x_d)$ and $\exists x_1 \cdots \exists x_d\, P(x_1,\ldots,x_d)$ respectively. 

\begin{theorem}
\label{The : P-S-rec}
Any predicate on $\N^{md}$ (with $m\in\N$) that is defined recursively from $\bS$-recognizable predicates by only using the logical connectives $\land,\lor,\neg,\implies,\iff$ and the quantifiers $\forall$ and $\exists$ on variables describing elements of $\N^d$, is $\bS$-recognizable.
\end{theorem}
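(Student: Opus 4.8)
The plan is to proceed by structural induction on the formula defining the predicate, identifying throughout a predicate on $\N^{md}$ with its characteristic subset of $\N^{md}$ and using that $\rep_{\bS^m}$ is a bijection between $\N^{md}$ and the regular language $\bL_m:=\rep_{\bS^m}(\N^{md})$ (the numeration language of $\bS^m$, regular for the same reason as $\bL$). The base case holds by hypothesis. Since $\forall\bx\,P\equiv\neg\exists\bx\,\neg P$, $\ P\implies Q\equiv\neg P\lor Q$ and $P\iff Q\equiv(P\implies Q)\land(Q\implies P)$, it suffices to prove that the class of $\bS$-recognizable predicates is closed under negation, under finite intersection and union, and under existential quantification over one block of $d$ variables. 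The Boolean cases are immediate from bijectivity of $\rep_{\bS^m}$: negation corresponds to the complement inside $\bL_m$, while conjunction and disjunction correspond to intersection and union of representation languages, all regular. To align the free variables of two subformulas before intersecting, and to quantify over an arbitrary block rather than only the last one, I will also use that the class is closed under permuting the $m$ blocks of $d$ coordinates (which acts on $\bL_m$ as a bijective renaming of the $md$-dimensional alphabet, hence preserves regularity) and under cylindrification, i.e.\ the introduction of a dummy free variable.

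The heart of the proof is the treatment of $\exists$, where the left-padding convention intervenes. Let $X\subseteq\N^{md}$ be recognizable with $R:=\rep_{\bS^m}(X)$ regular, and set $Y=\{(\bn_1,\ldots,\bn_{m-1})\colon\exists\bn_m,\ (\bn_1,\ldots,\bn_m)\in X\}$. Let $\pi$ be the letter-to-letter morphism that erases the last block of $d$ components. For $\bw\in R$ representing $(\bn_1,\ldots,\bn_m)$, the word $\pi(\bw)$ equals $\bdiese^{s}\,\rep_{\bS^{m-1}}(\bn_1,\ldots,\bn_{m-1})$ for some $s\ge 0$, where $\bdiese$ now denotes the all-$\#$ letter in dimension $(m-1)d$; the extra leading copies of $\bdiese$ appear exactly when $\bn_m$ has the strictly longest representation. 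Because a representation in $\bL_{m-1}$ never contains the all-$\#$ letter, stripping this leading $\bdiese$-block recovers $\rep_{\bS^{m-1}}(\bn_1,\ldots,\bn_{m-1})$ intact. Hence
\[
	\rep_{\bS^{m-1}}(Y)=\big((\bdiese^{*})^{-1}\,\pi(R)\big)\cap\bL_{m-1},
\]
where $(\bdiese^{*})^{-1}K=\{\bv\colon\exists s\ge 0,\ \bdiese^{s}\bv\in K\}$ is the left quotient by $\bdiese^{*}$, and the intersection with $\bL_{m-1}$ discards the partially stripped words, which fail to be tightly padded. As regular languages are closed under morphic image, left quotient and intersection, $\rep_{\bS^{m-1}}(Y)$ is regular, so $Y$ is recognizable. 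Cylindrification is handled symmetrically: if $X\subseteq\N^{(m-1)d}$ is recognizable with $R:=\rep_{\bS^{m-1}}(X)$, then $X\times\N^{d}$ has representation language $\pi^{-1}(\bdiese^{*}R)\cap\bL_m$, again regular, the factor $\bdiese^{*}$ accounting for the possible over-padding introduced by the new coordinate.

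Assembling these closure properties along the parse tree of the formula then yields the statement. I expect the only genuinely delicate point to be the re-padding after projection: the naive letterwise image $\pi(R)$ does not lie in $\bL_{m-1}$, and one must quotient out the leading $\bdiese^{*}$ and re-intersect with the numeration language in order to restore the tight left padding. This is precisely the step that depends on the left-padding choice fixed earlier, and the bookkeeping to be done carefully is the verification that $(\bdiese^{*})^{-1}\pi(R)\cap\bL_{m-1}$ equals $\rep_{\bS^{m-1}}(Y)$ rather than something strictly larger; the Boolean connectives and the logical reductions are then routine.
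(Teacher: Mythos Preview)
Your proposal is correct and follows essentially the same approach as the paper: structural induction on the formula, using closure properties of regular languages for the Boolean connectives and for projection. The paper's proof is much terser---it simply refers to the classical argument of Bruy\`ere--Hansel--Michaux--Villemaire and highlights that the only place needing care in the abstract setting is negation, where one must re-intersect with the numeration language $(\bL^m)^\#$; you make the same observation (``complement inside $\bL_m$'') and additionally spell out the projection step $\rep_{\bS^{m-1}}(Y)=\big((\bdiese^{*})^{-1}\pi(R)\big)\cap\bL_{m-1}$ and the cylindrification step, which the paper leaves implicit in its reference.
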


\begin{proof}
This is a straightforward generalization of the forward direction of the proof of~\cite[Theorem 6.1]{BruyereHanselMichauxVillemaire1994}. The only difference is when we take the negation of a $md$-ary predicate $P$, we do not simply complement the corresponding language of $\bS^m$-representations but we need to intersect with the numeration language on each component afterwards:
\begin{multline*}
	\rep_{\bS^m}
	\Big\{\duple{\bn_1}{\bn_m}\in\N^{md}
	\colon \neg P(\bn_1,\ldots,\bn_m)\text{ is true}\Big\}\\
	=(\bL^m)^\#
	\setminus \rep_{\bS^m}
	\Big\{\duple{\bn_1}{\bn_m}\in\N^{md}\colon P(\bn_1,\ldots,\bn_m)\text{ is true}\Big\}.
\end{multline*}
Since the numeration language $\bL$ is regular, when starting with an $\bS$-recognizable predicate $P$, the resulting language is regular, i.e., the predicate $\neg P$ is $\bS$-recognizable.
\end{proof}

\begin{corollary}
For any predicate $P$ on $\N^d$ that is defined recursively from $\bS$-recognizable predicates by only using the logical connectives $\land,\lor,\neg,\implies,\iff$ and the quantifiers $\forall$ and $\exists$ on variables describing elements of $\N^d$, the closed predicates $\forall \bx P(\bx)$, $\exists \bx P(\bx)$ and $\exists^{\infty} \bx P(\bx)$ are decidable.
\end{corollary}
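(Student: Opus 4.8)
The plan is to reduce all three decision problems to classical decidable properties of regular languages, using Theorem~\ref{The : P-S-rec} as the bridge. The key observation is that all the logical content has already been absorbed into that theorem: since $P$ is obtained from $\bS$-recognizable predicates using only the connectives $\land,\lor,\neg,\implies,\iff$ and the quantifiers $\forall,\exists$, Theorem~\ref{The : P-S-rec} (applied with $m=1$) shows that $P$ itself is $\bS$-recognizable. Moreover, the proof of that theorem is effective, so from DFAs for the atomic predicates one actually computes a DFA $\mathcal{A}_P$ accepting the regular language $\rep_{\bS}(\{\bn\in\N^d\colon P(\bn)\text{ is true}\})$.

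First I would treat $\exists\bx\,P(\bx)$. This closed predicate is true if and only if the set $\{\bn\colon P(\bn)\}$ is nonempty, which, since $\rep_{\bS}$ is a bijection between $\N^d$ and $\bL$, holds exactly when the language accepted by $\mathcal{A}_P$ is nonempty. Emptiness of a regular language given by a DFA is decidable (by testing reachability of an accepting state), so $\exists\bx\,P(\bx)$ is decidable.

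Next I would handle $\forall\bx\,P(\bx)$ by passing to the negation. Since $\neg P$ is again built from $\bS$-recognizable predicates using the allowed connectives, Theorem~\ref{The : P-S-rec} shows that $\neg P$ is $\bS$-recognizable, and $\forall\bx\,P(\bx)$ is the negation of $\exists\bx\,\neg P(\bx)$, which is decidable by the previous paragraph. Equivalently, one may simply test whether the language of $\mathcal{A}_P$ equals the regular language $\bL$.

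Finally, $\exists^{\infty}\bx\,P(\bx)$ asserts that infinitely many $\bn\in\N^d$ satisfy $P$, which holds if and only if the set $\{\bn\colon P(\bn)\}$ is infinite, and hence, again because $\rep_{\bS}$ is a bijection onto $\bL$, if and only if the regular language accepted by $\mathcal{A}_P$ is infinite. Finiteness of a regular language is decidable (for instance by checking whether the trimmed DFA contains a cycle), so $\exists^{\infty}\bx\,P(\bx)$ is decidable as well. There is essentially no obstacle in this argument: the entire combinatorial content has been front-loaded into the effective closure property of Theorem~\ref{The : P-S-rec}, and each remaining step is a standard decidability fact for regular languages. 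The only point requiring a moment of care is to recall that $\rep_{\bS}$ is a bijection between $\N^d$ and $\bL$, which is precisely what lets us transfer emptiness, universality and infiniteness of the set of solutions to the corresponding properties of the recognizing language.
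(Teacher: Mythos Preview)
Your argument is correct and is exactly the standard one: the paper states this corollary without proof, treating it as an immediate consequence of Theorem~\ref{The : P-S-rec} together with the decidability of emptiness, universality (relative to $\bL$), and finiteness for regular languages. Your write-up makes explicit precisely the steps the paper leaves implicit, including the use of the bijection $\rep_{\bS}\colon\N^d\to\bL$ to transfer set-theoretic properties to language-theoretic ones.
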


\subsection{Application to factor complexity, and other enumeration properties,  of $\bS$-automatic sequences}
\label{Subsec : Applications}

We show how to apply the results of the previous three sections in order to obtain various families of $\bS$-regular sequences. 

For a sequence $f\colon\N^d\to \K$ and $\bp,\bs\in\N^d$, we let $f[\bp,\bs]$ denote the factor of \emph{size} $\bs$ occurring at \emph{position} $\bp$ in $f$. Formally, $f[\bp,\bs]\colon\dcart{[\![0,s_1-1]\!]}{[\![0,s_d-1]\!]}\to \K,\ \bi\mapsto f(\bp+\bi)$. The \emph{factor complexity} of a sequence $f\colon\N^d\to \K$ is the function $\rho_f\colon\N^d\mapsto\N_{\infty}$ that maps each $\bs\in\N^d$ to the number of factors of size $\bs$ occurring in $f$. Note that if $f$ has a finite image (as is the case for automatic sequences) then for all $\bs\in\N^d$, $\rho_f(\bs)\in\N$.

\begin{theorem}
\label{The : FactorComplexity}
Suppose that addition is $\bS$-recognizable. Then the factor complexity of an $\bS$-automatic sequence is a $(\bS,\N)$-regular sequence.
\end{theorem}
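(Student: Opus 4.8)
The plan is to realize the factor complexity $\rho_f$ as the counting function of a suitable $(\bS,\bS)$-recognizable set and then to invoke Theorem~\ref{The : enumeration}. Let $f\colon\N^d\to\K$ be $\bS$-automatic; in particular $f$ has finite image. For $\bp,\bp',\bs\in\N^d$ I would first encode the equality of the two factors of size $\bs$ occurring at positions $\bp$ and $\bp'$ by the predicate
\[
	\mathrm{Eq}(\bp,\bp',\bs):\quad
	\forall\bi\ \Big(\bi<\bs \implies \exists\bu\,\exists\bv\,\big(\bp+\bi=\bu\ \land\ \bp'+\bi=\bv\ \land\ f(\bu)=f(\bv)\big)\Big).
\]
Here $\bi<\bs$ describes exactly the index set $\dcart{[\![0,s_1-1]\!]}{[\![0,s_d-1]\!]}$ on which the factor $f[\bp,\bs]$ is defined. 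The atoms of this formula are $\bS$-recognizable: the predicate $\bi<\bs$ is $\bS$-recognizable (Subsection~\ref{Subsec : Ingredient3}), the addition predicate $\bx+\by=\bz$ is $\bS$-recognizable by hypothesis, and $f(\bx)=f(\by)$ is $\bS$-recognizable by Proposition~\ref{Prop : EqualityofAutSeqisRec}. Since all quantified variables range over $\N^d$, Theorem~\ref{The : P-S-rec} shows that $\mathrm{Eq}$ is $\bS$-recognizable.

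Next I would count each occurring factor exactly once by retaining only the first position at which it appears, in the well-order $<_{\bS}$. Define
\[
	X=\Big\{\couple{\bs}{\bp}\in\N^{2d}\colon \forall\bp'\ \big(\bp'<_{\bS}\bp \implies \neg\,\mathrm{Eq}(\bp,\bp',\bs)\big)\Big\}.
\]
For a fixed $\bs$, the map $\bp\mapsto f[\bp,\bs]$ is onto the set of factors of size $\bs$ occurring in $f$, and a position $\bp$ belongs to the $\bs$-section of $X$ precisely when it is the $<_{\bS}$-least position in its fiber; hence each occurring factor is counted once and $\rho_f(\bs)=\Card\{\bp\in\N^d\colon \couple{\bs}{\bp}\in X\}$. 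Since $\bp'<_{\bS}\bp$ is equivalent to $E_{\bS}(\bp')<E_{\bS}(\bp)$ and is therefore $(\bS,\bS)$-recognizable by Proposition~\ref{Pro : AutEnumeration}, and $\mathrm{Eq}$ is $\bS$-recognizable, another application of Theorem~\ref{The : P-S-rec} (again all quantifiers being over $\N^d$) yields that $X$ is $(\bS,\bS)$-recognizable. I would then apply Theorem~\ref{The : enumeration} with $\bS'=\bS$ and $d'=d$: as $X\subseteq\N^{d+d}$ is $(\bS,\bS)$-recognizable, the sequence $\bs\mapsto\Card\{\bp\colon\couple{\bs}{\bp}\in X\}=\rho_f(\bs)$ is $(\bS,\N_{\infty})$-regular. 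Because $f$ has finite image, say of cardinality $c$, we have $\rho_f(\bs)\le c^{s_1\cdots s_d}<\infty$, so $\rho_f(\N^d)\subseteq\N$ and the second part of Theorem~\ref{The : enumeration} upgrades this to $(\bS,\N)$-regularity.

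The only genuine input is the hypothesis that addition is $\bS$-recognizable: it is used solely to express $f(\bp+\bi)=f(\bp'+\bi)$ inside $\mathrm{Eq}$, since the shifted positions $\bp+\bi$ and $\bp'+\bi$ cannot otherwise be accessed. Once this is granted, the proof is essentially the assembly of the three ingredients of Section~\ref{Sec : Enumeration}; the points requiring care are the quantifier bookkeeping (so that Theorem~\ref{The : P-S-rec} applies) and the verification that the first-occurrence set $X$ counts distinct factors exactly once. I expect no deep obstacle here. The degenerate sizes $\bs$ having some $s_j=0$ need no separate treatment: then $\bi<\bs$ is unsatisfiable, $\mathrm{Eq}$ holds vacuously for all $\bp,\bp'$, only $\bp=\bz$ survives in the $\bs$-section of $X$, and $\rho_f(\bs)=1$, as it should be.
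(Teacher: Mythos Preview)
Your argument is correct and follows essentially the same route as the paper: you build the set $X$ of first occurrences (with respect to $<_{\bS}$, i.e., $E_{\bS}$) of each factor, show it is $(\bS,\bS)$-recognizable via Propositions~\ref{Pro : AutEnumeration} and~\ref{Prop : EqualityofAutSeqisRec} together with the addition hypothesis and Theorem~\ref{The : P-S-rec}, and then apply Theorem~\ref{The : enumeration}. Your treatment is in fact slightly more careful than the paper's, in that you explicitly handle the degenerate sizes and invoke the finite image of $f$ to pass from $(\bS,\N_\infty)$- to $(\bS,\N)$-regularity.
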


\begin{proof}
Let $f$ be an $\bS$-automatic $d$-dimensional sequence. First, we note that for all $\bs\in\N^d$,
\[
	\rho_f(\bs)
	=\Card\{
	\bp\in\N^d\colon 
	\forall\bp'\in\N^d\, 
	\big(
	E_{\bS}(\bp')<E_{\bS}(\bp)
	\implies f[\bp',\bs]\ne f[\bp,\bs]
	\big)
	\}.
\]
By Theorem~\ref{The : enumeration}, it suffices to prove that the set
\[
	X:=\{(\bs,\bp)\in\N^{2d}\colon
	\forall\bp'\in\N^d \, \big(
	E_{\bS}(\bp')<E_{\bS}(\bp)
	\implies f[\bp',\bs]\ne f[\bp,\bs]
	\big)\}
\] 
is $(\bS,\bS)$-recognizable. The notation $f[\bp',\bs]\ne f[\bp,\bs]$ is equivalent to $\exists \bi<\bs,\, f(\bp'+\bi)\ne f(\bp+\bi)$. By Proposition~\ref{Prop : EqualityofAutSeqisRec} and Theorem~\ref{The : P-S-rec}, since $f$ is $\bS$-automatic and addition is $\bS$-recognizable, the predicate $f[\bp',\bs]\ne f[\bp,\bs]$ is $\bS$-recognizable. By Proposition~\ref{Pro : AutEnumeration}, the predicate $E_{\bS}(\bp')<E_{\bS}(\bp)$ is $\bS$-recognizable as well. It then follows from Theorem~\ref{The : P-S-rec} that $X$ is $(\bS,\bS)$-recognizable as desired.
\end{proof}

In particular, Theorem~\ref{The : FactorComplexity} is valid for all $d$-dimensional Pisot numeration systems since addition is recognizable in such systems~\cite{BruyereHansel1997,FrougnySolomyak1996}. 
This observation solves~\cite[Probem 3.4.1]{Charlier2018}. 

In a similar manner, we obtain that a great amount of measures of $\bS$-automatic sequences gives rise to $(\bS,\N_{\infty})$-regular sequences. In particular, this holds true for all the measures mentioned in~\cite{CharlierRampersadShallit2012}. Let us illustrate our words in the case of the ($d$-dimensional) \emph{recurrence function} $R_f$. For a sequence $f\colon\N^d\to \K$, we let $R_f\colon\N^d\to\N_{\infty}$ be the function that maps every $\bs\in\N^d$ to the infimum of the nonnegative integers $\ell$ such that every factor of $f$ of size $(\ell,\ldots,\ell)$ contains all factors of $f$ of size $\bs$. 

\begin{proposition}
Suppose that addition is $\bS$-recognizable.
If $f\colon\N^d\to \K$ is $\bS$-automatic, then the recurrence function $R_f\colon\N^d\to\N_{\infty}$ is $(\bS,\N_{\infty})$-regular.
\end{proposition}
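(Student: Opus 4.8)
The plan is to mimic the proof of Theorem~\ref{The : FactorComplexity}, writing $R_f$ as an enumeration of an $(\bS,\bS)$-recognizable set and then invoking Theorem~\ref{The : enumeration}. Let $P(\ell,\bs)$ abbreviate the property ``every factor of $f$ of size $(\ell,\ldots,\ell)$ contains all factors of $f$ of size $\bs$'', so that by definition $R_f(\bs)=\inf\{\ell\in\N\colon P(\ell,\bs)\}$. The first step is the observation that $P(\cdot,\bs)$ is nondecreasing: a window of size $(\ell+1,\ldots,\ell+1)$ contains a sub-window of size $(\ell,\ldots,\ell)$, hence $P(\ell,\bs)\implies P(\ell+1,\bs)$. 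Therefore $\{\ell\in\N\colon\neg P(\ell,\bs)\}$ is the initial segment $[\![0,R_f(\bs)-1]\!]$ (and equals $\N$ precisely when $R_f(\bs)=\infty$), which yields the key cardinality formula
\[
	R_f(\bs)=\Card\{\ell\in\N\colon\neg P(\ell,\bs)\}.
\]
This is the exact analogue, for the recurrence function, of the formula used for $\rho_f$, and it reduces the problem to exhibiting a recognizable set whose fibres have these cardinalities.

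Next I would spell out $\neg P$ as a first-order formula and invoke the predicate machinery. Treating the window size as a vector $\boldsymbol{\ell}\in\N^d$, one has
\[
	\neg P(\boldsymbol{\ell},\bs)\colon\quad
	\exists\boldsymbol{q}\,\exists\bp\,\forall\boldsymbol{j}\,
	\big[\,\boldsymbol{j}+\bs\le\boldsymbol{\ell}\ \implies\ f[\boldsymbol{q}+\boldsymbol{j},\bs]\ne f[\bp,\bs]\,\big],
\]
where, exactly as in the proof of Theorem~\ref{The : FactorComplexity}, the inequality $f[\boldsymbol{q}+\boldsymbol{j},\bs]\ne f[\bp,\bs]$ abbreviates $\exists\bi<\bs\,\big(f(\boldsymbol{q}+\boldsymbol{j}+\bi)\ne f(\bp+\bi)\big)$. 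Since $f$ is $\bS$-automatic, the predicate $f(\bx)=f(\by)$ is $\bS$-recognizable by Proposition~\ref{Prop : EqualityofAutSeqisRec}; since addition is $\bS$-recognizable, so are the sums $\boldsymbol{q}+\boldsymbol{j}+\bi$ and $\bp+\bi$; and the componentwise order is always $\bS$-recognizable. Theorem~\ref{The : P-S-rec} then shows that the whole formula $\neg P$, obtained from these by Boolean connectives and quantifiers over $\N^d$-variables, defines an $\bS$-recognizable predicate.

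To conclude I would take the window size to be the diagonal vector $\boldsymbol{\ell}=(\ell,\ldots,\ell)$, set
\[
	X=\{(\bs,\boldsymbol{\ell})\in\N^{2d}\colon \ell_1=\cdots=\ell_d\ \text{and}\ \neg P(\boldsymbol{\ell},\bs)\},
\]
and count over $\boldsymbol{\ell}$ with $\bS'=\bS$: on the diagonal $\neg P(\boldsymbol{\ell},\bs)$ agrees with $\neg P(\ell_1,\bs)$, so $\Card\{\boldsymbol{\ell}\colon(\bs,\boldsymbol{\ell})\in X\}=R_f(\bs)$, and Theorem~\ref{The : enumeration} delivers the $(\bS,\N_\infty)$-regularity of $R_f$ as soon as $X$ is $(\bS,\bS)$-recognizable. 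The main obstacle, and the one point where the recurrence function differs from the factor complexity, is precisely the coupling of the scalar parameter $\ell$ to the diagonal window, i.e.\ the recognizability of $\{\boldsymbol{\ell}\colon\ell_1=\cdots=\ell_d\}$. This is immediate — a single equality of representations — when the component systems $\mathcal{S}_1,\ldots,\mathcal{S}_d$ coincide (as in the running example, and in every integer-base or Pisot setting, which is where recurrence functions are classically studied), so that the diagonal condition combines with the $\bS$-recognizable predicate $\neg P$ to make $X$ recognizable and the argument closes. I expect the bulk of the care to go into this diagonal/cross-system step, the rest being a routine transcription of the factor-complexity proof.
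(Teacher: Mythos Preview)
Your approach is essentially the paper's: write $R_f(\bs)=\Card\{\ell\in\N:\neg P(\ell,\bs)\}$ via monotonicity of $P$, unfold $\neg P$ into the first-order formula $\exists\bp,\bp'\,\forall\bk\le(\ell,\ldots,\ell)-\bs,\ f[\bp+\bk,\bs]\ne f[\bp',\bs]$, and then invoke Theorem~\ref{The : enumeration} after checking recognizability through Proposition~\ref{Prop : EqualityofAutSeqisRec} and Theorem~\ref{The : P-S-rec}. The paper's proof is a two-line sketch (``applying the same arguments than in the proof of Theorem~\ref{The : FactorComplexity}, the result follows'') and does not address the point you isolate --- how the scalar window size $\ell$ is to be coupled to the $\N^d$-variable framework of Theorem~\ref{The : P-S-rec}. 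Your diagonal device, together with the restriction to coinciding component systems, makes this step explicit; in that respect your treatment is more careful than the paper's own.
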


\begin{proof}
For all $\bs\in\N^d$, 
\begin{align*}
	R_f(\bs)
	&=\Card\{\ell\in\N\colon R_f(\bs)>\ell\}\\
	&=\Card\{\ell\in\N\colon \exists \bp,\bp'\in\N^d,\ f[\bp,(\ell,\ldots,\ell)] \text{ contains no occurrence of } f[\bp',\bs]\} \\
	&=\Card\{\ell\in\N\colon \exists \bp,\bp'\in\N^d,\ \forall \bk\le(\ell,\ldots,\ell)-\bs,\ f[\bp+\bk,\bs]\ne f[\bp',\bs]\}
\end{align*}
Applying the same arguments than in the proof of Theorem~\ref{The : FactorComplexity}, the result follows. 
\end{proof}

\section{$(\bS,\bS')$-Synchronized sequences}
\label{Sec : Synchronized}

A well-known family of $(b,\K)$-regular sequences is that of $b$-synchronized sequences~\cite{CarpiMaggi2001}. In this section, we generalize this notion to multidimensional abstract numeration systems.

\begin{definition}
A sequence $f\colon\N^d\to\N^{d'}$ is \emph{$(\bS,\bS')$-synchronized} if its graph 
\[
	\mathcal{G}_f=\{\couple{\bn}{f(\bn)}\colon \bn\in\N^d\}
\]
is an $(\bS,\bS')$-recognizable subset of $\N^{d+d'}$.
\end{definition}

\begin{runningexample}
The sequence $f$ is not $(\bS,\mathcal{S})$-synchronized. Proceed by contradiction and assume that there exists a DFA $\mathcal{A}$, say with $k$ states, recognizing $\rep_{(\bS,\mathcal{S})}(\mathcal{G}_f)$. For all $\ell\in \N$, $\rep_{\mathcal{S}}(\mathsf{v}(\ell))=a^{\ell+1}$ and hence, ${\tiny\triple{b^{\mathsf{v}(\ell)}}{b^{\mathsf{v}(\ell)}}{a^{\ell+1}}^\#}\in\rep_{(\bS,\mathcal{S})}(\mathcal{G}_f)$. Now suppose that $\ell+1\ge k$. By the pumping lemma, there exists $m\in \N_{\ge 1}$ such that the word ${\tiny\triple{b^{\mathsf{v}(\ell)+m}}{b^{\mathsf{v}(\ell)+m}}{a^{\ell+1+m}}}$ is accepted by $\mathcal{A}$ as well. This is impossible since $\mathsf{v}(\ell)+m\ne \mathsf{v}(\ell+m)$ and hence $\rep_{\mathcal{S}}(\mathsf{v}(\ell)+m)\ne a^{\ell+1+m}$.  

However, the sequence $f$ is $(\bS,\mathcal{S}_c)$-synchronized where $\mathcal{S}_c$ is the unary abstract numeration system $(c^*,c)$. A DFA recognizing the language $\rep_{(\bS,\mathcal{S}_c)}(\mathcal{G}_f)$ is depicted in Figure~\ref{Fig : AutomatonSynchronized}.
\begin{figure}[htb]
\begin{center}
\hspace*{-3.5cm}
\scalebox{0.6}{
\begin{tikzpicture}
\tikzstyle{every node}=[shape=circle, fill=none, draw=black,
minimum size=20pt, inner sep=2pt]

% L'état initial 
\node(initial) at (0,0){$ $};

%Les états de l'octogone
\node (diezA) at (1.76,4.24){$ $};
\node (Adiez) at (1.76,-4.24){$ $};
\node (diezB) at (6,6){$ $};
\node (AB) at (10.24,4.24){$ $};
\node (Bdiez) at (6,-6){$ $};
\node (BA) at (10.24,-4.24){$ $};
\node (BB) at (12,0)  {$ $};

%Les états  (a,a) 
\node (AAwaiting) at (5,0){$ $};
\node (AA) at (7,0){$ $};

%Fleche de poid initial et finaux
\tikzstyle{every node}=[shape=circle, fill=none, draw=black, minimum size=15pt, inner sep=2pt]
\node (initialbis) at (0,0){$ $};
\node (AAbis) at (7,0){$ $};
\node (BBbis) at (12,0){$ $};
\tikzstyle{every path}=[color=black, line width=0.5 pt]
\tikzstyle{every node}=[shape=circle, minimum size=5pt, inner sep=2pt]
\draw [-Latex] (-1,0) to node [above=-0.2] {$ $} (initial);

%Les loops
\draw [-Latex,brown] (diezA) [loop above] to node {$ $} ();
\draw [-Latex,violet] (diezB) [loop above] to node {$ $} ();
\draw [-Latex,blue] (AB) [loop above] to node {$ $} ();
\draw [-Latex,orange] (Adiez) [loop below] to node {$ $} ();
\draw [-Latex,magenta] (Bdiez) [loop below] to node {$ $} ();
\draw [-Latex,cyan] (BA) [loop below] to node {$ $} ();
\draw [-Latex,amber] (AAwaiting) [loop right] to node {$ $} ();
\draw [-Latex,vert] (AA) [loop above] to node {$ $} ();
\draw [-Latex,red] (BB) [loop right] to node {$ $} ();

%Edges to (#,a)
\draw [-Latex,brown] (initial) to node {$ $} (diezA);

%Edges to (a,#)
\draw [-Latex,orange] (initial) to node {$ $} (Adiez);

%Edges to (a,a)
\draw [-Latex,vert] (initial) to [out=20,in=160] node {$ $} (AA);
\draw [-Latex,vert] (diezA) to node {$ $} (AA);
\draw [-Latex,vert] (Adiez) to node {$ $} (AA);

%Edges to (a,a)waiting 
\draw [-Latex,amber] (initial) to node {$ $} (AAwaiting);
\draw [-Latex,amber] (diezA) to node {$ $} (AAwaiting);
\draw [-Latex,amber] (Adiez) to node {$ $} (AAwaiting);
 
%Edges to (#,b)
\draw [-Latex,violet] (diezA) to node {$ $} (diezB);
\draw [-Latex,violet] (initial) to  node {$ $} (diezB);

%Edges to (a,b)
\draw [-Latex,blue] (diezB) to node {$ $} (AB);
\draw [-Latex,blue] (initial) [bend left=15] to node {$ $} (AB);
\draw [-Latex,blue] (AAwaiting) to node {$ $} (AB);
\draw [-Latex,blue] (diezA) to node {$ $} (AB);
\draw [-Latex,blue] (Adiez) to  [bend right=20,in=-145]  node {$ $} (AB);

%Edges to (b,#)
\draw [-Latex,magenta] (Adiez) to node {$ $} (Bdiez);
\draw [-Latex,magenta] (initial) to node {$ $} (Bdiez);

%Edges to (b,a)
\draw [-Latex,cyan] (Bdiez) to node {$ $} (BA);
\draw [-Latex,cyan] (initial) to [bend right=15] node {$ $} (BA);
\draw [-Latex,cyan] (AAwaiting) to node {$ $} (BA);
\draw [-Latex,cyan] (Adiez) to node {$ $} (BA);
\draw [-Latex,cyan] (diezA) to  [out=-30,in=100]  node {$ $} (BA);

%Edges to (b,b),1
\draw [-Latex,red] (AA) to node {$ $} (BB);
\draw [-Latex,red] (Bdiez) to node {$ $} (BB);
\draw [-Latex,red] (BA) to node {$ $} (BB);
\draw [-Latex,red] (diezB) to node {$ $} (BB);
\draw [-Latex,red] (AB) to node {$ $} (BB);
\draw [-Latex,red] (Adiez) to [out=20, in=210] node {$ $} (BB);
\draw [-Latex,red] (diezA) to node {$ $} (BB);
\draw [-Latex,red] (initial) to [bend right=65, looseness=1.3, out=-30,in=200]  node {$ $} (BB);
\end{tikzpicture}
}%end scalebox
\end{center}
\vspace*{-2.5cm}
\hfill\scalebox{0.8}{
\fbox{\begin{tabular}{cc|cc|cc}
\textcolor{brown}{$\blacksquare$} 	& $\triple{\#}{a}{\#}$ 
& \textcolor{amber}{$\blacksquare$} 	& $\triple{a}{a}{\#}$  
&\textcolor{cyan}{$\blacksquare$} 	& $\triple{b}{a}{\#}$\\
\textcolor{violet}{$\blacksquare$} 	& $\triple{\#}{b}{\#}$ 
&\textcolor{vert}{$\blacksquare$} 	& $\triple{a}{a}{c}$
& \textcolor{red}{$\blacksquare$} 	& $\triple{b}{b}{c}$\\
\textcolor{orange}{$\blacksquare$} 	& $\triple{a}{\#}{\#}$
& \textcolor{magenta}{$\blacksquare$} & $\triple{b}{\#}{\#}$
&\textcolor{blue}{$\blacksquare$} 	& $\triple{a}{b}{\#}$  
\end{tabular}}
}%end scalebox
\vspace{0.5cm}
\caption{A DFA recognizing $\rep_{(\bS,\mathcal{S}_c)}(\mathcal{G}_f)$ where $\mathcal{S}_c=(c^*,c)$.}
\label{Fig : AutomatonSynchronized}
\end{figure}
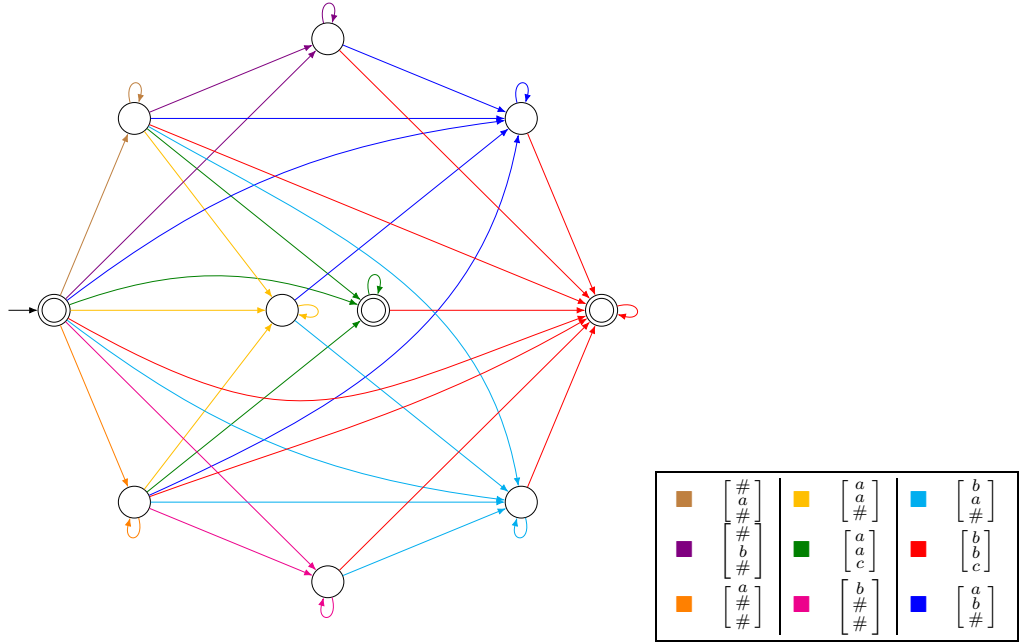
\end{runningexample}

First, let us provide some examples of $(\bS,\bS')$-synchronized sequences. One obtains the following lemma by adapting the proof of~\cite[Lemma 2.10]{CharlierLacroixRampersad2012} to left padding symbols. This involves adapting~\cite[Section 2.6.3]{FrougnySakatovitch2010} to this framework as well.

\begin{lemma}
\label{Lem : +k}
For all $k\in \N$ and all abstract numeration systems $\mathcal{S}$, the sequence $\N\to\N,\ n\mapsto n+k$ is $(\mathcal{S},\mathcal{S})$-synchronized.
\end{lemma}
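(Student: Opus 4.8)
The plan is to work in dimension $d=1$ with $\bS=\mathcal{S}$ and to reduce the statement to the recognizability of a first-order definable relation, so that Theorem~\ref{The : P-S-rec} can be invoked. By definition, the sequence $n\mapsto n+k$ is $(\mathcal{S},\mathcal{S})$-synchronized precisely when its graph $\mathcal{G}_f=\{\couple{n}{n+k}\colon n\in\N\}$ is $(\mathcal{S},\mathcal{S})$-recognizable; equivalently, when the $2$-ary predicate $y=x+k$ on $\N^2$ is $\mathcal{S}$-recognizable in the sense of Subsection~\ref{Subsec : Ingredient3}. I would build this predicate out of the strict order predicate $x<y$. Since $\rep_{\mathcal{S}}$ is the order isomorphism between $(\N,<)$ and $(L,<_{\rad})$, one has $x<y \iff \rep_{\mathcal{S}}(x)<_{\rad}\rep_{\mathcal{S}}(y)$, and the corresponding padded language is regular; thus $x<y$ (and likewise $x=y$) is $\mathcal{S}$-recognizable, exactly as recalled in Subsection~\ref{Subsec : Ingredient3}.

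First I would express the successor relation. As $\N$ is discretely ordered, $y=x+1$ is first-order definable from $<$ via
\[
	\mathrm{succ}(x,y)\ :\equiv\ x<y\ \land\ \neg\,\exists z\,\bigl(x<z\ \land\ z<y\bigr).
\]
Because $x<y$ is $\mathcal{S}$-recognizable and $\mathrm{succ}$ is obtained from it using only $\land$, $\neg$ and one existential quantifier over a variable in $\N=\N^d$, Theorem~\ref{The : P-S-rec} yields that $\mathrm{succ}$ is $\mathcal{S}$-recognizable. I would then iterate: for $k\ge 1$, equality $y=x+k$ is captured by a chain of $k$ successor steps,
\[
	y=x+k\ \iff\ \exists z_1\cdots\exists z_{k-1}\,\bigl(\mathrm{succ}(x,z_1)\land\mathrm{succ}(z_1,z_2)\land\cdots\land\mathrm{succ}(z_{k-1},y)\bigr),
\]
where for $k=1$ the right-hand side reduces to $\mathrm{succ}(x,y)$, and for $k=0$ one uses the equality predicate $x=y$ instead. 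A second application of Theorem~\ref{The : P-S-rec} shows that $y=x+k$ is $\mathcal{S}$-recognizable, whence $\mathcal{G}_f$ is $(\mathcal{S},\mathcal{S})$-recognizable and $n\mapsto n+k$ is $(\mathcal{S},\mathcal{S})$-synchronized.

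The point to get right, rather than a genuine obstacle, is that fixing the shift $k$ keeps us inside the logical closure whereas general addition does not: the displayed formula has a length that grows with $k$, so it legitimately defines $y=x+k$ for each fixed $k$ but provides no way to express the $3$-ary predicate $x+y=z$, in agreement with the fact that addition need not be $\mathcal{S}$-recognizable. The remaining care is simply to check the hypotheses of Theorem~\ref{The : P-S-rec}, namely that the base relation $x<y$ is $\mathcal{S}$-recognizable and that every quantifier ranges over a variable in $\N^d$ with $d=1$. I note that one could alternatively follow the route suggested by the cited works and construct directly a finite two-tape automaton for the successor relation; there the real difficulty would be the bookkeeping forced by the left-padding convention, since $\rep_{\mathcal{S}}(n)$ and $\rep_{\mathcal{S}}(n+1)$ may differ in length (by a bounded amount, as the length set of a regular language is ultimately periodic), and one must propagate this difference through the padding symbols $\#$. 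The logical route avoids this entirely.
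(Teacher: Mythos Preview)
Your argument is correct. The paper does not give a self-contained proof of this lemma; it simply points to an adaptation of \cite[Lemma~2.10]{CharlierLacroixRampersad2012} and \cite[Section~2.6.3]{FrougnySakatovitch2010}, which amounts to building a two-tape automaton for the successor relation by hand and then handling the left-padding bookkeeping you mention at the end. Your route is genuinely different: you stay entirely inside the logical closure machinery already set up in Section~\ref{Subsec : Ingredient3}, using that $x<y$ is $\mathcal{S}$-recognizable and then invoking Theorem~\ref{The : P-S-rec} to get first $\mathrm{succ}(x,y)$ and then $y=x+k$ by a $k$-fold chain. This is cleaner and fully self-contained within the paper, and it sidesteps the padding issues altogether; the price is that it is non-constructive in practice (the automaton one extracts from the closure-under-projection argument is typically much larger than the direct one), and the formula length grows with $k$, which you correctly flag as the reason this does not yield recognizability of addition. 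Both approaches are valid; yours is arguably the more natural one given the tools the paper has already developed by that point.
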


\begin{proposition}
\label{Pro : +k}
For all $\bk\in \N^d$, the sequence $\N^d\to\N^d,\ \bn\mapsto \bn+\bk$ is $(\bS,\bS)$-synchronized.
\end{proposition}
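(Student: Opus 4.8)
The plan is to exploit that $\bn\mapsto\bn+\bk$ acts coordinatewise and to assemble the one-dimensional information furnished by Lemma~\ref{Lem : +k} into an $(\bS,\bS)$-recognizable subset of $\N^{2d}$. Writing the graph as
\[
	\mathcal{G}=\Big\{\couple{\bn}{\bn+\bk}\colon \bn\in\N^d\Big\}
	=\Big\{\couple{\bn}{\bm}\in\N^{2d}\colon m_i=n_i+k_i \text{ for all } i\in[\![1,d]\!]\Big\},
\]
the idea is that, for each $i$, Lemma~\ref{Lem : +k} tells us that $\mathcal{G}_i:=\{\couple{n}{n+k_i}\colon n\in\N\}$ is $(\mathcal{S}_i,\mathcal{S}_i)$-recognizable, so $L_i:=\rep_{(\mathcal{S}_i,\mathcal{S}_i)}(\mathcal{G}_i)$ is regular. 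It then suffices to show that a coordinatewise conjunction of recognizable binary relations, one per coordinate, is $(\bS,\bS)$-recognizable. Note that Theorem~\ref{The : P-S-rec} does not apply directly, since its atomic predicates must already be recognizable over full $d$-dimensional blocks, whereas Lemma~\ref{Lem : +k} only provides one-dimensional relations; so a direct argument at the level of representations, handling the padding by hand, is cleaner.

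The key step is to reconcile the two padding conventions. For $i\in[\![1,d]\!]$, let $\pi_{i,d+i}$ be the coordinate projection erasing all but the $i$-th and $(d{+}i)$-th components; it is a letter-to-letter morphism from the alphabet of $(\bS,\bS)$ onto $(A_i)_\#\times (A_i)_\#$. I would first check that for every $\couple{\bn}{\bm}\in\N^{2d}$ one has
\[
	\pi_{i,d+i}\big(\rep_{(\bS,\bS)}\couple{\bn}{\bm}\big)
	=\couple{\#}{\#}^{\,t}\,\rep_{(\mathcal{S}_i,\mathcal{S}_i)}\couple{n_i}{m_i}
\]
for the appropriate $t\ge 0$, simply because the global maximal length used to pad $(\bS,\bS)$-representations is at least the local maximal length used for the pair $\couple{n_i}{m_i}$. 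From this and the closure of regular languages one obtains the characterization
\[
	\rep_{(\bS,\bS)}(\mathcal{G})
	=\rep_{(\bS,\bS)}(\N^{2d})\;\cap\;\bigcap_{i=1}^d \pi_{i,d+i}^{-1}\big(\couple{\#}{\#}^*\,L_i\big).
\]

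It then remains to invoke closure properties: each $L_i$ is regular, hence so is $\couple{\#}{\#}^*L_i$; inverse images of regular languages under the morphisms $\pi_{i,d+i}$ are regular; the numeration language $\rep_{(\bS,\bS)}(\N^{2d})$ is regular since each $L_j$ is; and finite intersections preserve regularity. Thus $\rep_{(\bS,\bS)}(\mathcal{G})$ is regular, i.e.\ $\mathcal{G}$ is $(\bS,\bS)$-recognizable and $\bn\mapsto\bn+\bk$ is $(\bS,\bS)$-synchronized.

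The only genuine obstacle is justifying the displayed characterization, and in particular the ``only if'' inclusion. Here the crucial point is that the letter $\couple{\#}{\#}$ never occurs inside a word of $L_i$ (as $L_i$ lies over the alphabet $((A_i)_\#\times (A_i)_\#)\setminus\{\couple{\#}{\#}\}$). Consequently, in any factorization of $\couple{\#}{\#}^{t}\,\rep_{(\mathcal{S}_i,\mathcal{S}_i)}\couple{n_i}{m_i}$ as a block of $\couple{\#}{\#}$'s followed by an element of $L_i$, everything preceding the $L_i$-word must be purely $\couple{\#}{\#}$; since $\rep_{(\mathcal{S}_i,\mathcal{S}_i)}\couple{n_i}{m_i}$ begins with a non-$\couple{\#}{\#}$ letter (or is empty), the split point is forced, so membership in $\couple{\#}{\#}^*L_i$ is equivalent to $\couple{n_i}{m_i}\in\mathcal{G}_i$. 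Everything after this verification is routine regularity bookkeeping.
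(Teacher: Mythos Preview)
Your proposal is correct and follows essentially the same approach as the paper: invoke Lemma~\ref{Lem : +k} coordinatewise and intersect. The paper writes the graph as $\bigcap_{i=1}^d\{\couple{\bn}{\bn'}\colon n'_j=n_j\text{ for }j\ne i,\ n'_i=n_i+k_i\}$ and leaves the passage from one-dimensional to $2d$-dimensional recognizability implicit, whereas you spell out the padding reconciliation via $\pi_{i,d+i}^{-1}\big(\couple{\#}{\#}^*L_i\big)$; your version is more explicit (and drops the redundant equality constraints, absorbed into the intersection with $\rep_{(\bS,\bS)}(\N^{2d})$), but the underlying idea is the same.
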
 

\begin{proof}
By Lemma~\ref{Lem : +k}, the $d$ graphs $\{ \couple{n}{n+k_i} \colon n\in\N \}$ are $(\mathcal{S}_i,\mathcal{S}_i)$-recognizable, for $i\in[\![1,d]\!]$. We deduce that the graph $\{\couple{\bn}{\bn+\bk} \colon \bn\in\N^d \}$ is $(\bS,\bS)$-recognizable since it is equal to
$\bigcap_{i=1}^d\ \{\couple{\bn}{\bn'}
\colon n'_j= n_j \text{ for } j\ne i,\ n'_i= n_i+k_i\}$.
\end{proof}

We prove that if $d'=1$ then 
for any choice of abstract numeration system 
$\mathcal{S}':=\mathcal{S}'_1$, the family of $(\bS,\mathcal{S}')$-synchronized sequences lies in between those of $\bS$-automatic sequences and $(\bS,\N)$-regular sequences. Note that the hypothesis $d'=1$ is used in Proposition~\ref{Pro : SynchRegular} only.

\begin{proposition}
\label{Pro : AutSynch}
A sequence $f\colon\N^d\to \N^{d'}$ is $\bS$-automatic if and only if it is $(\bS,\bS')$-synchronized and takes only finitely many values.
\end{proposition}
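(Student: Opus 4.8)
The plan is to prove both implications through the fiber characterization of $\bS$-automaticity given by Lemma~\ref{Lem : Automatic-Fibers}: a sequence is $\bS$-automatic if and only if it has finite image and all of its fibers $f^{-1}(\bv)$ are $\bS$-recognizable. Since the "finitely many values" condition appears explicitly on the synchronized side and is automatic on the automatic side, the whole statement reduces to transferring fiber-recognizability across the graph.

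For the direct implication, I would start from the fact that a DFAO generating $f$ has its output contained in a finite alphabet $\Delta$, so $f$ takes finitely many values; write $V=f(\N^d)$. By Lemma~\ref{Lem : Automatic-Fibers}, each fiber $f^{-1}(\bv)$ with $\bv\in V$ is $\bS$-recognizable. I would then decompose the graph as the finite union
\[
	\mathcal{G}_f=\bigcup_{\bv\in V}\big(f^{-1}(\bv)\times\{\bv\}\big)
\]
and show that each set $f^{-1}(\bv)\times\{\bv\}$ is $(\bS,\bS')$-recognizable. Concretely, $\rep_{\bS}(f^{-1}(\bv))$ is regular and $\rep_{\bS'}(\bv)$ is a single fixed word, so $\rep_{(\bS,\bS')}(f^{-1}(\bv)\times\{\bv\})$ is obtained by pairing every word of the regular language $\rep_{\bS}(f^{-1}(\bv))$ with this fixed word after left-padding to a common length. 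Closure of regular languages under finite union then gives that $\mathcal{G}_f$ is $(\bS,\bS')$-recognizable, i.e.\ that $f$ is $(\bS,\bS')$-synchronized.

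For the converse, assume $f$ is $(\bS,\bS')$-synchronized with finite image $V=f(\N^d)$. For each $\bv\in V$ I would recover the fiber as
\[
	f^{-1}(\bv)=\pi\big(\mathcal{G}_f\cap(\N^d\times\{\bv\})\big),
\]
where $\pi$ projects onto the first $d$ coordinates. The cylinder $\N^d\times\{\bv\}$ is $(\bS,\bS')$-recognizable (again by pairing the whole of $\bL$ with the fixed word $\rep_{\bS'}(\bv)$), so $\rep_{(\bS,\bS')}(\mathcal{G}_f\cap(\N^d\times\{\bv\}))$ is regular as an intersection of two regular languages. Applying to this language the letter-to-letter morphism keeping only the first $d$ tracks, followed by the morphism erasing the resulting all-$\#$ letter, yields exactly $\rep_{\bS}(f^{-1}(\bv))$; since both morphisms preserve regularity, each fiber is $\bS$-recognizable. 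As $V$ is finite, Lemma~\ref{Lem : Automatic-Fibers} shows $f$ is $\bS$-automatic.

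The set-theoretic and automata-closure steps above hide the one genuinely technical point, which I expect to be the main obstacle: because representations are padded \emph{on the left}, the elementary operations of pairing a regular language with a fixed word and of projecting away tracks must be verified to preserve regularity with respect to this left-padding convention. The cleanest justification I would give passes to reversed (hence right-padded) representations, for which the fixed track occupies a prefix of bounded length and the boundary of the padding block is determined while reading from left to right, so regularity of the relevant languages is immediate; one then transfers back using closure of regular languages under reversal. Alternatively, a nondeterministic automaton can guess where the leading padding block on the relevant track ends and verify the fixed word from that point, which also yields the desired regularity.
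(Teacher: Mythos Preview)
Your proposal is correct and follows essentially the same route as the paper: both directions go through Lemma~\ref{Lem : Automatic-Fibers}, using the finite decomposition $\mathcal{G}_f=\bigcup_{\bv\in V}(f^{-1}(\bv)\times\{\bv\})$ for the forward implication and extracting fibers from the graph for the converse. The paper is more terse on the converse --- it simply writes $\rep_{\bS}(f^{-1}(\bn'))=\{\bw\in\bL\colon \couple{\bw}{\rep_{\bS'}(\bn')}^\#\in\rep_{(\bS,\bS')}(\mathcal{G}_f)\}$ and asserts regularity directly, without the padding discussion you spell out --- but the underlying argument is identical.
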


\begin{proof}
Let $f\colon\N^d\to\N^{d'}$ be an $\bS$-automatic sequence. %(the image of $f$ is finite and can be seen as a subset of $\N^{d'}$). 
By Lemma~\ref{Lem : Automatic-Fibers}, for all $\bn'\in \N^{d'}$, the sets $f^{-1}(\bn')$ are $\bS$-recognizable. Then $\mathcal{G}_f$ is $(\bS,\bS')$-recognizable since it is equal to the finite union $\cup_{\bn'\in f(\N^d)}\,\big(f^{-1}(\bn')\times \{\bn'\}\big)$. 

Conversely, let $f\colon \N^d\to\N^{d'}$ be an $(\bS,\bS')$-synchronized sequence with finite image.  Then for all $\bn'\in\N^{d'}$, $\rep_{\bS}\big(f^{-1}(\bn')\big)=\{\bw\in\bL\colon \couple{\bw}{\rep_{\bS'}(\bn')}^\#\in\rep_{(\bS,\bS')}(\mathcal{G}_f\}$ is regular. We conclude by using Lemma~\ref{Lem : Automatic-Fibers}.
\end{proof}

\begin{proposition}
\label{Pro : SynchRegular}
Any $(\bS,\mathcal{S}')$-synchronized sequence $f\colon\N^d\to\N$ is $(\bS,\N)$-regular.
\end{proposition}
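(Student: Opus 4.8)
The plan is to reduce the statement to Theorem~\ref{The : enumeration} by writing each value $f(\bn)$ as the cardinality of an $(\bS,\mathcal{S}')$-recognizable fibre. The key observation is that, for every $\bn\in\N^d$,
\[
	f(\bn)=\Card\{m\in\N\colon m<f(\bn)\},
\]
so that, upon setting
\[
	X=\left\{\couple{\bn}{m}\in\N^{d+1}\colon m<f(\bn)\right\},
\]
the map $\bn\mapsto\Card\{m\in\N\colon \couple{\bn}{m}\in X\}$ is exactly $f$. Since $f(\N^d)\subseteq\N$, Theorem~\ref{The : enumeration} (applied with $d'=1$ and $\bS'=\mathcal{S}'$) will give that $f$ is $(\bS,\N)$-regular as soon as $X$ is shown to be $(\bS,\mathcal{S}')$-recognizable; this counting trick is the multidimensional analogue of the classical argument that a $b$-synchronized sequence is $b$-regular.

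To establish that $X$ is recognizable, I would express membership in $X$ logically via the synchronization hypothesis. Because $f$ is a function, for each $\bn$ there is a unique $y$ with $\couple{\bn}{y}\in\mathcal{G}_f$, namely $y=f(\bn)$, and hence
\[
	\couple{\bn}{m}\in X\iff \exists y\in\N\,\left(\couple{\bn}{y}\in\mathcal{G}_f\ \land\ m<y\right).
\]
The predicate $\couple{\bn}{y}\in\mathcal{G}_f$ is $(\bS,\mathcal{S}')$-recognizable by the very definition of an $(\bS,\mathcal{S}')$-synchronized sequence, while the order predicate $m<y$ is $\mathcal{S}'$-recognizable by the same argument recalled in Section~\ref{Subsec : Ingredient3}. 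Forming the conjunction of these two recognizable predicates and then projecting out the variable $y$ (an existential quantification) yields $X$, so the closure of recognizable predicates under these logical operations---Theorem~\ref{The : P-S-rec}---delivers the $(\bS,\mathcal{S}')$-recognizability of $X$.

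The one point that requires care, and which I expect to be the main (though shallow) obstacle, is that the predicate above lives over the \emph{mixed} numeration system $(\bS,\mathcal{S}',\mathcal{S}')$, with $\bn$ read in $\bS$ and both $m$ and $y$ read in $\mathcal{S}'$, whereas Theorem~\ref{The : P-S-rec} is phrased for a single system repeated. This is not a genuine difficulty: the automaton constructions underlying the logical closure---product for $\land$, projection for $\exists$, and complementation-then-intersection-with-the-numeration-language for $\neg$---never use that the component systems coincide, so the statement carries over verbatim to the mixed setting. Granting this, $X$ is $(\bS,\mathcal{S}')$-recognizable and the application of Theorem~\ref{The : enumeration} concludes; the passage from $(\bS,\N_\infty)$-regularity to $(\bS,\N)$-regularity is automatic here, since $\Card\{m\colon m<f(\bn)\}=f(\bn)$ is finite for every $\bn$.
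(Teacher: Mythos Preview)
Your proof is correct and matches the paper's own argument almost verbatim: define $X=\{\couple{\bn}{m}\colon m<f(\bn)\}$, rewrite it as $\{\couple{\bn}{m}\colon \exists y,\ \couple{\bn}{y}\in\mathcal{G}_f\land m<y\}$ to see recognizability, and apply Theorem~\ref{The : enumeration}. Your remark about the mixed system $(\bS,\mathcal{S}',\mathcal{S}')$ is a fair observation that the paper glosses over, and your resolution of it is correct.
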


\begin{proof}
Let $f\colon \N^d\to\N$ be an $(\bS,\mathcal{S}')$-synchronized sequence. Then the set 
\[
	X=\{\couple{\bn}{\ell}\in\N^{d+1} 
	\colon \ell<f(\bn)\}
\]
is $(\bS,\mathcal{S}')$-recognizable since it is equal to
\[
	\{\couple{\bn}{\ell}\in\N^{d+1}
	\colon \exists m,\ \couple{\bn}{m}\in \mathcal{G}_f\ \land\ \ell<m \}.
\]
Since for all $\bn\in\N^d$, $f(\bn)=\Card\{\ell\in\N\colon \couple{\bn}{\ell}\in X\}$, the result follows from Theorem~\ref{The : enumeration}.
\end{proof}

\begin{remark}
Even though both families of $\bS$-automatic sequences and $(\bS,\N)$-regular sequences are closed under sum, product and product by a constant, it is no longer the case of the family of $(\bS,\mathcal{S}')$-synchronized sequences. For instance, the sequence $\N \to \N,\ n \mapsto n$ is $(\mathcal{S},\mathcal{S})$-synchronized for any abstract numeration system $\mathcal{S}$. However, the sequence $\N \to \N,\ n \mapsto 2n$ is not $(\mathcal{S},\mathcal{S})$-synchronized in general. For example, it is not for the unary system $\mathcal{S}=(c^*,c)$.
\end{remark}

Let us show that, similarly to the families of multidimensional automatic and regular sequences, the family of multidimensional synchronized sequences is closed under projection. 

\begin{proposition}
Let $f\colon\N^d\to \N^{d'}$ be an $(\bS,\bS')$-synchronized sequence, let $i\in[\![1,d]\!]$ and let $k\in\N$. Then the sequence $\delta_{i,k}(f)\colon\N^{d-1}\to \N^{d'}$ (defined as in Proposition~\ref{Pro : Projection-Regular} where $\K$ is replaced by $\N^{d'}$) is $(\delta_i(\bS),\bS')$-synchronized.
\end{proposition}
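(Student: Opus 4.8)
The plan is to mirror the strategy of the proof of Proposition~\ref{Pro : Projection-Regular}: first restrict the graph to the slice $n_i=k$, then delete the $i$-th input tape, and finally correct a finite discrepancy caused by the left padding. Set $c=|\rep_{\mathcal{S}_i}(k)|$ and recall that $\delta_{i,k}(f)(\bn')=f(\bn)$ where $\bn$ is obtained from $\bn'\in\N^{d-1}$ by inserting $k$ in position $i$; thus $\mathcal{G}_{\delta_{i,k}(f)}=\{\couple{\delta_i(\bn)}{f(\bn)}\colon\bn\in\N^d,\ n_i=k\}$.

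First I would form the set $Y=\{\couple{\bn}{f(\bn)}\colon\bn\in\N^d,\ n_i=k\}$. Its representation language $\rep_{(\bS,\bS')}(Y)$ is the intersection of the regular language $\rep_{(\bS,\bS')}(\mathcal{G}_f)$ (regular by hypothesis) with the regular set of $(\bS,\bS')$-representations whose $i$-th component reads $\#^*\rep_{\mathcal{S}_i}(k)$; hence $\rep_{(\bS,\bS')}(Y)$ is regular, and in each of its words the $i$-th component is determined. Next I would delete the $i$-th tape: let $M'$ be the image of $\rep_{(\bS,\bS')}(Y)$ under the letter-to-letter morphism $\ba\mapsto\delta_i(\ba)$ (equivalently, project an NFA obtained from a DFA for $\rep_{(\bS,\bS')}(Y)$ by erasing the $i$-th component of each transition label). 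Then $M'$ is regular, with letters in $\delta_i(\bA)\cup\{\delta_i(\bdiese)\}$.

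The main obstacle—exactly as in Proposition~\ref{Pro : Projection-Regular}—is the left padding. For a tuple in $Y$, deleting the $i$-th row of its representation yields its genuine $(\delta_i(\bS),\bS')$-representation precisely when this creates no spurious leading $\delta_i(\bdiese)$ column, i.e.\ when at least one component other than the $i$-th has representation length $\ge c$. When instead every component other than the $i$-th has representation length $<c$, the deletion leaves $c-\ell'$ leading all-$\#$ columns (where $\ell'<c$ is the maximal length of the remaining components), which must be stripped. The key observation is that this bad case forces all $d-1$ remaining input components and all $d'$ output components to have representation length $<c$, so it concerns only finitely many tuples.

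Writing $F_1$ for the finite set of genuine $(\delta_i(\bS),\bS')$-representations of these exceptional tuples and $F_2\subseteq M'$ for the finite set of their over-padded images, the length and leading-$\#$ bookkeeping shows that the non-exceptional images are already correct and carry no leading $\delta_i(\bdiese)$, so they are disjoint from both $F_1$ and $F_2$; consequently
\[
	\rep_{(\delta_i(\bS),\bS')}(\mathcal{G}_{\delta_{i,k}(f)})=(M'\setminus F_2)\cup F_1,
\]
a finite modification of the regular language $M'$, hence regular. Therefore $\mathcal{G}_{\delta_{i,k}(f)}$ is $(\delta_i(\bS),\bS')$-recognizable and $\delta_{i,k}(f)$ is $(\delta_i(\bS),\bS')$-synchronized. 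I expect the only delicate verification to be this disjointness/length accounting confirming that $M'$ and the target language differ exactly on the finite sets $F_1$ and $F_2$.
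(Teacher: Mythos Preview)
Your argument is correct and follows the same overall strategy as the paper: intersect the graph with the slice $n_i=k$, then project out the $i$-th tape via a letter-to-letter morphism, and finally repair the leading-$\#$ padding. The only difference lies in this last step: you handle it by the finite-modification argument (observing that the exceptional tuples, whose remaining $d-1+d'$ components all have representation length $<c$, are finitely many), whereas the paper instead adds a loop labeled $\couple{\bdiese}{\bdiese'}$ on the initial state, projects, and then applies the erasing morphism $\tau_{\couple{\delta_i(\bdiese)}{\bdiese'}}$, which strips the spurious all-$\#$ columns uniformly without any case distinction. Both are valid; the paper's route is a bit slicker since it avoids the length bookkeeping, while yours makes the exceptional set explicit. (A minor notational point: your all-$\#$ letter after projection lives in the $(d-1+d')$-dimensional alphabet, i.e.\ it is $\couple{\delta_i(\bdiese)}{\bdiese'}$ rather than just $\delta_i(\bdiese)$.)
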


\begin{proof}
By definition, the graph $\mathcal{G}_f$ is an $(\bS,\bS')$-recognizable subset of $\N^{d+d'}$. Thus, so is 
\[
	\mathcal{G}_f\cap
	\big\{\couple{\bn}{\bn'}\in\N^{d+d'}
	\colon \pi_i(\bn)=k\big\}.
\]
We have
\[
	\delta_i\big(\mathcal{G}_f\cap
	\big\{\couple{\bn}{\bn'}\in\N^{d+d'}
	\colon \pi_i(\bn)=k\big\}\big)
	=\mathcal{G}_{\delta_{i,k}(f)}
\]
where for $X\subseteq \N^{d+d'}$, $\delta_i(X)$ deletes the $i$-th component of all elements in $X$. Since no $(\bS,\bS')$-representation contains the letter {\tiny $\couple{\bdiese}{\bdiese'}$}, we can build a DFA accepting the language
\[
	\couple{\bdiese}{\bdiese'}^*
	\rep_{(\bS,\bS')}\big(\mathcal{G}_f\cap
	\big\{\couple{\bn}{\bn'}\in\N^{d+d'}
	\colon \pi_i(\bn)=k\big\}\big)
\]
by adding a loop labeled by {\tiny $\couple{\bdiese}{\bdiese'}$} on the initial state. By deleting the $i$-th component of every label in this DFA, we obtain an NFA accepting a language $K$ such that
\[
	\rep_{(\delta_i(\bS),\bS')}(\mathcal{G}_{\delta_{i,k}(f)})
	=\tau_{\tiny \couple{\delta_i(\bdiese)}{\bdiese'}}(K).
\]
The image of a regular language under a morphism remaining regular, we obtain that $\delta_{i,k}(f)$ is $(\delta_i(\bS),\bS')$-synchronized.
\end{proof}

In order to obtain more properties on $(\bS,\bS')$-synchronized sequences, we give a characterization of these sequences in terms of synchronized relations. We consider relations $R\colon A^*\to B^*$ where $A$ and $B$ are arbitrary alphabets. The \emph{graph} of such a relation is the subset
\[
	\mathcal{G}_R
	=\{\couple{u}{v}\in A^*\times B^* \colon u Rv\}
\]
of $A^*\times B^*$. Note that a relation is completely determined by its graph. We let $\$\notin A\cup B$ and we let $(\mathcal{G}_R)^\$=\{\couple{u}{v}^\$\colon \couple{u}{v}\in \mathcal{G}_R\}$ designate the corresponding language over the alphabet $\big(A_\$\times B_\$\big)\setminus{\{\small\couple{\$}{\$}\}}$. A relation $R\colon A^*\to B^*$ is \emph{synchronized} if the language $(\mathcal{G}_R)^\$ $ is regular.

\begin{runningexample}
Consider the relation $R: \bA^* \to \bA^*$ whose graph is $\mathcal{G}_R=\{\couple{\bu}{\bv}\in\bA^*\times\bA^*\colon \big| |\bu|-|\bv| \big| \le 1\}$. This relation is synchronized since the language $(\mathcal{G}_R)^\$$ is recognized by the DFA depicted in Figure~\ref{Fig : RelationAutomaton}.
\begin{figure}[htb]
\begin{center}
\begin{tikzpicture}
\tikzstyle{every node}=[shape=circle, fill=none, draw=black,minimum size=20pt, inner sep=2pt]
\node(1) at (0,0) {$1$};
\node(2) at (6,0) {$2$};
\tikzstyle{every node}=[shape=circle, fill=none, draw=black,minimum size=15pt, inner sep=2pt]
\node(1f) at (0,0) {};
\node(2f) at (6,0) {};
\tikzstyle{every path}=[color=black, line width=0.5 pt]
\tikzstyle{every node}=[]
% fleche des poids initiaux
\draw [-Latex] (-1,0) to node { } (1) ; 
\draw [-Latex] (2) to [loop above] node {$\couple{\ba}{\bb},\ \ba,\bb\in\bA$} (2);
\draw [-Latex] (1) to node [below] {$\couple{\$}{\ba}, \couple{\ba}{\$}, \couple{\ba}{\bb},\  
\ba,\bb\in\bA $} 
(2);
\end{tikzpicture}
\end{center}
\caption{A DFA accepting $(\mathcal{G}_R)^\$$ where $\mathcal{G}_R=\{\couple{\bu}{\bv}\in\bA^*\times\bA^*\colon \big| |\bu|-|\bv| \big| \le 1\}$.}
\label{Fig : RelationAutomaton}
\end{figure}
\end{runningexample}

For a sequence $f\colon\N^d\to\N^{d'}$, we define a relation $R_{f,\bS,\bS'}\colon \bA^*\to(\bA')^*$ by
\[
	\mathcal{G}_{R_{f,\bS,\bS'}}
	=\{\couple{\bw}{\bw'}\in \bL\times \bL'
	\colon f(\val_{\bS}(\bw))=\val_{\bS'}(\bw')\}.
\]	 
Note that the letter $\#$ already appears in the alphabet $\bA$ if $d>1$ (resp.\ in the alphabet $\bA'$ if $d'>1$), so it is convenient to use a symbol $\$$ different from $\#$ in order to pad elements of the graph $\mathcal{G}_{R_{f,\bS,\bS'}}$. In this way, there is no ambiguity between the $\#$-padding of representations in a multidimensional abstract numeration system and the $\$$-padding of elements in the graph of a relation. 
 
\begin{proposition}
\label{Pro : SynchSeqGraph}
A sequence $f\colon\N^d\to\N^{d'}$ is $(\bS,\bS')$-synchronized if and only if the relation $R_{f,\bS,\bS'}$ is synchronized.
\end{proposition}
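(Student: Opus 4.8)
The plan is to identify the two languages whose regularity defines the two properties in the statement by means of an explicit letter-to-letter renaming. By definition, $f$ is $(\bS,\bS')$-synchronized exactly when $\rep_{(\bS,\bS')}(\mathcal{G}_f)$ is regular, whereas $R_{f,\bS,\bS'}$ is synchronized exactly when $(\mathcal{G}_{R_{f,\bS,\bS'}})^\$$ is regular. Since $\rep_{\bS},\val_{\bS}$ and $\rep_{\bS'},\val_{\bS'}$ are reciprocal bijections, both languages are parameterized by $\bn\in\N^d$: writing $\bn'=f(\bn)$,
\[
	\rep_{(\bS,\bS')}(\mathcal{G}_f)
	=\Bigl\{\rep_{(\bS,\bS')}\couple{\bn}{\bn'}\colon \bn\in\N^d\Bigr\}
	\ \text{ and }\
	(\mathcal{G}_{R_{f,\bS,\bS'}})^\$
	=\Bigl\{\couple{\rep_{\bS}(\bn)}{\rep_{\bS'}(\bn')}^\$\colon \bn\in\N^d\Bigr\}.
\]
First I would record that, for each fixed $\bn$, these two words have the same length $L=\max\bigl(|\rep_{\bS}(\bn)|,|\rep_{\bS'}(\bn')|\bigr)$: the left word left-pads all $d+d'$ scalar representations to this common length with $\#$, while the right word left-pads the shorter of $\rep_{\bS}(\bn)\in\bA^*$ and $\rep_{\bS'}(\bn')\in(\bA')^*$ to the same length with $\$$.

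Next I would define the renaming $\phi$ from the two alphabet bijections $\bA_\$\to\bA\cup\{\bdiese\}$, sending $\$$ to $\bdiese$ and fixing $\bA$, and $(\bA')_\$\to\bA'\cup\{\bdiese'\}$, sending $\$$ to $\bdiese'$ and fixing $\bA'$: on a pair $\couple{\ba}{\ba'}$, the map $\phi$ applies these two bijections componentwise and reads the result as a single $(d+d')$-dimensional letter. These two maps are bijections onto the sets of all $d$-tuples, resp.\ all $d'$-tuples, over the padded component alphabets, and the unique pair that does not occur—namely $\couple{\$}{\$}$ on the source side and the joint all-$\#$ letter $\couple{\bdiese}{\bdiese'}$ on the target side—correspond to each other under $\phi$; hence $\phi$ is a bijection between the two alphabets.

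The central step is then a position-by-position verification that $\phi\bigl(\couple{\rep_{\bS}(\bn)}{\rep_{\bS'}(\bn')}^\$\bigr)=\rep_{(\bS,\bS')}\couple{\bn}{\bn'}$ for every $\bn$. On the first tape, the $\$$-padding occupies positions $1,\dots,L-|\rep_{\bS}(\bn)|$, which $\phi$ turns into columns of $\bdiese$; on the target side these positions carry the $\#$-columns prepended to $\rep_{\bS}(\bn)$ when its components are lengthened from $|\rep_{\bS}(\bn)|$ to $L$, and the remaining positions coincide letter for letter. The last $d'$ components match symmetrically. Thus $\phi$ restricts to a bijection $(\mathcal{G}_{R_{f,\bS,\bS'}})^\$\to\rep_{(\bS,\bS')}(\mathcal{G}_f)$. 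As $\phi$ and $\phi^{-1}$ are alphabetic renamings, each preserves regularity, so one language is regular if and only if the other is, which is exactly the asserted equivalence.

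I expect the main obstacle to be the bookkeeping in this last verification: one must reconcile the two layers of $\#$-padding hidden in $\rep_{(\bS,\bS')}$ (the inner padding performed separately inside $\rep_{\bS}$ and $\rep_{\bS'}$, then the outer padding to the joint length $L$) with the single $\$$-padding of the relation graph, and check that $\phi$ aligns them on the nose rather than merely matching lengths. Getting the indices of the padded prefixes to agree—and confirming that the all-$\#$ letter is excluded on the target side precisely because $\couple{\$}{\$}$ is excluded on the source side—is where the care is needed.
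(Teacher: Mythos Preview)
Your proof is correct and follows essentially the same approach as the paper: your map $\phi$ is precisely the morphism $\sigma$ the paper defines, and both arguments conclude by observing that this letter-to-letter bijection identifies $(\mathcal{G}_{R_{f,\bS,\bS'}})^\$$ with $\rep_{(\bS,\bS')}(\mathcal{G}_f)$, so regularity transfers in both directions. The only difference is presentational: you carry out the position-by-position verification explicitly (including the reconciliation of the inner $\#$-padding with the outer $\$$-padding), whereas the paper states the image equality without details and appeals to closure of regular languages under morphic image and preimage.
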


\begin{proof}
Let $\sigma\colon \big((\bA_\$\times \bA'_\$)\setminus\{\couple{\$}{\$}\}\big)^*\to \big((\bA_{\bdiese}\times \bA'_{\bdiese'})\setminus\{{\tiny\couple{\bdiese}{\bdiese'}}\}\big)^*$ be the morphism defined by
\[
	\sigma\couple{\ba}{\$}=\couple{\ba}{\bdiese'},\quad
	\sigma\couple{\$}{\ba'}=\couple{\bdiese}{\ba'},\quad
	\sigma\couple{\ba}{\ba'}=\couple{\ba}{\ba'}
\]
for $\ba\in\bA$ and $\ba'\in\bA'$. Here $\bdiese'$ is the $d'$-dimensional letter all whose components are equal to $\#$. Then 
\[
	\rep_{(\bS,\bS')}(\{\couple{\bn}{f(\bn)}\colon \bn\in\N^d\})
	=\sigma((\mathcal{G}_{R_{f,\bS,\bS'}})^\$).
\]
The conclusion follows from the injectivity of $\sigma$ and the fact that the family of regular languages is closed under both image and inverse image under a morphism.
\end{proof}

The family of regular sequences is not closed under composition~\cite[p.\ 169, (iii)]{AlloucheShallit1992}. Thanks to Proposition~\ref{Pro : SynchSeqGraph}, we show that the composition of synchronized sequences is synchronized. To do so, in the following proposition, we consider an extra multidimensional abstract numeration system $\bS''$, of dimension $d''$.

\begin{proposition}
\label{Pro : Composition-Synchronized}
Let $f\colon \N^d\to\N^{d'}$ be an $(\bS,\bS')$-synchronized sequence and let $g\colon \N^{d'}\to\N^{d''}$ be an $(\bS',\bS'')$-synchronized sequence. Then their composition $g\circ f\colon \N^d\to\N^{d''}$ is $(\bS,\bS'')$-synchronized.
\end{proposition}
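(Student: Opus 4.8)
The plan is to reduce the statement to a closure property of synchronized relations and then to prove that property by eliminating the intermediate word. By Proposition~\ref{Pro : SynchSeqGraph}, it suffices to show that the relation $R_{g\circ f,\bS,\bS''}\colon\bA^*\to(\bA'')^*$ is synchronized. First I would check that this relation is precisely the composition $R_{g,\bS',\bS''}\circ R_{f,\bS,\bS'}$ of the two synchronized relations associated with $f$ and $g$: for $\bw\in\bL$ and $\bw''\in\bL''$, the pair $\couple{\bw}{\bw''}$ lies in $\mathcal{G}_{R_{g\circ f,\bS,\bS''}}$ if and only if there exists $\bw'\in\bL'$ with $f(\val_\bS(\bw))=\val_{\bS'}(\bw')$ and $g(\val_{\bS'}(\bw'))=\val_{\bS''}(\bw'')$, the intermediate word being forced to be $\bw'=\rep_{\bS'}(f(\val_\bS(\bw)))$ since $\val_{\bS'}$ and $\rep_{\bS'}$ are reciprocal bijections between $\bL'$ and $\N^{d'}$. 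Hence the whole statement follows once we establish that the composition of two synchronized relations is synchronized.

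To prove the latter, set $R=R_{f,\bS,\bS'}$ and $T=R_{g,\bS',\bS''}$, and work with the convolution languages $\mathcal{L}_R=(\mathcal{G}_R)^\$$ and $\mathcal{L}_T=(\mathcal{G}_T)^\$$, which are regular by hypothesis. The idea is to view both relations on a common three-track alphabet and then to eliminate the middle track. I would first cylindrify each language: the set of all convolutions $\triple{\bu}{\bv}{\bw}^\$$ (left-padded to a common length) such that $\couple{\bu}{\bv}\in\mathcal{G}_R$, with $\bw\in(\bA'')^*$ free, is regular, and symmetrically for $T$. This holds because recovering the two-track convolution $\couple{\bu}{\bv}^\$$ from the three-track one amounts to reading two of the tracks and deleting the leading all-$\$$ blocks that appear when the free word is the longest; concretely it is obtained by an inverse morphism applied to $\couple{\$}{\$}^*\mathcal{L}_R$ intersected with the (regular) language of valid three-track convolutions. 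Intersecting the two cylindrifications then yields the regular language $\mathcal{L}=\{\triple{\bu}{\bv}{\bw}^\$\colon \couple{\bu}{\bv}\in\mathcal{G}_R,\ \couple{\bv}{\bw}\in\mathcal{G}_T\}$.

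It remains to project $\mathcal{L}$ onto its first and third tracks and to recover $(\mathcal{G}_{T\circ R})^\$$, and this projection is where I expect the main difficulty to lie. Erasing the middle ($\bv$) track is a morphism, so its image is regular, but the resulting words are not yet valid convolutions over $\big(\bA_\$\times\bA''_\$\big)\setminus\{\couple{\$}{\$}\}$: whenever the intermediate word $\bv$ is strictly longer than both $\bu$ and $\bw$, erasing its track leaves leading occurrences of the forbidden letter $\couple{\$}{\$}$, and there can be arbitrarily many of them since synchronized relations impose no bound on the length of $\bv$ in terms of $|\bu|$ and $|\bw|$. The key point is that, by the left-padding convention, the $\$$-blocks of the $\bu$- and $\bw$-tracks are prefixes, so every such $\couple{\$}{\$}$ occurs at the beginning of the word; stripping this leading block and intersecting with the language of valid two-track convolutions is a regularity-preserving operation, exactly the padding renormalization already used in the projection argument of Proposition~\ref{Pro : Projection-Regular} and in the negation case of Theorem~\ref{The : P-S-rec}. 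This produces a regular language equal to $(\mathcal{G}_{T\circ R})^\$$, so $T\circ R$ is synchronized, and by Proposition~\ref{Pro : SynchSeqGraph} the sequence $g\circ f$ is $(\bS,\bS'')$-synchronized.
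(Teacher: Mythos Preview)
Your proposal is correct and follows the same overall strategy as the paper: reduce via Proposition~\ref{Pro : SynchSeqGraph} to the identity $R_{g\circ f,\bS,\bS''}=R_{g,\bS',\bS''}\circ R_{f,\bS,\bS'}$ and then invoke closure of synchronized relations under composition. The only difference is that the paper simply cites this closure property from the literature (Frougny--Sakarovitch, Theorem~2.6.6), whereas you supply a self-contained proof of it via three-track cylindrification, intersection, projection, and left-quotient by the leading $\couple{\$}{\$}$-block; your argument for that step is standard and correct.
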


\begin{proof}
By Proposition~\ref{Pro : SynchSeqGraph}, the relations $R_{f,\bS,\bS'}$ and $R_{g,\bS',\bS''}$ are synchronized. From~\cite[Theorem~2.6.6]{FrougnySakatovitch2010}, the composition $R_{g,\bS',\bS''}\circ R_{f,\bS,\bS'}$ is synchronized. Since $R_{g,\bS',\bS''}\circ R_{f,\bS,\bS'}=R_{g\circ f,\bS,\bS''}$, we conclude by using Proposition~\ref{Pro : SynchSeqGraph} again.
\end{proof}

\begin{corollary}
If $f\colon\N^d\to\N^{d'}$ is an $(\bS,\bS')$-synchronized sequence, then for all $\bk\in \N^d$, the sequences $\N^d\to\N^{d'},\ \bn\mapsto f(\bn+\bk)$ are $(\bS,\bS')$-synchronized.
\end{corollary}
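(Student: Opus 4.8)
The plan is to realize the sequence $\bn\mapsto f(\bn+\bk)$ as a composition of two $\bS$-synchronized sequences and then to invoke Proposition~\ref{Pro : Composition-Synchronized}. The key observation is that the translation map $t_{\bk}\colon\N^d\to\N^d,\ \bn\mapsto\bn+\bk$ is itself an $(\bS,\bS)$-synchronized sequence: this is precisely the content of Proposition~\ref{Pro : +k}. The sequence whose synchronization we wish to establish is then exactly $f\circ t_{\bk}$.

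Concretely, I would apply Proposition~\ref{Pro : Composition-Synchronized} with the middle numeration system taken to be $\bS$ itself (of dimension $d$). Playing the role of the ``inner'' synchronized sequence is $t_{\bk}\colon\N^d\to\N^d$, which is $(\bS,\bS)$-synchronized, and playing the role of the ``outer'' synchronized sequence is $f\colon\N^d\to\N^{d'}$, which is $(\bS,\bS')$-synchronized by hypothesis. The one compatibility condition demanded by the composition statement is that the output numeration system of the inner map coincide with the input numeration system of the outer map; here both are $\bS$, so the hypotheses of Proposition~\ref{Pro : Composition-Synchronized} are met. The proposition then yields that $f\circ t_{\bk}\colon\N^d\to\N^{d'}$ is $(\bS,\bS')$-synchronized, which is the desired conclusion since $(f\circ t_{\bk})(\bn)=f(\bn+\bk)$ for all $\bn\in\N^d$.

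There is no genuine obstacle in this argument: it is a direct two-line consequence of the results already established. The only point requiring care is purely bookkeeping, namely matching the triple of numeration systems $(\bS,\bS,\bS')$ against the generic triple $(\bS,\bS',\bS'')$ appearing in Proposition~\ref{Pro : Composition-Synchronized}, and checking that the dimensions $d,d,d'$ are consistent with $t_{\bk}$ mapping $\N^d$ to $\N^d$ and $f$ mapping $\N^d$ to $\N^{d'}$. Once this identification is in place, the statement follows immediately.
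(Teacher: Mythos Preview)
Your proof is correct and follows exactly the same approach as the paper, which simply cites Propositions~\ref{Pro : +k} and~\ref{Pro : Composition-Synchronized}.
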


\begin{proof}
This is a consequence of Propositions~\ref{Pro : +k} and~\ref{Pro : Composition-Synchronized}.
\end{proof}

\begin{corollary}
Let $f\colon\N^d\to\N^{d'}$ and $g\colon\N^{d'}\to\N^{d''}$.
\begin{itemize}
\item If $f$ is $\bS$-automatic and $g$ is $(\bS',\bS'')$-synchronized, then $g\circ f$ is $\bS$-automatic.
\item If $f$ is $(\bS,\bS')$-synchronized and $g$ is $\bS'$-automatic, then $g\circ f$ is $\bS$-automatic.
\end{itemize}
\end{corollary}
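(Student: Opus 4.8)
The plan is to derive both items purely from two earlier results: Proposition~\ref{Pro : AutSynch}, which characterizes $\bS$-automatic sequences as exactly the $(\bS,\bS')$-synchronized sequences with finite image, and Proposition~\ref{Pro : Composition-Synchronized}, the closure of synchronized sequences under composition. The common strategy is a three-step sandwich: translate the automatic hypothesis into ``synchronized $+$ finite image'' via Proposition~\ref{Pro : AutSynch}, apply Proposition~\ref{Pro : Composition-Synchronized} to obtain that the composite is synchronized, check that the composite has finite image, and finally invoke Proposition~\ref{Pro : AutSynch} in the reverse direction.

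For the first item, I would argue as follows. Since $f$ is $\bS$-automatic, Proposition~\ref{Pro : AutSynch} tells us that $f$ is $(\bS,\bS')$-synchronized and that $f(\N^d)$ is finite. By hypothesis $g$ is $(\bS',\bS'')$-synchronized, so Proposition~\ref{Pro : Composition-Synchronized} yields that $g\circ f$ is $(\bS,\bS'')$-synchronized. The composite also has finite image, because $(g\circ f)(\N^d)=g(f(\N^d))$ is the image under $g$ of the finite set $f(\N^d)$. Applying Proposition~\ref{Pro : AutSynch} once more, now in the direction ``synchronized with finite image $\implies$ automatic'', we conclude that $g\circ f$ is $\bS$-automatic.

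The second item follows the same template, with the finiteness of the image supplied by $g$ rather than by $f$. Here $f$ is $(\bS,\bS')$-synchronized and $g$ is $\bS'$-automatic; Proposition~\ref{Pro : AutSynch} gives that $g$ is $(\bS',\bS'')$-synchronized and that $g(\N^{d'})$ is finite. Proposition~\ref{Pro : Composition-Synchronized} again shows that $g\circ f$ is $(\bS,\bS'')$-synchronized, and the image $(g\circ f)(\N^d)\subseteq g(\N^{d'})$ is finite since the entire range of $g$ is already finite. One final application of Proposition~\ref{Pro : AutSynch} gives that $g\circ f$ is $\bS$-automatic.

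I do not expect a genuine obstacle here: once the two cited propositions are available, both arguments are essentially bookkeeping. The only point deserving (minor) attention is the finite-image step, where the two items differ in which hypothesis furnishes finiteness---in the first, the finiteness of $f(\N^d)$ propagates through $g$, while in the second it is the a priori finiteness of the whole range of $g$ that bounds the composite. Both reduce to the elementary inclusion $(g\circ f)(\N^d)\subseteq g(\N^{d'})$ together with $g$ mapping a finite set to a finite set.
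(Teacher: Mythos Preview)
Your proof is correct and follows exactly the approach the paper takes: the paper simply states that the corollary is a consequence of Propositions~\ref{Pro : AutSynch} and~\ref{Pro : Composition-Synchronized}, and you have faithfully spelled out the details of how those two results combine.
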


\begin{proof}
This is a consequence of Propositions~\ref{Pro : AutSynch} and~\ref{Pro : Composition-Synchronized}.\end{proof}

\section{Mixing regular sequences and synchronized sequences}
\label{Sec : Reg-Synch}

Even though the family of regular sequences is not closed under composition in general, in this section we prove that the composition of a regular sequence and a synchronized one is regular. This result can be seen as a generalization of Proposition~\ref{Pro : Composition-Synchronized}.

In order to do so, we will first show a general result concerning the composition of recognizable series and synchronized relations. Our proof of this result is based on automata. Indeed, automata are intrinsincally present in the notion of synchronized relations. Moreover, it is classical that recognizable series may be defined through an automata point of view. To make this link precise, we first present the notion of weighted automaton in Section~\ref{Sec : Weighted automata}. Next, we shall turn to the composition of recognizable series and synchronized relations in Section~\ref{Sec : CompRecSynch}, and finally we will prove the announced result on the composition of regular and synchronized sequences in Section~\ref{Sec : CompRegSynch}.

\subsection{Weighted automata}
\label{Sec : Weighted automata}

A \emph{weighted (finite) automaton} $\mathcal{A} = (Q, I, T, A, E)$ with weights in a semiring $\K$, or simply a \emph{$\K$-automaton}, is composed of a (finite) set $Q$ of states, a (finite) alphabet $A$ and of three mappings $I \colon Q \to \K$, $T \colon Q \to \K$ and $E \colon Q \times A \times Q \to \K$. We call a state $q$ \emph{initial} if $I(q) \ne 0$ and \emph{final} if $T(q) \ne 0$. A triple $(p,a,q)\in Q \times A \times Q$ is called a \emph{transition}. The \emph{label} of a transition $(p,a,q)$ is the letter $a$ and its \emph{weight} is $E(p,a,q)$. A \emph{path} in $\mathcal{A}$ is a sequence 
\[
	c = (q_0,a_1,q_1)(q_1,a_2,q_2) \cdots (q_{n-1},a_n,q_n)
\] 
of transitions, which will sometimes be shortened as
\begin{equation}
\label{Eq:chemin}	
	c=c_\mathcal{A}(q_0q_1\cdots q_n,a_1a_2\cdots a_n).
\end{equation}
The \emph{weight} of the path $c$ is the product 
\[
	E(c) = E(q_0,a_1,q_1)E(q_1,a_2,q_2)\cdots E(q_{n-1},a_n,q_n)
\] 
of the weights of its transitions. Its \emph{label} is the word $a_1a_2\cdots a_n$. We let $i_c$ and $t_c$ denote the first and last states of $c$ respectively. A path $c$ is \emph{initial} if $i_c$ is initial and \emph{final} if $t_c$ is final. A state $q\in Q$ is \emph{co-accessible} if there exists a non-zero weight final path starting in $q$. For $w\in A^*$, we let $C_\mathcal{A}(w)$ denote the set of paths in $\mathcal{A}$ of label $w$ that are both initial and final. The \emph{weight} of $w$ in $\mathcal{A}$ is the quantity 
\[
	\sum_{c\in C_\mathcal{A}(w)} I(i_c)E(c)T(t_c).
\] 

A $\K$-automaton can be represented by a graph where the states are the vertices and each transition $(p,a,q)$ is an arrow from $p$ to $q$ of label $a | E(p,a,q)$. In practice, we omit to represent the zero weight transitions. Every initial state $q$ has an incoming arrow labeled by $I(q)$ and every final state  has an outgoing arrow labeled by $T(q)$.

\begin{runningexample}
Considering the $\N$-automaton $\mathcal{A}$ over the alphabet $\bA$ depicted in Figure~\ref{Fig : WeightedAutomaton}. Since all represented weights are equal to $1$, the weight of a word $\bw\in \bA^*$ is equal to the number of represented paths labeled by $\bw$ that are both initial and final. For instance, the weights of the words $\couple{\#ab}{aab}$, $ \couple{aaaab}{\#aaab}$, $\couple{aab}{bab}$, $\couple{aa}{ab}$ and $\couple{a\#a}{aba}$ are respectively equal to $2$, $4$, $2$, $0$ and $1$. Those values correspond to those given in Figure~\ref{Fig : CoeffS}.
\begin{figure}[htb]
\begin{center}
\begin{tikzpicture}
\tikzstyle{every node}=[shape=circle, fill=none, draw=black,minimum size=20pt, inner sep=2pt]
\node(1) at (0,0) {$T$};
\node(2) at (5,0) {$S$};
\tikzstyle{every path}=[color=black, line width=0.5pt]
\tikzstyle{every node}=[]
\draw [-Latex] (2) to node [above] {$1$} (6,0); 
% fleche des poids initiaux
\draw [-Latex] (-1,0) to node [above] {$1$} (1) ; 
% fleche des poids finaux
\draw [-Latex] (2) to [loop above] node [above] {$\couple{a}{a}|1,\couple{b}{b}| 1$} (2);
\draw [-Latex] (1) to node [above] {$\couple{a}{a}|1,\couple{b}{b}| 1$} (2);
\draw [-Latex] (1) to [loop above] node [above] {$\ba|1, \ba\in\bA$} (1);
\end{tikzpicture}
\end{center}
\caption{An $\N$-automaton over $\bA=\left\{\couple{\#}{a},\couple{\#}{b},\couple{a}{\#},\couple{a}{a},\couple{a}{b},\couple{b}{\#},\couple{b}{a},\couple{b}{b}\right\}$.}
\label{Fig : WeightedAutomaton}
\end{figure}
\end{runningexample}

In what follows, it will be convenient to also use some of the previous definitions for a DFA $\mathcal{A}$ (in an obvious adapted manner). For instance, for a DFA $\mathcal{A}$ with a (partial) transition function $\delta$, the notation \eqref{Eq:chemin} designates the path in $\mathcal{A}$ labeled by $a_1a_2\cdots a_n$ and visiting the states $q_0,\ldots,q_n$, i.e., for each $i\in[\![1,n]\!]$, we have $\delta(q_{i-1},a_i)=q_i$. Similarly, for a DFA $\mathcal{A}$, we let $C_\mathcal{A}$ denote the set of accepting paths in $\mathcal{A}$, and moreover,  we let $L_\mathcal{A}$ denote the set of labels of the paths in $C_\mathcal{A}$, i.e., $L_\mathcal{A}$ is the language accepted by $\mathcal{A}$.

Formal series and $\K$-automata can be linked. 
The series \emph{recognized} by a $\K$-automaton $\mathcal{A}=(Q,I,T,A,E)$ is the series $S$ whose coefficients are the weights of the words over $A$ in $\mathcal{A}$. 

\begin{proposition}
A series is recognized by a $\K$-automaton if and only if it is $\K$-recognizable.
\end{proposition}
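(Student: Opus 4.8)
The plan is to set up the standard dictionary between $\K$-automata and linear representations: a $\K$-automaton whose state set $Q$ has cardinality $r$ will correspond to a linear representation of dimension $r$, obtained by reading the initial weights $I$ as the row vector $\lambda$, the final weights $T$ as the column vector $\gamma$, and the transition weights $E(p,a,q)$ as the entries of the matrices $\mu(a)$. Both implications rest on a single inductive lemma relating matrix products to path weights, so I would isolate that lemma and then deduce each direction by reading the construction one way or the other.

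First I would treat the forward implication. Suppose $S$ is recognized by a $\K$-automaton $\mathcal{A}=(Q,I,T,A,E)$, and enumerate $Q=\{q_1,\ldots,q_r\}$. I define $\lambda\in\K^{1\times r}$, $\gamma\in\K^{r\times 1}$ and a monoid morphism $\mu\colon A^*\to\K^{r\times r}$ by $\lambda_i=I(q_i)$, $\gamma_i=T(q_i)$ and $(\mu(a))_{i,j}=E(q_i,a,q_j)$ for $a\in A$, with $\mu(\varepsilon)$ the identity matrix. The heart of the argument is the claim that for every $w\in A^*$ and all $i,j$, the entry $(\mu(w))_{i,j}$ equals $\sum_c E(c)$, the sum being taken over all paths $c$ from $q_i$ to $q_j$ with label $w$. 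This is proved by induction on $|w|$: the base case $w=\varepsilon$ records that the only length-zero path from $q_i$ to $q_j$ exists precisely when $i=j$ and has weight $1$, while the inductive step uses that the matrix identity $(\mu(wa))_{i,j}=\sum_k (\mu(w))_{i,k}(\mu(a))_{k,j}$ mirrors exactly the concatenation of a path $q_i\to q_k$ labelled $w$ with a transition $(q_k,a,q_j)$.

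Granting the claim, I compute $\lambda\mu(w)\gamma=\sum_{i,j}I(q_i)(\mu(w))_{i,j}T(q_j)$, which by the lemma equals $\sum_c I(i_c)E(c)T(t_c)$ summed over all paths $c$ labelled $w$. Terms with $I(i_c)=0$ or $T(t_c)=0$ vanish, so this sum reduces to $\sum_{c\in C_\mathcal{A}(w)} I(i_c)E(c)T(t_c)=(S,w)$; hence $(\lambda,\mu,\gamma)$ is a linear representation of $S$ and $S$ is $\K$-recognizable. For the converse, I simply reverse the construction: given a linear representation $(\lambda,\mu,\gamma)$ of dimension $r$, I set $Q=\{1,\ldots,r\}$, $I(i)=\lambda_i$, $T(i)=\gamma_i$ and $E(i,a,j)=(\mu(a))_{i,j}$, and apply the same path-weight lemma to conclude that the weight of $w$ in this automaton is $\lambda\mu(w)\gamma=(S,w)$.

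The only genuine content is the inductive path-weight lemma; the remainder is a change of vocabulary between matrices and labelled graphs. The mild subtlety to watch is the handling of the empty word together with the convention $\mu(\varepsilon)=I_r$, and keeping the initial and final weights cleanly separated from the transition weights, so that the factor $I(i_c)\cdots T(t_c)$ is attached exactly once per path and the identification of the matrix product with summation over paths is not spoiled at the endpoints.
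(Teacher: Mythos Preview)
Your proof is correct and matches the approach the paper has in mind: the paper does not give a detailed argument but merely remarks that the proof is constructive and that ``the linear representation encodes the weights in the $\K$-automaton,'' which is exactly the dictionary you set up and verify via the path-weight lemma. Your write-up simply spells out the standard details that the paper leaves implicit.
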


The proof of the previous proposition is constructive. Roughly, the linear representation encodes the weights in the $\K$-automaton.

\begin{runningexample}
The $\N$-automaton of Figure~\ref{Fig : WeightedAutomaton} recognizes the series $S$. It corresponds to the linear representation given in~\eqref{Eq:LinearRep}. An $\N$-automaton recognizing the series $S_f$ is depicted in Figure~\ref{Fig : WeightedAutomaton-S_f}. The latter $\N$-automaton corresponds to the $\N$-submodule generated by the ten series $S_f$, 
$S_g$, 
$S_{\chi_{\{0\}\times\val_{\mathcal{S}}(a^*)}}$, 
$S_{\chi_{\{0\}\times\N}}$, 
$S_{\chi_{\val_{\mathcal{S}}(a^*)\times\{0\}}}$, 
$S_{\chi_{\val_{\mathcal{S}}(a^*)\times\val_{\mathcal{S}}(a^*)}}$, 
$S_{\chi_{\val_{\mathcal{S}}(a^*)\times\N}}$,
$S_{\chi_{\N\times\{0\}}}$,
$S_{\chi_{\N\times\val_{\mathcal{S}}(a^*)}}$ 
and $S_1$: the states are numbered according the previous list of generators and the arrows are colored depending on their labels. The only final state corresponds to the series $S_f$. A state $j$ is initial if the coefficient of the empty word in the $j$-th generator is non-zero. In fact, all states are initial except those corresponding to the series $S_f$ and $S_g$. Indeed, we have $(S_g,\boldsymbol{\varepsilon})=(S_f,\boldsymbol{\varepsilon})=0$ since $g(\bz)=f(\bz)=0$.
\begin{figure}[htb]
\begin{center}
\hspace{-2cm}
\scalebox{0.6}{\begin{tikzpicture}
\tikzstyle{every node}=[shape=circle, fill=none, draw=black,
minimum size=20pt, inner sep=2pt]
\node(1) at (0,6) {$1$};
\node(2) at (-4.24,4.24) {$2$};
%Intérieur octogone
\node(3) at (0,-2) {$3$};
\node(4) at (0,2) {$4$};
\node(5) at (0,-6) {$5$};
\node(6) at (-6,0) {$6$};
\node(7) at (-4.24,-4.24) {$7$};
\node(8) at (6,0) {$8$};
\node(9) at (4.24,-4.24) {$9$};
\node(10) at (4.24,4.24) {$10$};

%Fleche des poids initiaux et finaux
\tikzstyle{every node}=[shape=circle, fill=none, draw=black, minimum size=15pt, inner sep=2pt]
\tikzstyle{every path}=[color=black, line width=0.5 pt]
\tikzstyle{every node}=[shape=circle, minimum size=5pt, inner sep=2pt]
\draw [-Latex] (1) to node [above] {$\text{\footnotesize{ 1}}$} (-1,6);
\draw [-Latex] (1,-2) to node [above] {$\text{\footnotesize{ 1}}$} (3); 
\draw [-Latex] (-1,2) to node [above] {$\text{\footnotesize{ 1}}$} (4); 
\draw [-Latex] (1,-6) to node [below] {$\text{\footnotesize{ 1}}$} (5);
\draw [-Latex] (-6,1) to node [left] {$\text{\footnotesize{ 1}}$} (6); 
\draw [-Latex] (-5.24,-4.24) to node [above] {$\text{\footnotesize{ 1}}$} (7); 
\draw [-Latex] (6,1) to node [right] {$\text{\footnotesize{ 1}}$} (8); 
\draw [-Latex] (5.24,-4.24) to node [above] {$\text{\footnotesize{ 1}}$} (9); 
\draw [-Latex] (5.24,4.24) to node [above] {$\text{\footnotesize{ 1}}$} (10);

%Edges to (1)
\draw [-Latex,red] (1) to [loop above] node [above=-0.3] {$ $} (1);
\draw [-Latex,vert] (2) to node [above=-0.2] {$ $} (1);
\draw [-Latex,vert] (6) to node [above] {$ $} (1);
\draw [-Latex,red](10) to node [above=-0.2] {$ $} (1);

%Edges to (2)
\draw [-Latex,vert] (2) to [loop above] node [above=-0.3] {$ $} (2);
\draw [-Latex,vert] (6) to node [left] {$ $} (2);

%Edges to (3)
\draw [-Latex,brown] (3) to [loop below] node [below=-0.35] {$ $} (3);

%Edges to (4)
\draw [-Latex,violet] (4) to [loop above] node [above=-0.4] {$ $} (4);
\draw [-Latex,brown] (3) to node [left=-0.1,pos=0.6] {$ $} (4);

%Edges to (5)
\draw [-Latex,orange] (5) to [loop below] node [below=-0.35] {$ $} (5);

%Edges to (6)
\draw [-Latex,vert] (6) to [loop left] node [left] {$ $} (6);
\draw [-Latex,brown] (3) to node [pos=0.7,above=-0.3] {$ $} (6);
\draw [-Latex,orange] (5) to node [above,pos=0.1] {$ $} (6);

%Edges from (7)
\draw [-Latex,blue] (7) to [loop below] node [below=-0.35] {$ $} (7);
\draw [-Latex,brown] (3) to node [left=0.2,pos=0.001] {$ $} (7);
\draw [-Latex,violet] (4) to [bend right=10] node [left,pos=0.2] {$ $} (7);
\draw [-Latex,orange] (5) to node [below=-0.2] {$ $} (7);
\draw [-Latex,vert] (6) to node [left] {$ $} (7);

%Edges to (8)
\draw [-Latex,magenta] (8) to [loop right] node [right=-0.1] {$ $} (8);
\draw [-Latex,orange] (5) to node [left,pos=0.7] {$ $} (8);

%Edges to (9)
\draw [-Latex,cyan] (9) to [loop below] node [below=-0.35] {$ $} (9);
\draw [-Latex,brown] (3) to node [above=-0.2] {$ $} (9);
\draw [-Latex,orange] (5) to node [below=-0.2] {$ $} (9);
\draw [-Latex,vert] (6) to [bend right=23,looseness=1.2, out=-10, in=200] node [left=0.4,pos=0.65] {$ $} (9);
\draw [-Latex,magenta] (8) to node [right=-0.1] {$ $} (9);

%Edges to (10)
\draw [-Latex,red] (10) to [loop above] node [above=-0.35] {$ $} (10);
\draw [-Latex,brown] (3) to node [right=-0.03,pos=0.1] {$ $} (10);
\draw [-Latex,violet] (4) to node [left=0.05, pos=0.6] {$ $} (10);
\draw [-Latex,orange] (5) to  node [right=-0.1,pos=0.6] {$ $} (10);
\draw [-Latex,vert] (6) to [bend left=10,in=150] node [above=-0.2] {$ $} (10);
\draw [-Latex,blue] (7) to node [left, pos=0.75] {$ $} (10);
\draw [-Latex,magenta] (8) to node [right=-0.1] {$ $} (10);
\draw [-Latex,cyan] (9) to [] node [right=-0.1,pos=0.6] {$ $} (10);
\end{tikzpicture}}
\end{center}
\vspace*{-2.5cm}
\hfill\scalebox{0.8}{
\fbox{\begin{tabular}{cc|cc}
\textcolor{brown}{$\blacksquare$} 	& $\couple{\#}{a}|\text{\footnotesize{ 1}}$ 
&\textcolor{blue}{$\blacksquare$} 	& $\couple{a}{b}|\text{\footnotesize{ 1}}$\\
\textcolor{violet}{$\blacksquare$} 	& $\couple{\#}{b}|\text{\footnotesize{ 1}}$ 
&\textcolor{magenta}{$\blacksquare$} & $\couple{b}{\#}|\text{\footnotesize{ 1}}$\\
\textcolor{orange}{$\blacksquare$} 	& $\couple{a}{\#}|\text{\footnotesize{ 1}}$
&\textcolor{cyan}{$\blacksquare$} 	& $\couple{b}{a}|\text{\footnotesize{ 1}}$\\
\textcolor{vert}{$\blacksquare$} 	& $\couple{a}{a}|\text{\footnotesize{ 1}}$  
& \textcolor{red}{$\blacksquare$} 	& $\couple{b}{b}|\text{\footnotesize{ 1}}$\\
\end{tabular}}
}%end scalebox
\vspace{0.5cm}
\caption{An $\N$-automaton recognizing the series $S_f$ corresponding to the $(\bS,\N)$-regular sequence $f \colon \N^2 \to \N,\ \couple{m}{n} \mapsto\max|\Suff(\rep_{\mathcal{S}}(m))\cap \Suff(\rep_{\mathcal{S}}(n))|$.}
\label{Fig : WeightedAutomaton-S_f}
\end{figure}
\end{runningexample}

\subsection{Composing synchronized relations and $\K$-recognizable series}
\label{Sec : CompRecSynch}

As a first step, we consider the composition of a synchronized relation and a $\K$-recognizable series. For a relation $R\colon A^*\to B^*$ and a series  $S \colon B^*\to\K$ such that for all $u\in A^*$, the language $\{v\in B^*\colon uRv\}$ is finite, we define the \emph{composition} of $R$ and $S$ as the series
\[
	S\circ R\colon  A^*\to\K,\ 
	u\mapsto \sum_{\substack{v\in B^*\\ uRv}} (S,v).
\]

\begin{runningexample}
For all $\bu \in \bA^*$, we have
\[
(S \circ R, \bu)= \sum_{ \substack{\bv \in \bA^* \\
| \bv | \in \{ |\bu|-1, |\bu|, |\bu|+1 \} }} (S, \bv).
\]
By counting, for each $n \in [\![1,|\bu|+1]\!]$, the number of words $\bv$ in $\bA^*$ of length in $\{|\bu|-1, |\bu|,|\bu|+1\}$ having a coefficient $(S,\bv)$ equal to $n$, we get the closed formula
\begin{equation}
\label{Eq : ClosedFormula}	
(S \circ R, \bu)=
	\begin{cases}
	2 	& \text{if }\bu=\boldsymbol{\varepsilon}\\	
	\frac{2^{|\bu|-1}}{3} (73\cdot 4^{|\bu|-1} -7)
	& \text{otherwise}.
\end{cases}
\end{equation}
\end{runningexample}

\begin{theorem}
\label{The : CCS}
Let $R\colon A^*\to B^*$ be a synchronized relation, let $S \colon B^*\to\K$ be a $\K$-recognizable series, and suppose that for all $u\in A^*$, the language $\{v\in B^*\colon uRv\}$ is finite. Then $S\circ R$ is a $\K$-recognizable series.
\end{theorem}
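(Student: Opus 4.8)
The plan is to proceed by automata, building a $\K$-automaton for $S\circ R$ out of a DFA for the graph language and a linear representation of $S$, and then eliminating the $\varepsilon$-transitions that the length discrepancies between $u$ and the related $v$'s force upon us; this realizes the announced composition-like operation between a two-tape automaton and a weighted automaton. Write $C=(A_\$\times B_\$)\setminus\{\couple{\$}{\$}\}$ for the alphabet of the graph language $G:=(\mathcal{G}_R)^\$$, let $\pi_1,\pi_2$ be the two component projections, and set $\phi=\tau_\$\circ\pi_1\colon C^*\to A^*$ and $\psi=\tau_\$\circ\pi_2\colon C^*\to B^*$. Since the padding is uniquely determined by a pair, for every $u\in A^*$ the finiteness hypothesis says exactly that $\{w\in G\colon \phi(w)=u\}$ is finite, and then $(S\circ R,u)=\sum_{w\in G,\ \phi(w)=u}(S,\psi(w))$; thus $S\circ R$ is the image of a suitable series under the erasing morphism $\phi$.

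First I would lift $S$ to the pair alphabet. Fix a linear representation $(\lambda,\mu,\gamma)$ of $S$ and extend $\mu$ to $B_\$$ by $\mu(\$)=\mathrm{Id}$; then $\mu\circ\psi$ is a monoid morphism $C^*\to\K^{r\times r}$, so the series $\tilde S\colon C^*\to\K,\ w\mapsto\lambda\mu(\psi(w))\gamma$ is $\K$-recognizable. Because $R$ is synchronized, $G$ is regular, and Proposition~\ref{Pro : HadamardReg} yields that $T:=\tilde S\odot\underline{G}$ is $\K$-recognizable, with $(T,w)=(S,\psi(w))$ for $w\in G$ and $0$ otherwise. It then remains to see that $(S\circ R,u)=\sum_{\phi(w)=u}(T,w)$, i.e. that the image of the recognizable series $T$ under $\phi$ is again recognizable.

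Concretely, take a DFA $\mathcal{A}$ recognizing $G$, trimmed to its accessible and co-accessible states, and form the product linear representation underlying $T$, whose state set is $Q\times[\![1,r]\!]$. After relabelling by $\phi$, a letter $\couple{a}{b}$ or $\couple{a}{\$}$ with $a\in A$ becomes an $a$-transition carrying $\mu(b)$ (resp. $\mathrm{Id}$), while a letter $\couple{\$}{b}$ becomes an $\varepsilon$-transition carrying $\mu(b)$. This is a $\K$-automaton over $A$ with $\varepsilon$-transitions whose behaviour is precisely $S\circ R$, since interleaving the $\varepsilon$-transitions among the genuine $a$-transitions enumerates exactly the words $w$ with $\phi(w)=u$. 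The hard part is the $\varepsilon$-removal over an arbitrary semiring, and the crux is that the finiteness hypothesis forbids $\varepsilon$-cycles: an $\varepsilon$-cycle projects to a cycle of $\mathcal{A}$ labelled only by letters $\couple{\$}{b}$, and as $\mathcal{A}$ is trimmed every state on it is accessible and co-accessible, so pumping this cycle would produce infinitely many words of $G$ with one common value of $\phi$ but $\psi$-images of unbounded length, that is, infinitely many $v$ with a fixed $uRv$, contradicting the hypothesis. Hence the $\varepsilon$-transition matrix $N$ has acyclic support, so $N^k=0$ for large $k$ and $N^\ast=\sum_{k\ge 0}N^k$ is a finite sum, well defined in any semiring. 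Standard weighted $\varepsilon$-elimination ($\Lambda\mapsto\Lambda N^\ast$, $M(a)\mapsto M(a)N^\ast$, final vector unchanged) then produces a genuine linear representation of $S\circ R$ over $A$, which is therefore $\K$-recognizable. The main obstacle is exactly this last step: converting the combinatorial finiteness hypothesis into nilpotency of $N$, and performing the $\varepsilon$-removal without relying on subtraction or on infinite sums in $\K$.
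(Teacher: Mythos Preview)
Your argument is correct, and it is a genuinely different packaging of the construction than the paper's. The paper builds an explicit product $\K$-automaton $\mathcal{B}\circ\mathcal{A}$ on the state set $(Q_\mathcal{A}\times Q_\mathcal{B})\cup\{\alpha\}$: the block $Q_\mathcal{A}\times Q_\mathcal{B}$ handles the words $v$ with $|v|\le|u|$ (via a $\$$-loop of weight $1$ on the initial state of $\mathcal{B}$, playing the role of your $\mu(\$)=\mathrm{Id}$), while the extra state $\alpha$ absorbs, in one transition of weight $\sum_{\ell\ge 1}\sum_{c\in C_{q,q',a,\ell}}E(c)$, all the prefixes $\$^\ell a$ coming from the case $|v|>|u|$. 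The paper then checks correctness through three bookkeeping lemmas and patches the empty word separately. Your route---lift $S$ along $\psi$, Hadamard with $\underline{G}$, push forward along the erasing morphism $\phi$ by weighted $\varepsilon$-elimination---is more modular and treats $u=\varepsilon$ uniformly; the paper's $\alpha$-state is exactly your $\Lambda N^{+}$ compressed into a single initial vertex, and its well-definedness argument (no co-accessible state admits $\$^\ell a$-paths for infinitely many $\ell$) is the pumping argument you use for nilpotency of $N$. What the paper's explicit construction buys is a concrete automaton that can be drawn and fed into the running example; what your approach buys is that each step is a standard closure property, and the only nontrivial point---that acyclicity of the $\couple{\$}{b}$-transitions in a trimmed DFA for $G$ forces $N$ nilpotent over any semiring, via $N^k=\sum_{b_1,\ldots,b_k}(D_{\couple{\$}{b_1}}\cdots D_{\couple{\$}{b_k}})\otimes(\mu(b_1)\cdots\mu(b_k))$---is isolated cleanly.
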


In order to prove this result, we define a composition-like operation between a $2$-tape automaton (i.e., a DFA reading pairs of letters) and a $\K$-automaton. 

\begin{definition}
\label{Def : CCS}
Consider two finite alphabets $A$ and $B$ and a symbol $\$\notin A\cup B$. Let $\mathcal{A}=(Q_\mathcal{A}, i_\mathcal{A}, T_\mathcal{A}, A_\$\times B_\$, \delta_\mathcal{A})$ be a DFA and let $\mathcal{B} = (Q_\mathcal{B}, I_\mathcal{B}, T_\mathcal{B}, B_\$, E_\mathcal{B})$ be a $\K$-automaton having only one initial state, denoted by $i_\mathcal{B}$. With such automata $\mathcal{A}$ and $\mathcal{B}$, we associate a new $\K$-automaton $\mathcal{B} \circ \mathcal{A}  = (Q, I, T, A_\$, E)$ as follows. 
\begin{enumerate}
\item $Q=  (Q_\mathcal{A} \times Q_\mathcal{B})\cup\{\alpha\}$.
\item $I\colon Q \to \K$ is defined by 
\begin{itemize}
\item $I(i_\mathcal{A},i_\mathcal{B})=	I_\mathcal{B}(i_\mathcal{B})$
\item For $(q,q')\in (Q_\mathcal{A} \times Q_\mathcal{B})\setminus\{(i_\mathcal{A},i_\mathcal{B})\}$, $I(q,q')=0$
\item $I(\alpha)=1$.
\end{itemize}
\item $T\colon Q \to \K$ is defined by
\begin{itemize}
\item For $(q,q')\in T_{\mathcal{A}}\times Q_\mathcal{B}$, $T(q,q')=T_\mathcal{B}(q')$ 
\item For $(q,q')\in (Q_\mathcal{A}\setminus T_\mathcal{A} )\times Q_\mathcal{B}$, $T(q,q')=0$
\item $T(\alpha)=0$.
\end{itemize}
\item 
$E\colon Q \times A_\$ \times Q \to \K$ is defined by \begin{itemize}
\item For $(q_1,q'_1),(q_2,q'_2)\in Q_\mathcal{A}\times Q_\mathcal{B}$ and $a \in A_\$$, 
\[
	E((q_1,q'_1),a,(q_2,q_2'))
	= \sum\limits_{\substack{b\in B_\$ \\ \delta_{\mathcal{A}}(q_1, \couple{a}{b})= q_2}} 			
	E_\mathcal{B}(q_1',b,q_2')
\]   
\item For $a \in A_\$$, $E(\alpha,a,\alpha)=0$ 
\item For $(q,q')\in Q_\mathcal{A} \times Q_\mathcal{B}$ and $a \in A_\$$, $E((q,q'),a,\alpha)=0$ 
\item For $(q,q')\in Q_\mathcal{A} \times Q_\mathcal{B}$ and $a \in A_\$$, 
\[
	E(\alpha,a,(q,q'))
	=\begin{cases}
	{\displaystyle{	
	I(i_\mathcal{A},i_\mathcal{B})
	\sum_{\ell\ge 1} \sum_{c \in C_{q,q',a,\ell}}} E(c)}
	&\text{ if } (q,q') \text{ is co-accessible}\\
	0 &\text{ else}
	\end{cases} 
\]
where $C_{q,q',a,\ell}$ denotes the set of non-zero  weight paths from $(i_\mathcal{A},i_\mathcal{B})$ to $(q,q')$ labeled by $\$^\ell a$. In the case where there exist a co-accessible state $(q,q')\in Q_\mathcal{A} \times Q_\mathcal{B}$, a letter $a\in A_\$$ and infinitely many $\ell\ge 1$ such that $C_{q,q',a,\ell}$ is nonempty, we take the convention that the $\K$-automaton $\mathcal{B} \circ \mathcal{A}$ is not defined. 
\end{itemize}
\end{enumerate}
\end{definition}

We first prove three technical lemmas to get a better understanding of paths in $\mathcal{B}\circ \mathcal{A}$. For these results, we consider a DFA $\mathcal{A}$ and a $\K$-automaton $\mathcal{B}$ such that the $\K$-automaton $\mathcal{B} \circ \mathcal{A}$ is well defined. For all $w\in (A_{\$})^*$ and all paths $c=c_{\mathcal{B}\circ \mathcal{A}}((q_0,q_0')\cdots (q_{|w|},q_{|w|}'),w) $ such that $(q_0,q_0')=(i_\mathcal{A},i_\mathcal{B})$, we define the set 
\[
	C_{\mathcal{B},c}=
	\{c_\mathcal{B}(q_0'\cdots q'_{|w|},w')\colon 
	w'\in (B_{\$})^*,\ 
	c_\mathcal{A}\big(q_0\cdots q_{|w|},\couple{w}{w'}\big)\in C_\mathcal{A}\}.
\]

\begin{lemma}
\label{Lem : Lem1CCS}
Let $w\in (A_{\$})^*$ and let $c\in C_{\mathcal{B}\circ \mathcal{A}}(w)$ such that $i_c=(i_\mathcal{A},i_\mathcal{B})$. Then
\[
	E(c)T(t_c)
	=\sum_{c'\in C_{\mathcal{B},c}} 
	E_\mathcal{B}(c')T_\mathcal{B}(t_{c'}).
\]
\end{lemma}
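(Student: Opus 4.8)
The plan is to compute $E(c)$ directly from the transition weights of $\mathcal{B}\circ\mathcal{A}$ given in Definition~\ref{Def : CCS}, expand the resulting product of sums by distributivity, and then absorb the finality factor $T(t_c)$. Write $w=a_1\cdots a_n$ with $n=|w|$ and $c=c_{\mathcal{B}\circ\mathcal{A}}((q_0,q_0')\cdots(q_n,q_n'),w)$, where $(q_0,q_0')=(i_\mathcal{A},i_\mathcal{B})$. By definition of the weight of a path together with the formula for $E$ on transitions between pairs of states,
\[
	E(c)=\prod_{i=1}^n E((q_{i-1},q_{i-1}'),a_i,(q_i,q_i'))
	=\prod_{i=1}^n\ \sum_{\substack{b\in B_\$\\ \delta_\mathcal{A}(q_{i-1},\couple{a_i}{b})=q_i}} E_\mathcal{B}(q_{i-1}',b,q_i').
\]

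The first main step is to apply distributivity in the semiring $\K$ (keeping the factors in their left-to-right order, since $\K$ need not be commutative) in order to turn this product of sums into a single sum indexed by tuples $(b_1,\ldots,b_n)\in(B_\$)^n$ satisfying $\delta_\mathcal{A}(q_{i-1},\couple{a_i}{b_i})=q_i$ for every $i$:
\[
	E(c)=\sum_{(b_1,\ldots,b_n)} E_\mathcal{B}(q_0',b_1,q_1')\cdots E_\mathcal{B}(q_{n-1}',b_n,q_n').
\]
Each summand is exactly the weight $E_\mathcal{B}(c')$ of the path $c'=c_\mathcal{B}(q_0'\cdots q_n',b_1\cdots b_n)$ in $\mathcal{B}$, whose state sequence is the second-component sequence of $c$ and whose label is $w'=b_1\cdots b_n$.

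The second step handles the finality factor and identifies the index set with $C_{\mathcal{B},c}$. Since $c\in C_{\mathcal{B}\circ\mathcal{A}}(w)$ is a final path and $t_c=(q_n,q_n')$, the definition of $T$ forces $q_n\in T_\mathcal{A}$ (otherwise $T(t_c)=0$ and $c$ would not be final) and gives $T(t_c)=T_\mathcal{B}(q_n')$. Multiplying on the right by this factor and distributing yields $E(c)T(t_c)=\sum_{(b_1,\ldots,b_n)}E_\mathcal{B}(c')\,T_\mathcal{B}(q_n')$, with $q_n'=t_{c'}$. Because $q_0=i_\mathcal{A}$ and $q_n\in T_\mathcal{A}$, a tuple $(b_1,\ldots,b_n)$ satisfies the transition constraints precisely when $c_\mathcal{A}(q_0\cdots q_n,\couple{w}{w'})$ (with $w'=b_1\cdots b_n$) is an accepting path of $\mathcal{A}$, i.e. lies in $C_\mathcal{A}$. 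As the state sequence $q_0'\cdots q_n'$ is fixed by $c$, the assignment $(b_1,\ldots,b_n)\mapsto c_\mathcal{B}(q_0'\cdots q_n',b_1\cdots b_n)$ is a bijection from the set of admissible tuples onto $C_{\mathcal{B},c}$, which delivers the claimed identity $E(c)T(t_c)=\sum_{c'\in C_{\mathcal{B},c}}E_\mathcal{B}(c')\,T_\mathcal{B}(t_{c'})$.

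I expect the main delicate point to be the bookkeeping forced by the noncommutativity of $\K$: the expansion of the product of sums must preserve the left-to-right order of the transition weights, and the factor $T_\mathcal{B}(q_n')$ must be appended on the right so that each term genuinely reads as $E_\mathcal{B}(c')\,T_\mathcal{B}(t_{c'})$. A secondary point to verify carefully is the bijection between admissible label tuples and the paths of $C_{\mathcal{B},c}$, which relies on the state sequence $q_0'\cdots q_n'$ being determined by $c$ and on the initial/final conditions of $\mathcal{A}$ coinciding with the acceptance condition defining $C_\mathcal{A}$.
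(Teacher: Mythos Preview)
Your proof is correct and follows essentially the same approach as the paper: expand $E(c)$ via the definition of the transition weights in $\mathcal{B}\circ\mathcal{A}$, distribute the product of sums over $b\in B_\$$, use finality of $c$ to rewrite $T(t_c)=T_\mathcal{B}(q_n')$, and identify the resulting index set with $C_{\mathcal{B},c}$. The paper's proof is slightly terser (it does not explicitly discuss noncommutativity or spell out the bijection between admissible tuples and elements of $C_{\mathcal{B},c}$), but the computation is the same line for line.
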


\begin{proof}
Write $c=c_{\mathcal{B}\circ\mathcal{A}}\big((q_0,q'_0)\cdots(q_{|w|},q'_{|w|}),w\big)$. Since $c$ belongs to $C_{\mathcal{B}\circ \mathcal{A}}(w)$, it is a final path of $\mathcal{B}\circ \mathcal{A}$, and hence $T(q_{|w|},q_{|w|}')=T_\mathcal{B}(q_{|w|}')$. Then by definition of the $\K$-automaton $\mathcal{B}\circ\mathcal{A}$, we obtain
\begin{align*}
\allowdisplaybreaks
	E(c) T(t_c)
	&=\Bigg(\prod_{j=1}^{|w|} 
	E\big((q_{j-1},q'_{j-1}),w[j],(q_j,q'_j)\big)
	\Bigg) T(q_{|w|},q_{|w|}') \\
	&=\left(\prod_{j=1}^{|w|} 
	\sum_{\substack{b\in B_\$ \\ \delta_{\mathcal{A}}\left(q_{j-1}, \couple{w[j]}{b}\right)= q_{j}}}  
	E_\mathcal{B}(q'_{j-1},b,q'_j) 
	\right) T_\mathcal{B}(q_{|w|}')\\
	&=\sum_{\substack{w'\in (B_\$)^*\\ 		
	c_\mathcal{A}\big(q_0 \cdots q_{|w|}, \couple{w}{w'}\big)\in C_\mathcal{A}}} 
	\Bigg(\prod_{j=1}^{|w|} 
	E_\mathcal{B}(q'_{j-1},w'[j],q'_j) 
	\Bigg) 
	T_\mathcal{B}(q'_{|w|})\\
	&= \sum_{c'\in C_{\mathcal{B},c}} 
	E_\mathcal{B}(c')T_\mathcal{B}(t_{c'}).
\end{align*}
\end{proof}

\begin{lemma}
\label{Lem : Lem2CCS}
Let $w\in (A_\$)^*$. Then $\{C_{\mathcal{B},c}\colon c\in C_{\mathcal{B}\circ \mathcal{A}}(w),\ i_c=(i_\mathcal{A},i_\mathcal{B})\}$ is a partition of the set 
\[
	\bigcup_{\substack{w'\in (B_{\$})^*\\ \couple{w}{w'}\in L_\mathcal{A}}} C_\mathcal{B}(w').
\]
\end{lemma}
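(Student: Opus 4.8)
The plan is to realize the blocks $C_{\mathcal{B},c}$ as the fibres of a single map, so that covering and disjointness both fall out at once. Fix $w\in(A_\$)^*$ of length $n$ and write $D_w=\bigcup\{\,C_\mathcal{B}(w')\colon \couple{w}{w'}\in L_\mathcal{A}\,\}$ for the set to be partitioned; every $w'$ occurring here has length $n$, since $\couple{w}{w'}$ must be a genuine word over $A_\$\times B_\$$. An element of $D_w$ is thus an accepting path $c'=c_\mathcal{B}(p_0\cdots p_n,w')$ of $\mathcal{B}$ (so $p_0=i_\mathcal{B}$ and $p_n$ is final) whose label satisfies $\couple{w}{w'}\in L_\mathcal{A}$. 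The crucial structural fact I would exploit is that $\mathcal{A}$ is deterministic: for every such $\couple{w}{w'}\in L_\mathcal{A}$ there is a \emph{unique} accepting run $r_0r_1\cdots r_n$ of $\mathcal{A}$, with $r_0=i_\mathcal{A}$, $r_n\in T_\mathcal{A}$ and $r_j=\delta_\mathcal{A}(r_{j-1},\couple{w[j]}{w'[j]})$.

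First I would define the map $\Phi\colon D_w\to\{c\in C_{\mathcal{B}\circ\mathcal{A}}(w)\colon i_c=(i_\mathcal{A},i_\mathcal{B})\}$ sending $c'=c_\mathcal{B}(p_0\cdots p_n,w')$ to the composite path $\Phi(c')=c_{\mathcal{B}\circ\mathcal{A}}((r_0,p_0)\cdots(r_n,p_n),w)$, where $r_0\cdots r_n$ is the unique $\mathcal{A}$-run on $\couple{w}{w'}$. I would then check that $\Phi(c')$ is an initial-and-final path with $i_{\Phi(c')}=(i_\mathcal{A},i_\mathcal{B})$: it starts at $(i_\mathcal{A},i_\mathcal{B})$, which is initial because $I(i_\mathcal{A},i_\mathcal{B})=I_\mathcal{B}(i_\mathcal{B})\neq0$, and it ends at $(r_n,p_n)$ with $T(r_n,p_n)=T_\mathcal{B}(p_n)\neq0$ since $r_n\in T_\mathcal{A}$ and $p_n$ is final. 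By construction $c'\in C_{\mathcal{B},\Phi(c')}$, which already gives the covering inclusion $D_w\subseteq\bigcup_c C_{\mathcal{B},c}$. The reverse inclusion $C_{\mathcal{B},c}\subseteq D_w$ is immediate from the definition of $C_{\mathcal{B},c}$: its members are paths $c_\mathcal{B}(q_0'\cdots q_n',w')$ with $q_0'=i_\mathcal{B}$ and $q_n'$ final (because $c$ is final in $\mathcal{B}\circ\mathcal{A}$ forces $T_\mathcal{B}(q_n')\neq0$), and with $c_\mathcal{A}(q_0\cdots q_n,\couple{w}{w'})\in C_\mathcal{A}$, hence $\couple{w}{w'}\in L_\mathcal{A}$.

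The heart of the argument, and the step I expect to be the main obstacle, is to prove that $C_{\mathcal{B},c}=\Phi^{-1}(c)$ for every relevant $c$; once this holds the sets $C_{\mathcal{B},c}$ are exactly the fibres of $\Phi$ and therefore form a partition of $D_w$ (the empty fibres, corresponding to composite paths outside the image of $\Phi$, are discarded). Writing $c$ as the path through $(q_0,q_0')\cdots(q_n,q_n')$, I would argue both directions: if $c'\in C_{\mathcal{B},c}$ has label $w'$, then $q_0\cdots q_n$ is \emph{the} accepting run of $\mathcal{A}$ on $\couple{w}{w'}$ by determinism, while $c'=c_\mathcal{B}(q_0'\cdots q_n',w')$, so $\Phi(c')=c_{\mathcal{B}\circ\mathcal{A}}((q_0,q_0')\cdots(q_n,q_n'),w)=c$; conversely $\Phi(c')=c$ unwinds to precisely the conditions defining $C_{\mathcal{B},c}$. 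The delicate bookkeeping is that a composite path interleaves two state sequences — its $\mathcal{A}$-components $(q_j)$ and its $\mathcal{B}$-components $(q_j')$ — and I must use determinism of $\mathcal{A}$ to recover $(q_j)$ from $\couple{w}{w'}$, together with the fact that a path is determined by its label and its sequence of states to recover $(q_j')$ from $c'$. This is exactly what prevents a single $c'$ from lying in two distinct blocks, establishing pairwise disjointness and completing the partition.
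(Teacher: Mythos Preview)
Your proof is correct and takes essentially the same approach as the paper: both arguments hinge on the determinism of $\mathcal{A}$ to show that a path $c'\in D_w$ determines a unique composite path $c$, and both verify the union by checking the two inclusions. The only difference is packaging—the paper proves disjointness directly by contradiction and then the union by double inclusion, whereas you organize the same ingredients through the map $\Phi$ and read off disjointness from the fibre decomposition; neither approach does any real work the other does not.
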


\begin{proof}
First, we show that the sets $C_{\mathcal{B},c}$ are pairwise disjoint. Consider distinct paths $c_1=c_{\mathcal{B}\circ\mathcal{A}}\big((q_{1,0},q'_{1,0})\cdots(q_{1,|w|},q'_{1,|w|}),w\big)$ and $c_2=c_{\mathcal{B}\circ\mathcal{A}}\big((q_{2,0},q'_{2,0})\cdots(q_{2,|w|},q'_{2,|w|}),w\big)$ of $C_{\mathcal{B}\circ \mathcal{A}}(w)$ such that $(q_{1,0},q'_{1,0})=(q_{2,0},q'_{2,0})=(i_\mathcal{A},i_\mathcal{B})$. Proceed by contradiction and suppose that $C_{\mathcal{B},c_1}\cap C_{\mathcal{B},c_2}\ne\emptyset$. This means that there exists some word $w'\in (B_{\$})^*$ such that on the one hand, $c_\mathcal{B}(q'_{1,0}\cdots q'_{1,|w|},w')=c_\mathcal{B}(q'_{2,0}\cdots q'_{2,|w|},w')$ and on the other hand, both $c_\mathcal{A}\big(q_{1,0}\cdots q_{1,|w|},\couple{w}{w'}\big)$ and $c_\mathcal{A}\big(q_{2,0}\cdots q_{2,|w|},\couple{w}{w'}\big)$ are accepting paths in $\mathcal{A}$. The first condition implies that $q'_{1,0}=q'_{2,0},\ \ldots,\ q'_{1,|w|}=q'_{2,|w|}$. Since $\mathcal{A}$ is a DFA and $q_{1,0}=q_{2,0}=i_\mathcal{A}$, the second condition implies that $q_{1,1}=q_{2,1},\ \ldots,\ q_{1,|w|}=q_{2,|w|}$. But then $c_1=c_2$, a contradiction.

Second, we show that 
\[
	\bigcup_{\substack{c\in C_{\mathcal{B}\circ \mathcal{A}}(w)\\i_c=(i_\mathcal{A},i_\mathcal{B})}}C_{\mathcal{B},c}				
	=\bigcup_{\substack{w'\in (B_{\$})^*\\ \couple{w}{w'}\in L_\mathcal{A}}} C_\mathcal{B}(w').
\]
Let $c\in C_{\mathcal{B}\circ \mathcal{A}}(w)$ such that $i_c=(i_\mathcal{A},i_\mathcal{B})$ and let $c'\in C_{\mathcal{B},c}$. Let $w'$ be the label of $c'$. By definition of $C_{\mathcal{B},c}$, the word $\couple{w}{w'}$ is accepted by $\mathcal{A}$ and $c'\in C_\mathcal{B}(w')$. Conversely, let $w'\in (B_{\$})^*$ such that $\couple{w}{w'}$ is accepted by $\mathcal{A}$ and let $c'\in C_\mathcal{B}(w')$. Consider the path $c=c_{\mathcal{B}\circ\mathcal{A}}\big((q_0,q'_0)\cdots(q_{|w|},q'_{|w|}),w\big)$ where $q_0,\ldots,q_{|w|}$ are the states visited along the (unique) accepting path labeled by $\couple{w}{w'}$ in $\mathcal{A}$ and $q'_0,\ldots,q'_{|w|}$ are the states of $\mathcal{B}$ visited along the path $c'$. Then $c'\in C_{\mathcal{B},c}$. Moreover, $(q_0,q'_0)=(i_\mathcal{A},i_\mathcal{B})$ and $(q_{|w|},q'_{|w|})$ is a final state of $\mathcal{B}\circ\mathcal{A}$, hence $c\in C_{\mathcal{B}\circ \mathcal{A}}(w)$.
\end{proof}

\begin{lemma}
\label{Lem : Lem3CCS}
Let $w\in (A_{\$})^*$ and let $S$ be the series recognized by $\mathcal{B}$. We have 
\[
	I_\mathcal{B}(i_\mathcal{B})\sum_{\substack{c\in C_{\mathcal{B}\circ \mathcal{A}}(w)\\i_c=(i_\mathcal{A},i_\mathcal{B})}} E(c)T(t_c)
	=\sum_{\substack{w'\in (B_{\$})^*\\ \couple{w}{w'}\in L_\mathcal{A}}} (S,w').
\]
\end{lemma}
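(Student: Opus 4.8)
The plan is to read the left-hand side through the three preceding lemmas, which between them already isolate every moving part. The quantity $\sum_{c} E(c)T(t_c)$, summed over the accepting paths $c$ of $\mathcal{B}\circ\mathcal{A}$ of label $w$ that start at $(i_\mathcal{A},i_\mathcal{B})$, should be rewritten one path at a time using Lemma~\ref{Lem : Lem1CCS}, which converts each summand $E(c)T(t_c)$ into $\sum_{c'\in C_{\mathcal{B},c}}E_\mathcal{B}(c')T_\mathcal{B}(t_{c'})$. The resulting double sum then ranges over all pairs $(c,c')$ with $c'\in C_{\mathcal{B},c}$, and Lemma~\ref{Lem : Lem2CCS} says precisely that, as $c$ varies over the paths in question, the sets $C_{\mathcal{B},c}$ partition $\bigcup_{w':\,\couple{w}{w'}\in L_\mathcal{A}}C_\mathcal{B}(w')$. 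Hence the double sum collapses to a single sum over this union. The only remaining ingredient is the scalar $I_\mathcal{B}(i_\mathcal{B})$, which I would reincorporate using that $\mathcal{B}$ has a single initial state.

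Concretely, I would carry out the chain
\begin{align*}
I_\mathcal{B}(i_\mathcal{B})\sum_{\substack{c\in C_{\mathcal{B}\circ \mathcal{A}}(w)\\ i_c=(i_\mathcal{A},i_\mathcal{B})}} E(c)T(t_c)
&= I_\mathcal{B}(i_\mathcal{B})\sum_{\substack{c\in C_{\mathcal{B}\circ \mathcal{A}}(w)\\ i_c=(i_\mathcal{A},i_\mathcal{B})}} \sum_{c'\in C_{\mathcal{B},c}} E_\mathcal{B}(c')T_\mathcal{B}(t_{c'})\\
&= I_\mathcal{B}(i_\mathcal{B}) \sum_{\substack{w'\in (B_{\$})^*\\ \couple{w}{w'}\in L_\mathcal{A}}}\ \sum_{c'\in C_\mathcal{B}(w')} E_\mathcal{B}(c')T_\mathcal{B}(t_{c'})\\
&= \sum_{\substack{w'\in (B_{\$})^*\\ \couple{w}{w'}\in L_\mathcal{A}}} (S,w'),
\end{align*}
where the first equality is Lemma~\ref{Lem : Lem1CCS}, the second is the partition of Lemma~\ref{Lem : Lem2CCS} (the outer union over $w'$ being disjoint since a path determines its label), and the last uses that every path in $C_\mathcal{B}(w')$ is initial, hence starts at the unique initial state $i_\mathcal{B}$, so that $I_\mathcal{B}(i_\mathcal{B})\sum_{c'\in C_\mathcal{B}(w')}E_\mathcal{B}(c')T_\mathcal{B}(t_{c'})$ is exactly the weight $(S,w')$ of $w'$ in $\mathcal{B}$.

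The step to be careful about is the last one: pulling the scalar $I_\mathcal{B}(i_\mathcal{B})$ back inside each inner sum and recognizing the result as $(S,w')$ is legitimate only because $\mathcal{B}$ is assumed to have a single initial state, so that $I_\mathcal{B}(i_{c'})=I_\mathcal{B}(i_\mathcal{B})$ for every $c'\in C_\mathcal{B}(w')$; without this hypothesis the initial weights occurring in $C_\mathcal{B}(w')$ could differ and the factorization would break. I would also observe that the reindexing in the second equality is the only genuinely combinatorial move, and that it rests entirely on Lemma~\ref{Lem : Lem2CCS}; everything else is bookkeeping over the semiring $\K$ together with distributivity, which is why no commutativity or finiteness assumption on $\K$ is needed at this stage.
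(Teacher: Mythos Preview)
Your proof is correct and follows essentially the same approach as the paper's: apply Lemma~\ref{Lem : Lem1CCS} to each summand, use the partition from Lemma~\ref{Lem : Lem2CCS} to reindex the double sum, and then recognize $I_\mathcal{B}(i_\mathcal{B})\sum_{c'\in C_\mathcal{B}(w')}E_\mathcal{B}(c')T_\mathcal{B}(t_{c'})$ as $(S,w')$. Your additional remark that the last step relies on $\mathcal{B}$ having a single initial state is a welcome clarification that the paper leaves implicit.
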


\begin{proof}
By first using Lemma~\ref{Lem : Lem1CCS} and then Lemma~\ref{Lem : Lem2CCS},  we have 
\begin{align*}
	I_\mathcal{B}(i_\mathcal{B})\sum_{\substack{c\in C_{\mathcal{B}\circ \mathcal{A}}(w)\\i_c=(i_\mathcal{A},i_\mathcal{B})}}E(c)T(t_c)
	&=I_\mathcal{B}(i_\mathcal{B})\sum_{\substack{c\in C_{\mathcal{B}\circ \mathcal{A}}(w)\\i_c=(i_\mathcal{A},i_\mathcal{B})}}\sum_{c'\in C_{\mathcal{B},c}} 
	E_\mathcal{B}(c')T_\mathcal{B}(t_{c'})\\
	&= \sum_{\substack{w'\in (B_{\$})^*\\ \couple{w}{w'}\in L_\mathcal{A}}} \sum_{c'\in C_\mathcal{B}(w')} I_\mathcal{B}(i_\mathcal{B})
	E_\mathcal{B}(c')T_\mathcal{B}(t_{c'})\\
	&=\sum_{\substack{w'\in (B_{\$})^*\\ \couple{w}{w'}\in L_\mathcal{A}}} (S,w'). 
\end{align*}
\end{proof}

We now prove Theorem~\ref{The : CCS}.

\begin{proof}[Proof of Theorem~\ref{The : CCS}]
Let $\mathcal{A}=(Q_\mathcal{A}, i_\mathcal{A}, T_\mathcal{A}, (A_\$\times B_\$)\setminus\{\couple{\$}{\$}\}, \delta_\mathcal{A})$ be a DFA recognizing $\mathcal{G}_R^\$$ such that the initial state $i_\mathcal{A}$ has no incoming transition. Next, consider a $\K$-automaton recognizing the series $S$ having only one initial state, with no incoming transition. We modify the latter automaton to read words over $B_\$$ by adding a loop on the unique initial state of label $\$$ and of weight $1$ and by setting the weight of all other transitions labeled by $\$$ to $0$. We obtain a new $\K$-automaton that we denote by $\mathcal{B}=(Q_\mathcal{B}, I_\mathcal{B}, T_\mathcal{B}, B_\$, E_\mathcal{B})$.  The unique initial state of $\mathcal{B}$ is denoted by $i_\mathcal{B}$.

We consider the $\K$-automaton $\mathcal{B} \circ \mathcal{A}$ from Definition~\ref{Def : CCS}. Let us argue that this $\K$-automaton is indeed well defined. Otherwise, there exist a co-accessible state $(q,q')\in Q_\mathcal{A} \times Q_\mathcal{B}$, a letter $a \in A_\$$ and infinitely many $\ell \ge 1$ such that the set $C_{q,q',a,\ell}$ of non-zero  weight paths from $(i_\mathcal{A},i_\mathcal{B})$ to $(q,q')$ labeled by $\$^\ell a$ is nonempty. By co-accessibility of $(q,q')$, there exists at least one  non-zero  weight final path starting in $(q,q')$. Let $u$ denote the label of such a path. For all $\ell\ge 1$ such that $C_{q,q',a,\ell}$ is nonempty, there exists $v_\ell\in B_\$^*$ such that $\couple{\$^\ell au}{v_\ell}$ is accepted by the DFA $\mathcal{A}$, and hence $v_\ell\in B^*$. But this means that the language $\{v\in B^*\colon \tau_{\$}(au)Rv\}$ is infinite, contradicting the assumption.

We let $T$ denote the series recognized by $\mathcal{B}\circ \mathcal{A}$. In order to get that $S\circ R$ is $\K$-recognizable, it suffices to show that for all $u\in A^+$, $(T,u)=(S\circ R,u)$. Therefore, we fix $u\in A^+$ and we prove that
\begin{equation}
\label{Eq : Egalité S-T}
	(T,u)=\sum_{\substack{v\in B^*\\ uRv}} (S,v).
\end{equation} 
Since the initial states of $\mathcal{B}\circ \mathcal{A}$ are $(i_\mathcal{A},i_\mathcal{B})$ and $\alpha$, we have
\[
	(T,u)
	=I_\mathcal{B}(i_\mathcal{B})\sum_{\substack{c\in \mathcal{C}_{\mathcal{B}\circ \mathcal{A}}(u)\\ i_c=(i_\mathcal{A},i_\mathcal{B})}} E(c) T(t_c)
	+ \sum_{\substack{c\in \mathcal{C}_{\mathcal{B}\circ \mathcal{A}}(u)\\ i_c=\alpha}} E(c) T(t_c).
\]
To get Equality~\eqref{Eq : Egalité S-T}, it suffices to prove
\begin{align}
\label{Eq : PathsCi}
	I_\mathcal{B}(i_\mathcal{B})\sum_{\substack{c\in \mathcal{C}_{\mathcal{B}\circ \mathcal{A}}(u)\\ i_c=(i_\mathcal{A},i_\mathcal{B})}} E(c) T(t_c)
	&=\sum_{\substack{v\in B^*\\ uRv,\ |v| \le |u|}} (S,v)\\
\label{Eq : PathsCalpha}
	\sum_{\substack{c\in \mathcal{C}_{\mathcal{B}\circ \mathcal{A}}(u)\\ i_c=\alpha}} E(c) T(t_c)
	&=\sum_{\substack{v\in B^*\\ uRv,\ |v| > |u|}} (S,v).
\end{align}

We first show Equality~\eqref{Eq : PathsCi}. We have $\{w'\in (B_\$)^*\colon \couple{u}{w'}\in L_\mathcal{A}\}=\{\$^{|u|-|v|}v\colon v\in B^*,\ u R v,\ |v|\le|u|\}$. Moreover, by definition of the $\K$-automaton $\mathcal{B}$, for all $v\in B^*$ and $d\in\N$, the weight of $\$^dv$ in $\mathcal{B}$ equals $(S,v)$. We conclude by Lemma~\ref{Lem : Lem3CCS}.

Now, let us prove Equality~\eqref{Eq : PathsCalpha}. Write $u=au_2$ with $a\in A$ and $u_2\in A^*$. Any path in $\mathcal{C}_{\mathcal{B}\circ \mathcal{A}}(u)$ starting in $\alpha$ has the form $(\alpha,a,(q,q'))c_2$ where $(q,q')\in Q_\mathcal{A}\times Q_\mathcal{B}$ and $c_2$ is a path labeled by $u_2$ from $(q,q')$ to a final state. We let $C_{q,q'}$ denote the set of those paths. Then 
\begin{align*}
	\sum_{\substack{c\in \mathcal{C}_{\mathcal{B}\circ \mathcal{A}}(u)\\ i_c=\alpha}}E(c)T(t_c)
	&=\sum_{(q,q')\in Q_\mathcal{A}\times Q_\mathcal{B}} 
	E(\alpha,a,(q,q'))
	\sum_{c_2\in C_{q,q'}}E(c_2)T(t_{c_2})\\
	&=\sum_{(q,q')\in Q_\mathcal{A}\times Q_\mathcal{B}} 
	\left(
	I(i_\mathcal{A}, i_\mathcal{B})
	\sum_{\ell\ge 1}
	\sum_{c_1\in C_{q,q',a,\ell}} E(c_1)
	\right)
	\sum_{c_2\in C_{q,q'}}E(c_2)T(t_{c_2})\\
	&=	\sum_{\ell\ge 1}	
	I(i_\mathcal{A}, i_\mathcal{B}) 		
	\sum_{\substack{(q,q')\in Q_\mathcal{A}\times Q_\mathcal{B}\\ 
	c_1\in C_{q,q',a,\ell}\\
	c_2\in C_{q,q'}}}
	E(c_1c_2)T(t_{c_2}) \\
	&= \sum_{\ell\ge 1}
	I_\mathcal{B}(i_\mathcal{B}) 		
	\sum_{\substack{c_3\in C_{\mathcal{B}\circ\mathcal{A}}(\$^\ell u)\\ i_{c_3}=(i_\mathcal{A}, i_\mathcal{B})}} 
	E(c_3)T(t_{c_3}).
\end{align*}
For all $\ell \ge 1$, we have $\{w'\in (B_\$)^*\colon \couple{\$^\ell u}{w'}\in L_\mathcal{A}\}=\{v\in B^*\colon uRv,\ |v|=\ell+|u|\}$. 
By definition of the $\K$-automaton $\mathcal{B}$, the weight of a word $v\in B^*$ in $\mathcal{B}$ is equal to $(S,v)$. Hence, by Lemma~\ref{Lem : Lem3CCS}, we obtain
\[
	\sum_{\substack{c\in \mathcal{C}_{\mathcal{B}\circ \mathcal{A}}(u)\\ i_c=\alpha}}E(c)T(t_c)
	=\sum_{\ell\ge 1}\sum_{\substack{v\in B^*\\ uRv,\ |v|=\ell+|u|}} (S,v)
	=\sum_{\substack{v\in B^*\\ uRv,\ |v|>|u|}} 		(S,v).
\]
This concludes the proof.
\end{proof}

\begin{remark}
\label{Rem : CCS}
In the previous proof, we may modify the automaton $\mathcal{B}\circ \mathcal{A}$ so that Equality~\eqref{Eq : Egalité S-T} also holds for the empty word. To do so, we change the final weight of $(i_\mathcal{A},i_\mathcal{B})$ in $\mathcal{B}\circ \mathcal{A}$ from $T_\mathcal{B}(i_\mathcal{B})$ to $\frac{1}{I_\mathcal{B}(i_\mathcal{B})}\Big(\sum_{\substack{v\in B^* \\ \varepsilon R v}} (S,v)\Big)$. Therefore, the equality $(T,\varepsilon)=(S\circ R,\varepsilon)$ is satisfied by construction. Note that this modification only affects the weight of the empty word since the state $(i_\mathcal{A},i_\mathcal{B})$ has no incoming transition: considering a word $u\in A^+$, any path of label $u$ never ends in $(i_\mathcal{A},i_\mathcal{B})$.
\end{remark}

%{\color{magenta}(parler dans l'exemple en question qu'il n'y a pas de padding condition. Par exemple $\left(S, \couple{\# ba}{a \# a}\right)=1$)}

\begin{runningexample}
The $\N$-automaton of Figure~\ref{Fig : WeightedAutomaton} can be modified in order to have a unique initial state $i$ with no incoming transition, while keeping the same recognized series $S$. Moreover, following the construction of the proof of Theorem~\ref{The : CCS}, we add a loop on $i$ of label $\$$ and of weight $1$. This new $\N$-automaton is depicted in Figure~\ref{Fig : ModifiedWeightedAutomaton}.
\begin{figure}[htb]
\begin{center}
\begin{tikzpicture}
\tikzstyle{every node}=[shape=circle, fill=none, draw=black,minimum size=20pt, inner sep=2pt]
\node(i) at (-3.5,0) {$i$};
\node(1) at (0,0) {$T$};
\node(2) at (3.5,0) {$S$};
\tikzstyle{every node}=[shape=circle, fill=none, draw=black,
minimum size=15pt, inner sep=2pt]
\tikzstyle{every path}=[color=black, line width=0.5 pt]
\tikzstyle{every node}=[]
\draw [-Latex] (2) to node [above] {$1$} (4.5,0); 
% fleche des poids initiaux
\draw [-Latex] (-4.5,0) to node [above] {$1$} (i) ; 
% fleche des poids finaux
\draw [-Latex] (i) to [loop above] node {$\$|1$} (i);
\draw [-Latex] (2) to [loop above] node {$\couple{a}{a}|1,\couple{b}{b}| 1$} (2);
\draw [-Latex] (1) to node [above] {$\couple{a}{a}|1,\couple{b}{b}| 1$} (2);
\draw [-Latex] (1) to [loop above] node {$\ba|1, \ba\in\bA$} (1);
\draw [-Latex] (i) to [bend right=25] node [below] {$\couple{a}{a}|1,\couple{b}{b}| 1$} (2);
\draw [-Latex] (i) to node [above] {$\ba|1, \ba\in\bA$} (1);
\end{tikzpicture}
\end{center}
\caption{The modification of the $\N$-automaton of Figure~\ref{Fig : WeightedAutomaton} described in the proof of Theorem~\ref{The : CCS}.}
\label{Fig : ModifiedWeightedAutomaton}
\end{figure}
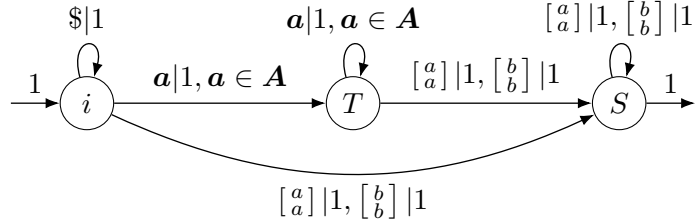
Let $\mathcal{A}$ and $\mathcal{B}$ denote the DFA of Figure~\ref{Fig : RelationAutomaton} and the $\N$-automation of Figure~\ref{Fig : ModifiedWeightedAutomaton} respectively. The accessible part of the $\N$-automaton $\mathcal{B}\circ \mathcal{A}$, where the final weight of the state $(1,i)$ has been modified as in Remark~\ref{Rem : CCS}, is depicted in Figure~\ref{Fig : CCSRelation}. For the sake of conciseness, every transition labeled by $\ba$ corresponds to $8$ labels, one for each letter $\ba\in \bA$. 
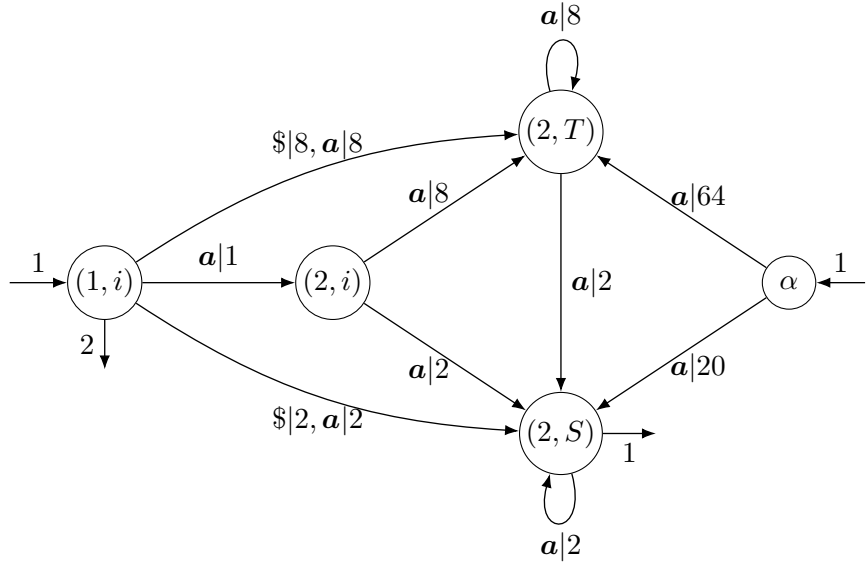
\begin{figure}[htb]
\begin{center}
\begin{tikzpicture}
\tikzstyle{every node}=[shape=circle, fill=none, draw=black,minimum size=20pt,inner sep=1pt]
\node(1i) at (0,0) {$(1,i)$};
\node(2i) at (3,0) {$(2,i)$};
\node(2T) at (6,2) {$(2,T)$};
\node(2S) at (6,-2) {$(2,S)$};
\node(alpha) at (9,0) {$\alpha$};
\tikzstyle{every node}=[shape=circle, fill=none, draw=black,
minimum size=15pt, inner sep=2pt]
\tikzstyle{every path}=[color=black, line width=0.5 pt]
\tikzstyle{every node}=[] 
% fleche des poids initiaux
\draw [-Latex] (-1.25,0) to node [above] {$1$} (1i); 
\draw [-Latex] (10,0) to node [above] {$1$} (alpha); 
% fleche des poids finaux
\draw [-Latex] (2S) to node [below] {$1$} (7.25,-2);
\draw [-Latex] (1i) to node [left] {$2$} (0,-1.15);

\draw [-Latex] (1i) to node [above] {$\ba|1$} (2i);
\draw [-Latex] (2i) to [pos=0.4] node [above] {$\ba|8$} (2T);
\draw [-Latex] (2i) to [pos=0.4] node [below] {$\ba|2$} (2S);
\draw [-Latex] (1i) to [bend left=15] node [above] {$\$|8, \ba|8 $} (2T);
\draw [-Latex] (1i) to [bend right=15] node [below] {$\$|2, \ba|2 $} (2S);
\draw [-Latex] (2T) to node [right] {$\ba|2$} (2S);
\draw [-Latex] (alpha) to [pos=0.4] node [above] {$\ba|64$} (2T);
\draw [-Latex] (alpha) to [pos=0.4] node [below] {$ \ba|20$} (2S);
\draw [-Latex] (2T) to [loop above] node {$\ba|8$} (2T);
\draw [-Latex] (2S) to [loop below] node {$\ba|2$} (2S);
\end{tikzpicture}
\end{center}
\caption{The $\N$-automaton $\mathcal{B}\circ\mathcal{A}$. Removing the transitions of label $\$$, we obtain an $\N$-automaton recognizing the series $S \circ R$.}
\label{Fig : CCSRelation}
\end{figure}
The projection of this $\N$-automaton onto the alphabet $\bA$ (i.e., where all transitions of label $\$$ have been removed) recognizes the series $S\circ R$. It can be checked that the weights of the words over $\bA$ of length $1$, $2$ and $3$ are equal to $22$, $190$ and $1548$ respectively. Those values indeed correspond to those obtained thanks to the closed formula~\eqref{Eq : ClosedFormula}.
\end{runningexample}

\begin{remark}
In a previous work~\cite{CharlierCisterninoStipulanti}, we used a composition of a DFA and a $\K$-automaton similar to that of Definition~\ref{Def : CCS}. The two compositions differ for two reasons. First, since we were concerned with Pisot numeration systems $\boldsymbol{U}$, we used the letter $0$ as our padding symbol as is usual in this context. This involves some technicalities since the padding symbol may belong to the numeration alphabets. Second, and more importantly, in the present work we have dealt with an arbitrary relation $R\colon A^*\to B^*$. From that point of view, the situation of \cite{CharlierCisterninoStipulanti} is simpler because the DFA involved in the composition accepts the (padded) graph of a \emph{function}. More precisely, this DFA is the normalizer $\mathcal{N}_{\boldsymbol{U},\bA}$, which recognizes the graph of the normalization function $\nu_{\boldsymbol{U},\bA}\colon \bA^*\to \bA_{\boldsymbol{U}}^*$ mapping a word $\bw$ over $\bA$ to the canonical $\boldsymbol{U}$-representation of $\val_{\boldsymbol{U}}(\bw)$, where $\bA$ is an arbitrary alphabet included in $\Z^d$ and $\bA_{\boldsymbol{U}}$ is the canonical alphabet of the numeration system $\boldsymbol{U}$. 
\end{remark}

\subsection{Composing $(\bS,\bS')$-synchronized and $( \bS,\K)$-regular sequences}
\label{Sec : CompRegSynch}

We are now ready to prove the announced result of this section. 

\begin{theorem}
\label{The : CompSynchReg}
If $f\colon\N^d\to\N^{d'}$ is an $(\bS,\bS')$-synchronized sequence and $g\colon\N^{d'}\to\K$ is an $(\bS',\K)$-regular sequence, then the sequence $g\circ f \colon\N^d\to\K$ is $(\bS,\K)$-regular.
\end{theorem}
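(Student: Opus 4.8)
The plan is to reduce the statement about sequences to the statement about series and relations, namely Theorem~\ref{The : CCS}, by identifying the right synchronized relation and the right $\K$-recognizable series. Recall that $g$ being $(\bS',\K)$-regular means exactly that the series $S_g=\sum_{\bw'\in\bL'}g(\val_{\bS'}(\bw'))\,\bw'$ is $\K$-recognizable, and $f$ being $(\bS,\bS')$-synchronized means, by Proposition~\ref{Pro : SynchSeqGraph}, that the relation $R_{f,\bS,\bS'}\colon\bA^*\to(\bA')^*$ with graph $\{\couple{\bw}{\bw'}\in\bL\times\bL'\colon f(\val_{\bS}(\bw))=\val_{\bS'}(\bw')\}$ is synchronized. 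The goal is to prove that $S_{g\circ f}$ is $\K$-recognizable. The first step is therefore to express $S_{g\circ f}$ as (a Hadamard restriction of) the composition $S_g\circ R_{f,\bS,\bS'}$.

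First I would verify the finiteness hypothesis of Theorem~\ref{The : CCS}: for each $\bw\in\bA^*$, the set $\{\bw'\in(\bA')^*\colon \bw\,R_{f,\bS,\bS'}\,\bw'\}$ must be finite. If $\bw\notin\bL$ this set is empty; if $\bw\in\bL$, then by definition it consists of exactly those $\bw'\in\bL'$ with $\val_{\bS'}(\bw')=f(\val_{\bS}(\bw))$, and since $\rep_{\bS'}$ is a bijection there is a unique such $\bw'$, namely $\rep_{\bS'}(f(\val_{\bS}(\bw)))$. So the relation is in fact (the graph of) a partial function and the fibres are singletons, well within the finiteness requirement. Consequently Theorem~\ref{The : CCS} applies and the composition $S_g\circ R_{f,\bS,\bS'}\colon\bA^*\to\K$ is a $\K$-recognizable series.

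Next I would compute this composition explicitly. For $\bw\in\bA^*$,
\[
	(S_g\circ R_{f,\bS,\bS'},\bw)
	=\sum_{\substack{\bw'\in(\bA')^*\\ \bw\,R_{f,\bS,\bS'}\,\bw'}}(S_g,\bw').
\]
When $\bw=\rep_{\bS}(\bn)\in\bL$, the only contributing $\bw'$ is $\rep_{\bS'}(f(\bn))$, whose coefficient in $S_g$ is $g(\val_{\bS'}(\rep_{\bS'}(f(\bn))))=g(f(\bn))=(g\circ f)(\bn)$. Thus for all $\bw\in\bL$ we have $(S_g\circ R_{f,\bS,\bS'},\bw)=(S_{g\circ f},\bw)$, while the two series may disagree off $\bL$. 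I would then take the Hadamard product with the characteristic series $\underline{\bL}$ of the regular numeration language $\bL$: since $S_{g\circ f}$ is supported on $\bL$ by definition, we get
\[
	S_{g\circ f}=(S_g\circ R_{f,\bS,\bS'})\odot\underline{\bL}.
\]
As $S_g\circ R_{f,\bS,\bS'}$ is $\K$-recognizable and $\bL$ is regular, Proposition~\ref{Pro : HadamardReg} yields that $S_{g\circ f}$ is $\K$-recognizable, i.e.\ $g\circ f$ is $(\bS,\K)$-regular, as claimed.

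I expect the conceptual content of the argument to be almost entirely packaged inside Theorem~\ref{The : CCS}; the remaining steps are the bookkeeping identifications above. The main obstacle is therefore making sure the hypotheses of Theorem~\ref{The : CCS} are genuinely met and that the matching between the relational composition and the sequence composition is exact on $\bL$. The delicate point is the behaviour off the numeration language: the relation $R_{f,\bS,\bS'}$ is only defined via $\bL\times\bL'$, so I must check that no spurious contributions from words outside $\bL$ survive, which is precisely why the final Hadamard restriction to $\bL$ is needed rather than claiming equality of the full series. A secondary check is that the padding symbol conventions ($\#$ inside multidimensional alphabets versus the $\$$ used to pad relation graphs) are handled consistently, but this is already addressed by the setup of Proposition~\ref{Pro : SynchSeqGraph} and Definition~\ref{Def : CCS}.
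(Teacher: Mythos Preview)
Your proposal is correct and follows the same route as the paper: identify $R_{f,\bS,\bS'}$ as synchronized via Proposition~\ref{Pro : SynchSeqGraph}, apply Theorem~\ref{The : CCS} to $S_g$, and read off that $S_{g\circ f}$ is $\K$-recognizable. The only difference is that your final Hadamard restriction to $\underline{\bL}$ is unnecessary: you yourself observed that for $\bw\notin\bL$ the fibre $\{\bw'\colon \bw\,R_{f,\bS,\bS'}\,\bw'\}$ is empty, so $(S_g\circ R_{f,\bS,\bS'},\bw)=0=(S_{g\circ f},\bw)$ there as well, and the paper simply writes $S_g\circ R_{f,\bS,\bS'}=S_{g\circ f}$ directly.
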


\begin{proof}
By Proposition~\ref{Pro : SynchSeqGraph}, the relation $R_{f,\bS,\bS'}$ is synchronized. We have $S_g\circ R_{f,\bS,\bS'}=S_{g\circ f}$. Then by Theorem~\ref{The : CCS},  $S_{g\circ f}$ is $\K$-recognizable, i.e., $g\circ f$ is $(\bS,\K)$-regular. 
\end{proof}

\begin{corollary}
If $f\colon\N^d\to\K$ is a $(\bS,\K)$-regular sequence, then for all $\bk\in \N^d$, the sequence $\N^d\to\K,\ \bn\mapsto f(\bn+\bk)$ is $(\bS,\K)$-regular.
\end{corollary}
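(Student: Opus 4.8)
The plan is to recognize the shifted sequence as the composition of a synchronized sequence with the given regular sequence, and then to invoke the main result of this section, Theorem~\ref{The : CompSynchReg}. Concretely, fix $\bk\in\N^d$ and consider the translation map
\[
	s\colon\N^d\to\N^d,\ \bn\mapsto \bn+\bk.
\]
The sequence in the statement is then exactly $f\circ s$, since $(f\circ s)(\bn)=f(\bn+\bk)$ for all $\bn\in\N^d$.

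First I would record that $s$ is $(\bS,\bS)$-synchronized: this is precisely the content of Proposition~\ref{Pro : +k}. Next, the hypothesis gives that $f\colon\N^d\to\K$ is $(\bS,\K)$-regular. I would then apply Theorem~\ref{The : CompSynchReg} with the numeration system $\bS'$ specialized to $\bS$ and the dimension $d'$ specialized to $d$, taking $s$ in the role of the $(\bS,\bS')$-synchronized map and $f$ in the role of the $(\bS',\K)$-regular map $g$. The theorem immediately yields that $f\circ s\colon\N^d\to\K$ is $(\bS,\K)$-regular, which is the desired conclusion.

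I expect no genuine obstacle here: the entire argument is a single application of Theorem~\ref{The : CompSynchReg} once the translation is identified as synchronized. The only point requiring a moment's care is bookkeeping of the indices, namely verifying that the intermediate numeration system produced by $s$ is indeed $\bS$ in dimension $d$, so that it matches the source system of the regular sequence $f$; this is guaranteed by Proposition~\ref{Pro : +k}, which outputs an $(\bS,\bS)$-synchronized map rather than one valued in a foreign system. This corollary is the regular-sequence analogue of the earlier corollary stating that shifts of $(\bS,\bS')$-synchronized sequences remain synchronized, which was deduced from Propositions~\ref{Pro : +k} and~\ref{Pro : Composition-Synchronized}; here the closure-under-composition step is replaced by the mixed composition result Theorem~\ref{The : CompSynchReg}.
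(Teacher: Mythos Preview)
Your proposal is correct and matches the paper's own proof exactly: the paper simply states that the result is a consequence of Proposition~\ref{Pro : +k} and Theorem~\ref{The : CompSynchReg}, which is precisely the composition argument you describe.
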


\begin{proof}
This is a consequence of Proposition~\ref{Pro : +k} and Theorem~\ref{The : CompSynchReg}.
\end{proof}

\section{Acknowledgment}
Célia Cisternino is supported by the FNRS Research Fellow grant 1.A.564.19F. Manon Stipulanti is supported by the FNRS Research grant 1.B.397.20.

\bibliographystyle{plain}
\bibliography{RegularSequences.bib}
\end{document}